\documentclass[a4paper,12pt]{amsart}
\usepackage[italian,english]{babel}

\usepackage{geometry}
 \geometry{
 a4paper,
 total={170mm,257mm},
 left=20mm,
 top=25mm,
 bottom= 30mm
 }

\usepackage{enumitem}
\usepackage{mathrsfs}
\usepackage{comment}
\usepackage[colorlinks, linkcolor=blue,anchorcolor=Periwinkle,
citecolor=blue,urlcolor=Emerald]{hyperref}

\usepackage{amsmath}
\usepackage{amsthm}
\usepackage[T1]{fontenc}
\usepackage[utf8]{inputenc}
\usepackage{amssymb}
\usepackage{csquotes}

\usepackage{mathrsfs}

\usepackage{graphicx}
\usepackage{tikz-cd}
\tikzset{dynkdot/.style={circle,draw,scale=.38}}

\newtheorem{prop}{Proposition}[section]
\newtheorem{lem}[prop]{Lemma}
\newtheorem{teo}[prop]{Theorem}
\newtheorem{cor}[prop]{Corollary}

\theoremstyle{definition}
\newtheorem{de}[prop]{Definition}

\newtheorem{rem}[prop]{Remark}
\newtheorem{ex}[prop]{Example}

\newtheorem{con}[prop]{Construction}

{\left\lbrace\begin{array}{@{}l@{}}}%
{\end{array}\right.}

\title[Solution of a problem in monoidal categorification]{Solution of a problem in monoidal categorification by additive categorification}

\subjclass{Primary: 13F60; secondary: 17B37, 18G80.}
\keywords{
Cluster algebras;
additive categorification;
monoidal categorification;
quantum affine algebras;
2-Calabi--Yau categories.
}

\author{Alessandro Contu}

\address{
Université Paris Cité, Sorbonne Université,
CNRS,
Institut de Mathématiques de Jussieu- Paris Rive Gauche,
F-75013 Paris, France
}
\email{alessandro.contu@imj-prg.fr}

\usepackage{comment}

\newcommand{\eps}{\varepsilon}

\begin{document}

\maketitle

\begin{abstract}
In 2021, Kashiwara--Kim--Oh--Park constructed cluster algebra structures on the
Grothendieck rings of certain monoidal subcategories of the category of finite-dimensional repre\-sentations of a quantum loop algebra, generalizing Hernandez--Leclerc's pioneering work from 2010.
They stated the problem of finding explicit quivers
for the seeds they used. We provide a solution by using Palu’s generalized mutation rule applied
to the cluster categories associated with certain algebras of global dimension at most 2, for example tensor products of path algebras of representation-finite quivers. Thus, our method is based on (and contributes to) the bridge, provided by cluster combinatorics,
between the representation theory of quantum groups and that of quivers with relations.
\end{abstract}

\tableofcontents

\section{Introduction}

Around the year 2000, Fomin and Zelevinsky introduced the study of cluster algebras, motivated by the aim of understanding Luzstig’s canonical bases of quantum groups and his closely related theory of total positivity. 
Cluster algebras are commutative algebras equipped with a set of distinguished generators, the \textit{cluster variables}, which are grouped into overlapping sets of fixed cardinality, the \textit{clusters}. Starting from an initial cluster, the others are constructed through a recursive procedure, whose elementary step is called \textit{mutation}.
The combinatorial information needed to mutate a given cluster is encoded in a quiver (or, equivalently, the so-called \textit{exchange matrix}) associated with it.  A cluster with the associated matrix is called a \textit{seed}.

Fomin--Zelevinsky's theory soon developed into an independent and successful research area, and cluster algebra structures were discovered in numerous contexts. 
In particular, in 2010, a link was established with the representation theory of quantum affine algebras: let $\mathfrak{g}$ be a finite-dimensional complex Lie algebra. In order to study the category $\mathscr{C}$ of finite-dimensional modules of type 1 of the quantum loop algebra $U_q(L\mathfrak{g})$, Hernandez and Leclerc exhibited \cite{Her_Lec_clust_alg_quat_aff} a cluster algebra structure on the Grothendieck rings of certain monoidal Serre subcategories $\mathscr{C}_l \subset \mathscr{C}$ defined for each level $l\in \mathbb{N}$. The two key ingredients in their approach were the so-called Kirillov--Reshetikin modules and a system of recurrence equations satisfied by their $q$-characters, the so-called $T$-system. In the language of cluster algebras, these correspond to certain cluster variables and certain exchange relations respectively. In particular, in the case of $\mathscr{C}_1$ in types $A$ and $D_4$, Hernandez--Leclerc proved that cluster monomials bijectively correspond to real simple modules and cluster variables to the tensor indecomposables ones among these. Their result was extended to all simply laced
finite root systems by Nakajima \cite{Nakajima_2011}.

Motivated by this example they introduced the notion of monoidal categorification of a cluster algebra, that is, the datum of a monoidal category whose Grothendieck ring is isomorphic to the cluster algebra in such a way that the above correspondences hold. In the following years, other subcategories of $\mathscr{C}$ were shown to provide monoidal categorifications: all the categories $\mathscr{C}^l_{\mathfrak{g}}\ (l\in \mathbb{N})$ (\cite{Qin_conjectures}), the category $\mathscr{C}^-_{\mathfrak{g}}$ \cite{HL_Clust_alg_approach_q_char}, and the categories $\mathscr{C}_\mathfrak{g}^{[a,b],\mathcal{D},\widehat{\underline{w}_0}}$ \cite{KKOP_mon_cat_quant_aff_II}, which include the previous ones as special cases.
Moreover, analogous results were obtained for the deformed versions of the Grothendieck rings, which were shown to have  quantum cluster algebra structures (see, for example,  \cite{Bittmann_quant_clust_alg_Cz-} and \cite{HFOO_propagation_positivity}).
Let us also mention that further classes of monoidal categorifications were constructed by Kang--Kashiwara--Kim--Oh \cite{KKKO_monoidal_cat_clust_alg} using representations of quiver Hecke algebras. This allowed them to show that in suitable quantum unipotent subgroups all cluster monomials belong to the canonical basis, thus confirming Fomin-Zelevinsky's main hope for these important examples. In their work, Kang--Kashiwara--Kim--Oh introduced the notion of a monoidal seed, that is, a collection of simple modules and an associated matrix (or, equivalently, a certain quiver) satisfying suitable compatibility conditions. Monoidal seeds are the monoidal avatar of the seeds of the associated cluster algebras. \\

This article is motivated by a problem raised in \cite{KKOP_mon_cat_quant_aff_II} concerning the categories $\mathscr{C}_\mathfrak{g}^{[a,b],\mathcal{D},\widehat{\underline{w}_0}}$. They are associated with the following data: an interval of integers $[a,b]$, a strong duality datum $\mathcal{D}$ arising from a simple Lie algebra $\mathsf{g}$ of simply-laced type, and an infinite sequence of indices obtained from a reduced expression $\underline{w}_0$ of the longest element of the Weyl group of $\mathsf{g}$.
In order to prove that the categories $\mathscr{C}_\mathfrak{g}^{[a,b],\mathcal{D},\widehat{\underline{w}_0}}$ are monoidal categorifications (under the assumption that the strong duality datum comes from a $\mathcal{Q}$-datum), they prove the existence of a whole family of monoidal seeds for each of these categories, starting from the initial monoidal seed used by Hernandez-Leclerc to prove the monoidal categorification given by $\mathscr{C}^-_{\mathfrak{g}}$. However, for any of these categories (and under the assumption that the interval $[a,b]$ is bounded on the right), they explicitly describe the quiver of only one monoidal seed. We denote these quivers by $Q^{[a,b]}(\underline{w}_0)$ (the quivers defined by Hernandez-Leclerc are special cases). At the end of \cite{KKOP_mon_cat_quant_aff_II}, they state the explicit determination of the exchange matrices of the other seeds as an open problem.\\

Our aim in the present paper is to exhibit natural additive categorifications corresponding to the categories $\mathscr{C}_\mathfrak{g}^{[a,b],\mathcal{D},\widehat{\underline{w}_0}}$ and to use this framework to solve the problem posed by Kim--Kashiwara--Oh--Park.

 Let us recall from \cite{Keller_clust_alg_quiv_rep_tring_cat} that an additive categorification of an (upper) cluster algebra $\mathcal{A}$ consists of a suitable small triangulated 2-Calabi--Yau category $\mathcal{C}$ and a \textit{decategorifcation map} from the set of objects of $\mathcal{C}$ to $\mathcal{A}$ which links the combinatorics of the rigid objects in $\mathcal{C}$ to those of the cluster monomials in $\mathcal{A}$. In particular, the decategorification map yields a bijection between (reachable, basic) cluster-tilting objects $T=T_1\oplus\dots\oplus T_n$ in $\mathcal{C}$ and the clusters $(u_1,\dots,u_n)$ in $\mathcal{A}$, with the indecomposable summands $T_i$ of $T$ being mapped to the cluster variables $u_i$. Moreover, the quiver of the seed corresponding to $u$ can be recovered as the \emph{endoquiver} of $T$ (i.e. the quiver of its endomorphism algebra). Thus, if we are given an additive categorification $\mathcal{C}$ and a monoidal categorification $\mathcal{M}$ of the same cluster algebra $\mathcal{A}$, we obtain a highly non trivial interplay between the triangulated structure of $\mathcal{C}$ and the monoidal structure of $\mathcal{M}$, which has already proved very promising in the recent papers \cite{baur_2024_arX_correspondenceadditivemonoidalcategorifications, duan_Schiff2023real, Fujita_singularitiesnormalizedrmatriceseinvariants_arX}, $cf$ also \cite{Fujita_Murakami_deformed_Cartan_generalized_preproj_I} and \cite{Fujita_Murakami_deformed_Cartan_generalized_preproj_II}.

In this paper, we apply this philosophy to the cluster algebras monoidally categorified by the categories $\mathscr{C}_\mathfrak{g}^{[a,b],\mathcal{D},\widehat{\underline{w}_0}}$. In particular, to solve the problem posed by \cite{KKOP_mon_cat_quant_aff_II}, we appeal to Palu's generalized mutation rule \cite{Palu02}, which allows to compute the endoquiver of any cluster-tilting object if we are given an initial cluster-tilting object whose endoquiver is known.

In more detail, we construct the additive avatar of $\mathscr{C}_\mathfrak{g}^{[a,b],\mathcal{D},\widehat{\underline{w}_0}}$ as the generalized cluster category $\mathcal{C}_A$ where $A$ is an algebra of global dimension $\leq 2$ given by 
 the quotient of the path algebra of the quiver $Q^{[a,b]}(\underline{w}_0)$ obtained by replacing certain arrows with relations. Our construction of $A$ ensures that the endoquiver of the image of the free module $A_A$ in $\mathcal{C}_A$ is exactly $Q^{[a,b]}(\underline{w}_0)$.

 Next, for each monoidal seed $\mathcal{S}$ appearing in Kashiwara--Kim--Oh--Park's problem, by iterated mutations we construct a corresponding cluster-tilting object in $\mathcal{C}_A$. We then determine its endoquiver using Palu's generalized mutation rule. Since mutations in the additive setting correspond to those in the monoidal setting, this endoquiver is indeed the one of the monoidal seed $\mathcal{S}$.

Let us briefly summarize the content of the sections of this paper:
\begin{itemize}
    \item in Section 2, we recall the basics of the representation theory of quantum loop algebras, introduce the categories $\mathscr{C}_\mathfrak{g}^{[a,b],\mathcal{D},\widehat{\underline{w}_0}}$  and discuss the properties of $i$-boxes, a combinatorial tool invented by Kashiwara-Kim-Oh-Park;
    \item in Section 3, we recall the combinatorics of $Q$-data, a generalization of Dynkin quivers introduced by Fujita-Oh \cite{Fuj_Oh_2021}, where the orientation is replaced with the datum of a height function on the Dynkin diagram. These allow us to define a class of strong duality data in the sense of \cite{KKOP_mon_cat_quant_aff_II};
    \item in Section 4, we recall the necessary background on monoidal categorification. Furthermore, we state the open problem of Kashiwara--Kim--Oh--Park \cite{KKOP_mon_cat_quant_aff_II};
    \item in section 5, we recall the necessary tools from additive categorification;
    \item in section 6, we present our solution of Kashiwara--Kim--Oh--Park's problem.
\end{itemize}

\section{Module categories of quantum loop algebras}
\label{sect_module_categories}
In this section, we recall the definition of the quantum loop algebra of a simple complex Lie algebra
and of some important monoidal subcategories of its module category. 

Let $q$ be an indeterminate. We denote by $\mathbb{K}$ the algebraic closure of the field $\mathbb{Q}(q)$. We write $\mathbb{K}^\times$ for the set of its invertible elements. Let $\mathfrak{g}$ be a finite-dimensional simple complex Lie algebra. Let $I_\mathfrak{g}=\{1, \ldots, n\}$ be the set of vertices of its Dynkin diagram,
$A$ the associated Cartan matrix and $D$ the integer diagonal $n\times n$-matrix with strictly positive diagonal entries $d_i$ with greatest common divisor $1$ such that $DA$ is symmetric. 

The quantum loop algebra associated to $\mathfrak{g}$ is the $\mathbb{K}$-algebra defined via the generators 
\[ \{K_i^{\pm}| i\in I_{\mathfrak{g}}\}\cup \{X_{i,s}, Y_{i,s}| i\in I_{\mathfrak{g}}, s\in \mathbb{Z} \}\cup \{H_{i,r}|i\in I_{\mathfrak{g}}, r \in \mathbb{Z}-\{0\}\}
\] 
subject to the relations given in \cite[Definition 2.1]{HFOO_propagation_positivity}. 
We denote it by $U_q(L\mathfrak{g})$.

\subsection{The category $\mathscr{C}_\mathfrak{g}^\mathbb{Z}$}\label{subsec_categoryC0}
We denote by $\mathscr{C}_\mathfrak{g}$ the category of finite-dimensional $U_q(L\mathfrak{g})$-modules of type 1, that is, 
the finite-dimensional $U_q(L\mathfrak{g})$-modules where the generators $K_1, \dots, K_n$ act semisimply, 
with eigenvalues which are integer powers of $q$.  
The theory of $q$-characters (see \cite{FrenkelReshetikhin98}) provides us with a parametrisation
$m \mapsto L(m)$ of the simple objects in $\mathscr{C}_\mathfrak{g}$ by the monomials $m$ 
in the variables $\{Y_{i,a} \ |\ i\in I,a\in \mathbb{K}^\times \}$.

For each vertex $i$ of the Dynkin diagram of $\mathfrak{g}$, we choose $\Tilde{\varepsilon}_i\in \{0,1\}$ such that
we have $\Tilde{\varepsilon}_i\equiv\Tilde{\varepsilon}_j +\text{min}(d_i,d_j) \text{ mod 2}$ whenever the vertices $i$ and
$j$ are linked by an edge. Define $\widehat{I}_\mathfrak{g}$ to be the set
$$\widehat{I}_\mathfrak{g}=\{(i,p)\in I\times\mathbb{Z}\ |\ p-\Tilde{\varepsilon}_i\in 2\mathbb{Z}\}.$$
Let $\mathscr{C}_\mathfrak{g}^\mathbb{Z}$ be the monoidal Serre subcategory of $\mathscr{C}_\mathfrak{g}$ whose simple objects are the
$L(m)$, where $m$ is a monomial in the $Y_{i,q^p}$ associated with the vertices $(i,p)\in \widehat{I}_\mathfrak{g}$.

In the following, we will focus on subcategories of the category $\mathscr{C}_\mathfrak{g}^\mathbb{Z}$ and use the 
simplified notation $Y_{i,p}=Y_{i,q^p}$.
For  $(i,p)\in \widehat{I}_\mathfrak{g}$ and an integer $k\geq 1$, we define the 
\emph{Kirillov--Reshetikhin module $W_{k,p}^i\in \mathscr{C}_\mathfrak{g}^\mathbb{Z}$} as
\[W_{k,p}^i=L(Y_{i,p}Y_{i,p+2d_i}\dots Y_{i,p+2d_i(k-1)}).\]
For $k=0$, the module $W_{k,p}^i$ is defined to be the trivial module.

\begin{de}
Let $M$ and $N$ be simple objects of $\mathscr{C}_\mathfrak{g}$. 
\begin{enumerate}
    \item $M$ and $N$ \emph{commute} if there is an isomorphism $M\otimes N \cong N\otimes M$.
    \item $M$ and $N$ \emph{strongly commute} if $M\otimes N$ is simple.
    \item M is \emph{real} if it strongly commutes with itself.
\end{enumerate}
\end{de}

\begin{rem}\mbox{}
\begin{itemize}
    \item[a)]  Since the Grothendieck ring 
    $\mathcal{K}(\mathscr{C}_\mathfrak{g})$ is commutative with a basis formed by the classes of the simple objects, 
    if two simple objects $M$ and $N$ strongly commute, then they commute.
    \item[b)] Let $m$ be a positive integer. As shown in \cite{Hernandez_simple_tensor_products}, if $(M_i)_{1\leq i\leq m}$ is a family of simple objects of $\mathscr{C}_\mathfrak{g}$, then they pairwise strongly commute if and only if $M_{\sigma(1)}\otimes \dots \otimes M_{\sigma(m)}$ is simple for any permutation $\sigma$ of the indices $\{1,\dots,m\}$.
\end{itemize}
\end{rem}

\subsection{Duality datum and PBW pairs.} \label{subsect_Duality_datum}
Let $C=(c_{\imath \jmath})_{\imath,\jmath\in J}$ be a generalized Cartan matrix with index set $J$. Beware that
$C$ is usually {\em not} the Cartan matrix of $\mathfrak{g}$. A {\em duality datum} for $\mathfrak{g}$ associated to $C$ is a collection of simple modules $\mathcal{D}=\{L(m_\jmath)\}_{\jmath\in J}$ of $\mathscr{C}_\mathfrak{g}^\mathbb{Z}$ satisfying certain properties (see \cite[Section 4]{kashiwara2021pbw}). 
We refer to \cite[Def. 4.6]{kashiwara2021pbw} for the notion of a {\em strong duality datum}, which is defined when $C$
is of simply laced type.

From now on, we suppose that $C$ is of simply laced type.  Let the symbol $\mathsf{g}$ (not to be confused with $\mathfrak{g}$) 
denote a finite-dimensional simple Lie algebra with Cartan matrix $C$. 
Let $I_{\mathsf{g}}$ be the set of  vertices of its Dynkin diagram. We identify the elements $\imath$ of $J$ with these vertices and
write $\imath \sim \jmath$ if they correspond to adjacent vertices.
Let $w_0$ be the longest element of the Weyl group of $\textsf{g}$ and denote its length by $l_0$. Define the involution $(-)^*: I_\mathsf{g}\rightarrow I_{\mathsf{g}}$
by $w_0(\alpha_{\imath})=-\alpha_{\imath^*}$.

Choose a reduced expression $\underline{w}_0=s_{\imath_1}\ \dots\ s_{\imath_{l_0}}$ of $w_0$. We extend the sequence
$\imath_1, \ldots, \imath_{l_0}$ to an infinite sequence $\widehat{\underline{w}}_0=(\imath_k)_{k\in\mathbb{Z}}$ by requiring
that we have 
\[
\imath_{k+{l_0}}=(\imath_k)^*
\]
for all $k\in \mathbb{Z}$. If $\mathcal{D}$ is a strong duality datum associated to the Cartan matrix of $\mathsf{g}$, we call $(\mathcal{D},\underline{w}_0)$ a PBW-pair. For each PBW-pair $(\mathcal{D},\underline{w}_0)$, Kashiwara--Kim--Oh--Park have introduced the family of \textit{affine cuspidal modules} $(S_{k}^{\mathcal{D}, \widehat{\underline{w}}_0})_{k\in\mathbb{Z}}$, 
cf.~\cite[Def. 4.5]{KKOP_mon_cat_quant_aff_II}. For each (possibly infinite) integer interval $K\subseteq \mathbb{Z}$, they defined 
$\mathscr{C}_{\mathfrak{g}}^{K,\mathcal{D},\underline{w}_0}$
to be the monoidal Serre subcategory of $\mathscr{C}_\mathfrak{g}^\mathbb{Z}$ generated by the affine cuspidal modules 
$S_{k}^{\mathcal{D},\widehat{\underline{w}}_0}$, for $k\in K$, cf.~\cite[$\S$ 6.3]{kashiwara2021pbw}.
For our purposes, we will not need the precise definition of these modules, but we recall how to characterize them
in a particular case in Section \ref{section_Q_data}.

\subsection{Combinatorics of $i$-boxes}
In this subsection, following \cite{KKOP_mon_cat_quant_aff_II}, we define and discuss the properties of $i$-boxes.  
These are combinatorial objects which will serve to parametrize certain simple modules, cf.~Definition~\ref{def_aff_det_mod}.
We fix a PBW-pair $(\mathcal{D},\underline{w}_0)$. Recall that we have fixed a sequence $\widehat{\underline{w}}_0=(\imath_k)_{k\in\mathbb{Z}}$.
When dealing with the indices of $\widehat{\underline{w}}_0$, for $s\in \mathbb{Z}$ and $\jmath\in I_\mathsf{g}$,  we will use the following notations: 

\begin{align}
s^+& =\text{min}\{t\;|\;s<t,\ \imath_t=\imath_s\}, \quad\quad   s^-=\text{max}\{t\;|\;s>t,\ \imath_t=\imath_s\},\\
s(\jmath)^+&=\text{min}\{t\;|\;s\leq t,\ \imath_t=\jmath\}, \quad\quad  s(\jmath)^-=\text{max}\{t\;|\;s\geq t,\ \imath_t=\jmath\}.
\end{align}

For $a\leq b\in\mathbb{Z}\sqcup\{\pm\infty\}$, as in \cite{KKOP_mon_cat_quant_aff_II},  we define the integer interval $[a,b]$ by
\[ [a,b]=\{k\in\mathbb{Z} \;|\; a\leq k\leq b\}.\]
If $a$ and $b$ are integers, the \textit{length} of  the interval $[a,b]$ is $l=b-a+1$. Otherwise, we say that the interval
has infinite length.

\begin{de}[{\cite[Section 4]{KKOP_mon_cat_quant_aff_II}}] \mbox{}
\begin{itemize} 
    \item[-] An {\em $i$-box} is a finite integer interval $[a,b]$ such that $\imath_a=\imath_b$. 
    \item[-] For an $i$-box $[a,b]$, we define its {\em index} as $\jmath=\imath_a=\imath_b$. When we want to emphasize that an $i$-box is of index $\jmath$, we use the notation $[a,b]_\jmath$.
    \item[-] The {\em $i$-cardinality} of an $i$-box $[a,b]_\jmath$ is the number of times that the index $\jmath$ appears in the sub-interval of $\widehat{\underline{w}}_0$ corresponding to $[a,b]$.
    \item[-] For a finite interval $[a,b]$, we define $[a,b\}$ and $\{a,b]$ as the largest $i$-boxes contained in $[a,b]$ with index $\imath_a$ and $\imath_b$ respectively. In other terms, we have
\[ [a,b\}=[a,b(\imath_a)^-] \text{  and  } \{a,b]=[a(\imath_b)^+,b]. \]
\end{itemize}
\end{de}

In the following, all $i$-boxes will be defined with respect to the sequence 
$\widehat{\underline{w}}_0=(\imath_k)_{k\in\mathbb{Z}}$ fixed above.

\begin{de}[{\cite[Def. 5.1]{KKOP_mon_cat_quant_aff_II}}]
Let $[a,b]$ be a finite interval of length $l$.
A \textit{chain} of $i$-boxes of \textit{range} $[a,b]$ is a sequence of $i$-boxes $\mathfrak{C}=(\mathfrak{c}_k)_{1\leq k\leq l}$ contained in $[a,b]$ and satisfying the following conditions for any $1\leq s\leq l$:
\begin{itemize}
    \item[(i)] The union $\bigcup_{1\leq k\leq s} \mathfrak{c}_k $ is an interval of length $s$;
    \item [(ii)] The $i$-box $\mathfrak{c}_s$ is the largest $i$-box of its own index contained in the interval $\bigcup_{1\leq k\leq s} \mathfrak{c}_k $.
\end{itemize}
Similarly, for an interval $[a,b]$ of infinite length $l=\infty$, a \textit{chain} of $i$-boxes of \textit{range} $[a,b]$ is a sequence
of $i$-boxes $\mathfrak{C}=(\mathfrak{c}_k)_{1\leq k< l}$ contained in $[a, b]$ and satisfying (i) and (ii) for any $1\leq s<\infty$.
By abuse of terminology, we call $l$ the \textit{length of the chain}.
Clearly, for any  $1\leq s \leq l$, the sequence $(\mathfrak{c}_k)$, where $1\leq k\leq s$ for $s<\infty$ and $1\leq k <s$ for $s=\infty$, is a chain of $i$-boxes. We refer to it as a \textit{subchain} of $\mathfrak{C}$ of length $s$.
\end{de}

\begin{rem} \label{rem_exp_op} 
Let $[a,b]$ be a finite interval of length $l$. It follows from the previous definition that we have a bijection between
the chains of $i$-boxes  in $[a,b]$ and the pairs consisting of a singleton 
$\{c\}\subseteq [a,b]$ and a sequence $E\in \{L,R\}^{l-1}$. Namely, for a given chain $\mathfrak{C}=(\mathfrak{c}_k)_{1\leq k\leq l}$,
we put $\{c\}=\mathfrak{c}_1$ and, for $1\leq k<l$, define $E_k$ to be $L$ respectively $R$ if adding $\mathfrak{c}_{k+1}$ increases the range of the interval $\bigcup_{1\leq k\leq s} \mathfrak{c}_k $ by one unit on the left respectively on the right. We call
$L$ resp.~$R$ the \emph{left} resp.~\emph{right expansion operator} and refer to the sequence $E$ together with
the $i$-box $[c]$ as the \emph{rooted sequence
of expansion operators} associated with the chain of $i$-boxes $\mathfrak{C}$. We leave it to the reader to formulate
the natural extension of this bijection to the case of an infinite interval $[a,b]$.
\end{rem}

For $a\in \mathbb{Z}\cup \{-\infty\}$ and  $b\in\mathbb{Z},\ b\geq a$,  
we denote by $\mathfrak{C}_{-}^{[a,b]}$ the chain of $i$-boxes 
\[
\mathfrak{C}_{-}^{[a,b]}=(\mathfrak{c}^-_k)_{1\leq k \leq b-a+1}=([s,b\})_{b\geq s\geq a } \ .
\] 
It is associated to the pair $(b;(E_k)_{1\leq k\leq b-a})$, where $E_k=L$ for any $k$.

\begin{rem}
    Let $[a,b]$ be a finite interval of length $l$. Let $\mathfrak{C}$ be a chain of $i$-boxes of range $l$, associated to the pair 
    $([c],(E_k)_{1\leq k\leq l-1})$. Then the integer $c\in [a,b]$ is determined by the sequence $(E_k)_{1\leq k\leq l-1}$. In fact, 
    if we denote by $\mathcal{R}$ the set $\{ k\in [1,l-1]\ |\ E_k=R\}$, then $c$ is equal to $b-|\mathcal{R}|$.
\end{rem}

Next, we recall a combinatorial operation on chains of $i$-boxes from \cite[\S 5.2]{KKOP_mon_cat_quant_aff_II}.
\begin{de}
\label{de_box_move}
Let $\mathfrak{C}=(\mathfrak{c}_k)$ be a chain of $i$-boxes of length $l\leq \infty$ corresponding to a pair $(c,(E_k)_{1\leq k<l})$. 
\begin{itemize}
\item[(i)] For $1 \leq s < l$, the $i$-box $\mathfrak{c}_s$ is defined to be \textit{movable} if $s=1$ or  $s\geq 2$ and $E_{s-1}\neq E_s$.
\item[(ii)] For a movable $i$-box $\mathfrak{c}_s$, the \textit{box move} at $s$, denoted by $\nu_s$, is the operation sending $\mathfrak{C}$ 
to the chain $\nu_s\mathfrak{C}$, whose associated pair $(c',E')$ is defined as follows:
\[
c'=\begin{cases}
c+1 & \text{if } s=1, E_1=R,\\
c-1 & \text{if } s=1, E_1=L,\\
c & \text{if } s>1,
\end{cases} \ \text{   and   }\
E'_k=\begin{cases}
    R & \text{if } E_k=L,\ k\in\{s-1,s\},\\
    L & \text{if } E_k=L,\ k\in\{s-1,s\},\\
    E_k & \text{if } k\notin\{s-1,s\}.
\end{cases}
\]
\item[(iii)] We call a finite composition of box moves a \textit{chain transformation}.
\end{itemize}
\end{de}

The next lemma shows that box moves are involutions. 
\begin{lem}
Let $\mathfrak{C}=(\mathfrak{c}_k)$ be a chain of $i$-boxes of length $l\leq \infty$ and $1\leq s<l$ an integer such that $\mathfrak{c}_s$ is movable. Then the $i$-box $\mathfrak{c'}_s$ of $\nu_s\mathfrak{C}$ is movable and $\nu_s\nu_s\mathfrak{C}=\mathfrak{C}$.
\end{lem}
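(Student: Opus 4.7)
The plan is to reduce the claim to a direct verification on the rooted sequences of expansion operators, using the bijection from Remark \ref{rem_exp_op} between chains of $i$-boxes and pairs $(c,(E_k))$. Once translated to that combinatorial data, both statements become formal manipulations of the pair $(c,E)$.

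First I would set up notation. Let $\mathfrak{C}$ correspond to the pair $(c,E)=(c,(E_k)_{1\leq k<l})$, and write $(c',E')$ for the pair associated with $\nu_s\mathfrak{C}$. I would then split into the two cases from Definition \ref{de_box_move}. If $s\geq 2$, movability of $\mathfrak{c}_s$ means $E_{s-1}\neq E_s$; the move $\nu_s$ fixes $c$ and swaps $E_{s-1}\leftrightarrow E_s$, leaving all other entries unchanged. In particular $E'_{s-1}=E_s$ and $E'_s=E_{s-1}$, so $E'_{s-1}\neq E'_s$, which shows that the $i$-box $\mathfrak{c}'_s$ of $\nu_s\mathfrak{C}$ is movable. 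Applying $\nu_s$ once more swaps $E'_{s-1}\leftrightarrow E'_s$ back, recovering the original pair $(c,E)$. If $s=1$, movability is automatic. The move changes $E_1$ to its opposite and translates the root $c$ by $+1$ or $-1$ according to whether $E_1=R$ or $E_1=L$. In $\nu_1\mathfrak{C}$ the new first expansion operator $E'_1$ is again well defined and $\mathfrak{c}'_1$ is automatically movable. Applying $\nu_1$ a second time flips $E'_1$ back to $E_1$ and translates the root in the opposite direction by one unit, so the pair returns to $(c,E)$.

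Via the bijection of Remark \ref{rem_exp_op}, these verifications at the level of pairs translate directly into the statement that $\nu_s\nu_s\mathfrak{C}=\mathfrak{C}$ and that $\mathfrak{c}'_s$ is movable in $\nu_s\mathfrak{C}$. The infinite-length case $l=\infty$ is handled by the same computation, since only the entries indexed by $s-1$ and $s$ (and possibly the root) are affected, and these lie in the finite part of the sequence.

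There is no real obstacle here beyond bookkeeping: the only point requiring care is the asymmetry between the case $s=1$, where the box move acts on the root $c$ and on $E_1$ only, and the case $s\geq 2$, where it acts as a transposition of adjacent expansion operators and fixes the root. Making this case distinction explicit and invoking the bijection of Remark \ref{rem_exp_op} to guarantee that the resulting pair indeed defines a chain of $i$-boxes completes the argument.
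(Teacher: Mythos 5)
Your proof is correct and is exactly the direct verification that the paper leaves to the reader ("this can be checked directly using the definition of box move"), carried out on the pairs $(c,(E_k))$ via the bijection of Remark \ref{rem_exp_op}. Note only that Definition \ref{de_box_move} flips both $E_{s-1}$ and $E_s$ rather than literally swapping them, but since movability forces $E_{s-1}\neq E_s$ the two descriptions agree, so your argument goes through unchanged.
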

\begin{proof}
    This can be checked directly using the definition of box move.
\end{proof}

\begin{ex}
Let $\mathsf{g}$ be of type $A_3$ and choose the reduced expression $\underline{w}_0=s_1s_3s_2s_3s_1s_2$. The associated 
sequence $\widehat{\underline{w_0}}$ is 
\[
 \widehat{\underline{w_0}}=\dots \underbrace{\ 1,\ 3,\ 2,\ 3,\ 1,\ 2,\ 3,\ 1,\ 2,\ 1,\ 3,\ 2\,}_{[1,12]}, \  
 \dots .\]
 Consider the chain of $i$-boxes $\mathfrak{C}=(\mathfrak{c}_k)_{1\leq k\leq 6}$ of range $[1,6]$ defined by 
  \begin{align*}
\mathfrak{c}_1&=[6]_2, &  \mathfrak{c}_2&=[5]_1, &\mathfrak{c}_3&=[4]_3, \\
\mathfrak{c}_4&=[3,6]_2, &  \mathfrak{c}_5&=[2,4]_3, &\mathfrak{c}_6&=[1,5]_1.
\end{align*} 
It corresponds to the pair $(6;(L,L,L,L,L))$. According to the above definition, the only movable $i$-box in the chain $\mathfrak{C}$ is $\mathfrak{c}_1$. The box move at 1 sends $\mathfrak{C}$ to the chain $\nu_1\mathfrak{C}=(\mathfrak{c}_k')_{1\leq k\leq 6}$, with 

\begin{align*}
\mathfrak{c'}_1&=[5]_1, &  \mathfrak{c'}_2&=[6]_2, &\mathfrak{c'}_3&=[4]_3, \\
\mathfrak{c'}_4&=[3,6]_2, &  \mathfrak{c'}_5&=[2,4]_3, &\mathfrak{c'}_6&=[1,5]_1.
\end{align*} 

It corresponds to the pair $(5;(R,L,L,L,L))$. The chain $\nu_1\mathfrak{C}$ has two movable $i$-boxes, namely $\mathfrak{c'}_1$ and $\mathfrak{c'}_2$. Performing the box move at 1 returns the original chain $\mathfrak{C}$, while the box move at 2 returns the chain with associated pair $(5;(L,R,L,L,L))$.
\end{ex}

\begin{rem}
\label{rem_can_chain_trans}
Proceeding as in the above example (that is, replacing a left operator with a right operator via the box move at 1 and shifting it in the proper position via box moves at the other indices), any chain of $i$-boxes of finite range $[a,b]$ can be obtained through a chain transformation from the chain of $i$-boxes $\mathfrak{C}_{-}^{[a,b]}$.
Let us make this more precise: let $\mathfrak{C}=(\mathfrak{c}_k)$ be a chain of $i$-boxes on the interval $[a,b]$. 
Let $k_1 < k_2 < \ldots$ be the elements of the set
 \[
 \mathcal{R}=\{ k\in [1,l-1]\ |\ E_k=R\}.
 \]
We set $\mathfrak{C}_0=\mathfrak{C}_{-}^{[a,b]}$ and, for any $1\leq s\leq |\mathcal{R}|$, we define $\mathfrak{C}_s$ as the chain of 
 $i$-boxes of range $[a,b]$ with associated sequence of expansion operators
\[ E_k=\begin{cases}
     R,\text{    if } k\in \mathcal{R}, k\geq k_{|\mathcal{R}|-s+1}\\
     L, \text{    otherwise.}
 \end{cases}
 \]
Then we have, for any $1\leq s\leq |\mathcal{R}|$, the chain transformation 
 
 \begin{equation}
 \label{eq_basic_chain_trans}
     \nu_{k_{|\mathcal{R}|-s+1}}\circ \nu_{k_{|\mathcal{R}|-s}}\circ \dots \circ \nu_2\circ\nu_1: \mathfrak{C}_{s-1}\mapsto \mathfrak{C}_{s}. 
 \end{equation}
 
 By composition, we obtain a chain transformation 
 \begin{equation}
 \label{eq_can_chain_trans}
 \mathfrak{C}_{-}^{[a,b]} \mapsto \mathfrak{C}.
 \end{equation}
 We call a chain transformation of the form (\ref{eq_basic_chain_trans}) \emph{basic} and one of the form (\ref{eq_can_chain_trans}) \emph{canonical}. Notice that, by composing a canonical chain transformation with the inverse of a canonical chain transformation, 
we obtain a chain transformation between any pair of chains of $i$-boxes with the same finite range.
\end{rem}

\begin{lem}
Let $\mathfrak{C}=(\mathfrak{c}_k)$ be a chain of $i$-boxes of length $l$ such that the $i$-box $\mathfrak{c}_s$ is movable, with $2\leq s< l-1$. The following are equivalent:
\begin{itemize}
    \item[(i)] The $i$-box $\mathfrak{c}_{s+1}$ is equal to the union $\bigcup_{1\leq k\leq s+1}\mathfrak{c}_k$.
    \item[(ii)] The $i$-boxes $\mathfrak{c}_s$ and $\mathfrak{c}_{s+1}$ have the same index.
\end{itemize}
\end{lem}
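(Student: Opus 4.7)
The plan is to show that both (i) and (ii) are equivalent to the single assertion that $U_{s+1} := \bigcup_{1\leq k\leq s+1}\mathfrak{c}_k$ is itself an $i$-box of index $\jmath_{s+1}$, where $\jmath_k$ denotes the index of $\mathfrak{c}_k$.

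First I would establish the following general fact about any chain of $i$-boxes: for each $k\geq 2$, the index $\jmath_k$ of $\mathfrak{c}_k$ equals the index of the endpoint newly added when passing from $U_{k-1}$ to $U_k$. This follows from condition (i) of the chain definition: since $|U_k|=|U_{k-1}|+1$, the $i$-box $\mathfrak{c}_k$ cannot be contained in $U_{k-1}$, hence it contains the unique element $p\in U_k\setminus U_{k-1}$; as $p$ is extremal in $U_k\supseteq \mathfrak{c}_k$, it is also extremal in the $i$-box $\mathfrak{c}_k$, so $\jmath_k = \imath_p$. In terms of the expansion operators this reads $\jmath_k = \imath_{\min U_k}$ if $E_{k-1}=L$, and $\jmath_k = \imath_{\max U_k}$ if $E_{k-1}=R$.

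Next I would apply the movability hypothesis: since $s\geq 2$ we have $E_{s-1}\neq E_s$, and by a left-right symmetry it suffices to treat the case $E_{s-1}=L$, $E_s=R$. Then $\min U_{s+1} = \min U_s$, so the preliminary observation gives
\[
\jmath_s = \imath_{\min U_{s+1}} \qquad\text{and}\qquad \jmath_{s+1} = \imath_{\max U_{s+1}}.
\]
Thus condition (ii), $\jmath_s=\jmath_{s+1}$, is exactly the statement that $U_{s+1}$ is an $i$-box of index $\jmath_{s+1}$. As for condition (i), the forward direction is immediate since any $\mathfrak{c}_k$ is an $i$-box. For the converse, condition (ii) of the chain definition says that $\mathfrak{c}_{s+1}$ is the largest $i$-box of index $\jmath_{s+1}$ contained in $U_{s+1}$; if $U_{s+1}$ itself is such an $i$-box, it must therefore coincide with $\mathfrak{c}_{s+1}$.

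The only point that requires real care is the preliminary observation linking $\jmath_k$ to $E_{k-1}$, since it is not spelled out in the excerpt and involves unwinding the bijection of Remark~\ref{rem_exp_op}; once that is in hand, both equivalences reduce to a direct inspection of definitions and I would not expect any further obstacle.
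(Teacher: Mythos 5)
Your proposal is correct and follows essentially the same route as the paper: chain condition (i) forces $\mathfrak{c}_{s+1}$ to contain the newly added endpoint of the union, movability places the relevant endpoint of $\mathfrak{c}_s$ at the opposite extremity, and the maximality condition (ii) in the definition of a chain yields the converse implication. Packaging both conditions as equivalent to ``$\bigcup_{1\leq k\leq s+1}\mathfrak{c}_k$ is an $i$-box of index $\jmath_{s+1}$'' is just a mild reorganization of the paper's endpoint computation, so no substantive difference.
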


\begin{proof}
Let $\mathfrak{c}_s=[a_s,b_s]$, $\mathfrak{c}_{s+1}=[a_{s+1},b_{s+1}]$ and $\cup_{1\leq k\leq s+1}\mathfrak{c}_k=[\Tilde{a}_s,\Tilde{b}_s]$. By the definition of chain of $i$-boxes, we either have $a_{s+1}=\Tilde{a}_s$ or $b_{s+1}=\Tilde{b}_s$. Moreover, since the $i$-box $\mathfrak{c}_s$ is movable, these cases correspond to $b_{s}=\Tilde{b}_s$ or $a_{s}=\Tilde{a}_s$ respectively. 
In the first case, the equality $\mathfrak{c}_{s+1}=\cup_{1\leq k\leq s+1}\mathfrak{c}_k$ holds if and only if $b_{s+1}=\Tilde{b}_s$, 
which is equivalent to $\mathfrak{c}_{s}$ and $\mathfrak{c}_{s+1}$ having the same index $\imath_{\Tilde{b}_s}$. The second case is analogous. 
\end{proof}

The following result clarifies the effects of the box move on the $i$-boxes of a chain.
For an $i$-box $[a,b]$, we define
\begin{align*}
    \tau [a,b] &= [a^+,b^+] \\
    \tau^- [a,b] &= [a^-,b^-] \\
    \tau^{(\pm i)} [a,b] &= \underbrace{\tau^\pm\dots \tau^\pm}_{i\text{-times}} [a,b],\ \ i\in \mathbb{N}.
\end{align*}

\begin{prop}[{\cite[Prop.5.6, Prop.5.7]{KKOP_mon_cat_quant_aff_II}}]
\label{prop_box_move}
Let $\mathfrak{C}=(\mathfrak{c}_k)$ be a chain of $i$-boxes of length $l$, with expansion operators $(E_k)_{1\leq k<l}$. 
Assume that $\mathfrak{c}_s$ is movable.
\begin{itemize}
    \item[(i)] If the $i$-box $\mathfrak{c}_{s+1}$ has the same index as $\mathfrak{c}_s$, then
    
   \[ (\nu_s(\mathfrak{C}))_k=\begin{cases}
     \tau \mathfrak{c}_s & \text{if } k=s \text{ and } E_k=R,\\
     \tau^- \mathfrak{c}_s & \text{if } k=s \text{ and } E_k=L,\\
     \mathfrak{c}_k & \text{if } k\neq s.
    \end{cases}\]
    
    \item[(ii)] If the $i$-boxes $\mathfrak{c}_{s+1}$ and $\mathfrak{c}_s$ have different indices, then 
    \[ (\nu_s(\mathfrak{C}))_k=\begin{cases}
     \mathfrak{c}_{s+1} & \text{if } k=s,\\
     \mathfrak{c}_s & \text{if } k=s+1,\\
     \mathfrak{c}_k & \text{if } k\neq s, s+1.
    \end{cases}\]
\end{itemize}
\end{prop}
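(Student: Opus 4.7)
The plan is to work at the level of the rooted sequences of expansion operators $(c,(E_k))$ attached to $\mathfrak{C}$ and $\nu_s\mathfrak{C}$ via Remark~\ref{rem_exp_op}. Recall that, setting $I_k=\bigcup_{j\le k}\mathfrak{c}_j$, the $i$-box $\mathfrak{c}_k$ is the largest $i$-box, inside $I_k$, of the index of the endpoint newly added in passing from $I_{k-1}$ to $I_k$. The box move flips $E_{s-1}$ and $E_s$ (for $s\ge 2$) and, when $s=1$, simultaneously shifts the root $c$ so that $I'_1$ becomes the endpoint of $I_2$ opposite to the original $\{c\}$. It is therefore enough to compare, step by step, the intervals $I_k,I'_k$ and the corresponding newly added endpoints in $\mathfrak{C}$ and $\nu_s\mathfrak{C}$.

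First I would dispose of the positions $k\notin\{s,s+1\}$. For $k\le s-1$ both $c$ and $E_1,\dots,E_{k-1}$ are untouched, so $I_k$, the endpoint added at step $k$, and hence $\mathfrak{c}_k$ itself, are all unchanged (this range is empty when $s=1$). For $k\ge s+2$, the two flips at positions $s-1$ and $s$ leave $I_{s+1}$ invariant, because extending by one unit on the left and one on the right in either order yields the same interval, and the subsequent operators $E_{s+1},E_{s+2},\dots$ coincide in the two chains; therefore $I_k=I'_k$ and the endpoint added at step $k$ is the same in both chains.

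The heart of the argument lies at $k=s,s+1$, where $I_s\ne I'_s$ while $I_{s+1}=I'_{s+1}$. By the obvious left/right symmetry I would reduce to the subcase $E_{s-1}=L$, $E_s=R$, write $I_{s-1}=[a,b]$, and compute
\[
I_s=[a-1,b],\quad I_{s+1}=[a-1,b+1],\quad I'_s=[a,b+1],\quad I'_{s+1}=[a-1,b+1].
\]
Hence $\mathfrak{c}_s$ and $\mathfrak{c}'_{s+1}$ both have index $\imath_{a-1}$, while $\mathfrak{c}_{s+1}$ and $\mathfrak{c}'_s$ both have index $\imath_{b+1}$. The case $s=1$ is analogous, the value of $c'$ being read off directly from the definition of the box move.

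The lemma preceding the proposition now splits the discussion in two. In case (ii), where $\imath_{a-1}\ne\imath_{b+1}$, the endpoint $b+1$ lies outside every $i$-box of index $\imath_{a-1}$ contained in $[a-1,b+1]$ and, symmetrically, $a-1$ lies outside every $i$-box of index $\imath_{b+1}$ it contains; hence $\mathfrak{c}'_{s+1}=\mathfrak{c}_s$ and $\mathfrak{c}'_s=\mathfrak{c}_{s+1}$, which is precisely the swap. In case (i), both indices equal a common $\imath$, so $\mathfrak{c}_{s+1}=\mathfrak{c}'_{s+1}=[a-1,b+1]$, while $\mathfrak{c}_s=[a-1,b(\imath)^-]$ and $\mathfrak{c}'_s=[a(\imath)^+,b+1]$; an immediate unpacking of the notations $(-)^{\pm}$ gives $(a-1)^+=a(\imath)^+$ and $(b(\imath)^-)^+=b+1$, so $\mathfrak{c}'_s=\tau\mathfrak{c}_s$, as predicted by the formula when $E_s=R$. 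The symmetric subcase $E_{s-1}=R$, $E_s=L$ produces the factor $\tau^-$. No step is a real obstacle; the main subtlety is simply to keep a clean distinction between ``the newly added endpoint'' and ``the index of the $i$-box'' throughout the bookkeeping.
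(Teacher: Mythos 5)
Your proof is correct. Note that the paper itself does not prove Proposition~\ref{prop_box_move}: it is imported verbatim from Kashiwara--Kim--Oh--Park (their Propositions 5.6 and 5.7), so there is no internal argument to compare against; what you supply is a self-contained elementary verification, and it holds up. The route you take — reconstructing each $\mathfrak{c}_k$ from the rooted sequence $(c,(E_k))$ of Remark~\ref{rem_exp_op} as the maximal $i$-box of $I_k=\bigcup_{j\le k}\mathfrak{c}_j$ flush against the newly added endpoint, observing that a box move leaves $I_k$ and the added endpoint unchanged for all $k\le s-1$ and all $k\ge s+2$ (since flipping $E_{s-1},E_s$, or shifting the root when $s=1$, does not change $I_{s+1}$), and then computing the two positions $s,s+1$ explicitly in the subcase $E_{s-1}=L$, $E_s=R$ — is sound, and the endgame is right: when the two indices differ, neither new endpoint can lie in an $i$-box of the other index, which forces the swap $\mathfrak{c}'_s=\mathfrak{c}_{s+1}$, $\mathfrak{c}'_{s+1}=\mathfrak{c}_s$; when they coincide, the identities $(a-1)^+=a(\imath)^+$ and $(b(\imath)^-)^+=b+1$ give $\mathfrak{c}'_s=\tau\mathfrak{c}_s$ with $E_s=R$, and the mirrored subcase gives $\tau^-$ with $E_s=L$, exactly as in the statement. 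Two small points a polished write-up should make explicit: first, the fact that the index of $\mathfrak{c}_k$ is the index of the newly added endpoint (and that $\mathfrak{c}_k$ is flush at it) is not literally condition (ii) of the definition but follows from it because $\mathfrak{c}_k\subseteq I_k$ must contain the new point for $I_k$ to have length $k$ — this is precisely the content of the bijection in Remark~\ref{rem_exp_o p}, which the paper also states without proof, so invoking it is legitimate but worth a sentence; second, the "left/right symmetry" you use is an order-reversing mirror that exchanges $\tau$ and $\tau^-$, which you do track correctly but should state, since the conclusion of case (i) is not symmetric. Your argument also applies verbatim to chains of infinite length, since it is entirely local in $k$.

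(Correction to a typo above: the reference is Remark~\ref{rem_exp_op}.)
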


\begin{rem}
In the setting of Prop.~\ref{prop_box_move}, if $\mathfrak{c}_{s+1}=[a_{s+1},b_{s+1}]$ and $\mathfrak{c}_s$ have the same index, 
then we have two possible scenarios:
\[(1) \begin{cases}
    \mathfrak{c}_s=[a_{s+1},b_{s+1}^-],\\
    E_s=R\\
    (\nu_s(\mathfrak{C}))_s=[a_{s+1}^+,b_{s+1}];
\end{cases}\ \ \ \ \ \ (2) 
\begin{cases}
 \mathfrak{c}_s=[a_{s+1}^+,b_{s+1}],\\
    E_s=L\\
    (\nu_s(\mathfrak{C}))_s=[a_{s+1},b_{s+1}^-].
\end{cases}\]
In particular, the box move shifts $\mathfrak{c}_s$ inside $\mathfrak{c}_{s+1}$ from flush right to flush left in
scenario $(1)$ and from flush left to flush right in scenario $(2)$, preserving its index in both cases.
\end{rem}

Notice that the definitions of $i$-box and of chain of $i$-boxes require only the datum of the infinite sequence $\widehat{\underline{w}}_0$. We use the strong duality datum $\mathcal{D}$ to associate to each $i$-box a module in the category $\mathscr{C}_\mathfrak{g}^\mathbb{Z}$ as follows:

\begin{de}[{\cite[Def.~4.14]{KKOP_mon_cat_quant_aff_II}}] \label{def_aff_det_mod}
Let $\mathfrak{c}=[a,b]$ be an $i$-box. The \textit{affine determinant module} 
$M(\mathfrak{c})=M^{\mathcal{D},\widehat{\underline{w}}_0}(\mathfrak{c})$ is defined as the
 head of the module
\[ 
    S^{\mathcal{D},\widehat{\underline{w}}_0}_b\otimes S^{\mathcal{D},\widehat{\underline{w}}_0}_{b^-}\otimes
    S^{\mathcal{D},\widehat{\underline{w}}_0}_{(b^-)^-} \otimes \dots \otimes 
    S^{\mathcal{D},\widehat{\underline{w}}_0}_{a^+}\otimes S^{\mathcal{D},\widehat{\underline{w}}_0}_a\in \mathscr{C}^{[a,b],\mathcal{D},\underline{w}_0}_\mathfrak{g}.
\]
\end{de}

The next proposition shows that the combinatorics of $i$-boxes reflect the behaviour of the affine determinant
modules with respect to tensor products. 
If $\mathfrak{C}$ is a chain of $i$-boxes, we write $M(\mathfrak{C})$ for the family of modules $M(\mathfrak{c})$, where
$\mathfrak{c}$ belongs to $\mathfrak{C}$.

\begin{prop}[{\cite[Thm.~5.5]{KKOP_mon_cat_quant_aff_II}}]
\label{prop_properties_aff_det_mod}
For any $i$-box $\mathfrak{c}$, the module $M(\mathfrak{c})$ is simple and real.
Moreover, for each chain of $i$-boxes $\mathfrak{C}$, the modules in the family $M(\mathfrak{C})$ strongly commute.
\end{prop}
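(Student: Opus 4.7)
The plan is to prove the two assertions in succession, reducing the strong commutativity claim to the canonical chain $\mathfrak{C}_{-}^{[a,b]}$ via the chain transformations discussed in Remark~\ref{rem_can_chain_trans}. For the first assertion, fix an $i$-box $\mathfrak{c}=[a,b]_\jmath$. By Definition~\ref{def_aff_det_mod}, $M(\mathfrak{c})$ is the head of the tensor product $S_b\otimes S_{b^-}\otimes\dots\otimes S_a$ of affine cuspidals, all of whose indices equal $\jmath$ and which appear in strictly decreasing order along $\widehat{\underline{w}}_0$. The simplicity and realness of such heads (the so-called affine determinantial modules) form part of the general theory of the PBW family $(S_k)_k$ developed in \cite{kashiwara2021pbw}, where it is shown that an ordered tensor product of cuspidals of a single index has a simple, real head.

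For the second assertion I would argue by induction along a chain transformation $\mathfrak{C}_{-}^{[a,b]}\mapsto \mathfrak{C}$, the existence of which is guaranteed by Remark~\ref{rem_can_chain_trans}. In the case $l=\infty$ one restricts to each finite sub-range $[a,b']$ and passes to the limit, which is legitimate since strong commutativity is a condition on finite subfamilies. The base case is the family $M(\mathfrak{C}_{-}^{[a,b]})=(M([s,b\}))_{a\leq s\leq b}$: each member is the head of a tensor product of consecutive cuspidals along $\widehat{\underline{w}}_0$, and their pairwise strong commutativity follows from the properties of the normalized $R$-matrices of the PBW family and their compatibility with the convex order on positive roots induced by $\widehat{\underline{w}}_0$, as developed in \cite{kashiwara2021pbw}.

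For the inductive step, assume the modules in $M(\mathfrak{C})$ strongly commute and let $\mathfrak{c}_s$ be movable. By Proposition~\ref{prop_box_move}, the family $M(\nu_s \mathfrak{C})$ either coincides with $M(\mathfrak{C})$ up to reordering (case~(ii), nothing to prove), or is obtained by replacing a single $M(\mathfrak{c}_s)$ by $M(\tau^\pm \mathfrak{c}_s)$ while leaving all other modules unchanged (case~(i)). In the latter case, the strategy is to exploit a short exact sequence of ``T-system'' type relating $M(\mathfrak{c}_s)$ and $M(\tau^\pm\mathfrak{c}_s)$ through the neighbouring module $M(\mathfrak{c}_{s+1})\in M(\mathfrak{C})$; strong commutativity of the new module with every remaining $M(\mathfrak{c}_t)$ then follows by standard head/socle arguments for tensor products of simples. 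The main obstacle is the base case, whose proof rests on a delicate analysis of normalized $R$-matrices and constitutes the technical heart of \cite[Thm.~5.5]{KKOP_mon_cat_quant_aff_II}; once it is available, the inductive step reduces to formal bookkeeping based on Proposition~\ref{prop_box_move}.
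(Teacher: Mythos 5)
This proposition is not proved in the paper at all: it is imported verbatim from Kashiwara--Kim--Oh--Park, and the paper's ``proof'' is the citation to \cite[Thm.~5.5]{KKOP_mon_cat_quant_aff_II}. Measured against that, your proposal does not supply an independent argument, and it has two genuine problems. First, it is circular at the base case: you reduce everything to the chain $\mathfrak{C}_{-}^{[a,b]}$ and then say that the strong commutativity of the family $(M([s,b\}))_s$ ``constitutes the technical heart of \cite[Thm.~5.5]{KKOP_mon_cat_quant_aff_II}'' --- but that theorem \emph{is} the statement being proved, so nothing has been established. (The attribution to \cite{kashiwara2021pbw} of the simplicity/realness of $M(\mathfrak{c})$ is also shaky: for a single $i$-box this is a result of the monoidal-categorification-II paper about affine determinantial modules, not something you can simply read off from the PBW paper for a general strong duality datum; in the $\mathcal{Q}$-adapted case it amounts to known facts about Kirillov--Reshetikhin modules, but your argument is not restricted to that case.)

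Second, the inductive step is a real gap, not ``formal bookkeeping''. In case (i) of a box move you must show that the new module $M(\tau^{\pm}\mathfrak{c}_s)$ strongly commutes with every other member of the family. The T-system exact sequence of Proposition~\ref{prop_T-system} only exhibits $M(\tau^{\pm}\mathfrak{c}_s)$ as the exchange partner of $M(\mathfrak{c}_s)$; deducing from this that the exchanged simple commutes strongly with all the remaining $M(\mathfrak{c}_t)$ is precisely condition (ii) of admissibility in Definition~\ref{de_admissible_monoidal_seed}, and in \cite{KKOP_mon_cat_quant_aff_II} this is proved with the $\Lambda$-invariant/$R$-matrix machinery (degree estimates for normalized $R$-matrices, the invariants $\Lambda(M,N)$, $\mathfrak{d}(M,N)$, etc.), not by ``standard head/socle arguments for tensor products of simples''. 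In fact a simple quotient of $M\otimes N$ with $M,N$ simple need not commute with a third simple that commutes with both $M$ and $N$, so the step as stated would fail without that extra input. In short: the reduction to $\mathfrak{C}_{-}^{[a,b]}$ via canonical chain transformations is a reasonable skeleton (and is broadly how the cited source organizes its proof), but both the base case and the propagation step are exactly the nontrivial content of the theorem you are quoting, so the proposal does not constitute a proof.
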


The following result is a generalization of the $T$-system for Kirillov–Reshetikhin modules. 
It can be interpreted as a monoidal avatar of certain exchange relations in a suitable cluster algebra.

\begin{prop}[{\cite[Thm.~4.25]{KKOP_mon_cat_quant_aff_II}}] \label{prop_T-system}
For each $i$-box $[a,b]$, there is a short exact sequence
\begin{equation}\label{eq_T_su}
    0\rightarrow \bigotimes_{\imath\sim \jmath} M[a(\jmath)^+,b(\jmath)^-]\rightarrow M[a^+,b]\otimes M[a,b^-]\rightarrow 
    M[a,b]\otimes M[a^+,b^-]\rightarrow 0.
\end{equation}
\end{prop}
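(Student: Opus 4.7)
I would proceed in three steps. \emph{Step 1: Simplicity and real simplicity of the terms.} To make sense of the sequence, each of its three terms must be shown to be a real simple object of $\mathscr{C}_\mathfrak{g}^\mathbb{Z}$. By Proposition~\ref{prop_properties_aff_det_mod}, it suffices to exhibit, for each term, a chain of $i$-boxes containing the family in question: for the middle term one needs a chain containing both $[a^+,b]$ and $[a,b^-]$, for the right-hand term one containing both $[a,b]$ and $[a^+,b^-]$, and for the left-hand term one containing all the $i$-boxes $[a(\jmath)^+,b(\jmath)^-]$ with $\jmath \sim \imath$. Each such chain can be produced from $\mathfrak{C}_-^{[a,b]}$ by composing the box moves of Definition~\ref{de_box_move}, as spelled out in Remark~\ref{rem_can_chain_trans}.

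\emph{Step 2: Construction of the surjection.} Next, I would produce the surjection in the middle of the sequence. Using the defining presentation of the affine determinant modules as heads of tensor products of the affine cuspidals $S_k^{\mathcal{D},\widehat{\underline{w}}_0}$, together with the strong commutation provided by Step~1, a normalized $R$-matrix argument yields a canonical non-zero morphism $M[a^+,b] \otimes M[a,b^-] \to M[a,b] \otimes M[a^+,b^-]$; since both source and target are real (the source being the tensor product of two strongly commuting real simples and the target likewise), and the target appears as the head of the source by the head-computation sketched above, this morphism is surjective. Let $K$ denote its kernel.

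\emph{Step 3: Identification of the kernel.} The crux of the argument is to show that $K \cong \bigotimes_{\jmath \sim \imath} M[a(\jmath)^+, b(\jmath)^-]$. Since the right-hand side is already known to be simple by Step~1, it is enough to check the identity
\[
[M[a^+,b]]\cdot [M[a,b^-]] \;=\; [M[a,b]]\cdot [M[a^+,b^-]] \;+\; \prod_{\jmath \sim \imath} [M[a(\jmath)^+, b(\jmath)^-]]
\]
in the Grothendieck ring $\mathcal{K}(\mathscr{C}_\mathfrak{g}^\mathbb{Z})$. I would prove this by induction on the length of the $i$-box $[a,b]$: the base case reduces to a $T$-system type relation among consecutive affine cuspidal modules, while the inductive step exploits the shift operator $\tau$ together with the effect of box moves described in Proposition~\ref{prop_box_move} to trim $[a,b]$ down to a shorter $i$-box. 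The main obstacle is precisely this identity: one must ensure that the combinatorial adjacencies $\jmath \sim \imath$ governing the left-hand factors are faithfully tracked through the induction. This ultimately reduces to the classical $T$-system for Kirillov--Reshetikhin modules in the case where $\mathcal{D}$ arises from a $Q$-datum, and then extends to general strong duality data by the equivariance of the construction under the duality-datum formalism of Section~\ref{subsect_Duality_datum}.
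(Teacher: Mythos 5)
The paper does not prove this proposition at all: it is recalled verbatim, with a citation, from Kashiwara--Kim--Oh--Park (Theorem 4.25 of \cite{KKOP_mon_cat_quant_aff_II}), so there is no internal argument to compare with; judged on its own merits, your proposal has a genuine gap already in Step 1. You claim that all three terms of \eqref{eq_T_su}, in particular the middle one, are real simple, and that this follows by exhibiting a chain of $i$-boxes containing both $[a^+,b]$ and $[a,b^-]$. Both claims are false. If $M[a^+,b]\otimes M[a,b^-]$ were simple, the sequence itself would be impossible, since its kernel and cokernel are both nonzero. And no chain can contain both $i$-boxes: whichever of the two is added later is added at a step where the ambient union already contains $[a^+,b]\cup[a,b^-]=[a,b]$, and condition (ii) in the definition of a chain then forces the added $i$-box of index $\imath$ to be the largest one of that index in the union, hence to contain $[a,b]$; it can therefore equal neither $[a^+,b]$ nor $[a,b^-]$. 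Indeed these two $i$-boxes label precisely the cluster variables exchanged under a mutation (Lemma \ref{lem_box_move_mutation}(i)): the corresponding modules commute but do not strongly commute, and the non-split sequence \eqref{eq_T_su} is exactly the statement that their tensor product has length two. Step 2 inherits the problem, since it invokes the reality/simplicity of the source established in Step 1.

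Step 3 is where the actual content of the theorem sits, and it is only gestured at. Granting the surjection, it is true that the Grothendieck-ring identity plus simplicity of $\bigotimes_{\jmath\sim\imath}M[a(\jmath)^+,b(\jmath)^-]$ (which does hold, by Proposition \ref{prop_properties_aff_det_mod}, as these $i$-boxes lie in a common chain) would identify the kernel; but that identity is essentially equivalent to the theorem. The proposed induction on the length of $[a,b]$ is not spelled out: the factors $M[a(\jmath)^+,b(\jmath)^-]$ involve the neighbouring indices $\jmath\sim\imath$, so they do not obviously obey the same recursion as $[a,b]$, and no mechanism is given for tracking them. Finally, the reduction ``to the classical $T$-system for Kirillov--Reshetikhin modules when $\mathcal{D}$ arises from a $Q$-datum, extended to general strong duality data by equivariance'' is not an argument: the passage to an arbitrary strong duality datum is exactly the nontrivial point, which Kashiwara--Kim--Oh--Park handle by transporting a determinantal-module $T$-system through the generalized Schur--Weyl duality functor, not by a formal equivariance of the construction.
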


The next result is an application of Prop.~\ref{prop_T-system} that describes the relation between the affine determinant modules associated to a movable $i$-box and the $i$-box obtained from its box move.
We keep the notation of Prop.(\ref{prop_box_move})

\begin{cor}[{\cite[Prop.5.7]{KKOP_mon_cat_quant_aff_II}}]
Assume that the $i$-boxes $\mathfrak{c}_{s+1}$ and $\mathfrak{c}_s$ have the same index $\imath$.
\begin{itemize}
    \item[(i)] If $T_s=L$, then there is a short exact sequence
    \[0\rightarrow \bigotimes_{\imath \sim \jmath} M([a_{s+1}(\jmath)^+,b_{s+1}(\jmath)^-]) \rightarrow M((\nu_s(\mathfrak{C}))_s)\otimes M(\mathfrak{c}_s) \rightarrow M(\mathfrak{c}_{s+1})\otimes M([a_{s+1}^+,b_{s+1}^-])\rightarrow 0. 
    \]
    \item[(ii)] If $T_s=R$, then there is a short exact sequence
    \[0\rightarrow \bigotimes_{\imath \sim \jmath} M([a_{s+1}(\jmath)^+,b_{s+1}(j)^-]) \rightarrow M(\mathfrak{c}_s)\otimes M((\nu_s(\mathfrak{C}))_s)  \rightarrow M(\mathfrak{c}_{s+1})\otimes M([a_{s+1}^+,b_{s+1}^-])\rightarrow 0. 
    \]
\end{itemize}
\end{cor}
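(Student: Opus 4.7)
The plan is to derive both short exact sequences directly from the $T$-system short exact sequence of Proposition \ref{prop_T-system}, applied to the $i$-box $\mathfrak{c}_{s+1}$. Setting $a=a_{s+1}$ and $b=b_{s+1}$ (which is legitimate since $\mathfrak{c}_{s+1}$ has index $\imath$ by hypothesis) yields
\begin{equation*}
0 \to \bigotimes_{\imath \sim \jmath} M[a_{s+1}(\jmath)^+, b_{s+1}(\jmath)^-] \to M[a_{s+1}^+, b_{s+1}] \otimes M[a_{s+1}, b_{s+1}^-] \to M(\mathfrak{c}_{s+1}) \otimes M[a_{s+1}^+, b_{s+1}^-] \to 0,
\end{equation*}
where I have used $M[a_{s+1}, b_{s+1}] = M(\mathfrak{c}_{s+1})$. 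The kernel, the right factor $M[a_{s+1}^+, b_{s+1}^-]$ of the cokernel, and the factor $M(\mathfrak{c}_{s+1})$ already match the corresponding terms appearing in both cases (i) and (ii) of the corollary.

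The next step is to rewrite the two factors $M[a_{s+1}^+, b_{s+1}]$ and $M[a_{s+1}, b_{s+1}^-]$ of the middle term as $M(\mathfrak{c}_s)$ and $M((\nu_s(\mathfrak{C}))_s)$, using the Remark preceding the corollary. Explicitly, in case (i), corresponding to $E_s = L$ (scenario (2) of the Remark), we have $\mathfrak{c}_s = [a_{s+1}^+, b_{s+1}]$ and $(\nu_s(\mathfrak{C}))_s = [a_{s+1}, b_{s+1}^-]$; in case (ii), corresponding to $E_s = R$ (scenario (1)), the two are exchanged. Substituting, the middle term of the $T$-system sequence reads $M(\mathfrak{c}_s) \otimes M((\nu_s(\mathfrak{C}))_s)$ in case (i) and $M((\nu_s(\mathfrak{C}))_s) \otimes M(\mathfrak{c}_s)$ in case (ii), which are precisely the reverse of the order stated in the corollary.

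The final step is therefore to justify that the two orderings $M(\mathfrak{c}_s) \otimes M((\nu_s(\mathfrak{C}))_s)$ and $M((\nu_s(\mathfrak{C}))_s) \otimes M(\mathfrak{c}_s)$ are isomorphic. I expect this minor bookkeeping to be the only real obstacle in the argument: either one exhibits a chain of $i$-boxes simultaneously containing $\mathfrak{c}_s$ and $(\nu_s(\mathfrak{C}))_s$ (so that Proposition \ref{prop_properties_aff_det_mod} yields strong commutativity, hence commutativity), or one invokes the normalized $R$-matrix between these two simple real modules directly. Once this commutativity is available, both short exact sequences follow immediately from the translated $T$-system sequence.
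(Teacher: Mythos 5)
Your reduction is the intended one: apply the $T$-system of Proposition \ref{prop_T-system} to the $i$-box $\mathfrak{c}_{s+1}$ and use the Remark following Proposition \ref{prop_box_move} to identify $\{\mathfrak{c}_s,(\nu_s(\mathfrak{C}))_s\}$ with $\{[a_{s+1}^+,b_{s+1}],[a_{s+1},b_{s+1}^-]\}$ according to whether $E_s=L$ or $E_s=R$ (the paper itself offers no more than this, since the corollary is quoted from KKOP). The gap is your last step. The modules $M(\mathfrak{c}_s)$ and $M((\nu_s(\mathfrak{C}))_s)$ form an exchange pair, and they do \emph{not} commute, even weakly. Indeed, the very sequence you are trying to prove exhibits $M((\nu_s(\mathfrak{C}))_s)\otimes M(\mathfrak{c}_s)$ as a length-two module with simple socle $\bigotimes_{\imath\sim\jmath}M[a_{s+1}(\jmath)^+,b_{s+1}(\jmath)^-]$ and simple head $M(\mathfrak{c}_{s+1})\otimes M[a_{s+1}^+,b_{s+1}^-]$, these two simples being non-isomorphic (they are distinct cluster monomials), and the sequence is non-split because a tensor product of a real simple with a simple has simple head. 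Swapping the factors interchanges head and socle (the image of the renormalized $R$-matrix $M\otimes N\to N\otimes M$ is the head of the source and the socle of the target), so the two orderings are genuinely non-isomorphic; the $R$-matrix here is not an isomorphism precisely because the product is not simple. Your first suggested route is also unavailable and in fact self-defeating: no chain of $i$-boxes can contain both $\mathfrak{c}_s$ and $(\nu_s(\mathfrak{C}))_s$, since two $i$-boxes of a chain with the same index are comparable under inclusion, while these two have the same index and the same $i$-cardinality but are distinct; and if they did strongly commute, the middle term would be simple, contradicting the non-trivial short exact sequence you want.

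The correct resolution of the ordering mismatch you noticed is not commutativity but conventions: the corollary is \cite[Prop.~5.7]{KKOP_mon_cat_quant_aff_II}, proved there from their Theorem 4.25, where the order of the two factors in the middle term is fixed consistently with the corollary; the transcription of the $T$-system as Proposition \ref{prop_T-system} and of the corollary in the present paper differ by exactly this swap, and one of the two orders has to be aligned with the source. Once the $T$-system is taken with the order matching the corollary, your substitution via the Remark finishes the argument with no further step; what cannot be done is to pass from one order to the other by an isomorphism of the tensor products.
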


\section{Combinatorics of $Q$-data}\label{section_Q_data}
In this section, following \cite{Fuj_Oh_2021}, we recall the definition and the properties of $Q$-data.

Let $\mathfrak{g}$ be a finite-dimensional complex Lie algebra. 
Using  Table \ref{ta_unfold}, we associate to $\mathfrak{g}$ a pair $(\Delta, \sigma)$, 
where $\Delta$ is a simply-laced Dynkin diagram and $\sigma$ is a graph automorphism of $\Delta$. In the table, the symbol $\sigma= \text{id}$
denotes the identity map, while $\sigma = \vee $ and $\Tilde{\vee}$  are given by the blue arrows in Figure \ref{Fig_Q_data}. 
Let $\check{\mathfrak{g}}$ be the simply-laced Lie algebra associated to $\Delta$, and let $I_{\check{\mathfrak{g}}}$ be 
the set of vertices of  $\Delta$. We use the symbols $\imath, \jmath$ for the elements of  $I_{\check{\mathfrak{g}}}$ 
and the symbols $i,j$ for the elements of $I_\mathfrak{g}$.  In the following, we denote by $w_0$ the longest element of the 
Weyl group $W_{\check{\mathfrak{g}}}$ of $\check{\mathfrak{g}}$, and by $l_0$ its length.

\begin{table}
\begin{tabular}{||c c c||} 
 \hline
 $\mathfrak{g}$ & $\Delta$ & $\sigma$ \\ 
 \hline\hline
 $A_n$ & $A_n$ & id \\ 
 \hline
 $D_n$ & $D_n$ & id \\
 \hline
 $E_n$ & $E_n$ & id  \\
 \hline
 $B_n$ & $A_{2n-1}$ & $\vee$  \\
 \hline
 $C_n$ & $D_{n+1}$ & $\vee$  \\
 \hline 
 $F_4$ & $E_6$ & $\vee$ \\
 \hline  
 $G_2$ & $D_4$ & $\Tilde{\vee}$ \\
 \hline
\end{tabular}
\vspace*{0.3cm}
\caption{unfoldings}
\label{ta_unfold}
\end{table}

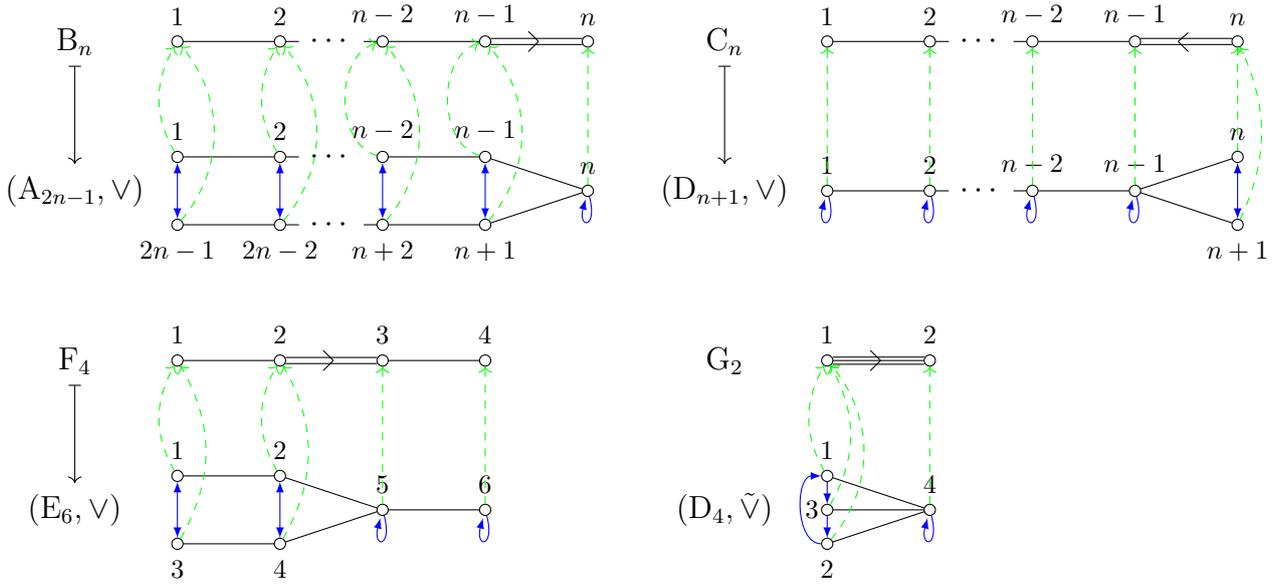
\begin{figure}
    \centering
\begin{tikzpicture}[xscale=0.9,yscale=0.9]
    \node (Bnlabel) at (-0.5,0.7) {$\text{B}_n$};
    \node[dynkdot, label={above: \footnotesize$1$}] (B1) at (1,0.7) {};
    \node[dynkdot, label={above: \footnotesize$2$}] (B2) at (2.5,0.7) {};
    \node (Bdots) at (3.25,0.7) {$\cdots$};
    \node[dynkdot, label={above: \footnotesize$n-2$}] (Bn-2) at (4,0.7) {};
    \node[dynkdot, label={above: \footnotesize$n-1$}] (Bn-1) at (5.5,0.7) {};
    \node[dynkdot, label={above: \footnotesize$n$}] (Bn) at (7,0.7) {};

    \node (A2n-1label) at (-0.5,-1.5) {$(\text{A}_{2n-1},\vee)$};
    \node[dynkdot, label={above: \footnotesize$1$}] (A1) at (1,-1) {};
    \node[dynkdot, label={above: \footnotesize$2$}] (A2) at (2.5,-1) {};
    \node (Adots1) at (3.25,-1) {$\cdots$};
    \node[dynkdot, label={above: \footnotesize$n-2$}] (An-2) at (4,-1) {};
    \node[dynkdot, label={above: \footnotesize$n-1$}] (An-1) at (5.5,-1) {};
    \node[dynkdot, label={above: \footnotesize$n$}] (An) at (7,-1.5) {};
    
    \node[dynkdot, label={below: \footnotesize$2n-1$}] (A2n-1) at (1,-2) {};
    \node[dynkdot, label={below: \footnotesize$2n-2$}] (A2n-2) at (2.5,-2) {};
    \node (Adots2) at (3.25,-2) {$\cdots$};
    \node[dynkdot, label={below: \footnotesize$n+2$}] (An+2) at (4,-2) {};
    \node[dynkdot, label={below: \footnotesize$n+1$}] (An+1) at (5.5,-2) {};

    \draw[|->] (Bnlabel) -- (A2n-1label);
    
    \draw (B1) -- (B2);
    \draw (B2) -- (Bdots);
    \draw (Bdots) -- (Bn-2);
    \draw (Bn-2) -- (Bn-1);


  \node (ghost1) at (5.42,0.745) {};
  \node (ghost2) at (7.08,0.745) {};
  \draw (ghost1) -- (ghost2);

  \node (ghost3) at (5.42,0.655) {};
  \node (ghost4) at (7.08,0.655) {};
  \draw (ghost3) -- (ghost4);

  \draw (6.13,0.85) -- (6.28,0.7);
  \draw (6.13,0.55) -- (6.28,0.7);

    \draw (A1) -- (A2);
    \draw (A2) -- (Adots1);
    \draw (An-2) -- (An-1);
    \draw (An-1) -- (An);
    \draw (An) -- (An+1);
    \draw (An+1) -- (An+2);
    \draw (An+2) -- (Adots2);
    \draw (Adots2) -- (A2n-2);
    \draw (A2n-2) -- (A2n-1);

    \draw[<->,>=latex, blue] (A1) -- (A2n-1);
    \draw[<->,>=latex, blue] (A2) -- (A2n-2);
    \draw[<->,>=latex, blue] (An-2) -- (An+2);
    \draw[<->,>=latex, blue] (An-1) -- (An+1);
    \path[->, >=latex, blue] (An) edge [loop below] (An);

    \draw[->, dashed, green] (A2n-1) to[bend right=40] (B1);
    \draw[->, dashed, green] (A1) to[bend left] (B1);
    \draw[->, dashed, green] (A2n-2) to[bend right=40] (B2);
    \draw[->, dashed, green] (A2) to[bend left] (B2);
    \draw[->, dashed, green] (An+2) to[bend right=40] (Bn-2);
    \draw[->, dashed, green] (An-2) to[bend left = 75] (Bn-2);
    \draw[->, dashed, green] (An+1) to[bend right=40] (Bn-1);
    \draw[->, dashed, green] (An-1) to[bend left = 75] (Bn-1);
    \draw[->, dashed, green] (An) to (Bn);


\node (Cnlabel) at (9,0.7) {$\text{C}_n$};
    \node[dynkdot, label={above: \footnotesize$1$}] (C1) at (10.5,0.7) {};
    \node[dynkdot, label={above: \footnotesize$2$}] (C2) at (12,0.7) {};
    \node (Cdots) at (12.75,0.7) {$\cdots$};
    \node[dynkdot, label={above: \footnotesize$n-2$}] (Cn-2) at (13.5,0.7) {};
    \node[dynkdot, label={above: \footnotesize$n-1$}] (Cn-1) at (15,0.7) {};
    \node[dynkdot, label={above: \footnotesize$n$}] (Cn) at (16.5,0.7) {};

    \draw (C1) -- (C2);
    \draw (C2) -- (Cdots);
    \draw (Cdots) -- (Cn-2);
    \draw (Cn-2) -- (Cn-1);

\node (ghostC1) at (14.92,0.745) {};
  \node (ghostC2) at (16.58,0.745) {};
  \draw (ghostC1) -- (ghostC2);

  \node (ghostC3) at (14.92,0.655) {};
  \node (ghostC4) at (16.58,0.655) {};
  \draw (ghostC3) -- (ghostC4);
 
  \draw (15.63,0.7) -- (15.78,0.83);
  \draw (15.63,0.7) -- (15.78,0.55);

\node (Dn+1label) at (9,-1.5) {$(\text{D}_{n+1}, \vee)$};
    \node[dynkdot, label={above: \footnotesize$1$}] (D1) at (10.5,-1.5) {};
    \node[dynkdot, label={above: \footnotesize$2$}] (D2) at (12,-1.5) {};
    \node (Ddots) at (12.75,-1.5) {$\cdots$};
    \node[dynkdot, label={above: \footnotesize$n-2$}] (Dn-2) at (13.5,-1.5) {};
    \node[dynkdot, label={above: \footnotesize$n-1$}] (Dn-1) at (15,-1.5) {};
    \node[dynkdot, label={above: \footnotesize$n$}] (Dn) at (16.5,-1) {};
    \node[dynkdot, label={below: \footnotesize$n+1$}] (Dn+1) at (16.5,-2) {};

    \draw (D1) -- (D2);
    \draw (D2) -- (Ddots);
    \draw (Ddots) -- (Dn-2);
    \draw (Dn-2) -- (Dn-1);
    \draw (Dn-1) -- (Dn);
    \draw (Dn-1) -- (Dn+1); 

    \draw[<->, >=latex, blue] (Dn) -- (Dn+1);

    \path[->, >=latex, blue] (D1) edge [loop below] (D1);
    \path[->, >=latex, blue] (D2) edge [loop below] (D2);
    \path[->, >=latex, blue] (Dn-2) edge [loop below] (Dn-2);
    \path[->, >=latex, blue] (Dn-1) edge [loop below] (Dn-1);

    \draw[|->] (Cnlabel) -- (Dn+1label);

    \draw[->, green, dashed] (D1) -- (C1);
    \draw[->, green, dashed] (D2) -- (C2);
    \draw[->, green, dashed] (Dn-2) -- (Cn-2);
    \draw[->, green, dashed] (Dn-1) -- (Cn-1);
    \draw[->, green, dashed] (Dn) -- (Cn);
    \draw[->, green, dashed] (Dn+1) to[bend right=25] (Cn);
    
\node (Flabel) at (-0.5,-4) {$\text{F}_4$};
    \node[dynkdot, label={above: \footnotesize$1$}] (F1) at (1,-4) {};
    \node[dynkdot, label={above: \footnotesize$2$}] (F2) at (2.5,-4) {};
    \node[dynkdot, label={above: \footnotesize$3$}] (F3) at (4,-4) {};
    \node[dynkdot, label={above: \footnotesize$4$}] (F4) at (5.5,-4) {};

   \node (ghostF1) at (2.42,-3.955) {};
   \node (ghostF2) at (4.08,-3.955) {};
  \draw (ghostF1) -- (ghostF2);

  \node (ghostF3) at (2.42,-4.045) {};
  \node (ghostF4) at (4.08,-4.045) {};
  \draw (ghostF3) -- (ghostF4);
  
  \draw (3.13,-3.85) -- (3.28,-4);
  \draw (3.13,-4.15) -- (3.28,-4);

  \draw (F1) -- (F2);
  \draw (F3) -- (F4);

  \node (Elabel) at (-0.5,-6.2) {$(\text{E}_6,\vee)$};
    \node[dynkdot, label={above: \footnotesize$1$}] (E1) at (1,-5.7) {};
    \node[dynkdot, label={above: \footnotesize$2$}] (E2) at (2.5,-5.7) {};
    \node[dynkdot, label={below: \footnotesize$3$}] (E3) at (1,-6.7) {};
    \node[dynkdot, label={below: \footnotesize$4$}] (E4) at (2.5,-6.7) {};
    \node[dynkdot, label={above: \footnotesize$5$}] (E5) at (4,-6.2) {};
    \node[dynkdot, label={above: \footnotesize$6$}] (E6) at (5.5,-6.2) {};
    
   \draw (E1) -- (E2);
  \draw (E3) -- (E4);
  \draw (E2) -- (E5);
  \draw (E4) -- (E5);
  \draw (E5) -- (E6);

 \path[->, >=latex, blue] (E5) edge [loop below] (E5);
 \path[->, >=latex, blue] (E6) edge [loop below] (E6);
 \draw[<->,>=latex, blue] (E1) -- (E3) ;
 \draw[<->,>=latex, blue] (E2) -- (E4) ;

 \draw[|->] (Flabel) -- (Elabel);

 \draw[->, green, dashed] (E1) to[bend left] (F1);
 \draw[->, green, dashed] (E3) to[bend right] (F1);
 \draw[->, green, dashed] (E2) to[bend left] (F2);
 \draw[->, green, dashed] (E4) to[bend right] (F2);
 \draw[->, green, dashed] (E5) to (F3);
 \draw[->, green, dashed] (E6) to (F4);


\node (Glabel) at (9,-4) {$\text{G}_2$};
    \node[dynkdot, label={above: \footnotesize$1$}] (G1) at (10.5,-4) {};
    \node[dynkdot, label={above: \footnotesize$2$}] (G2) at (12,-4) {};

    \node (ghostC1) at (10.42,-3.95) {};
  \node (ghostC2) at (12.08,-3.95) {};
  \draw (ghostC1) -- (ghostC2);

  \node (ghostC3) at (10.42,-4.055) {};
  \node (ghostC4) at (12.08,-4.055) {};
  \draw (ghostC3) -- (ghostC4);
 
  \draw (11.13,-3.85) -- (11.28,-4);
  \draw (11.13,-4.15) -- (11.28,-4);

  \draw (G1) -- (G2);
  
\node (Glabel) at (9,-6.2) {$(\text{D}_4,\Tilde{\vee})$};
  \node[dynkdot, label={above: \footnotesize$1$}] (DG1) at (10.5,-5.7) {};
  \node[dynkdot] (DG3) at (10.5,-6.2) {};
  \node (DG3label) at (10.28,-6.2) {\footnotesize$3$};
  \node[dynkdot, label={below: \footnotesize$2$}] (DG4) at (10.5,-6.7) {};
  \node[dynkdot, label={above: \footnotesize$4$}] (DG2) at (12,-6.2) {};

  \draw (DG1) -- (DG2);
  \draw (DG3) -- (DG2);
  \draw (DG4) -- (DG2);

  \draw[->,>=latex,blue] (DG1) -- (DG3);
  \draw[->,>=latex,blue] (DG3) -- (DG4);
  \draw[->,>=latex,blue] (DG4) edge[bend left = 90] (DG1);

  \path[->, >=latex, blue] (DG2) edge [loop below] (DG2);

  \draw[->,dashed, green] (DG1) to[bend left] (G1); 
  \draw[->,dashed, green] (DG3) to[bend right] (G1); 
  \draw[->,dashed, green] (DG4) to[bend right=40] (G1); 
  \draw[->,dashed, green] (DG2) to (G2); 
    
\end{tikzpicture}
\caption{Unfoldings for non-simply-laced $\mathfrak{g}$} 
\label{Fig_Q_data}
\end{figure}

Notice that the green arrows in Figure \ref{Fig_Q_data} provide a bijection between the orbits of the action of $\sigma$ 
on the set $I_{\check{\mathfrak{g}}}$ and the set $I_\mathfrak{g}$ introduced at the beginning of section~\ref{sect_module_categories}.
We denote by $\pi: I_{\check{\mathfrak{g}}} \rightarrow I_{\mathfrak{g}}$ the corresponding projection.

A {\em height function} on $(\Delta,\sigma)$ is a function $\varepsilon: I_{\check{\mathfrak{g}}}\rightarrow \mathbb{Z}$ satisfying the following conditions for any $\imath, \jmath \in I_{\check{\mathfrak{g}}}$ such that $\imath\sim \jmath$:
\begin{enumerate}
    \item[(i)] If $d_{\pi{(\imath)}}=d_{\pi{(\jmath)}}$, then $|\varepsilon_{\imath}-\varepsilon_{\jmath}|=d_{\pi{(\imath)}}$.
    \item[(ii)] If $1=d_{\pi{(\imath)}}<d_{\pi{(\jmath)}}=\text{ord}(\sigma)$, there is a unique $\Tilde{\jmath}$ in the $\sigma$-orbit of $\jmath$ 
    such that $|\varepsilon_{\imath}-\varepsilon_{\tilde{\jmath}}|=1$ and $\varepsilon_{\sigma^k(\Tilde{\jmath})}=\varepsilon_{\Tilde{\jmath}}+2k$ for any $0\leq k< \text{ord}(\sigma)$.
\end{enumerate}

If $\varepsilon$ is a height function for $(\Delta, \sigma)$, we call the triple
$\mathcal{Q}=(\Delta,\sigma,\varepsilon)$ a {\em $Q$-datum} for $\mathfrak{g}$.

\begin{de}[{\cite[Section 2]{Fuj_Oh_2021}}]
Let $\varepsilon$ be a height function for $(\Delta, \sigma)$ and let $\mathcal{Q}$ be the associated $Q$-datum.
\begin{itemize}
    \item A vertex $\imath\in I_{\check{\mathfrak{g}}}$ is a \textit{sink} of $\mathcal{Q}$ if $\varepsilon_\imath<\varepsilon_\jmath$ for any $\jmath\sim \imath$. It is a \textit{source} of $\mathcal{Q}$ if $\varepsilon_{\imath}-2d_{\pi(\imath)}>\varepsilon_\jmath-2d_{\pi(\jmath)}$ for any $\jmath\sim \imath$.
    \item If $\imath$ is a sink of $\mathcal{Q}$, we denote by $s_\imath\varepsilon$ the height function on $(\Delta,\sigma)$ defined by 
    \[(s_\imath\varepsilon)_\jmath=\varepsilon_\jmath+2d_{\pi(\imath)}\delta_{\imath\jmath},\]
    and by $s_\imath\mathcal{Q}$ the $Q$-datum $s_\imath\mathcal{Q}=(\Delta,\sigma,s_\imath\varepsilon)$ for $\mathfrak{g}$.
    \item A reduced expression $\underline{w}=s_{\imath_1}\dots s_{\imath_r}$ of an element of $W_{\check{\mathfrak{g}}}$ is $\mathcal{Q}$-\textit{adapted} if, for any $1\leq k\leq r$, the vertex $\imath_k$ is a sink of  $s_{\imath_{k-1}}s_{\imath_{k-2}}\dots s_{\imath_1}\mathcal{Q}$. When the context is clear, we also say that the reduced expression $\underline{w}$ is $\varepsilon$-\emph{adapted}, meaning that it is $\mathcal{Q}$-adapted.
\end{itemize}
\end{de}

\begin{de}[{\cite[Def. 6.9]{KKOP_mon_cat_quant_aff_II}}]
An {\em admissible sequence} for $(\Delta,\sigma)$ is a sequence of pairs $(\imath_k,p_k)$, $k\in\mathbb{Z}$, where
$\imath_k\in I_{\check{\mathfrak{g}}}$ and $p_k\in \mathbb{Z}$, that satisfies the following conditions:
\begin{enumerate}
    \item[(i)] $p_{s^+}=p_s+2d_{\pi(\imath_s)}$ for any $s\in \mathbb{Z}$;
    \item[(ii)] for $s,t\in\mathbb{Z}$, if $\imath_s\sim \imath_t$ and $t^-<s<t<s^+$, then $p_t=p_s+\text{min}(d_{\pi(\imath_s)},d_{\pi(\imath_t)})$;
    \item[(iii)] for any $k\in \mathbb{Z}$, we have $s_{\imath_k}\dots s_{\imath_{k+{l_0}-1}}=w_0$.
\end{enumerate}
Here, for $s\in\mathbb{Z}$, we denote by $s^+$ (resp. $s^-$) the smallest index $k>s$ such that $\imath_k=\imath_s$ (resp. the greatest index $k<s$ such that $\imath_k=\imath_s$).
\end{de}

The next proposition describes how to construct an admissible sequence from the datum of a height function and a reduced expression of $w_0$, and vice versa. Recall that $(\Delta,\sigma)$ is an unfolding of the finite-dimensional simple complex Lie algebra $\mathfrak{g}$.
\begin{prop}[{\cite[Prop. 6.11]{KKOP_mon_cat_quant_aff_II}}]
    There is a bijection between the set of pairs $(\varepsilon,\underline{w}_0)$, where $\varepsilon$ is a height function on $(\Delta,\sigma)$ and $\underline{w}_0$ is an $\varepsilon$-adapted reduced expression of $w_0$, and the set of admissible sequences on $(\Delta,\sigma)$. It is given as follows:
    \begin{itemize}
        \item If $\varepsilon$ is a height function on $(\Delta, \sigma)$ and $\underline{w}_0=s_{\jmath_1}\dots s_{\jmath_{l_0}}$ is an 
        $\varepsilon$-adapted expression, the associated admissible sequence $(\imath_k,p_k)$, $k\in \mathbb{Z}$, is defined by
    \[
\imath_k=(\widehat{\underline{w}}_0)_k \text{  and  } p_k= 
\begin{cases}
    (s_{\imath_{k-1}}\dots s_{\imath_1}\varepsilon)_{\imath_k}, & k\geq 1,\\
    (s_{\imath_{k-1}}^{-1}\dots s_{\imath_1}^{-1}\varepsilon)_{\imath_k}, & k\leq 0.
\end{cases}
    \]

    \item If $(\imath_k,p_k)$, $k \in \mathbb{Z}$, is an admissible sequence, the corresponding pair 
    $(\varepsilon,\underline{w}_0)$ is defined by
    \[ \underline{w}_0=s_{\imath_1}\dots s_{\imath_{l_0}} \text{  and  } \varepsilon_\imath=p_{a_\imath} \text{ for each } \imath\in I_{\check{\mathfrak{g}}},\]
    where $a_\imath=\text{min}\{k\in\mathbb{N}\backslash \{0\} | \imath_k=\imath\}$.
    \end{itemize}
\end{prop}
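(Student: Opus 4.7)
The plan is to verify the bijection by constructing the two maps explicitly and checking that each produces data satisfying the required axioms, then observing that the two constructions are inverse to each other essentially by unwinding the definitions.

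For the forward direction, I start with a pair $(\varepsilon, \underline{w}_0)$ with $\underline{w}_0 = s_{\jmath_1}\dots s_{\jmath_{l_0}}$ being $\varepsilon$-adapted, extend it to the infinite sequence $\widehat{\underline{w}}_0 = (\imath_k)_{k\in\mathbb{Z}}$, and set $p_k = (s_{\imath_{k-1}}\dots s_{\imath_1}\varepsilon)_{\imath_k}$ for $k \geq 1$ (and the analogous formula with inverses for $k \leq 0$). The key observation is that the adapted condition guarantees $\imath_k$ is a sink of the height function obtained at step $k{-}1$, so applying $s_{\imath_k}$ simply increases the value at $\imath_k$ by $2d_{\pi(\imath_k)}$ while fixing the other coordinates. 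This immediately yields condition (i) of admissible sequences: indeed, between indices $k$ and $k^+$ the value at $\imath_k$ is untouched by the intermediate reflections (which operate at other vertices), so $p_{k^+} - p_k = 2 d_{\pi(\imath_k)}$. Condition (iii) follows from the cyclic definition $\imath_{k + l_0} = \imath_k^*$ combined with the standard fact that $s_{\imath_{l_0}^*}\cdots s_{\imath_1^*} w_0 = w_0^{-1} \underline{w}_0 w_0 \cdot w_0 = w_0$.

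The main technical point is condition (ii): if $\imath_s \sim \imath_t$ with $t^- < s < t < s^+$, one must show $p_t - p_s = \min(d_{\pi(\imath_s)}, d_{\pi(\imath_t)})$. Here I would argue as follows: since no simple reflection at $\imath_s$ occurs between positions $s$ and $t$, the value at $\imath_s$ in the height function at step $t{-}1$ is still $p_s$. On the other hand, since $\imath_t$ is a sink at that step and it is adjacent to $\imath_s$, the height-function axioms (i)-(ii) applied to the pair $(\imath_s, \imath_t)$ force $\varepsilon_{\imath_t} - \varepsilon_{\imath_s} = \min(d_{\pi(\imath_s)}, d_{\pi(\imath_t)})$ at that step, which is exactly $p_t - p_s$. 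The case distinction $d_{\pi(\imath_s)} = d_{\pi(\imath_t)}$ versus $d_{\pi(\imath_s)} \neq d_{\pi(\imath_t)}$ is the part where one must be careful, because in the second case one needs to select the correct representative $\Tilde{\jmath}$ in the $\sigma$-orbit, and I expect this to be the main obstacle of the proof. I would handle it by induction on $k$, propagating the invariant that the height-function axioms continue to hold after each reflection at a sink.

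For the reverse direction, starting from an admissible sequence $(\imath_k, p_k)$, I set $\underline{w}_0 = s_{\imath_1}\dots s_{\imath_{l_0}}$ (a reduced expression of $w_0$ by axiom (iii)) and $\varepsilon_\imath = p_{a_\imath}$ where $a_\imath = \min\{k \geq 1 \mid \imath_k = \imath\}$. That $\varepsilon$ is a height function follows from axiom (ii) of the admissible sequence applied to pairs of adjacent indices $(\imath, \jmath)$ occurring as first occurrences; again the non-equal-defect case requires picking the correct orbit representative, reversing the analysis from the forward direction. That $\underline{w}_0$ is $\varepsilon$-adapted is verified by induction on $k$: at each step, axiom (ii) forces $\imath_k$ to satisfy the sink condition with respect to the current height function, because any neighbor $\jmath \sim \imath_k$ that has already been reflected had its height strictly increased, while any unreflected neighbor has its original value, and the comparison $p_k < p_t$ or $p_k < p_{t'}$ (depending on side) is exactly what the sink condition needs.

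Finally, both compositions of the two maps return the starting data essentially by construction: the forward-then-backward composition recovers $(\varepsilon, \underline{w}_0)$ because $p_{a_\imath} = \varepsilon_\imath$ (no reflection at $\imath$ occurs before position $a_\imath$), and the backward-then-forward composition recovers the admissible sequence because the recursive definition of $p_k$ from $\varepsilon$ is forced by axioms (i)-(ii) once the initial values $p_{a_\imath}$ are fixed. Thus the two maps are mutually inverse bijections.
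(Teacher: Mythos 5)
The paper itself offers no proof of this statement: it is quoted verbatim from Kashiwara--Kim--Oh--Park \cite[Prop.~6.11]{KKOP_mon_cat_quant_aff_II}, so there is no internal argument to compare with, and your proposal has to stand on its own. Its overall strategy is the natural (and essentially the only) one: define the two maps, verify the admissibility axioms in one direction and the height-function axioms plus $\varepsilon$-adaptedness in the other, and check the compositions are the identity. Your treatments of axiom (i) and of axiom (iii) (the shift identity $s_{\imath_{k+1}}\cdots s_{\imath_{k+l_0}}=s_{\imath_k}w_0s_{(\imath_k)^*}=w_0$, even though the formula you wrote is garbled) are fine in substance, as are the unwinding arguments for the two compositions.

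There are, however, genuine gaps. First, your verification of axiom (ii) rests on an incorrect intermediate claim: at step $t-1$ the reflection at position $s$ has already been applied, so the value of the running height function at $\imath_s$ is $p_s+2d_{\pi(\imath_s)}$, not $p_s$; with your claim, the sink condition at $\imath_t$ would force $p_t<p_s$ and hence $p_t=p_s-\min(d_{\pi(\imath_s)},d_{\pi(\imath_t)})$, the wrong sign. The correct computation in the equal-defect case is $p_t=(p_s+2d)-d=p_s+d$. Second, the part you explicitly defer --- that the height-function axioms, in particular axiom (ii) with its distinguished orbit representative $\tilde{\jmath}$ and the relation $\varepsilon_{\sigma^k(\tilde{\jmath})}=\varepsilon_{\tilde{\jmath}}+2k$, are preserved under reflection at a sink, and that in the mixed-defect case the sink condition pins down $p_t=p_s+1$ --- is precisely the mathematical content of the proposition; announcing ``an induction propagating the invariant'' does not discharge it, and the same lemma is needed in the backward direction to see that each $\imath_k$ is a sink. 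Third, the admissibility axioms quantify over all of $\mathbb{Z}$, while your checks only really address indices within one reduced word: for $k>l_0$ you need that the $*$-periodic extension stays adapted (this uses the compatibility of $w_0$, the involution $*$ and the height function), and for $k\leq 0$ you need the analogous statement for the inverse reflections at sources, neither of which is addressed. Until these points are supplied, the forward map is not known to land in the set of admissible sequences, so the bijection is not established.
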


In the following, we only consider  height functions satisfying the extra assumption
\[ \varepsilon_\imath = \Tilde{\varepsilon}_{\Bar{\imath}}\ \ (\text{mod } 2), \ \ \forall \imath\in I_{\check{\mathfrak{g}}}, \]
where $\Tilde{\varepsilon}$ is the parity function introduced in \ref{subsec_categoryC0}.

As shown in \cite[Thm. 6.12]{KKOP_mon_cat_quant_aff_II}, each $Q$-datum $\mathcal{Q}$ of $\mathfrak{g}$ naturally provides  a strong duality datum $\mathcal{D}_\mathcal{Q}$ of $\mathfrak{g}$.
For PBW-pairs $(\mathcal{D}_\mathcal{Q},\underline{w}_0)$, where $\underline{w}_0$ is $\mathcal{Q}$-adapted, the following proposition gives an explicit characterization of the affine cuspidal modules.

\begin{prop}[{\cite[Thm. 6.13]{KKOP_mon_cat_quant_aff_II}}]
Let $\varepsilon$ be a height function on $(\Delta,\sigma)$ and $\mathcal{Q}=(\Delta,\sigma,\eps)$ the corresponding $Q$-datum.
Let $\underline{w}_0$ be a $\mathcal{Q}$-adapted reduced expression for $w_0$.
Let $(\imath_k,p_k)$,  $k\in\mathbb{Z}$, be the corresponding admissible sequence.
Then, for any $k\in\mathbb{Z}$, we have an isomorphism
\[ S^{\mathcal{D}_\mathcal{Q},\widehat{\underline{w}}_0}_k \cong L(Y_{\pi(\imath_k),p_k}). \]
Therefore, for any $i$-box $[c,d]_\imath$ of $i$-cardinality $m$, we have
\[ M^{\mathcal{D}_\mathcal{Q},\widehat{\underline{w}}_0}[c,d] \cong W^{\pi(\imath)}_{m,p_c}\, , \]
and, for any (possibly infinite) interval $[a,b]$, the category  $\mathscr{C}_{\mathfrak{g}}^{[a,b],\mathcal{D}_\mathcal{Q},\underline{w}_0}$ is the smallest monoidal Serre subcategory of $\mathscr{C}_\mathfrak{g}^{0}$ containing the fundamental modules $L(Y_{\pi(\imath_k),p_k})$, for all $k$ in $[a,b]$.
\end{prop}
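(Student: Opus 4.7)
The plan is to prove the three statements in succession, with the first isomorphism $S^{\mathcal{D}_\mathcal{Q},\widehat{\underline{w}}_0}_k \cong L(Y_{\pi(\imath_k),p_k})$ carrying essentially all the content; the other two follow formally.

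For the first isomorphism, I would proceed by induction on $|k|$, exploiting the inductive structure of the cuspidal construction. For $k \in [1,l_0]$, the affine cuspidal module $S_k^{\mathcal{D}_\mathcal{Q},\widehat{\underline{w}}_0}$ is built by a dual PBW-type recipe (cf.~\cite[Def.~4.5]{KKOP_mon_cat_quant_aff_II}) from the strong duality datum $\mathcal{D}_\mathcal{Q}$ by applying $(k-1)$ successive renormalized $R$-matrix braidings indexed by the letters $s_{\imath_1},\dots, s_{\imath_{k-1}}$ of $\underline{w}_0$. The construction of $\mathcal{D}_\mathcal{Q}$ from the $Q$-datum \cite[Thm.~6.12]{KKOP_mon_cat_quant_aff_II} places the initial fundamental modules at the spectral parameters dictated by $\varepsilon$, that is, at the initial values $p_{a_\imath} = \varepsilon_\imath$ of the admissible sequence. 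The braid step $s_{\imath_j}$ applied to a fundamental module shifts its spectral parameter in precisely the way encoded by the action $\varepsilon \mapsto s_{\imath_j}\varepsilon$ on height functions; this is the key compatibility and uses in an essential way that $\underline{w}_0$ is $\mathcal{Q}$-adapted, so that each $\imath_j$ is a sink of the current height function. Unwinding the recursion, the spectral parameter produced at step $k$ is exactly $(s_{\imath_{k-1}}\dots s_{\imath_1}\varepsilon)_{\imath_k} = p_k$, and the module remains a fundamental module (since the shifted modules stay in the class of KR modules of $i$-cardinality one). For indices $k \notin [1,l_0]$, one uses the $l_0$-periodic relation $\imath_{k+l_0}=\imath_k^*$ built into $\widehat{\underline{w}}_0$ together with the corresponding translation property of the affine cuspidal sequence (the right/left dual shift), which matches the arithmetic extension of $p_k$ to $\mathbb{Z}$ obtained from condition (i) of an admissible sequence.

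For the second assertion, once the first is established, Definition \ref{def_aff_det_mod} expresses $M^{\mathcal{D}_\mathcal{Q},\widehat{\underline{w}}_0}[c,d]$ for the $i$-box $[c,d]_\imath$ of $i$-cardinality $m$ as the head of the tensor product
\[
L(Y_{\pi(\imath),p_d}) \otimes L(Y_{\pi(\imath),p_{d^-}}) \otimes \dots \otimes L(Y_{\pi(\imath),p_{c^+}}) \otimes L(Y_{\pi(\imath),p_c}).
\]
Condition (i) of an admissible sequence forces $p_{s^+} = p_s + 2d_{\pi(\imath)}$, so the indices $p_c, p_{c^+},\dots, p_d$ form an arithmetic progression with common difference $2d_{\pi(\imath)}$ and $m$ terms. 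The head of such a decreasing product of fundamental modules is, by definition, the Kirillov--Reshetikhin module $W^{\pi(\imath)}_{m,p_c}$, giving the stated isomorphism.

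The third assertion follows immediately: the category $\mathscr{C}_{\mathfrak{g}}^{[a,b],\mathcal{D}_\mathcal{Q},\underline{w}_0}$ is defined as the monoidal Serre subcategory generated by the affine cuspidals $S_k^{\mathcal{D}_\mathcal{Q},\widehat{\underline{w}}_0}$ with $k\in[a,b]$, and by the first point these are exactly the fundamental modules $L(Y_{\pi(\imath_k),p_k})$.

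The main obstacle is the first step: matching the intrinsic, $R$-matrix-theoretic recursion defining the affine cuspidal modules with the concrete arithmetic of spectral parameters coming from the admissible sequence. This requires both a precise understanding of how the dual PBW operations act on spectral parameters of fundamental modules (ultimately a statement about Chari's braid group action on $\ell$-weights, or the analogous statement for the T-system) and a careful verification that the $\mathcal{Q}$-adaptedness of $\underline{w}_0$ guarantees that at each step the relevant $R$-matrices degenerate to the expected form (no poles, simple head coinciding with a single fundamental module). All of this is developed in \cite{KKOP_mon_cat_quant_aff_II,Fuj_Oh_2021}, to which I would defer the detailed execution.
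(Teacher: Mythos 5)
This proposition is not proved in the paper at all: it is quoted verbatim from \cite[Thm.~6.13]{KKOP_mon_cat_quant_aff_II} (together with \cite[Thm.~6.12]{KKOP_mon_cat_quant_aff_II} for the construction of $\mathcal{D}_\mathcal{Q}$), so there is no internal argument to compare your proposal against. Judged on its own, your outline has the right architecture: the isomorphism $S^{\mathcal{D}_\mathcal{Q},\widehat{\underline{w}}_0}_k\cong L(Y_{\pi(\imath_k),p_k})$ carries all the content; the second claim then follows from Definition~\ref{def_aff_det_mod} together with the fact that the ordered tensor product of fundamental modules whose spectral parameters form the arithmetic progression $p_c,\,p_c+2d_{\pi(\imath)},\dots$ has simple head the Kirillov--Reshetikhin module (beware that this is a nontrivial standard theorem about heads of ordered tensor products, not the definition of $W^{\pi(\imath)}_{m,p_c}$, which in this paper is defined by its highest $\ell$-weight monomial); and the third claim is immediate from the definition of $\mathscr{C}^{[a,b],\mathcal{D}_\mathcal{Q},\underline{w}_0}_{\mathfrak{g}}$ as the monoidal Serre subcategory generated by the $S_k$ with $k\in[a,b]$.

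The genuine gap is in the first and decisive step: you do not actually produce an argument there. The compatibility you invoke --- that each $\mathcal{Q}$-adapted reflection step shifts spectral parameters exactly as $\varepsilon\mapsto s_{\imath}\varepsilon$ does, and that the resulting cuspidal modules for $\mathcal{D}_\mathcal{Q}$ remain fundamental, for all $k\in\mathbb{Z}$ via the duality shifts --- is precisely the content of \cite[Thm.~6.13]{KKOP_mon_cat_quant_aff_II}, which rests on the Fujita--Oh identification of the duality datum $\mathcal{D}_\mathcal{Q}$ and of its cuspidal modules with fundamental modules positioned according to the (twisted) Auslander--Reiten combinatorics of the $Q$-datum \cite{Fuj_Oh_2021}. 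Deferring this to the references is defensible, since the paper itself only cites the result, but it means your text is a roadmap of the known proof rather than a proof. Moreover, your description of the affine cuspidal modules as obtained by applying successive renormalized $R$-matrix braidings to fundamental modules is only heuristic: in \cite{KKOP_mon_cat_quant_aff_II} they are defined by applying the quantum affine Schur--Weyl duality functor attached to $\mathcal{D}_\mathcal{Q}$ to the cuspidal modules over the quiver Hecke algebra determined by $\underline{w}_0$, extended to all $k\in\mathbb{Z}$ by taking duals; any honest execution of your induction would have to start from that definition, and the matching of spectral parameters is exactly where the work lies.
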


\section{Monoidal Categorification}
\label{sec_mo_cat}
In this section, following \cite{KKOP_mon_cat_quant_aff_II}, we recall the notion of monoidal categorification of cluster algebras 
and that each Grothendieck ring $\mathcal{K}(\mathscr{C}_{\mathfrak{g}}^{\mathcal{D}_\mathcal{Q},\underline{w}_0,[a,b]})$ is an occurance of such phenomenon. In particular, we recall the seed of the cluster algebra structure on the Grothendieck ring $\mathcal{K}(\mathscr{C}^-_{\mathfrak{g}})$
 introduced by Hernandez and Leclerc in \cite{HL_Clust_alg_approach_q_char} and explain a construction which allows to 
 explicitly describe the quivers of a large family of other seeds. 

\subsection{Reminder on monoidal categorification}
Let $\mathcal{C}$ be a monoidal Serre subcategory of $\mathscr{C}_\mathfrak{g}$.
\begin{de}
 A \textit{monoidal seed} in $\mathcal{C}$ is a triple 
\[ \mathcal{S}=(\{M_i\}_{i\in K}, B, K),\]
where
\begin{itemize}
    \item $K$ is a countable index set with a decomposition $K=K^{\text{ex}}\coprod K^{\text{fr}}$. We refer to $K^{\text{ex}}$ as the subset of {\em exchangeable} indices, and to $K^{\text{fr}}$ as the subset of {\em frozen} indices; 
    \item $B=(b_{ij})_{K\times K}$ is an \emph{exchange matrix}, that is, a skew-symmetric matrix with the property that, for each $j\in K^\text{ex}$, there exist only finitely many $i\in K$ such that $b_{ij}\neq 0$;
    \item $\{M_i\}_{i\in K}$ is a strongly tensor-commuting family of real simple objects in $\mathcal{C}$.
\end{itemize}
\end{de}

\begin{rem}
    The above definition of exchange matrix differs from the one given in \cite{KKOP_mon_cat_quant_aff_II}, where the format of the matrix is $K\times K^{\text{ex}}$. This change is motivated by the setting of additive categorification of the next section, where, in order to apply Palu's generalized mutation rule to compute the matrices $B(\mathfrak{C})$ (see Thm. \ref{teo_main_KKOP}), we need square matrices. However, if we restrict the matrix $B(\mathfrak{C})$ to the subset $K\times K^{\text{ex}}$, we obtain the corresponding matrix of \cite{KKOP_mon_cat_quant_aff_II}.
\end{rem}

\begin{de}
\label{de_admissible_monoidal_seed}
    A monoidal seed $\mathcal{S}$ in $\mathcal{C}$ is \textit{admissible} if the following two conditions are satisfied:
    \begin{itemize}
        \item[(i)] For any $k\in K^\text{ex}$, there exists a real simple 
        object $M_k'\in \mathcal{C}$ occurring in a short exact sequence
        \[0\rightarrow \bigotimes_{b_{ik}>0} M_i^{b_{ik}} \rightarrow M_k\otimes M_k'\rightarrow \bigotimes_{b_{ik}<0} M_i^{-b_{ik}}\rightarrow 0.\]
        \item[(ii)] For any $k\in K^{\text{ex}}$, the object $M_k'$ strongly commutes with $M_i$, for any $i\in K\backslash \{k\}$.
    \end{itemize}
    If $\mathcal{S}$ is admissible, we define its \emph{mutation} in direction $k$ as the monoidal seed
    \[\mu_k(\mathcal{S})=(\{M_i\}_{i\neq k}\cup \{M_k'\},\mu_k(B), K).\]
    We say that $\mathcal{S}$ is \textit{completely admissible} if it admits successive mutations in all possible directions.
\end{de}

\begin{de}[{\cite[Def. 2.1]{Her_Lec_clust_alg_quat_aff}}]  \label{def_monoidal_cat}
A \textit{monoidal categorification in $\mathscr{C}_\mathfrak{g}$} of a cluster algebra $\mathcal{A}$ is given by 
\begin{itemize}
    \item[(a)] a monoidal Serre subcategory $\mathcal{C}$ of $\mathscr{C}_\mathfrak{g}$ and
    \item[(b)] a ring isomorphism $\varphi: \mathcal{K}(\mathcal{C}) \xrightarrow{\sim} \mathcal{A}$ such that there is a completely admissible monoidal seed $\mathcal{S}=(\{M_i\}_{i\in K}, B, K)$ whose image $(B, (\varphi([M_i]))_{i\in K})$ under $\varphi$
    is a seed of $\mathcal{A}$ and, for $i\in K^{\mathrm{fr}}$, the elements $\varphi([M_i])$ are the frozen cluster variables.
\end{itemize}
\end{de}

We recall that, using the theory of $R$-matrices, to each pair of simple modules $M$ and $N$ in $\mathscr{C}_\mathfrak{g}$,
Kashiwara–Kim–Oh–Park have associated  an integer denoted by $\Lambda(M,N)$ 
(see \cite[Def. 3.6]{KKOP_mon_cat_quant_aff} for the precise definition). They have shown in Corollary~3.17 of
[loc.~cit.] that if $M$ and $N$ are real modules which strongly commute, then we have $\Lambda(M,N)=-\Lambda(N,M)$.
Relying on these invariants, they have introduced a stronger notion of categorification as follows.

\begin{de}[{\cite[Def. 6.5]{KKOP_mon_cat_quant_aff}}] 
An admissible monoidal seed $\mathcal{S}=(\{M_i\}_{i\in K}, B, K)$ is 
$\Lambda$-\emph{admissible} if, for any $k\in K^{\text{ex}}$, the module $M_k'$ of Def. \ref{de_admissible_monoidal_seed} satisfies
\[(\Lambda(M_k,M_k')+\Lambda(M_k',M_k))/2=1.\] It is \emph{completely $\Lambda$-admissible} if it is completely admissible and
any monoidal seed obtained from it through iterated mutations is $\Lambda$-admissible.
\end{de}

For a monoidal seed $\mathcal{S}=(\{M_i\}_{i\in K}, B, K)$, 
we denote by $\Lambda^\mathcal{S}=(\Lambda^\mathcal{S}_{ij})_{i,j\in K}$ the skew-symmetric matrix defined by
\[ \lambda^\mathcal{S}_{ij} = \Lambda(M_i,M_j).\]

\begin{de}[{\cite[Def. 6.7]{KKOP_mon_cat_quant_aff}}]
A {\em $\Lambda$-monoidal categorification in $\mathscr{C}_\mathfrak{g}$} of a cluster algebra $\mathcal{A}$ is 
a monoidal categorification of $\mathcal{A}$ in $\mathscr{C}_\mathfrak{g}$ as in definition~\ref{def_monoidal_cat}
 where $\mathcal{S}=(\{M_i\}_{i\in K}, B, K)$ is a completely $\Lambda$-admissible monoidal seed and we have
 \[
\sum_{k\in K} \lambda^\mathcal{S}_{ik} b_{kj} = -2\delta_{i,j}
\]
for all $i\in K$ and $j\in K^{\mathrm{ex}}$.    
\end{de}

\subsection{Admissible seeds associated to chains of $i$-boxes}
We fix a simple finite-dimensional complex Lie algebra $\mathfrak{g}$, a $Q$-datum $\mathcal{Q}$ of $\mathfrak{g}$ 
and a reduced expression $\underline{w}_0$ for the longest element of the Weyl group of the unfolded
Lie algebra $\check{\mathfrak{g}}$, see section~\ref{section_Q_data}. For the rest of this section, 
we work with $i$-boxes defined with respect to the sequence $\widehat{\underline{w}}_0=(\imath_k)_{k\in\mathbb{Z}}$, 
and with affine determinant modules defined with respect to the PBW-pair $(\mathcal{D}_{\mathcal{Q}},\underline{w}_0)$.
For a chain of $i$-boxes $\mathfrak{C}=(\mathfrak{c}_k)$ of range $[a,b]$ and length $l$, we use the following
notations:
\begin{align*}
K(\mathfrak{C})&=\begin{cases}
    \{1,\dots, l\}, &\text{ if } l<\infty;\\
    \mathbb{N}_{\geq 1}, &\text{ if } l=\infty;
\end{cases}\\
K(\mathfrak{C})^{\text{fr}}&=\{s \in K(\mathfrak{C})\ |\ \mathfrak{c}_s=[a(\imath)^+,b(\imath)^-] \text{ for some } \imath \in I_{\check{\mathfrak{g}}} \};\\
K(\mathfrak{C})^{\text{ex}}&=K(\mathfrak{C})\backslash K(\mathfrak{C})^{\text{fr}}.
\end{align*}

Note that $K(\mathfrak{C})^{\text{fr}}$ is the set of indices of those $i$-boxes which are maximal with respect to the inclusion of $i$-boxes with the same index. In particular, if $l$ is finite, then $K(\mathfrak{C})^{\text{fr}}$ has the same number of elements
as $I_{\check{\mathfrak{g}}}$ and it is empty otherwise.

Recall that, by Prop. \ref{prop_properties_aff_det_mod}, for any chain $\mathfrak{C}$ of $i$-boxes the modules in
$M(\mathfrak{C})$ form a strongly commuting family of real simple modules. The next theorem is the fundamental result of \cite{KKOP_mon_cat_quant_aff_II}.

\begin{teo}[{\cite[Thm.~8.1]{KKOP_mon_cat_quant_aff_II}}]
\label{teo_main_KKOP}
Let $[a,b]$ be a possibly infinite interval. For  each chain of $i$-boxes $\mathfrak{C}$ of range $[a,b]$, there is an exchange matrix $B(\mathfrak{C})$ such that the monoidal seed 
\begin{equation}
\label{S(C)}
\mathcal{S}(\mathfrak{C})=(M(\mathfrak{C}), B(\mathfrak{C}), K(\mathfrak{C}) ) 
\end{equation}
in $\mathcal{C}=\mathscr{C}_\mathfrak{g}^{[a,b],\mathcal{D}_\mathcal{Q},\underline{w}_0}$
is completely $\Lambda$-admissible and defines a $\Lambda$-monoidal categorification of the
cluster algebra $\mathcal{A}$ associated with $B(\mathfrak{C})$.
\end{teo}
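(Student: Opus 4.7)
The plan is to reduce the general case to a particularly well-chosen initial chain, namely $\mathfrak{C}_0 := \mathfrak{C}_{-}^{[a,b]}$, by exploiting Remark~\ref{rem_can_chain_trans}, which shows that every chain of $i$-boxes of range $[a,b]$ is obtained from $\mathfrak{C}_0$ by a canonical chain transformation. Throughout, Proposition~\ref{prop_properties_aff_det_mod} guarantees that $M(\mathfrak{C})$ is a pairwise strongly commuting family of real simple modules, so only the matrix $B(\mathfrak{C})$ and the various admissibility conditions need to be produced. For the base case, I would define $B(\mathfrak{C}_0)$ column by column by reading off the $T$-system exact sequences of Proposition~\ref{prop_T-system}: the column indexed by $s\in K(\mathfrak{C}_0)^{\text{ex}}$, corresponding to the $i$-box $[s,b\}$, records the multiplicities of the outer terms of the short exact sequence at that $i$-box, so that admissibility in the sense of Definition~\ref{de_admissible_monoidal_seed}(i) holds tautologically. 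Strong commutation of $M_s'$ with the remaining $M(\mathfrak{c}_i)$ would be deduced from the standard commutation properties of Kirillov--Reshetikhin modules in the $\mathcal{Q}$-datum framework of Section~\ref{section_Q_data}. The $\Lambda$-compatibility identity
\[
\sum_{k\in K(\mathfrak{C}_0)} \lambda^{\mathcal{S}(\mathfrak{C}_0)}_{ik}\, b_{kj} \;=\; -2\delta_{ij},\qquad i\in K(\mathfrak{C}_0),\ j\in K(\mathfrak{C}_0)^{\text{ex}},
\]
together with the normalization $(\Lambda(M_s,M_s')+\Lambda(M_s',M_s))/2 = 1$, would be verified directly on $\mathcal{S}(\mathfrak{C}_0)$ using explicit computations of $\Lambda$-invariants on fundamental and Kirillov--Reshetikhin modules.

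The inductive step would propagate these structures along a single box move $\nu_s$, the induction being on the length of the canonical chain transformation from $\mathfrak{C}_0$ to $\mathfrak{C}$. In case (ii) of Proposition~\ref{prop_box_move}, the box move merely swaps two indices of the chain, and I would transport $B(\mathfrak{C})$ by the same permutation; admissibility and $\Lambda$-admissibility are then manifestly preserved. In case (i), the corollary to Proposition~\ref{prop_T-system} produces a short exact sequence whose outer terms match precisely those of the candidate monoidal mutation at index $s$ in $\mathcal{S}(\mathfrak{C})$. One therefore sets $B(\nu_s\mathfrak{C}) := \mu_s(B(\mathfrak{C}))$ and obtains $\mathcal{S}(\nu_s\mathfrak{C}) = \mu_s(\mathcal{S}(\mathfrak{C}))$; $\Lambda$-compatibility propagates through the mutation since the identity $\Lambda\cdot B = -2\,\mathrm{Id}$ on the exchangeable columns is preserved by the simultaneous mutation of $\Lambda$ and $B$, and the transformation of $\Lambda$-invariants between old and new cluster modules agrees with this mutation rule.

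The main obstacle is to upgrade admissibility to \emph{complete} $\Lambda$-admissibility, that is, to ensure that \emph{every} seed reachable from $\mathcal{S}(\mathfrak{C})$ by iterated cluster mutations, not only those induced by box moves, still consists of real simple modules fitting into the required short exact sequences. I would address this by establishing a dominance and Laurent positivity property for the cluster monomials inside the (quantum) Grothendieck ring $\mathcal{K}(\mathscr{C}_\mathfrak{g}^{[a,b],\mathcal{D}_\mathcal{Q},\underline{w}_0})$, so that the recursion initiated from $\mathcal{S}(\mathfrak{C}_0)$ systematically produces classes liftable to real simple modules; once that property is in place, the preservation of the $\Lambda$-identity and of the numerical normalization are automatic from the general theory of compatible pairs. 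Finally, the ring isomorphism in Definition~\ref{def_monoidal_cat} follows from the fact that the classes $[M(\mathfrak{c}_k)]$ generate $\mathcal{K}(\mathscr{C}_\mathfrak{g}^{[a,b],\mathcal{D}_\mathcal{Q},\underline{w}_0})$ as an algebra, which is built into the definition of this category as the Serre subcategory generated by the affine cuspidal modules, together with their explicit description as fundamental modules $L(Y_{\pi(\imath_k),p_k})$ in the $\mathcal{Q}$-datum case.
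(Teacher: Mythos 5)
First, a point of comparison: the paper does not prove Theorem~\ref{teo_main_KKOP} at all --- it is imported verbatim from Kashiwara--Kim--Oh--Park (Thm.~8.1 of \cite{KKOP_mon_cat_quant_aff_II}), and the present paper only builds on it (via Lemma~\ref{lem_box_move_mutation} and the KKOP problem). So your proposal has to be judged as a sketch of that external proof. Its architecture --- base case $\mathfrak{C}_-^{[a,b]}$ with exchange relations read off the $T$-system of Proposition~\ref{prop_T-system}, then propagation along the canonical chain transformation of Remark~\ref{rem_can_chain_trans}, using Proposition~\ref{prop_properties_aff_det_mod} for reality and strong commutation --- does match the broad strategy of KKOP. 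But the step you yourself flag as ``the main obstacle'' is exactly where the proposal fails. Complete $\Lambda$-admissibility requires control of \emph{every} seed reachable by arbitrary iterated mutations, not only those realized by box moves, and your proposed remedy (dominance/Laurent positivity of cluster monomials in the Grothendieck ring) cannot deliver it: positivity of a Laurent expansion says nothing about the class being that of a \emph{simple}, let alone \emph{real simple}, module admitting the short exact sequences of Definition~\ref{de_admissible_monoidal_seed}; producing such lifts is precisely the content of monoidal categorification, not a consequence of it. In the actual proof this step is carried by a self-propagation criterion (from KKKO and KKOP's first paper): a $\Lambda$-admissible seed whose pair $(\Lambda^{\mathcal{S}},B)$ satisfies $\sum_k\lambda^{\mathcal{S}}_{ik}b_{kj}=-2\delta_{ij}$ mutates to a seed of the same kind, proved with $R$-matrix and $\Lambda$-, $\mathfrak{d}$-invariant techniques. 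Without importing a criterion of that strength, your induction only certifies the seeds $\mathcal{S}(\mathfrak{C})$ themselves, not complete admissibility, and the final claim ``automatic from the general theory of compatible pairs'' is a placeholder for the hardest part of the theorem.

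Two further gaps. The theorem allows $[a,b]$ infinite, but for $b=+\infty$ the chain $\mathfrak{C}_-^{[a,b]}$ is not even defined, and in general the canonical chain transformation involves infinitely many box moves, so your induction ``on the length of the canonical chain transformation'' does not terminate; one needs the stabilization argument over finite subchains based on Lemma~\ref{lem_uniqueness_exchange_matrix} (as in the remark following it) to define $B(\mathfrak{C})$ and to transfer complete admissibility to the infinite seed. Finally, in the base case the claim that admissibility ``holds tautologically'' is too quick: the $T$-system at the $i$-box $[s,b\}$ must be matched against a single globally defined skew-symmetric matrix whose $s$-th column records those multiplicities, its outer factors must be shown to lie in $M(\mathfrak{C}_-^{[a,b]})$, condition (ii) of Definition~\ref{de_admissible_monoidal_seed} must be checked, and the identification of the resulting matrix with $B^{[a,b]}(\underline{w}_0)$ is a genuine statement (Hernandez--Leclerc for $\mathcal{Q}$-adapted $\underline{w}_0$, and an additional argument for general reduced expressions), as is the verification of the $\Lambda$-compatibility identity on this seed, which in KKOP rests on explicit computations of $\Lambda$ between affine determinantial modules rather than being immediate.
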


At the end of \cite{KKOP_mon_cat_quant_aff_II}, Kashiwara--Kim--Oh--Park state the following problem for any chain of $i$-boxes $\mathfrak{C}$

\begin{equation*} \text{(KKOP Problem) determine explicitly the exchange matrix } B(\mathfrak{C}).
\end{equation*}

\begin{rem}
    Notice that the statement of the problem in \cite[\S~8]{KKOP_mon_cat_quant_aff_II} is formulated for general PBW-pairs, while in this work we are providing a solution for PBW-pairs associated to $\mathcal{Q}$-data. However, in the general setting, Theorem \ref{teo_main_KKOP} is a conjecture \cite[Conj.~8.13]{KKOP_mon_cat_quant_aff_II}. If the conjecture is proven to be true, our method equally applies in the general case.
\end{rem}

For any chain of $i$-boxes $\mathfrak{C}$, the exchange matrix $B(\mathfrak{C})$ restricted to the subset $K(\mathfrak{C})\times K^{\text{ex}}(\mathfrak{C})$ is unique, since it is determined by the cluster 
$([M(\mathfrak{c}_k)])_k$, cf.~\cite{CerulliKellerLabardiniPlamondon13}. We denote by $Q(\mathfrak{C})$ the associated quiver. 
In the following, we will occasionally identify its set of vertices with the set of $i$-boxes $\{\mathfrak{c}_k\}$ (they are both indexed by $K(\mathfrak{C})$). Under this identification, the frozen vertices correspond to the $i$-boxes which are maximal with respect to the inclusion of $i$-boxes with the same index.

For a specific type of chain of 
$i$-boxes, the corresponding exchange matrix is already known, and this is the starting point
for the proof of the above Theorem~\ref{teo_main_KKOP} in \cite{KKOP_mon_cat_quant_aff_II}. 

Let $Q(\underline{w}_0)$ be the quiver with vertex set $\mathbb{Z}$, and with arrows
\[
\{s\rightarrow s^- | s\in\mathbb{Z}\} \cup \{s\rightarrow t | s^-<t^-<s<t,\ \imath_s\sim\imath_t \},
 \]
 where $s^-$ and $t^-$ are computed using the sequence $\widehat{\underline{w}}_0$.

We choose to adopt the following conventions to graphically represent the quiver $Q(\underline{w}_0)$: we arrange the vertices in lines indexed by the set $I_{\check{\mathfrak{g}}}$, such that the vertex $a$ is placed in the line $i_a$ and, if $a<b$, then $a$ is placed to the left of $b$.
Moreover, by the periodicity of the sequence $\widehat{\underline{w}}_0$, in order to know $Q(\underline{w}_0)$ it suffices to draw the full subquiver $Q^{[-l_0+1,l_0]}(\underline{w}_0)$.

\begin{ex}
If $\mathfrak{g}$ is of type $A_3$ and we fix the reduced expression $\underline{w}_0=s_1 s_2s_3s_1s_2s_1$, then the quiver $Q(\underline{w}_0)$ can be represented by the following diagram:
\begin{equation*}
    \begin{tikzcd}[sep=small]
      &  & \dots & -3 \arrow[drr] & & & & 1  \arrow[dr] \arrow[llll] & & & 3  \arrow[lll] \arrow[dr] &  & 6 \arrow[ll]& \dots \\
        &\dots&  -4 \arrow[drr] \arrow[ur]  & & & -1 \arrow[urr] \arrow[dr]  \arrow[lll]& 
        & & 2 \arrow[lll] \arrow[urr] \arrow[dr] &  & &5 \arrow[ur]\  \arrow[lll] & \dots\\
       \dots&  -5  \arrow[ur]& & & -2  \arrow[ur]\arrow[lll] & & 0  \arrow[ll] \arrow[urr]& & & 4 \arrow[urr] \arrow[lll]& \dots &  &
    \end{tikzcd}
\end{equation*}
\end{ex}

For $a\in \mathbb{Z}\cup \{-\infty\}$ and  $b\in\mathbb{Z},\ b\geq a$,  we denote by $Q^{[a,b]}(\underline{w}_0)$ the ice 
quiver (i.e. quiver with a distinguished subset of `frozen' vertices) obtained from the full subquiver of $Q(\underline{w}_0)$ 
on the vertex set $[a,b]$ by freezing the vertices $s\in [a,b]$ such that 
\[
s=\text{min}\{t\in [a,b] \ |\ \imath_t=\imath  \text{ for some } \imath \in I_{\check{\mathfrak{g}}}. \} 
\] 
We denote by $B(\underline{w}_0)$ the associated matrix. 
Notice that there is a bijection 
\[
Q^{[a,b]}(\underline{w}_0)_0 \xrightarrow{\sim} K(\mathfrak{C}_{-}^{[a,b]}),\ s\mapsto 1 + (b-s).  
\]
It sends the set of frozen vertices to the set $K^{\text{fr}}(\mathfrak{C}_{-}^{[a,b]})$.

When $\underline{w}_0$ is $\mathcal{Q}$-adapted, the following result is basically \cite[Thm. 5.1]{HL_Clust_alg_approach_q_char}. The general case is included in the proof of \cite[Prop. 8.11]{KKOP_mon_cat_quant_aff_II}
\begin{prop} 
Let $[a,b]$ be an interval, where $a\in \mathbb{Z}\cup \{-\infty\}$ and $b\in\mathbb{Z}$.
For $\mathfrak{C}=\mathfrak{C}_{-}^{[a,b]}$ the matrix $B(\mathfrak{C})$ of Theorem~\ref{teo_main_KKOP} 
equals $B(\underline{w}_0)^{[a,b]}$.
\end{prop}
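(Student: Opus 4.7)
The plan is to exploit the bijection $s\mapsto 1+(b-s)$ noted immediately before the statement: under this identification, vertex $s\in[a,b]$ of $Q^{[a,b]}(\underline{w}_0)$ corresponds to the $i$-box $\mathfrak{c}_{1+(b-s)}=[s,b\}$ of $\mathfrak{C}_-^{[a,b]}$, and the frozen parts match. By the uniqueness result of \cite{CerulliKellerLabardiniPlamondon13} invoked just above the proposition, the $K\times K^{\text{ex}}$ part of $B(\mathfrak{C})$ is determined by the cluster $\{[M(\mathfrak{c}_k)]\}_k$; since the matrices are skew-symmetric, it suffices to verify that the exchange quivers $Q(\mathfrak{C}_-^{[a,b]})$ and $Q^{[a,b]}(\underline{w}_0)$ agree as ice quivers under the above identification.

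In the case when $\underline{w}_0$ is $\mathcal{Q}$-adapted, the final proposition of Section \ref{section_Q_data} identifies $M([s,b\})$ with the Kirillov--Reshetikhin module $W^{\pi(\imath_s)}_{m,p_s}$, where $m$ denotes the $i$-cardinality of $[s,b\}$. The family $\{M([s,b\})\}_{s\in[a,b]}$ then coincides with the family of KR modules used by Hernandez--Leclerc to build the initial seed of the cluster algebra structure on the Grothendieck ring of $\mathscr{C}_\mathfrak{g}^-$, restricted to the subcategory $\mathscr{C}_\mathfrak{g}^{[a,b],\mathcal{D}_\mathcal{Q},\underline{w}_0}$. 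Hernandez--Leclerc's \cite[Thm.~5.1]{HL_Clust_alg_approach_q_char} identifies the exchange quiver of this seed with $Q^{[a,b]}(\underline{w}_0)$, which yields the conclusion in this case.

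For a general reduced expression $\underline{w}_0$, the identification with KR modules is no longer available, and one has to work directly with the T-system of Prop.~\ref{prop_T-system}. Applied to the $i$-box $[s,b\}_{\imath_s}$, it furnishes the short exact sequence through which the exchangeable module $M([s,b\})$ mutates, and from which the arrows incident to the corresponding vertex can be read off. The verification that this combinatorics reproduces exactly the arrows of $Q^{[a,b]}(\underline{w}_0)$ — in particular the diagonal arrows $s\to t$ with $s^-<t^-<s<t$ and $\imath_s\sim \imath_t$, which correspond to the factors $M[s(\imath')^+,b(\imath_s)^-(\imath')^-]$ appearing in the kernel of the T-system sequence — is the main technical obstacle, and it is precisely what is carried out inside the proof of \cite[Prop.~8.11]{KKOP_mon_cat_quant_aff_II}; we invoke that analysis to finish.
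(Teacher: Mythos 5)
Your proposal is correct and follows essentially the same route as the paper, which itself gives no independent argument but attributes the $\mathcal{Q}$-adapted case to \cite[Thm.~5.1]{HL_Clust_alg_approach_q_char} and the general case to the proof of \cite[Prop.~8.11]{KKOP_mon_cat_quant_aff_II}, exactly the two references you invoke (your additional remarks on the vertex bijection and the uniqueness of the exchange matrix via \cite{CerulliKellerLabardiniPlamondon13} merely spell out what the surrounding text of the paper already records).
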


The next result describes the effect of a box move in the associated monoidal seeds.

\begin{lem}[\cite{KKOP_mon_cat_quant_aff_II}]
\label{lem_box_move_mutation}
    Let $\mathfrak{C}=(\mathfrak{c}_k)_{1\leq k\leq l}$ be a chain of $i$-boxes of finite range. Let $s\in K^{\mathrm{ex}}$ such that $\mathfrak{c}_{s}$ is movable.
    
    \begin{itemize}
        \item[(i)] If the $i$-box $\mathfrak{c}_{s+1}=[a_{s+1},b_{s+1}]$ has the same index as $\mathfrak{c}_s$, then
        \[\mathcal{S}(\nu_s(\mathfrak{C}))= (\{M(\mathfrak{c}_k)\}_{k\in K(\mathfrak{C})\backslash \{s\}}\sqcup \{M(\mathfrak{c_s'})\}, \mu_s(B(\mathfrak{C})), K(\mathfrak{C})),\]
        where  \[ M(\mathfrak{c}_s')=\begin{cases}
     [a_{s+1},b_{s+1}^-] & \text{if } \mathfrak{c}_s=[a_{s+1}^+,b_{s+1}]\\
     [a_{s+1}^+,b_{s+1}] & \text{if } \mathfrak{c}_s=[a_{s+1},b_{s+1}^-].
    \end{cases}\]

        \item[(ii)] If the $i$-boxes $\mathfrak{c}_{s+1}$ and $\mathfrak{c}_s$ have different indices, then 
    \[\mathcal{S}(\nu_s(\mathfrak{C}))= (\{M(\mathfrak{c'}_k)\}_{k\in K(\mathfrak{C})}, B'(\mathfrak{C}), K(\mathfrak{C})),\]
    where
    \[ M(\mathfrak{c}_k') = M(\mathfrak{c}_{\sigma_s(k)}), \ \ \ B'(\mathfrak{C})_{ij}=B(\mathfrak{C})_{\sigma_s(i),\sigma_s(j)} \]
    and $\sigma_s$ denotes transposition of $s$ and $s+1$.
    
    \end{itemize}
\end{lem}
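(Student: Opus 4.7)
The plan is to handle the two cases separately, with Prop.~\ref{prop_box_move} providing the $i$-box-level description of $\nu_s$ and everything else translating via the assignment $\mathfrak{c} \mapsto M(\mathfrak{c})$.

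For case (ii), the argument is largely formal. Prop.~\ref{prop_box_move}(ii) tells us that $\nu_s\mathfrak{C}$ is obtained from $\mathfrak{C}$ by transposing the $i$-boxes at positions $s$ and $s+1$, so the strongly commuting family $M(\nu_s\mathfrak{C})$ is the permutation of $M(\mathfrak{C})$ by $\sigma_s$. Since Thm.~\ref{teo_main_KKOP} guarantees that both $\mathcal{S}(\mathfrak{C})$ and $\mathcal{S}(\nu_s\mathfrak{C})$ are admissible monoidal seeds, and the restriction of the exchange matrix to $K \times K^{\mathrm{ex}}$ is uniquely determined by the cluster, the matrix $B(\nu_s\mathfrak{C})$ must be obtained from $B(\mathfrak{C})$ by applying $\sigma_s$ to both rows and columns. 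No genuine mutation occurs.

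For case (i), Prop.~\ref{prop_box_move}(i) gives $(\nu_s\mathfrak{C})_k = \mathfrak{c}_k$ for all $k \neq s$, while $(\nu_s\mathfrak{C})_s$ is obtained from $\mathfrak{c}_s$ by shifting it inside $\mathfrak{c}_{s+1}$ from one flush position to the other. The crucial input is the corollary after Prop.~\ref{prop_T-system}, which provides a short exact sequence with middle term $M(\mathfrak{c}_s) \otimes M((\nu_s\mathfrak{C})_s)$, left term $\bigotimes_{\jmath \sim \imath} M([a_{s+1}(\jmath)^+, b_{s+1}(\jmath)^-])$, and right term $M(\mathfrak{c}_{s+1}) \otimes M([a_{s+1}^+, b_{s+1}^-])$. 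I would then identify this sequence with the exchange relation obtained by mutating $\mathcal{S}(\mathfrak{C})$ at $s$. Concretely, this amounts to showing that the vertex $s$ in the quiver $Q(\mathfrak{C})$ has precisely the following incidences: an outgoing arrow toward $\mathfrak{c}_{s+1}$, an outgoing arrow toward $[a_{s+1}^+, b_{s+1}^-]$, and an incoming arrow from each $[a_{s+1}(\jmath)^+, b_{s+1}(\jmath)^-]$ with $\jmath \sim \imath$ (or the opposite orientations, depending on the sign convention). All these $i$-boxes appear in the chain $\mathfrak{C}$ by construction, since they are maximal of their indices inside $\bigcup_{k \leq s+1} \mathfrak{c}_k$. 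Once the matching with the exchange relation is established, uniqueness of the mutated real simple module yields $M((\nu_s\mathfrak{C})_s) = M'_s$, and uniqueness of the exchange matrix on $K \times K^{\mathrm{ex}}$ given the cluster (invoked after Thm.~\ref{teo_main_KKOP}) yields $B(\nu_s\mathfrak{C}) = \mu_s(B(\mathfrak{C}))$ on that submatrix.

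The main obstacle is the combinatorial verification, in case (i), that the arrows incident to $s$ in $Q(\mathfrak{C})$ are exactly the ones needed to match the terms of the short exact sequence. I would organize this by induction on the number of basic chain transformations required to reach $\mathfrak{C}$ from the initial chain $\mathfrak{C}_-^{[a,b]}$ via the canonical chain transformation of Rem.~\ref{rem_can_chain_trans}. The base case $\mathfrak{C} = \mathfrak{C}_-^{[a,b]}$ is handled by the explicit description of $Q^{[a,b]}(\underline{w}_0)$, and the inductive step uses case (i) itself (at previous positions) together with case (ii) to propagate the matching through each box move. This is a purely combinatorial computation on quivers, but it is the only step not already supplied by the preceding propositions.
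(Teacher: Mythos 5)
Your case (ii) is essentially the paper's own argument: the box move is a transposition of the two $i$-boxes (Prop.~\ref{prop_box_move}(ii)), so the seed is just relabelled, and the uniqueness of the exchange matrix given the cluster justifies the formula for $B'(\mathfrak{C})$. For case (i), however, the paper gives no internal argument at all: it simply cites \cite[Lem.~7.17]{KKOP_mon_cat_quant_aff_II}. You instead try to derive it, and your derivation has a genuine gap exactly at the point you flag yourself. To identify the $T$-system sequence (the corollary following Prop.~\ref{prop_T-system}) with the exchange relation of $\mathcal{S}(\mathfrak{C})$ at $s$, you must know the $s$-th column of $B(\mathfrak{C})$, i.e.\ that the arrows of $Q(\mathfrak{C})$ incident to $s$ involve precisely $\mathfrak{c}_{s+1}$, $[a_{s+1}^+,b_{s+1}^-]$ and the boxes $[a_{s+1}(\jmath)^+,b_{s+1}(\jmath)^-]$, $\jmath\sim\imath$, each with coefficient $\pm 1$. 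That statement is not supplied by anything proved earlier; it is the substantial content of \cite[Prop.~7.15]{KKOP_mon_cat_quant_aff_II}, and your proposal reduces to asserting it, deferring the proof to ``a purely combinatorial computation'' by induction along the canonical chain transformation of Remark~\ref{rem_can_chain_trans}.

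That induction is not routine, and as sketched it does not close. Each basic chain transformation entails mutations of the quiver at vertices other than $s$, and such mutations can create or destroy arrows incident to $s$; so ``case (i) at previous positions together with case (ii)'' does not by itself propagate the local description of $Q(\mathfrak{C})$ at $s$ --- controlling exactly this is what the cited section of \cite{KKOP_mon_cat_quant_aff_II} accomplishes. (There is also a mild circularity to be aware of: you invoke Theorem~\ref{teo_main_KKOP} for admissibility and uniqueness, whereas in the source the present lemma is part of the machinery leading up to that theorem; within this paper's expository ordering that is tolerable, but it means your argument cannot stand as an independent replacement for the citation.) The peripheral steps you do spell out are fine: since $\mathfrak{c}_s$ is movable and has the same index as $\mathfrak{c}_{s+1}$, one has $\mathfrak{c}_{s+1}=\bigcup_{k\leq s+1}\mathfrak{c}_k$, so the boxes $[a_{s+1}(\jmath)^+,b_{s+1}(\jmath)^-]$ indeed occur among $\mathfrak{c}_1,\dots,\mathfrak{c}_{s+1}$, and once the column of $B(\mathfrak{C})$ at $s$ is known, comparing the two short exact sequences determines $[M_s']$ in the Grothendieck ring and hence $M_s'\cong M((\nu_s\mathfrak{C})_s)$, after which uniqueness of the exchange matrix gives $\mu_s(B(\mathfrak{C}))$. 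But the missing local description of $Q(\mathfrak{C})$ at $s$ is the heart of the matter, so as it stands the proof of case (i) is incomplete.
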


\begin{proof}
Case $(i)$ is \cite[Lem. 7.17]{KKOP_mon_cat_quant_aff_II}. In case $(ii)$, notice that the chain $\nu_s(\mathfrak{C})$ differs from the chain $\mathfrak{C}$ only by the transposition of the indices of the $i$-boxes $\mathfrak{c}_s$ and $\mathfrak{c}_{s+1}$. Therefore, the monoidal seed $\mathcal{S}(\nu_s(\mathfrak{C}))$ can be obtained from $\mathcal{S}(\mathfrak{C})$ by the corresponding permutation of its indices.
\end{proof}

We now discuss an algorithmic procedure to obtain the exchange matrix for an arbitrary chain of $i$-boxes with finite length.
In the following, we identify chains of $i$-boxes with the associated rooted sequences of exchange
operators as in Remark~\ref{rem_exp_op}.

\begin{lem}[Order of mutations]
\label{lem_ord_mut}
Let $[a,b]$ be a finite interval of length $l\geq 2$. Let $\mathfrak{C}$ be the chain of $i$-boxes of range $[a,b]$ with associated pair 
\[([b-1];(\underbrace{L,\dots,L}_{(l-2)\text{-times}},R)).\] 
Then the quiver $Q(\mathfrak{C})$ is obtained from the quiver $Q^{[a,b]}(\underline{w}_0)$ by mutating, in order, 
at the vertices associated to the $i$-boxes $\mathfrak{c}^-_{i_1},\dots,\mathfrak{c}^-_{i_m}$, where $\mathfrak{c}^-_{i_j}$ is the 
$i$-box in $\mathfrak{C}_{-}^{[a,b]} $ with index $\imath_b$ and $i$-cardinality $j$ and $(m+1)$ is the 
$i$-cardinality of the maximal $i$-box of index $\imath_b$.
\end{lem}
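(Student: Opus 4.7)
The strategy is to realize the chain $\mathfrak{C}$ as the image of $\mathfrak{C}_-^{[a,b]}$ under a basic canonical chain transformation (Remark \ref{rem_can_chain_trans}), and to classify each constituent box move via Lemma \ref{lem_box_move_mutation} as either a cluster mutation or a mere transposition of adjacent labels, the latter having no effect on the ice quiver once its vertices are identified with their associated $i$-boxes.

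Since the set $\mathcal{R}$ attached to $\mathfrak{C}$ reduces to the singleton $\{l-1\}$, the basic transformation $\nu_{l-1}\circ\nu_{l-2}\circ\cdots\circ\nu_1$ sends $\mathfrak{C}_-^{[a,b]}$ to $\mathfrak{C}$. An easy induction on $s$ shows that immediately before the application of $\nu_s$ (for $s\ge 2$) the current chain carries the pair $(b-1,\,L^{s-2}RL^{l-s})$; unfolding definitions then identifies $\mathfrak{c}_s$ with the largest $i$-box of index $\imath_b$ in $[b-s+1,b]$ and $\mathfrak{c}_{s+1}$ with the largest $i$-box of index $\imath_{b-s}$ in $[b-s,b]$. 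By Lemma \ref{lem_box_move_mutation}, $\nu_s$ is a mutation precisely when these two $i$-boxes share their index, i.e., when $\imath_{b-s}=\imath_b$, equivalently when $b-s\in\{b^-,b^{-(2)},\ldots,b^{-(m)}\}$; and for $s=s_j:=b-b^{-(j)}$, counting the positions of index $\imath_b$ in $[b^{-(j)}+1,b]$ yields $\mathfrak{c}_s=[b^{-(j-1)},b]=\mathfrak{c}^-_{i_j}$. All other $\nu_s$ are transpositions (Lemma \ref{lem_box_move_mutation}(ii)) and therefore act trivially at the level of the ice quiver.

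Since the values $s_j=b-b^{-(j)}$ are strictly increasing in $j$ and a cluster mutation relabels only the vertex at which it is performed, the distinct vertices associated to $\mathfrak{c}^-_{i_1},\ldots,\mathfrak{c}^-_{i_m}$ remain correctly labeled between successive mutation steps. Combining this with the triviality of the intermediate transpositions, one concludes that the entire transformation acts on $Q(\mathfrak{C}_-^{[a,b]})=Q^{[a,b]}(\underline{w}_0)$ as the ordered composition of cluster mutations at the vertices associated to $\mathfrak{c}^-_{i_1},\ldots,\mathfrak{c}^-_{i_m}$, producing $Q(\mathfrak{C})$. The main delicacy of the argument is precisely this bookkeeping on vertex labels throughout the alternation of mutations and transpositions; once handled carefully as above the claim follows.
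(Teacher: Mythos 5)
Your proof is correct and takes essentially the same route as the paper: both decompose the basic chain transformation $\mathfrak{C}_-^{[a,b]}\mapsto\mathfrak{C}$ into the box moves $\nu_1,\dots,\nu_{l-1}$, use Lemma~\ref{lem_box_move_mutation} to recognize that precisely the moves where the two adjacent $i$-boxes share the index $\imath_b$ are quiver mutations (the remaining moves being transpositions which act trivially once vertices are identified with their $i$-boxes), and then match these mutation steps, in order, with the vertices of $\mathfrak{c}^-_{i_1},\dots,\mathfrak{c}^-_{i_m}$. The only cosmetic difference is bookkeeping: you locate the mutation steps via the explicit positions $s_j=b-b^{-(j)}$ and identify the mutated box directly as $[b^{-(j-1)},b]$, whereas the paper tracks the $i$-cardinality of the right-flush box, which increases by one exactly at each mutation.
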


\begin{proof}
Consider the basic chain transformation
\begin{equation}
\label{eq_basic_chain_in_lemma_order_mutations}
    \mathfrak{C}_{-}^{[a,b]} \mapsto ([b-1];(\underbrace{L,\dots,L}_{(l-2)\text{-times}},R)).
\end{equation}  
It involves $l-1$ box moves. For $0 \leq k\leq l-1$, denote by $\mathfrak{C}_k=(\mathfrak{c}_t^k)_t$ the chain obtained after the first $k$ moves. For $k\geq 1$, the right expansion operator is in the $k$th position. 
Moreover, for any $0 \leq k\leq l-1$, the $i$-box $\mathfrak{c}_{k+1}^{k}$ has $b$ as its right border and is the only $i$-box among 
the $\mathfrak{c}_s^{k}$, $1\leq s\leq k+1$, containing $b$. Indeed, this follows from the facts that 
\begin{itemize}
\item[1)] $b$ is the right border of the range of the chain of $i$-boxes, and 
\item[2)] for $k\geq 1$, the $i$-box $\mathfrak{c}^k_{k+1}$ is obtained from the unique right expansion operator in the sequence.
\end{itemize}
Therefore, it follows from Lemma \ref{lem_box_move_mutation} that the $k$th box move entails a quiver mutation at the vertex associated to the $i$-box $\mathfrak{c}_{k}^{k-1}$ 
if and only if also $\mathfrak{c}_{k+1}^{k-1}=\mathfrak{c}_{k+1}^{-}$ is of 
index $\imath_b$. In particular, the basic chain transformations (\ref{eq_basic_chain_in_lemma_order_mutations}) entails exactly $m$ mutations and these occur at vertices associated to $i$-boxes of index $\imath_b$. 

Notice that, for any $0\leq k\leq l-1$, there exists an integer $1\leq j\leq m$ such that $\mathfrak{c}_{k+1}^k=\mathfrak{c}_{i_j}^-$. Moreover, the $i$-cardinality of $\mathfrak{c}_{1}^0=\{b\}$ is equal to 1 and, by Lemma \ref{lem_box_move_mutation}, for any $1\leq k\leq l-1$, the $i$-cardinality of $\mathfrak{c}_{k+1}^{k}$ increases by 1 with respect to the $i$-cardinality of $\mathfrak{c}_{k}^{k-1}$ if and only if the $k$th box move entails a quiver mutation (otherwise, these two $i$-boxes are equal). Therefore, for any $1\leq j\leq m$, the $j$th mutation occurs at the vertex associated to the $i$-box $\mathfrak{c}_{i_j}^-$.
\end{proof}

\begin{con}
\label{con_sequence_mutations_Q(C)}
 Let $[a,b]$ be a finite interval of length $l$. Let $\mathfrak{C}$ be a chain of $i$-boxes of range $[a,b]$ associated with a 
 pair $([c],(E_k)_{1\leq k\leq l-1})$. We recursively construct the quiver $Q(\mathfrak{C})$
starting from the quiver $Q^{[a,b]}(\underline{w}_0)$ as follows:  Let $i_1 < i_2 < \ldots$ be the elements of the set
 \[
 \mathcal{R}=\{ k\in [1,l-1]\ |\ E_k=R\}.
 \]
 We set $\mathfrak{C}_0=\mathfrak{C}_{-}^{[a,b]}$ and, for any $1\leq s\leq |\mathcal{R}|$, as in Remark \ref{rem_can_chain_trans},
 we define   the chain of $i$-boxes $\mathfrak{C}_s$ of range $[a,b]$ associated to the sequence of expansion operators
\[ E_k=\begin{cases}
     R,\text{    if } k\in \mathcal{R}, k\geq i_{|\mathcal{R}|-s+1}\\
     L, \text{    otherwise.}
 \end{cases}
 \]
Notice that $\mathfrak{C}_{|\mathcal{R}|}=\mathfrak{C}$ and that, for any $1\leq s\leq |\mathcal{R}|$, the chain of $i$-boxes 
 $\mathfrak{C}_s$ associated with 
 \[
 ([b-s]; \underbrace{L,\dots,L}_{(i_{|\mathcal{R}|-s+1}-1)}, R, E_{i_{|\mathcal{R}|-s+1}+1},\dots, E_{l-1} )
 \]
 can be obtained from the chain $\mathfrak{C}_{(s-1)}$ associated with
 \[
 ([b-s+1]; \underbrace{L,\dots,L}_{(i_{|\mathcal{R}|-s+1}-1)}, L, E_{i_{|\mathcal{R}|-s+1}+1},\dots, E_{l-1} )
 \] 
through the basic chain transformation defined in the proof of Lemma~\ref{lem_ord_mut} (the expansion operators $E_k$ with $k\geq i_{|\mathcal{R}|-s+1}+1$ play no role). Thus, using the mutation sequence associated with the chain transformation, we obtain
$Q(\mathfrak{C}_s)$ from $Q(\mathfrak{C}_{s-1})$. The quiver $Q(\mathfrak{C})$ is reached when $s=|\mathcal{R}|$.
\end{con}

\begin{lem}[Number of mutations]
\label{lem_numb_mut}
 Let $[a,b]$ be a finite interval.
Let $\mathfrak{C}$ be a chain of \mbox{$i$-boxes} of range $[a,b]$. Let $[c,d]$ be an $i$-box of $i$-cardinality $k$ 
and index $\imath$ in $\mathfrak{C}$ and $[e,b(\imath)^-]$ the corresponding $i$-box of $i$-cardinality $k$ and index $\imath$
in $\mathfrak{C}^{[a,b]}_-$. Denote by $v$ the corresponding vertex of 
$Q(\mathfrak{C}^{[a,b]}_-)=Q^{[a,b]}(\widehat{\underline{w}}_0)$. Then the number of
mutations at $v$ in the mutation sequence associated with the canonical chain transformation
\[  
\mathfrak{C}_{-}^{[a,b]} \mapsto \mathfrak{C}
\]
equals the integer $m$ defined by
\[ 
[c,d]=\tau^{-m} [e,b(\imath)^-]. 
\]
\end{lem}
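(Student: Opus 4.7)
The plan is to proceed by induction on $s$, establishing that after the $s$-th basic chain transformation $\mathfrak{C}_{s-1}\to\mathfrak{C}_s$ in the canonical decomposition $\mathfrak{C}_-^{[a,b]}=\mathfrak{C}_0\to\mathfrak{C}_1\to\dots\to\mathfrak{C}_{|\mathcal{R}|}=\mathfrak{C}$ of Construction~\ref{con_sequence_mutations_Q(C)}, the $(\imath,k)$-box of $\mathfrak{C}_s$ equals $\tau^{-m_s}[e,b(\imath)^-]$, where $m_s$ is the total number of mutations at vertex $v$ performed during the first $s$ basic transformations. The base case $s=0$ is trivial, and the lemma follows by taking $s=|\mathcal{R}|$.

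For the inductive step, I would analyze a single basic transformation $\mathfrak{C}_{s-1}\to\mathfrak{C}_s$, with $p=i_{|\mathcal{R}|-s+1}$, consisting of the box moves $\nu_1,\nu_2,\dots,\nu_p$. Setting $\imath^{(s)}:=\imath_{b-s+1}$, the heart of the argument consists of three assertions: $(\mathrm{i})$ $\nu_1$ is always a case (ii) move, because the singleton $\{b-s+1\}$ at position $1$ and the box $\{b-s\}$ at position $2$ have distinct indices (adjacent letters in a reduced expression of $w_0$ being distinct); $(\mathrm{ii})$ for $2\leq j\leq p$, a straightforward toggling computation on the operators (recalling that $\nu_t$ with $t\geq 2$ swaps $E_{t-1}$ and $E_t$) shows that just before $\nu_j$ we have $E_{j-1}=R$, $E_j=L$, and the $i$-box at position $j$ is the largest $i$-box of index $\imath^{(s)}$ contained in $[b-s-j+2,b-s+1]$; hence by Proposition~\ref{prop_box_move}(i), $\nu_j$ is case (i) precisely when $\imath_{b-s-j+1}=\imath^{(s)}$, and in that case the mutation replaces $\mathfrak{c}_j$ by $\tau^-\mathfrak{c}_j$; $(\mathrm{iii})$ enumerating the case (i) positions as $j_1<j_2<\dots<j_{r_s}$, the $i$-box mutated at $\nu_{j_u}$ has $i$-cardinality exactly $u$, since the $\imath^{(s)}$-positions contained in $[b-s-j_u+2,b-s+1]$ are precisely $\{b-s-j_v+1\mid 1\leq v<u\}\cup\{b-s+1\}$.

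Granting these three assertions, the basic transformation $\mathfrak{C}_{s-1}\to\mathfrak{C}_s$ shifts the $(\imath^{(s)},u)$-box by a single $\tau^-$ for each $u=1,\dots,r_s$ and leaves all other $(\imath,k)$-boxes unchanged. By Lemma~\ref{lem_box_move_mutation}, the case (ii) moves are mere relabelings of the monoidal seed (they contribute no mutations), while each case (i) move is counted as exactly one mutation at the vertex then labeling the mutated box. Tracking the identification of vertex $v$ with the $(\imath,k)$-entity through these relabelings, the number of mutations at $v$ during the step $\mathfrak{C}_{s-1}\to\mathfrak{C}_s$ equals $1$ if $\imath=\imath^{(s)}$ and $k\leq r_s$, and $0$ otherwise; thus $m_s-m_{s-1}$ coincides with the number of $\tau^-$-shifts applied to the $(\imath,k)$-box during this step. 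Combined with the inductive hypothesis, the terminal value $m_{|\mathcal{R}|}$ is precisely the integer $m$ characterized by $[c,d]=\tau^{-m}[e,b(\imath)^-]$.

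The main obstacle lies in assertion $(\mathrm{iii})$: one must verify that the box appearing at position $j_u$ just before $\nu_{j_u}$ is indeed the (possibly already shifted) $(\imath^{(s)},u)$-entity rather than some other box migrated there by prior case (ii) swaps. This is resolved by an inner induction on $u$: the mutations $\nu_{j_1},\dots,\nu_{j_{u-1}}$ each act on an $(\imath^{(s)},v)$-entity with $v<u$, so the $(\imath^{(s)},u)$-entity retains its $\mathfrak{C}_{s-1}$-form at the moment $\nu_{j_u}$ is performed, while the case (ii) transpositions interleaved between consecutive case (i) moves migrate it precisely to position $j_u$ in time for the mutation.
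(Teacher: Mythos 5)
Your proof is correct and follows essentially the same route as the paper: the decisive point in both is that in a canonical chain transformation every case (i) box move is a $\tau^-$-shift preserving index and $i$-cardinality (no $\tau$-shifts occur), so the mutations at $v$ are in bijection with the $\tau^-$-shifts of the $(\imath,k)$-entity, whose total number is the $m$ with $[c,d]=\tau^{-m}[e,b(\imath)^-]$. The only difference is one of economy: your analysis of a single basic transformation (assertions (i)--(iii)) re-derives the structure already contained in Lemma~\ref{lem_ord_mut} and Proposition~\ref{prop_box_move}, which the paper's shorter proof simply invokes.
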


\begin{proof}
The canonical chain transformation from $\mathfrak{C}^{[a,b]}_-$ to $\mathfrak{C}$ does not involve flush
right shifts of $i$-boxes. Thus, the box moves associated to mutations are exactly the left flush shifts
moving an $i$-box to the next $i$-box to the left with the same index and $i$-cardinality. Therefore, the number of mutations at the 
vertex $v$ is equal to the number of times that the corresponding starting $i$-box  $[e,b(\imath)^-]$
has to be shifted to obtain $[c,d]$.
\end{proof}

\begin{de}
Let $\mathfrak{C}$ be a chain of $i$-boxes and let $([c],(E_k)_{1\leq k\leq l-1})$ be the associated rooted sequence of expansion operators. For any $1\leq k\leq l-1$, the \emph{index} $i(E_k)$ of the expansion operator $E_k$ is the index of the $i$-box $\mathfrak{c}_{k+1}$.
\end{de}

\begin{rem}
Let $\mathfrak{C}$ and $\mathfrak{C}'$ be two chain of $i$-boxes related by a box move at $s\geq 2$. If we denote the associated sequences of expansion operators by $(E_k)_k$ and $(E'_k)_k$, then we have
\[
i(E_s)=i(E'_{s-1})\quad \text{and}\quad i(E_{s-1})=i(E_{s}'). 
\]\
In particular, each basic chain transformation is associated with a right expansion operator of a fixed index.
\end{rem}

In the next couple of lemmas, we retain the notation introduced in Construction \ref{con_sequence_mutations_Q(C)}.

\begin{lem}
\label{lem_condition_box_move_basic_chain_trans}
For $1\leq t\leq |\mathcal{R}|$, let $\mathfrak{c}$ be an $i$-box of $\mathfrak{C}_{t-1}$ of index $i(E_{i_{|\mathcal{R}|-t+1}})$ and let $k$ be its $i$-cardinality. Let $m(t)$ be the integer
\[m(t)=|\{ j\in [i_{|\mathcal{R}|-t+1}+1,l-1]\ |\ E_j=R, i(E_j)=\imath\}|.\]
Then the basic chain transformation $\mathfrak{C}_{t-1}\mapsto\mathfrak{C}_{t}$ sends $\mathfrak{c}$ to $\tau^-\mathfrak{c}$ if and only if 
there is an integer $1\leq i(m(t)+1+k,\imath)\leq l$ such that 
\begin{itemize}
    \item the $i$-box $\mathfrak{c}^{-}_{i(m(t)+1+k,\imath)}$ of $\mathfrak{C}_{-}^{[a,b]}$ has index $\imath$ and $i$-cardinality $m(t)+1+k$;
    \item $i_{|\mathcal{R}|-t+1}+t\geq i(m(t)+1+k,\imath)$.
\end{itemize}
\end{lem}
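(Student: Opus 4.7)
The plan is to recognize the basic chain transformation $\mathfrak{C}_{t-1}\mapsto\mathfrak{C}_t$ as an instance of the transformation of Lemma~\ref{lem_ord_mut} applied to a suitable initial subchain of $\mathfrak{C}_{t-1}$, and then to translate the resulting shift criterion into the numerical condition on $\mathfrak{C}_-^{[a,b]}$. Set $p:=i_{|\mathcal{R}|-t+1}$. First I will check that $\mathfrak{C}_{t-1}$ has root $b-t+1$ and that its first $p$ expansion operators are all $L$: indeed, the $R$-operators already present in $\mathfrak{C}_{t-1}$ sit at positions $i_{|\mathcal{R}|-t+2}<\cdots<i_{|\mathcal{R}|}$, all strictly greater than $p$. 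It follows that the first $p+1$ $i$-boxes of $\mathfrak{C}_{t-1}$ constitute the subchain $\mathfrak{C}_-^{[b-t+1-p,\,b-t+1]}$, and the basic chain transformation $\mathfrak{C}_{t-1}\mapsto\mathfrak{C}_t$ (which consists of the box moves $\nu_1,\ldots,\nu_p$) acts only on this subchain, matching exactly the transformation of Lemma~\ref{lem_ord_mut}. A direct check that the $R$ at position $p$ in $\mathfrak{C}_t$ adds the $i$-box of index $\imath_{b-t+1}$ yields $i(E_p)=\imath_{b-t+1}=\imath$.

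Next, I apply Lemma~\ref{lem_ord_mut} to this subchain: the transformation sends an $i$-box $\mathfrak{c}$ of index $\imath$ and $i$-cardinality $k$ to $\tau^-\mathfrak{c}$ if and only if $\mathfrak{c}$ lies in the subchain and is not the $i$-box of index $\imath$ of maximal cardinality there; equivalently, the subchain's range $[b-t+1-p,\,b-t+1]$ contains at least $k+1$ positions of index $\imath$. When this holds, I denote by $s_0$ the $(k+1)$-st $\imath$-position counted from $b-t+1$ downward, so that the relevant subchain $i$-box of cardinality $k+1$ is $[s_0,\,b-t+1]$.

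The remaining task is to recast this condition in the form stated. In $\mathfrak{C}_-^{[a,b]}$, the $i$-box of index $\imath$ containing $s_0$ is $[s_0,\,b(\imath)^-]$; it occupies position $b-s_0+1$ and has $i$-cardinality
\[
(k+1)+\bigl|\{j\in [b-t+2,\,b(\imath)^-]:\imath_j=\imath\}\bigr|.
\]
The combinatorial identity at the heart of the proof is that the second summand equals $m(t)$. To see this, note that the $R$-operators of $\mathfrak{C}$ at positions $>p$ are exactly those introduced during the previous steps $s=1,\ldots,t-1$, each of index $i(E_{i_{|\mathcal{R}|-s+1}})=\imath_{b-s+1}$; those of index $\imath$ are thus in bijection with the $\imath$-positions $j=b-s+1$ in $[b-t+2,\,b]$, which coincide with those in $[b-t+2,\,b(\imath)^-]$ because $b(\imath)^-$ is by definition the largest $\imath$-position $\le b$. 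Hence $[s_0,b(\imath)^-]$ has cardinality $m(t)+1+k$, and the inclusion $s_0\ge b-t+1-p$ is equivalent to $b-s_0+1\le p+t=i_{|\mathcal{R}|-t+1}+t$, which is precisely the bound in the lemma. Conversely, when $\mathfrak{c}$ lies outside the subchain, or is the maximal-cardinality $i$-box of index $\imath$ in it, the required cardinality $m(t)+1+k$ exceeds the maximum $m(t)+M+1$ (with $M+1$ the number of $\imath$-positions in the subchain) realizable at a position $\le p+t$, so the criterion correctly fails.

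The only genuinely delicate point is the bookkeeping in the last paragraph: relating the intrinsic $i$-cardinality of an $i$-box in the subchain to that of its natural enlargement in $\mathfrak{C}_-^{[a,b]}$, and matching the algebraic count $m(t)$ with the geometric count of $\imath$-positions in $[b-t+2,\,b(\imath)^-]$. Once that dictionary is established, the conclusion is a direct application of Lemma~\ref{lem_ord_mut} to the subchain.
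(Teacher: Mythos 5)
Your proposal is correct and follows essentially the same route as the paper's proof: both recognize the basic chain transformation as the transformation of Lemma~\ref{lem_ord_mut} acting on the initial subchain $\mathfrak{C}_-^{[b-t+1-i_{|\mathcal{R}|-t+1},\,b-t+1]}$, reduce the shift $\mathfrak{c}\mapsto\tau^-\mathfrak{c}$ to the presence of at least $k+1$ positions of index $\imath$ in that range, identify $m(t)$ with the number of $\imath$-positions in $[b-t+2,b]$, and translate this into the position-and-cardinality condition in $\mathfrak{C}_-^{[a,b]}$. Your write-up is merely more explicit than the paper on the bijection giving $m(t)$ and on the converse direction.
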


\begin{proof}
The basic chain transformation associated to the expansion operator $E_{i_{|\mathcal{R}|-t+1}}=R$ has the form
 \[
 ([b-t+1]_\imath; \underbrace{L,\dots,L}_{i_{|\mathcal{R}|-t+1}-1}, L, E_{i_{\mathcal{R}-t+1}+1},\dots, E_{l-1} )\mapsto 
 ([b-t]; \underbrace{L,\dots,L}_{i_{|\mathcal{R}|-t+1}-1}, R, E_{i_{\mathcal{R}-t+1}+1},\dots, E_{l-1} ).
 \]
By Lemma \ref{lem_ord_mut}, we have that the basic chain transformation entails the box move $\mathfrak{c}\mapsto\tau^-\mathfrak{c}$ if and only if 
 \begin{equation}
\label{eq_disequality_lemma_tecnico_entail_mutation}
    |\{ \imath_s \ |\  b-(t-1)-(i_{|\mathcal{R}|-t+1})\leq s\leq b-(t-1),\ \imath_s=\imath\}|\geq k+1.
\end{equation}
Moreover, since the sequence $(E_k)_{k\geq i_{{|\mathcal{R}|-t+1}}+1}$ contains exactly $m(t)$ expansion operators of index $\imath$, we have also
  \[|\{ \imath_s \ |\  b-(t-1)+1\leq s\leq b,\ \imath_s=\imath\}|= m(t).\]
Therefore, the inequality (\ref{eq_disequality_lemma_tecnico_entail_mutation}) is satisfied if and only if there is an integer $t'$ such that
\[b-(t-1)-(i_{|\mathcal{R}|-t+1})\leq t'\leq b\] and the $\mathfrak{c}^-_{b-t'+1}$ has index $\imath$ and $i$-cardinality $m(t)+k+1$.
Setting $i(m(t)+1+k,\imath)=b-t'+1$, we have 
\[i_{|\mathcal{R}|-t+1} + (t-1)\geq b-t'=i(m(t)+1+k,\imath)-1,\]
as required.
\end{proof}

\begin{lem}
\label{lem_last_box_move_determine_other}
For $1\leq t\leq |\mathcal{R}|$, let $\mathfrak{c}$ be an $i$-box of $\mathfrak{C}_{t-1}$ of index $\imath$ and $i$-cardinality $k$.  Suppose that the basic chain transformation $\mathfrak{C}_{t-1}\mapsto\mathfrak{C}_{t}$ sends $\mathfrak{c}$ to $\tau^-\mathfrak{c}$. Then, for any integer $t'$ such that $1\leq t'<t$ and $i(E_{i_{|\mathcal{R}|-t'+1}})=\imath$, the basic chain transformation $\mathfrak{C}_{t'-1}\mapsto\mathfrak{C}_{t'}$ sends $\mathfrak{c'}$ to $ \tau^-\mathfrak{c}'$, where $\mathfrak{c}'$ is the $i$-box of $\mathfrak{C}_{t'-1}$ of index $\imath$ and $i$-cardinality $k$.
\end{lem}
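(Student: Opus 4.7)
The plan is to invoke Lemma~\ref{lem_condition_box_move_basic_chain_trans} twice: once at step $t$, to unpack the hypothesis, and once at step $t'$, in the \enquote{if} direction, to reach the conclusion.

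First, I would check that the $i$-box $\mathfrak{c}'$ is unambiguously defined. By Proposition~\ref{prop_box_move}, the multiset of pairs (index, $i$-cardinality) of the $i$-boxes of a chain is invariant under box moves: case (ii) merely permutes two $i$-boxes, while case (i) replaces an $i$-box by $\tau^{\pm}$ applied to it, which preserves both its index and its $i$-cardinality. Therefore a unique $i$-box of index $\imath$ and $i$-cardinality $k$ exists in $\mathfrak{C}_{t'-1}$, corresponding to $\mathfrak{c}$ in $\mathfrak{C}_{t-1}$.

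Next, I would apply Lemma~\ref{lem_condition_box_move_basic_chain_trans} at step $t$ to produce an integer $i(m(t)+1+k,\imath)$ such that $\mathfrak{c}^-_{i(m(t)+1+k,\imath)}$ has index $\imath$ and $i$-cardinality $m(t)+1+k$ in $\mathfrak{C}_-^{[a,b]}$, together with the inequality $i_{|\mathcal{R}|-t+1}+t\geq i(m(t)+1+k,\imath)$. The goal is then to check the corresponding two conditions at step $t'$. The decisive observation is that $m(t)\geq m(t')+1$: indeed, the right operator at position $i_{|\mathcal{R}|-t'+1}$ has index $\imath$ by hypothesis and is counted by $m(t)$ (since $i_{|\mathcal{R}|-t'+1}>i_{|\mathcal{R}|-t+1}$) but not by $m(t')$. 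This implies $m(t')+1+k<m(t)+1+k$; since the $i$-cardinalities of $i$-boxes of index $\imath$ in $\mathfrak{C}_-^{[a,b]}$ form a contiguous interval starting at $1$, the existence of the $i$-box required for the first condition at step $t'$ is automatic.

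To derive the numerical inequality $i_{|\mathcal{R}|-t'+1}+t'\geq i(m(t')+1+k,\imath)$, I would combine two strict monotonicities with the inequality at step $t$: since the sequence $(i_s)_s$ is strictly increasing one has $i_{|\mathcal{R}|-t'+1}-i_{|\mathcal{R}|-t+1}\geq t-t'$, and since $N\mapsto i(N,\imath)$ is strictly increasing one has $i(m(t)+1+k,\imath)\geq i(m(t')+1+k,\imath)$. Chaining these estimates with the step-$t$ inequality yields the desired bound. The main obstacle is really the bookkeeping required to keep track of the two independent monotonicities simultaneously and to verify that the slack $t-t'$ gained on the left-hand side comfortably absorbs the loss $m(t)-m(t')$ on the right. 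Once the two conditions of Lemma~\ref{lem_condition_box_move_basic_chain_trans} are established at step $t'$, the \enquote{if} direction of that lemma delivers the stated conclusion.
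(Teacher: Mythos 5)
Your proposal is correct and follows essentially the same route as the paper's proof: apply Lemma~\ref{lem_condition_box_move_basic_chain_trans} at step $t$ to extract the $i$-box $\mathfrak{c}^-_{i(m(t)+1+k,\imath)}$ and the inequality $i_{|\mathcal{R}|-t+1}+t\geq i(m(t)+1+k,\imath)$, observe $m(t)>m(t')$, use the monotonicity $i_{|\mathcal{R}|-t'+1}-i_{|\mathcal{R}|-t+1}\geq t-t'$ together with $i(m(t)+1+k,\imath)\geq i(m(t')+1+k,\imath)$, and conclude by the same lemma at step $t'$. Your additional remarks (well-definedness of $\mathfrak{c}'$ via Proposition~\ref{prop_box_move}, and the explicit reason why $m(t)\geq m(t')+1$) only make explicit what the paper leaves implicit.
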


\begin{proof}
By Lemma \ref{lem_condition_box_move_basic_chain_trans}, we have that $\mathfrak{C}_-^{[a,b]}$ contains an $i$-box $\mathfrak{c}_{i(m(t)+1+k,\imath)}^-$ of index $\imath$ and $i$-cardinality $m(t)+1+k$ such that $i_{|\mathcal{R}|-t+1}+t\geq i(m(t)+1+k,\imath)$. Since $m(t)>m(t')$, there exists a positive integer $i(m(t')+1+k,\imath)\leq i(m(t)+1+k,\imath)$ such that the $i$-box $\mathfrak{c}_{i(m(t')+1+k,\imath)}^-$ has index $\imath$ and $i$-cardinality $m(t')+1+k$. Moreover, $i_{|\mathcal{R}|-t'+1}-i_{|\mathcal{R}|-t+1}\geq t-t'$. Therefore, we have that 
\[i_{|\mathcal{R}|-t'+1}+t'\geq i_{|\mathcal{R}|-t+1}+t\geq i(m(t)+1+k,\imath)\geq i(m(t')+1+k,\imath),\]
which, by Lemma \ref{lem_condition_box_move_basic_chain_trans}, gives the desired result.
\end{proof}

\begin{prop}
\label{prop_condition_m_flush_right}
Let $[a,b]$ be a finite interval of length $l$. Let $\mathfrak{C}$ be a chain of $i$-boxes of range $[a,b]$ and let $([c],(E_j)_{1\leq j\leq l-1})$ be the associated pair.
Let $\mathfrak{c}$ be a $i$-box $\mathfrak{C}$ of index $\imath$ and $i$-cardinality $k$ such that  $\mathfrak{c}=\tau^{-m'}\mathfrak{c}^-$, where $\mathfrak{c}^-$ is in $\mathfrak{C}_-^{[a,b]}$ and $m'\geq 0$. Let $m$ be a non-negative integer such that $(E_j)_{1\leq j\leq -l-1}$ contains at least $m$ expansion operators of index $\imath$.
Write $s$ for the largest integer such that 
\[|\{ j\in [s,l-1]\ |\ E_j=R, i(E_j)=\imath\}|\geq m,\]
and write $r$ for the non-negative integer
\[r=|\{j\in [s+1,l-1]\ |\ E_j=R\}|.\]
Then $m'\geq m$ if and only if there is an integer $1\leq i(m+k,\imath)\leq l$ such that 
\begin{itemize}
    \item the $i$-box $\mathfrak{c}^-_{i(m+k,\imath)}$ of $\mathfrak{C}_{-}^{[a,b]}$ has index $\imath$ and $i$-cardinality $m+k$;
    \item $s+r\geq i(m+k,\imath)-1$.
\end{itemize} 
\end{prop}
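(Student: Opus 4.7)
The plan is to reduce the statement to a single application of Lemma \ref{lem_condition_box_move_basic_chain_trans}, at the appropriate step of the decomposition of the canonical chain transformation $\mathfrak{C}_-^{[a,b]}\mapsto \mathfrak{C}$ into basic chain transformations provided by Construction \ref{con_sequence_mutations_Q(C)}. By Lemma \ref{lem_numb_mut}, the integer $m'$ coincides with the total number of left-shift box moves $\tau^-$ performed on the $i$-box of index $\imath$ and $i$-cardinality $k$ along this canonical decomposition, and only the basic chain transformations whose handled expansion operator has index $\imath$ can contribute such a shift.

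Next I would invoke Lemma \ref{lem_last_box_move_determine_other}: if the $t$-th basic chain transformation shifts the $i$-box of interest, then so do all the preceding basic chain transformations of index $\imath$. Consequently, the $\tau^-$-shifts at our fixed $i$-box occur precisely at an initial segment of the basic chain transformations of index $\imath$ (ordered by the running index $t$ of Construction \ref{con_sequence_mutations_Q(C)}). Writing $t_0$ for the index of the $m$-th basic chain transformation of index $\imath$, we therefore have $m'\geq m$ if and only if the $t_0$-th basic chain transformation does entail a left-shift.

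The rest is a matter of matching the parameters of the current proposition with those of Lemma \ref{lem_condition_box_move_basic_chain_trans} applied at $t=t_0$. Since the basic chain transformations iterate through the elements of $\mathcal{R}$ from right to left, the maximality of $s$ together with $|\{j\in[s,l-1]\mid E_j=R,\ i(E_j)=\imath\}|\geq m$ forces $s$ itself to be a right expansion operator of index $\imath$, and gives the identifications $s=i_{|\mathcal{R}|-t_0+1}$ and $r=t_0-1$; in particular $s+r+1=i_{|\mathcal{R}|-t_0+1}+t_0$. By the very definition of $t_0$, the quantity $m(t_0)$ appearing in Lemma \ref{lem_condition_box_move_basic_chain_trans} equals $m-1$, so $m(t_0)+1+k=m+k$. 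The two bullet conditions of Lemma \ref{lem_condition_box_move_basic_chain_trans} therefore translate verbatim into the two bullet conditions of the proposition, completing the proof.

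The main obstacle is purely bookkeeping: carefully matching the forward indexing of basic chain transformations (running from $t=1$) against the backward processing of right expansion operators in $(E_j)_{1\leq j\leq l-1}$, so that $s$, $r$, and $i(m+k,\imath)$ of the present statement are unambiguously identified with $i_{|\mathcal{R}|-t_0+1}$, $t_0-1$, and $i(m(t_0)+1+k,\imath)$ of Lemma \ref{lem_condition_box_move_basic_chain_trans}. Once this dictionary is in place, the argument is a direct concatenation of Lemmas \ref{lem_numb_mut}, \ref{lem_last_box_move_determine_other}, and \ref{lem_condition_box_move_basic_chain_trans}.
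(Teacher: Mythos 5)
Your proposal is correct and follows essentially the same route as the paper's proof: both reduce the statement to Lemma \ref{lem_condition_box_move_basic_chain_trans} applied at the basic chain transformation handling the $m$-th right expansion operator of index $\imath$, using Lemma \ref{lem_last_box_move_determine_other} to see that the $\tau^-$-shifts of the fixed $i$-box occur along an initial segment of these transformations, and then matching $s=i_{|\mathcal{R}|-t_0+1}$, $r=t_0-1$, $m(t_0)+1=m$ exactly as in the paper (which writes $E_s=E_{i_{|\mathcal{R}|-(r+1)+1}}$ and $m(r+1)+1=m$). Your extra bookkeeping via Lemma \ref{lem_numb_mut} is just a more explicit version of what the paper leaves implicit.
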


\begin{proof}
By Lemma \ref{lem_last_box_move_determine_other}, $m\geq m'$ if and only if the right expansion operator $E_s=\mathcal{R}$ entails a box move of the $i$-box of index $\imath$ and $i$-cardinality $k$.
The basic chain transformation associated to the expansion operator $E_s=R$ has the form
 \[
 ([b-r]_\imath; \underbrace{L,\dots,L}_{s-1}, L, E_{s+1},\dots, E_{l-1} )\mapsto 
 ([b-r-1]; \underbrace{L,\dots,L}_{s-1}, R, E_{s+1},\dots, E_{l-1} ).
 \] As in Construction \ref{con_sequence_mutations_Q(C)}, let $i_1<i_2<\dots$ be the elements of the set $\mathcal{R}=\{ j\in [1,l-1]\ |\ E_j=R\}.$ Then $E_s=E_{i_{|\mathcal{R}|-(r+1)+1}}$ and, with the notation of Lemma \ref{lem_condition_box_move_basic_chain_trans}, $m(r+1)+1=m$. Therefore, the desired equivalence follows by applying Lemma \ref{lem_condition_box_move_basic_chain_trans}.
\end{proof}

The following result discusses the form of the exchange matrices of monoidal seeds associated to sub-chains of $i$-boxes of a given chain.

\begin{lem}[{\cite[Prop. 7.14, Lem. 7.16]{KKOP_mon_cat_quant_aff_II}}]
\label{lem_uniqueness_exchange_matrix}
Let $\mathfrak{C}$ be a chain of $i$-boxes with finite range $[a,b]$ and consider a subchain $\mathfrak{C}'$ of length $s$.
Let $\mathcal{S}(\mathfrak{C})$ and $\mathcal{S}(\mathfrak{C'})$ be the  associated admissible monoidal seeds as in 
Theorem~\ref{teo_main_KKOP}. Then, 
\[B(\mathfrak{C})_{|K(\mathfrak{C}')\times K^{\text{ex}}(\mathfrak{C}')} = B(\mathfrak{C}')_{|K(\mathfrak{C}')\times K^{\text{ex}}(\mathfrak{C}')}\text{  and  } B(\mathfrak{C})_{|K(\mathfrak{C})\backslash K (\mathfrak{C}')\times K^{\text{ex}}(\mathfrak{C}')}=0.\]

\end{lem}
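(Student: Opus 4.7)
The plan is to reduce both claimed equalities to the uniqueness of the exchange matrix given the cluster, following \cite{CerulliKellerLabardiniPlamondon13}. The key observation is that the cluster of $\mathcal{S}(\mathfrak{C}')$ is precisely the subfamily $\{[M(\mathfrak{c}_k)]\}_{k\in K(\mathfrak{C}')}$ of the cluster of $\mathcal{S}(\mathfrak{C})$, and that both monoidal seeds can be regarded as living inside $\mathscr{C}_\mathfrak{g}^{[a,b],\mathcal{D}_\mathcal{Q},\underline{w}_0}$, since the range of the subchain $\mathfrak{C}'$ is contained in $[a,b]$.

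For each exchangeable index $k\in K^{\mathrm{ex}}(\mathfrak{C}')$, I would use the admissibility of $\mathcal{S}(\mathfrak{C}')$ provided by Theorem \ref{teo_main_KKOP} to produce a short exact sequence
\[
0 \to \bigotimes_{\substack{i\in K(\mathfrak{C}')\\ b'_{ik}>0}} M(\mathfrak{c}_i)^{b'_{ik}} \to M(\mathfrak{c}_k)\otimes M_k' \to \bigotimes_{\substack{i\in K(\mathfrak{C}')\\ b'_{ik}<0}} M(\mathfrak{c}_i)^{-b'_{ik}} \to 0,
\]
where $B(\mathfrak{C}')=(b'_{ij})$ and all indices range only over $K(\mathfrak{C}')$. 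Since $\mathcal{S}(\mathfrak{C})$ is also a monoidal categorification of the relevant cluster algebra (again by Theorem \ref{teo_main_KKOP}), passing to the Grothendieck ring yields, at the index $k$, an exchange relation for the cluster of $\mathcal{S}(\mathfrak{C})$ that involves only the cluster variables indexed by $K(\mathfrak{C}')$.

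Invoking the uniqueness of the exchange matrix from \cite{CerulliKellerLabardiniPlamondon13}, the $k$-th column of $B(\mathfrak{C})$ is then forced to coincide with the $k$-th column of $B(\mathfrak{C}')$ on rows indexed by $K(\mathfrak{C}')$ and to vanish on rows indexed by $K(\mathfrak{C})\setminus K(\mathfrak{C}')$. Running this over all $k\in K^{\mathrm{ex}}(\mathfrak{C}')$ simultaneously proves both equalities.

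The main delicate point will be the proper set-up of the uniqueness result in our setting: one must verify that the cluster variables $\{[M(\mathfrak{c}_k)]\}_{k\in K(\mathfrak{C})}$ are algebraically independent in the ambient (upper) cluster algebra, which follows from the fact that they form a cluster of $\mathcal{S}(\mathfrak{C})$, and one must use that the exchangeable/frozen split in \cite{CerulliKellerLabardiniPlamondon13} pins down exactly the exchangeable columns of the exchange matrix. Once this is in place, the argument is a direct comparison of the two exchange relations coming from admissibility.
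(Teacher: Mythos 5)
The paper itself does not prove this lemma: it is imported verbatim from \cite[Prop.~7.14, Lem.~7.16]{KKOP_mon_cat_quant_aff_II}, so what you propose is an independent proof rather than a reconstruction of an argument given in the text. Your setup is fine (the cluster of $\mathcal{S}(\mathfrak{C}')$ is a subfamily of that of $\mathcal{S}(\mathfrak{C})$, an index that is exchangeable for $\mathfrak{C}'$ is exchangeable for $\mathfrak{C}$, and admissibility of $\mathcal{S}(\mathfrak{C}')$ does give the short exact sequence you write), but the decisive step has a genuine gap.

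From the small seed you obtain, for $k\in K^{\mathrm{ex}}(\mathfrak{C}')$, an identity in the Grothendieck ring
\[
[M(\mathfrak{c}_k)]\,[M_k'] \;=\; \prod_{b'_{ik}>0}[M(\mathfrak{c}_i)]^{b'_{ik}} \;+\; \prod_{b'_{ik}<0}[M(\mathfrak{c}_i)]^{-b'_{ik}},
\]
but nothing in your argument identifies $[M_k']$ with the cluster variable obtained by mutating $\mathcal{S}(\mathfrak{C})$ at $k$, nor the right-hand side with the exchange binomial of $B(\mathfrak{C})$ in column $k$. The uniqueness result of Cerulli Irelli--Keller--Labardini--Plamondon compares two seeds \emph{of the same cluster algebra having the same cluster}; here you are comparing a seed of $\mathcal{K}(\mathscr{C}_\mathfrak{g}^{[a,b],\mathcal{D}_\mathcal{Q},\underline{w}_0})$ with a seed of the smaller algebra attached to the range of $\mathfrak{C}'$, whose cluster is a proper subfamily, so that theorem does not apply. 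The weaker principle you implicitly invoke --- ``if $x_k y$ is a sum of two coprime cluster monomials in the remaining variables, then it is the exchange relation at $k$'' --- is false: in the cluster algebra with one exchangeable variable $x$, one frozen variable $f$ and exchange relation $x x' = 1+f$, the element $y = x'(1-f+f^2)$ lies in the cluster algebra and satisfies $x y = 1+f^3$, a coprime binomial with a different exponent vector. Hence the existence of your short exact sequence does not force the $k$-th column of $B(\mathfrak{C})$; in particular the vanishing $B(\mathfrak{C})_{|(K(\mathfrak{C})\setminus K(\mathfrak{C}'))\times K^{\mathrm{ex}}(\mathfrak{C}')}=0$, which is the genuinely nontrivial half of the statement, is not obtained. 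Closing the gap requires identifying $M_k'$ with the module produced by mutating the \emph{big} seed at $k$ (this is where KKOP use the $\Lambda$-invariants, the compatibility of $(\Lambda^{\mathcal{S}},B)$ and the inductive construction of the seeds through box moves), and simplicity or reality of $M_k'$, which you never exploit, would still have to be fed into such an argument.
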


\begin{rem}[Chain of $i$-boxes with infinite length]
Let $[a,b]$ be an infinite interval.
Following the proof of \cite[Prop. 8.11]{KKOP_mon_cat_quant_aff_II}, in order to find the exchange matrix of the admissible seed associated to a chain of $i$-boxes $\mathfrak{C}=(\mathfrak{c}_k)_{k\geq 1}$ with range $[a,b]$, we consider the sequence $(\mathfrak{C}_s)_{s\geq 1}$ of chains 
of $i$-boxes defined by $\mathfrak{C}_s=(\mathfrak{c}_k)_{1\leq k\leq s}$.

By Lemma \ref{lem_uniqueness_exchange_matrix}, the coefficients of the exchange matrices stabilize, that is, for any $s\geq 1$, 
if $(i,j)\in K(\mathfrak{C}_s)\times K^{\text{ex}}(\mathfrak{C}_s)$, we have $B(\mathfrak{C}_t)_{ij}=B(\mathfrak{C}_s)_{ij}$ for any $t\geq s$.
It follows that, defining the $K(\mathfrak{C})\times K(\mathfrak{C})$-matrix $B(\mathfrak{C})=(b_{i,j})$ by
\[  b_{ij}=B(\mathfrak{C}_s)_{ij},\ \ \ (i,j)\in K(\mathfrak{C}_s)\times K^{\text{ex}}(\mathfrak{C}_s),\]
we have 
\[ B(\mathfrak{C})_{|K(\mathfrak{C}_s)\times K^{\text{ex}}(\mathfrak{C}_s)} = B(\mathfrak{C}_s) \ \ \ \text{for any } s\geq 1.\] 
In order to show that the seed $\mathcal{S}(\mathfrak{C})$ is completely admissible, one needs to verify that, for an arbitrary 
integer $N\geq 1$ and a sequence of integers $i_1,i_2,\dots, i_N$, the seed $\mathcal{S}(\mathfrak{C})$ admits the successive mutations 
$\mu_1(\mathcal{S}(\mathfrak{C})),\mu_2\mu_1(\mathcal{S}(\mathfrak{C}),\dots,\mu_N\dots\mu_2\mu_1(\mathcal{S}(\mathfrak{C}))$. Since the above is true after replacing $\mathfrak{C}$ with any $\mathfrak{C}_s$, where $s$ is large enough for all the $i_j$ to belong to
$K^{\text{ex}}(\mathfrak{C}_s)$,  by the construction of the matrix $B(\mathfrak{C})$, complete admissibility
also holds for the seed $\mathcal{S}(\mathfrak{C})$.
\end{rem}

\section{Reminder on cluster-tilting theory}
\label{subsec_reminder_clust_tilt}
Triangulated categories which are $2$-Calabi--Yau
and contain cluster-tilting objects are routinely used to additively categorify cluster 
algebras (without coefficients), cf. \cite{Reiten_clust_cat} for a survey.
In this subsection, we recall their definition and their properties. 

Let $k$ be an algebraically closed field and denote by $D=\text{Hom}_k(-,k)$ the duality over $k$. Let $\mathcal{C}$ be a triangulated $k$-category with suspension functor $\Sigma$.

We assume that

\begin{enumerate}
    \item The category $\mathcal{C}$ is Krull--Schmidt,  \emph{$\mathrm{Hom}$-finite} (that is, its spaces of morphisms are finite-dimensional) and that its idempotents split;
    \item the category $\mathcal{C}$ is  2-\emph{Calabi--Yau}, \emph{i.e.} there are bifunctorial isomorphisms
\[\text{Hom}_\mathcal{C}(X,Y)\cong D(\text{Hom}_\mathcal{C}(Y,\Sigma^2X))\ \ \ X,Y\in \mathcal{C}.\]
\end{enumerate}
For an object $X$ of $\mathcal{C}$, we denote by $\text{add}(X)$ the full subcategory of $\mathcal{C}$ whose objects are the direct factors of finite direct sums of copies of $X$.

\begin{de}(cluster-tilting object)
An object $T$ of $\mathcal{C}$ is \emph{(basic) cluster-tilting} if it is the sum of pairwise non-isomorphic indecomposables and the objects $X$ such that $\text{Ext}^1_{\mathcal{C}}(T,X)$ vanishes are the objects of $\text{add}(T)$.
\end{de}
Cluster-tilting objects in $\mathcal{C}$ play the role of clusters in a cluster algebra.
In the following, we assume that $T$ is a cluster tilting object of $\mathcal{C}$.
The following theorem allows to lift mutation of clusters to mutation of cluster-tilting objects.

\begin{teo}[\cite{Iyama_Yoshino_mut_tringcat_rigi_Cohen}]
Let $T_i$ be an  indecomposable direct factor of $T\cong T_i\oplus T_0$. Up to isomorphism, there is a unique indecomposable object $T_i^*$ not isomorphic to $T_i$ such that the object $T_i^*\oplus T_0$ obtained from $T$ by replacing $T_i$ with $T_i^*$ is cluster-tilting. 
Moreover, there are non split triangles,
unique up to isomorphism,

\begin{equation}
\label{exchange_triangles_IYAMA}
    T_i\xrightarrow{u} B\xrightarrow{v} T_i^* \xrightarrow{} \Sigma T_i \text{   and   } T_i^*\xrightarrow{u'} B'\xrightarrow{v'} T_i \xrightarrow{} \Sigma T_i^*
\end{equation} 
such that $u$ and $u'$ are minimal left $\text{add}(T_0)$-approximations and $v$ and $v'$ are minimal
right $\text{add}(T_0)$-approximations.
\end{teo}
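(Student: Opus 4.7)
The plan is to construct $T_i^*$ via the Iyama--Yoshino recipe. Since $\mathcal{C}$ is Krull--Schmidt, Hom-finite and idempotent complete, a minimal left $\operatorname{add}(T_0)$-approximation $u\colon T_i\to B$ exists. I complete $u$ to a distinguished triangle
\[ T_i\xrightarrow{u} B\xrightarrow{v} T_i^*\to \Sigma T_i, \]
and take this $T_i^*$ as the candidate exchange partner. Because $T$ is basic and cluster-tilting, $T_i\notin\operatorname{add}(T_0)$, so $u$ cannot split; hence the triangle is non-split, $T_i^*\neq 0$ and $T_i^*\not\cong T_i$.

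The next step is to verify that $T_0\oplus T_i^*$ is again cluster-tilting. For rigidity, applying $\operatorname{Hom}_{\mathcal{C}}(-,X)$ with $X\in\operatorname{add}(T_0)$ to the triangle and using the left-approximation property of $u$ together with the cluster-tilting relations $\operatorname{Ext}^1_{\mathcal{C}}(T,T)=0$ gives $\operatorname{Ext}^1_{\mathcal{C}}(T_i^*,X)=0$; the 2-Calabi--Yau duality $\operatorname{Ext}^1(A,B)\cong D\operatorname{Ext}^1(B,A)$ then yields $\operatorname{Ext}^1_{\mathcal{C}}(X,T_i^*)=0$, and an analogous argument applied to $\operatorname{Hom}_{\mathcal{C}}(-,T_i^*)$ produces $\operatorname{Ext}^1_{\mathcal{C}}(T_i^*,T_i^*)=0$. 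Applying $\operatorname{Hom}_{\mathcal{C}}(X,-)$ to the triangle shows that $v$ is automatically a right $\operatorname{add}(T_0)$-approximation, its minimality being inherited from that of $u$ via a Krull--Schmidt direct-summand argument. Indecomposability of $T_i^*$ is also a consequence of the minimality of $u$: any nontrivial decomposition of $T_i^*$ would split off a trivial summand from the triangle, contradicting the minimality of $u$.

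Upgrading rigidity of $T_0\oplus T_i^*$ to the full cluster-tilting property is the main technical obstacle. The idea is: given $Y$ with $\operatorname{Ext}^1_{\mathcal{C}}(T_0\oplus T_i^*,Y)=0$, take a right $\operatorname{add}(T_0\oplus T_i^*)$-approximation $Z\to Y$, complete to a triangle $K\to Z\to Y\to \Sigma K$, and combine the 2-Calabi--Yau duality with the cluster-tilting property of the original $T=T_0\oplus T_i$ to show that $K\in\operatorname{add}(T_0\oplus T_i^*)$ and that the triangle splits, whence $Y\in\operatorname{add}(T_0\oplus T_i^*)$. It is in this step that the 2-CY assumption is used in an essential way; extracting the required $\operatorname{Ext}^1$-vanishing for $K$ and then closing the loop with the original $T$ is the delicate part of the proof.

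For the second triangle, the dual construction starting from a minimal right $\operatorname{add}(T_0)$-approximation $v'\colon B'\to T_i$ produces a triangle $T_i^{**}\xrightarrow{u'} B'\xrightarrow{v'} T_i\to \Sigma T_i^{**}$ with the required approximation properties, and the same line of argument shows that $T_0\oplus T_i^{**}$ is cluster-tilting. Uniqueness of the indecomposable complement of $T_0$ then forces $T_i^{**}\cong T_i^*$, giving the two exchange triangles simultaneously. Uniqueness up to isomorphism of $T_i^*$ itself (and hence of the triangles) follows because the connecting morphism $T_i^*\to \Sigma T_i$ is determined up to automorphism by being a non-zero element of $\operatorname{Ext}^1_{\mathcal{C}}(T_i^*,T_i)$, which, via 2-Calabi--Yau duality with $\operatorname{Ext}^1_{\mathcal{C}}(T_i,T_i^*)$ produced by the analogous construction, pins down the isomorphism class of the triangles.
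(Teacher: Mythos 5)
First, note that the paper does not prove this statement at all: it is recalled verbatim from Iyama--Yoshino, so there is no in-paper proof to match your argument against; the relevant comparison is with their original proof, which proceeds through the theory of mutation of cluster-tilting subcategories and Calabi--Yau reduction to the subfactor category ${}^{\perp}(\Sigma T_0)/[T_0]$, rather than by a direct verification of the kind you attempt.

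Measured against that, your proposal sets up the standard construction correctly (complete a minimal left $\mathrm{add}(T_0)$-approximation $u\colon T_i\to B$ to a triangle; the rigidity computations $\mathrm{Ext}^1_{\mathcal{C}}(T_i^*,X)=0$ for $X\in\mathrm{add}(T_0)$ and, with a little extra care, $\mathrm{Ext}^1_{\mathcal{C}}(T_i^*,T_i^*)=0$ do go through as you indicate), but it stops short of the theorem at exactly the three hard points. (i) The upgrade from ``$T_0\oplus T_i^*$ is rigid'' to ``$T_0\oplus T_i^*$ is cluster-tilting'' is only described as a plan, and you yourself flag the key $\mathrm{Ext}^1$-vanishing for the cone $K$ as the delicate step; this is precisely where Iyama--Yoshino invest their main technical work, so the proof is not complete there. (ii) Your indecomposability argument is incorrect as stated: a nontrivial decomposition $T_i^*\cong X_1\oplus X_2$ only forces a summand to split off the triangle (contradicting minimality of $u$) when one of the components of the connecting morphism $T_i^*\to\Sigma T_i$ vanishes on a summand; if both components are nonzero, no contradiction with minimality arises, and indecomposability genuinely needs the deeper exchange/reduction theory. (iii) The uniqueness of the complement --- the heart of the theorem --- is not established: your closing argument is circular (you invoke ``uniqueness of the indecomposable complement'' to identify $T_i^{**}$ with $T_i^*$), and the claim that the connecting morphism is ``determined up to automorphism'' as a nonzero element of $\mathrm{Ext}^1_{\mathcal{C}}(T_i^*,T_i)$ is unjustified: this space need not be one-dimensional (by Lemma 5.3 of the paper, $\dim\mathrm{Ext}^1_{\mathcal{C}}(T_i,T_i^*)=1$ exactly when there is no loop at $i$), and in any case one must show that an \emph{arbitrary} indecomposable $X\not\cong T_i$ with $T_0\oplus X$ cluster-tilting is isomorphic to $T_i^*$, which your argument never addresses. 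So the proposal is a reasonable sketch of the construction, but the cluster-tilting property, indecomposability and uniqueness all remain genuine gaps.
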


We call $\mu_i(T)=T_i^*\oplus T_0$ the \emph{mutation} of $T$ at $T_i$. We refer to $(T_i,T^*_i)$ as an \emph{exchange pair} and to the triangles (\ref{exchange_triangles_IYAMA}) as the \emph{exchange triangles} of $T_i$ and $T_i^*$.

\begin{lem}[{\cite[Lemma 7.5]{Keller_clust_alg_quiv_rep_tring_cat}}]
\label{lem_exchange_triang_given_arrows}
The Gabriel quiver of the endomorphism algebra of $T$ does not have a loop at the vertex $i$ corresponding to the indecomposable $T_i$ if and only if 
\[
\dim \mathrm{Ext}_\mathcal{C}^1(T_i,T_i^*)=1.
\]
In this case, in the exchange triangles (\ref{exchange_triangles_IYAMA}), we have 
\[ 
B=\bigoplus_{i\rightarrow j} T_j \text{   and   } B'=\bigoplus_{j\rightarrow i} T_j,
\]
where the sums are taken over the sets of arrows with source respectively target $i$
in the Gabriel quiver of the endomorphism algebra of $T$.
\end{lem}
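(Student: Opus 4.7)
The plan is to translate the statement into homological properties of the endomorphism algebra $A = \mathrm{End}_\mathcal{C}(T)$, using the 2-Calabi--Yau duality to reduce to a computation of $\mathrm{coker}(u^*)$, and then applying the standard theory of minimal approximations in Krull--Schmidt categories. As a first step, I would apply $\mathrm{Hom}_\mathcal{C}(-, T_i)$ to the first exchange triangle in (\ref{exchange_triangles_IYAMA}). Since $B \in \mathrm{add}(T_0) \subset \mathrm{add}(T)$ and $T$ is cluster-tilting, $\mathrm{Ext}^1_\mathcal{C}(B, T_i) = 0$, so the long exact sequence collapses to
\begin{equation*}
    \mathrm{Hom}_\mathcal{C}(B, T_i) \xrightarrow{u^*} \mathrm{End}_\mathcal{C}(T_i) \to \mathrm{Ext}^1_\mathcal{C}(T_i^*, T_i) \to 0.
\end{equation*}
The 2-Calabi--Yau property gives $\dim\mathrm{Ext}^1_\mathcal{C}(T_i, T_i^*) = \dim\mathrm{Ext}^1_\mathcal{C}(T_i^*, T_i) = \dim\mathrm{coker}(u^*)$, so the Ext condition becomes purely algebraic.

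Next I would identify $\mathrm{im}(u^*)$ as a two-sided ideal of the local ring $\mathrm{End}_\mathcal{C}(T_i)$: stability on the right uses the left-approximation property (any composition $u \circ g \colon T_i \to T_i \to B$ factors through $u$ because $B \in \mathrm{add}(T_0)$), and stability on the left is clear. Moreover, $\mathrm{im}(u^*)$ coincides with the set of endomorphisms of $T_i$ factoring through $\mathrm{add}(T_0)$, and since $u$ is radical (as $T_i$ is not a summand of $B$), $\mathrm{im}(u^*) \subseteq \mathrm{rad}\,\mathrm{End}(T_i)$. Using the identification $e_j A e_i \cong \mathrm{Hom}_\mathcal{C}(T_i, T_j)$, the absence of a loop at vertex $i$ in the Gabriel quiver of $A$ translates into $\mathrm{rad}\,\mathrm{End}(T_i) \subseteq \mathrm{im}(u^*) + \mathrm{rad}^2\,\mathrm{End}(T_i)$. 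Since $\mathrm{rad}\,\mathrm{End}(T_i)$ is nilpotent and $\mathrm{im}(u^*)$ is a two-sided ideal, Nakayama's lemma upgrades this inclusion to $\mathrm{im}(u^*) = \mathrm{rad}\,\mathrm{End}(T_i)$, whence $\dim\mathrm{coker}(u^*) = 1$. The converse implication follows directly from $\mathrm{im}(u^*) \subseteq \mathrm{rad}\,\mathrm{End}(T_i)$.

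For the identification $B = \bigoplus_{i \to j} T_j$, I would examine, for $j \neq i$, the surjection $u_j^* \colon \mathrm{Hom}_\mathcal{C}(B, T_j) \twoheadrightarrow \mathrm{Hom}_\mathcal{C}(T_i, T_j)$ (onto because $T_j \in \mathrm{add}(T_0)$ and $u$ is a left approximation). Decomposing $B = \bigoplus_k T_k^{n_k}$ and reducing modulo $\mathrm{rad}^2\,\mathrm{Hom}(T_i, T_j)$, contributions from summands $T_k$ with $k \neq j$, as well as from radical endomorphisms of $T_j$, all vanish (both composition factors being radical), so surjectivity gives $n_j \geq \dim\mathrm{Hom}(T_i, T_j)/\mathrm{rad}^2$. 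Minimality of $u$ yields the reverse inequality: any linear relation modulo $\mathrm{rad}^2$ among the $n_j$ components of $u$ into $T_j$ can, by an iterative Nakayama-type lift using the left approximation property, be absorbed into an automorphism of $B$ killing a $T_j$-summand, contradicting minimality. Since $\dim\mathrm{Hom}(T_i, T_j)/\mathrm{rad}^2$ equals the number of arrows $i \to j$ in the Gabriel quiver of $A$ (by the standard description via $e_j(\mathrm{rad}\,A/\mathrm{rad}^2\,A)e_i$), this gives $B = \bigoplus_{i \to j} T_j$. The dual argument applied to the second exchange triangle, with $v' \colon B' \to T_i$ a minimal right $\mathrm{add}(T_0)$-approximation, gives $B' = \bigoplus_{j \to i} T_j$.

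The main obstacle I anticipate is the reverse inequality in the last step: lifting a linear dependence modulo $\mathrm{rad}^2$ to an honest automorphism of $B$ requires iteratively absorbing radical-squared corrections via the left approximation property, a standard but delicate Auslander--Reiten-type computation that hinges on the nilpotency of $\mathrm{rad}\,\mathrm{End}(T_k)$ for each Krull--Schmidt factor $T_k$ of $B$.
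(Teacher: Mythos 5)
Your first half is correct and essentially the standard argument (note that the paper itself gives no proof of this lemma: it is imported from Keller's survey, so the comparison is with the usual proof). Applying $\mathrm{Hom}_{\mathcal{C}}(-,T_i)$ to the first exchange triangle, identifying $\mathrm{im}(u^*)$ with the ideal of endomorphisms of $T_i$ factoring through $\mathrm{add}(T_0)$, and combining locality of $\mathrm{End}_{\mathcal{C}}(T_i)$, nilpotency of its radical and the $2$-Calabi--Yau symmetry does prove the equivalence between the absence of a loop at $i$ and $\dim\mathrm{Ext}^1_{\mathcal{C}}(T_i,T_i^*)=1$; the spanning argument giving $n_j\geq\#\{\text{arrows }i\to j\}$ is also fine.

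The gap is in the ``reverse inequality''. Minimality of $u$ does \emph{not} give linear independence of the components of $u$ into the copies of $T_j$ modulo $\mathrm{rad}^2(T_i,T_j)$; what it gives (this is the projective-cover characterization of a minimal left $\mathrm{add}(T_0)$-approximation) is independence modulo the smaller space $D_1=\sum_{k\neq i}\mathrm{rad}(T_k,T_j)\circ\mathrm{Hom}(T_i,T_k)$. The space $\mathrm{rad}^2(T_i,T_j)$ contains in addition the terms $\mathrm{Hom}(T_i,T_j)\circ\mathrm{rad}\,\mathrm{End}(T_i)$ coming from compositions through $T_i$ itself, and your absorption step fails exactly there: if $\sum_s\lambda_s u_{j,s}=f\circ\varphi$ with $\varphi\in\mathrm{rad}\,\mathrm{End}(T_i)$, the left approximation property cannot be applied to $\varphi$ (its target is $T_i\notin\mathrm{add}(T_0)$), so such a relation cannot be traded for an automorphism of $B$ killing a $T_j$-summand. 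And it must fail in general: without the no-loop hypothesis the conclusion $B=\bigoplus_{i\to j}T_j$ is simply false, whereas your argument for this part invokes only minimality, which holds for the minimal approximation regardless. The repair is to feed part 1 back in: no loop at $i$ gives $\mathrm{rad}\,\mathrm{End}(T_i)=\mathrm{im}(u^*)=\sum_{k\neq i}\mathrm{Hom}(T_k,T_i)\circ\mathrm{Hom}(T_i,T_k)$, and since any endomorphism of $T_j$ factoring through $T_i$ is non-invertible, one gets $\mathrm{Hom}(T_i,T_j)\circ\mathrm{rad}\,\mathrm{End}(T_i)\subseteq D_1$, hence $\mathrm{rad}^2(T_i,T_j)=D_1$. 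Only then does minimality yield independence modulo $\mathrm{rad}^2$, giving $n_j=\dim\mathrm{Hom}(T_i,T_j)/\mathrm{rad}^2(T_i,T_j)$, the number of arrows $i\to j$; the dual argument with $v'$ then handles $B'$.
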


The next result states that, under certain assumptions, mutating at an indecomposable direct summand of a cluster-tilting object has the effect of a combinatorial mutation at the level of the quiver of its endomorphism algebra.

\begin{teo}[\cite{BIRS_Clust_struct_2CY_unipot_groups}]
If the quivers $Q$ and $Q'$ of the endomorphism algebras of $T$ and $\mu_i(T)$ do not have loops nor 2-cycles, then $Q'$ is the mutation of $Q$ at the vertex $i$.
\end{teo}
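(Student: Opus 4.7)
My plan is to verify arrow-by-arrow that $Q'$ coincides with the Fomin--Zelevinsky mutation $\mu_i(Q)$. Recall that $\mu_i(Q)$ is obtained from $Q$ in three steps: (a) reverse every arrow incident to $i$; (b) for each length-two path $j\to i\to k$ with $j,k\neq i$, add a new arrow $j\to k$; (c) remove a maximal collection of 2-cycles in the result of (a) and (b).

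For the arrows incident to $i$, I would apply the preceding lemma twice. Since $Q$ has no loop at $i$, the lemma gives $\dim\mathrm{Ext}^1_{\mathcal{C}}(T_i,T_i^*)=1$ together with the decompositions
\[
B=\bigoplus_{i\to j\text{ in }Q}T_j,\qquad B'=\bigoplus_{j\to i\text{ in }Q}T_j.
\]
Since cluster-tilting mutation is involutive, the same two triangles serve as the exchange triangles for $\mu_i(T)$ mutated back at $T_i^*$, but with $B$ and $B'$ swapping roles. Applying the lemma again, now to $Q'$ (which by hypothesis has no loop at $i$), I read off the arrows at $i$ in $Q'$: each summand of $B'$ gives an arrow $i\to j$ in $Q'$, and each summand of $B$ gives an arrow $j\to i$ in $Q'$. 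Thus the arrows at $i$ are exactly reversed, matching step (a).

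For arrows between vertices $j,k\neq i$, I would compare the radical quotients of $\mathrm{End}(T)$ and $\mathrm{End}(\mu_i(T))$ restricted to the common summand $T_0$. The idea is to apply $\mathrm{Hom}_{\mathcal{C}}(T_j,-)$ and $\mathrm{Hom}_{\mathcal{C}}(-,T_k)$ to the two exchange triangles and to use the 2-Calabi--Yau Serre duality to convert the resulting $\mathrm{Ext}^1$-terms back into $\mathrm{Hom}$-terms. The long exact sequences obtained, after quotienting by morphisms factoring through the remaining summands of $T_0$, should yield the identification that the number of arrows $j\to k$ in $Q'$ equals the number in $Q$ plus the number of 2-paths $j\to i\to k$ in $Q$, minus any 2-cycles that cancel. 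This is exactly (b) combined with (c).

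The main obstacle is the second step: carefully controlling the contribution of morphisms factoring through $T_i^*$ in the radical quotient of $\mathrm{End}(\mu_i(T))$, and verifying that the 2-cycles cancel precisely as prescribed by Fomin--Zelevinsky. The no-loop and no-2-cycle hypotheses on both $Q$ and $Q'$ are crucial here: they guarantee that the approximations in the exchange triangles have clean middle terms (via the preceding lemma) and they pin down the 2-cycle removal in (c) as the unique cancellation compatible with the mutation rule. Minimality of the approximations and the 2-CY duality carry most of the combinatorial weight. This is essentially the argument of Buan--Iyama--Reiten--Scott cited in the statement.
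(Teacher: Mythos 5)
The paper does not prove this statement at all: it is quoted verbatim from Buan--Iyama--Reiten--Scott \cite{BIRS_Clust_struct_2CY_unipot_groups} and used as a black box, so there is no internal proof to compare yours against; I can only judge your sketch on its own terms and against the cited source. Your first step is essentially correct: by uniqueness of the exchange triangles (Iyama--Yoshino) the same two triangles serve for the mutation back from $\mu_i(T)$, and Lemma \ref{lem_exchange_triang_given_arrows}, applied once to $T$ and once to $\mu_i(T)$ (using the no-loop hypothesis on both $Q$ and $Q'$ and minimality of the approximations, which lets you read the arrows at $i$ off the middle terms), shows that the arrows incident to $i$ are exactly reversed.

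The genuine gap is the second step, which is precisely the hard part of the theorem and which you only describe rather than prove. Saying that the long exact sequences obtained from $\mathrm{Hom}_{\mathcal{C}}(T_j,-)$ and $\mathrm{Hom}_{\mathcal{C}}(-,T_k)$ applied to the exchange triangles ``should yield'' that the number of arrows $j\to k$ in $Q'$ equals the number in $Q$ plus the number of $2$-paths $j\to i\to k$ minus the cancelled $2$-cycles is a statement of the desired conclusion, not an argument. To make it one you would have to: (1) identify arrows $j\to k$ with the dimension of the space of irreducible maps $T_j\to T_k$ in $\mathrm{add}(T)$, i.e.\ of $\mathrm{rad}/\mathrm{rad}^2$ of the endomorphism algebra, and control how this changes when $T_i$ is replaced by $T_i^*$ --- in particular, compute the dimension of the space of maps $T_j\to T_k$ that become radical-square (factor through $\mathrm{add}(T_i^*)$) after mutation, and show it is exactly the number of compositions through $T_i$ counted by the exchange triangles; (2) justify that the relevant connecting maps vanish, using $\mathrm{Ext}^1$-vanishing between summands of a cluster-tilting object and the $2$-CY duality, so that the induced sequences on $\mathrm{Hom}$-spaces are exact where you need them; and (3) show that the no-$2$-cycle hypothesis forces the cancellation in step (c) of Fomin--Zelevinsky mutation to match. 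None of this bookkeeping is carried out, and it is exactly where the several-page proof in \cite{BIRS_Clust_struct_2CY_unipot_groups} does its work; as written, your text is a correct plan in the spirit of that proof, but not a proof.
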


The cluster tilting object $T$ allows us to associate a triangle to each object of $\mathcal{C}$ as follows.

\begin{prop}[{\cite[\S 2]{Keller_Reiten}}]
\label{keller_reiten-approx}
   Let $X$ be an object of $\mathcal{C}$. Then there is a triangle 
\begin{equation}
   \label{approximation_triangles_Keller_Reiten}
       \Sigma^{-1} X\rightarrow T'' \rightarrow T' \rightarrow X
   \end{equation}
   such that $T''$ and $T'$ are objects of $\text{add}(T)$. 
\end{prop}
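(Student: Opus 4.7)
The plan is to construct the desired triangle by starting from a right $\mathrm{add}(T)$-approximation of $X$ and showing that the third term of the resulting triangle automatically lies in $\mathrm{add}(T)$, thanks to the cluster-tilting hypothesis.

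First, since $T$ has only finitely many indecomposable summands and $\mathcal{C}$ is $\mathrm{Hom}$-finite, the subcategory $\mathrm{add}(T)$ is functorially finite in $\mathcal{C}$. Hence the object $X$ admits a right $\mathrm{add}(T)$-approximation $f\colon T'\to X$, meaning that any morphism from an object of $\mathrm{add}(T)$ to $X$ factors through $f$. Embedding $f$ into a triangle of $\mathcal{C}$ yields
\[
Y \longrightarrow T' \xrightarrow{\ f\ } X \longrightarrow \Sigma Y,
\]
and rotating this triangle gives
\[
\Sigma^{-1} X \longrightarrow Y \longrightarrow T' \xrightarrow{\ f\ } X,
\]
which has precisely the desired shape, once we verify $Y\in\mathrm{add}(T)$.

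To prove $Y\in\mathrm{add}(T)$, I would use the characterization of cluster-tilting objects: by definition, an object $Z$ of $\mathcal{C}$ lies in $\mathrm{add}(T)$ if and only if $\mathrm{Ext}^1_\mathcal{C}(T,Z)=0$. Applying the cohomological functor $\mathrm{Hom}_\mathcal{C}(T,-)$ to the triangle above produces a long exact sequence containing
\[
\mathrm{Hom}_\mathcal{C}(T,T') \xrightarrow{\ f_\ast\ } \mathrm{Hom}_\mathcal{C}(T,X) \longrightarrow \mathrm{Ext}^1_\mathcal{C}(T,Y) \longrightarrow \mathrm{Ext}^1_\mathcal{C}(T,T').
\]
The rightmost term vanishes because $T$ is cluster-tilting and $T'\in\mathrm{add}(T)$. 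The map $f_\ast$ is surjective, precisely because $f$ is a right $\mathrm{add}(T)$-approximation and every morphism $T\to X$ factors through $f$. An elementary diagram chase then forces $\mathrm{Ext}^1_\mathcal{C}(T,Y)=0$, so that $Y\in\mathrm{add}(T)$, and we may set $T''=Y$.

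There is no real obstacle here beyond assembling the three ingredients (functorial finiteness of $\mathrm{add}(T)$, the defining Ext-vanishing property of cluster-tilting objects, and the approximation property of $f$). The only point that requires care is the direction of the triangle and the bookkeeping of the approximation: one must set up the triangle so that the surjectivity supplied by $f$ being a right approximation lands in the correct spot of the long exact sequence to kill $\mathrm{Ext}^1_\mathcal{C}(T,Y)$. Once this is arranged, the proof is immediate.
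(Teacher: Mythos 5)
Your proof is correct, and it is essentially the standard argument: the paper itself gives no proof beyond citing Keller--Reiten, and the argument there is exactly yours (take a right $\mathrm{add}(T)$-approximation $T'\to X$, which exists by $\mathrm{Hom}$-finiteness, complete to a triangle, and use surjectivity of $\mathrm{Hom}_{\mathcal{C}}(T,T')\to\mathrm{Hom}_{\mathcal{C}}(T,X)$ together with $\mathrm{Ext}^1_{\mathcal{C}}(T,T')=0$ to conclude $\mathrm{Ext}^1_{\mathcal{C}}(T,Y)=0$, hence $Y\in\mathrm{add}(T)$ by the defining property of cluster-tilting objects). No gaps.
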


We refer to a triangle of the form (\ref{approximation_triangles_Keller_Reiten}) as an 
{\em approximation triangle} of $X$ by $T$. The  \emph{index} of $X$ is the difference $[T']-[T'']$ in the split Grothendieck
group $K_0(\mathrm{add}(T))$.
Approximation triangles find an application in the following theorem. 

\begin{teo}[Palu's generalized mutation rule \cite{Palu02}]
\label{teo_Palu}Assume that $\mathcal{C}$ is algebraic and let $T=\bigoplus_i T_i$ and $T'=\bigoplus_i T_j'$ be two basic cluster tilting objects in $\mathcal{C}$. Consider, for each summand $T_j'$ of $T'$, the approximation triangle of $T_j'$ by $T$:
 \[\Sigma^{-1} T'_j\rightarrow \bigoplus_i T_i^{\beta_{ij} }\rightarrow \bigoplus_i  T_i^{\alpha_{ij}} \rightarrow T'_j.\]
Define the matrix $P=(p_{ij})_{i,j\in J}$ by $p_{ij}=\alpha_{ij}-\beta_{ij}$.
Then the matrix $P$ is invertible and the matrices associated to the quivers of the endomorphism algebras of the cluster tilting objects $T$ and $T'$ are linked by the following formula:
\[
B_{T'}=P^{-1} B_{T} P^{-t},
\]
where $P^{-t}$ denotes the inverse of the transpose of $P$.
\end{teo}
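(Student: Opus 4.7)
The plan is to prove the theorem by identifying the exchange matrix $B_T$ with the Gram matrix of a canonical antisymmetric bilinear form on the Grothendieck group of indices; the stated formula then becomes the standard transformation rule for that form under a change of basis.

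As a first step, I would verify that $\mathrm{ind}_T(X) = [T']-[T''] \in K_0(\mathrm{add}\,T)$, defined via any approximation triangle as in Proposition~\ref{keller_reiten-approx}, is independent of the choice of such triangle—an argument using minimality of approximations or the octahedral axiom. In particular, the matrix $P=(p_{ij})$ has $j$-th column equal to $\mathrm{ind}_T(T'_j)$, and $P$ represents the natural comparison map between $K_0(\mathrm{add}\,T')$ and $K_0(\mathrm{add}\,T)$. Its invertibility will be deduced from the fact that the indecomposable summands of any basic cluster-tilting object, viewed through their indices, form a basis of a common abstract index group; this can be established by constructing an inverse for $P$ from the approximation triangles of each $T_i$ by $T'$.

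Next, following Palu, I would introduce an antisymmetric bilinear pairing $\langle -,-\rangle_a$ on the index group, obtained by antisymmetrization of a truncated Euler-type form
\[
\langle X,Y\rangle_3 = \dim \mathrm{Hom}_{\mathcal{C}}(X,Y) - \dim \mathrm{Hom}_{\mathcal{C}}(X,\Sigma Y) + \dim \mathrm{Hom}_{\mathcal{C}}(X,\Sigma^2 Y).
\]
The central technical point—and the main obstacle of the proof—is to show that $\langle X,Y\rangle_3-\langle Y,X\rangle_3$ depends only on the pair of indices $(\mathrm{ind}_T X,\mathrm{ind}_T Y)$. This descent onto indices is verified by checking invariance across approximation triangles, using in an essential way the 2-Calabi--Yau duality $\mathrm{Hom}_{\mathcal{C}}(U,\Sigma V)\cong D\mathrm{Hom}_{\mathcal{C}}(V,\Sigma U)$ to cancel the long-exact-sequence contributions.

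Once the canonical pairing is established, its Gram matrix in the basis $\{[T_i]\}$ of $K_0(\mathrm{add}\,T)$ is computed to equal $B_T$: rigidity of $T$ annihilates the $\mathrm{Hom}(T_i,\Sigma T_j)$ terms, and Lemma~\ref{lem_exchange_triang_given_arrows} identifies the remaining contributions with the signed arrow counts in the Gabriel quiver of $\mathrm{End}_{\mathcal{C}}(T)$—that is, with the entries $b_{ij}$ of $B_T$ up to the sign/transposition conventions used in the paper. Running the same computation with $T'$ in place of $T$ expresses $B_{T'}$ as the Gram matrix of the \emph{same} canonical pairing in the basis $\{[T'_j]\}$, and the formula $B_{T'}=P^{-1}B_T P^{-t}$ then follows from the elementary linear algebra of change of basis for bilinear forms, with $P$ serving as the matrix of the transition between the two bases.
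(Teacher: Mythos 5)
The paper does not prove Theorem~\ref{teo_Palu}: it is quoted from \cite{Palu02}, so your sketch has to be measured against Palu's argument, and as it stands it does not reconstruct it.

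The central object of your plan is vacuous. In a $2$-Calabi--Yau category the Serre duality you invoke gives $\dim\mathrm{Hom}_{\mathcal{C}}(X,\Sigma^2Y)=\dim\mathrm{Hom}_{\mathcal{C}}(Y,X)$ and $\dim\mathrm{Hom}_{\mathcal{C}}(X,\Sigma Y)=\dim\mathrm{Hom}_{\mathcal{C}}(Y,\Sigma X)$; substituting these into $\langle X,Y\rangle_3-\langle Y,X\rangle_3$ makes every term cancel, so your antisymmetrized pairing is identically zero on all of $\mathcal{C}$ --- the $2$-CY duality does not merely ``cancel the long-exact-sequence contributions'', it kills the whole form. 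In particular its Gram matrix in the basis $\{[T_i]\}$ is $0$, not $B_T$. The claimed identification of the ``remaining contributions'' with signed arrow counts is also incorrect even for a nonzero truncation: $\dim\mathrm{Hom}_{\mathcal{C}}(T_i,T_j)$ counts all morphisms, not irreducible ones, and Lemma~\ref{lem_exchange_triang_given_arrows} (which describes middle terms of exchange triangles) does not convert Hom-dimensions into arrow numbers. Already for the canonical cluster-tilting object in the cluster category of type $A_3$ (hexagon model, $T=13\oplus 14\oplus 15$) one has $\dim\mathrm{Hom}(13,15)-\dim\mathrm{Hom}(15,13)=\pm 1$ while $b_{13,15}=0$.

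Even granting a nonzero canonical antisymmetric form on the index lattice with Gram matrix $B_T$ in the basis $\{[T_i]\}$, your final step proves the wrong formula: if $[T'_j]=\sum_i p_{ij}[T_i]$, the change-of-basis law for a bilinear form yields $B_{T'}=P^{t}B_TP$, not $P^{-1}B_TP^{-t}$; the two agree in rank $2$ with $\det P=\pm1$ (so small examples are misleading) but not in general. The law $P^{-1}B_TP^{-t}$ is the transformation rule for the matrix of a homomorphism from the \emph{dual} lattice into $K_0(\mathrm{add}\,T)$, and this is how the actual proof goes: $B_T$ is realized as the matrix of a canonical map from the Grothendieck group of $\mathrm{mod}\,\mathrm{End}_{\mathcal{C}}(T)$ (dimension vectors, the $c$-vector side) to $K_0(\mathrm{add}\,T)$ (indices, the $g$-vector side), built from the exchange triangles/projective presentations, and its invariance under change of cluster-tilting object gives exactly the stated conjugation, the dual basis transforming by $P^{-t}$ (compare the paper's final remark reformulating Nakanishi--Zelevinsky, where $B$ is likewise conjugated by $G^{-1}$ and $G^{-t}$). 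So the variance of your invariant is backwards; repairing the argument forces the pairing onto the dimension-vector side. Finally, the parenthetical claim that an inverse of $P$ is obtained from the approximation triangles of the $T_i$ by $T'$ is false, because indices do not compose: in the cluster category of type $A_2$ (pentagon model) with $T=13\oplus 14$ and $T'=14\oplus 24$ one finds $\mathrm{ind}_{T'}(13)=-[24]$ whereas $P^{-1}[13]=[14]-[24]$. Invertibility of $P$ is instead the theorem of Dehy--Keller \cite{Dehy_Keller_combinatorics_2CY}, which you also cite; that part is fine.
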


The above result is the main tool in our approach to the determination of the exchange matrix $B(\mathfrak{C})$ of Theorem \ref{teo_main_KKOP}. Our strategy consists in constructing a suitable category $\mathcal{C}$ with cluster tilting objects $T$ and $T'$ such that the quivers of their endomorphism algebras are 
$Q^{[a,b]}(\underline{w}_0)$ and $Q(\mathfrak{C})$ respectively. In this way, we can compute $B(\mathfrak{C})$ by applying Theorem \ref{teo_Palu}. Following Amiot's work (\cite{Amiot_clust_cat_glob_dim_2}), we proceed to define two classes of 2-Calabi-Yau categories $\mathcal{C}$ with cluster tilting objects.

To start with, we recall the notion of a quiver with potential following \cite{DWZ_quiv_pot_rep_I}.
Let $Q$ be a finite quiver. We denote by $\widehat{kQ}$ the completion of the path algebra $kQ$ at the ideal generated by the arrows of $Q$ and we denote by $\mathrm{HH}_0$ the \emph{continuous Hochschild homology} of $\widehat{kQ}$, $i.e.$ the quotient of $\widehat{kQ}$ by the closure of the subspace generated by all commutators.  The vector space $\mathrm{HH}_0$ admits a topological basis formed by
the cyclic permutation classes of cyclic paths.
Let $W$ be a \emph{potential} on $Q$, i.e. an element $W$ of $\mathrm{HH}_0$ 
whose expansion in 
the basis of the cycles does not involve cycles of length zero. The quiver with potential $(Q,W)$ is \emph{reduced} if the expansion of $W$ does not invovle 2-$cycles$. If $(Q,W)$ is not reduced, Theorem 4.6 in \cite{DWZ_quiv_pot_rep_I} associates to it, in an essentially unique way, a reduced quiver with potential, called the \emph{reduced component} of $(Q,W)$.    \\
In \cite{DWZ_quiv_pot_rep_I}, the mutation operation of the classical theory of cluster algebra is adapted to the context of quivers with potential. Let $i$ be a vertex of $Q$. Assume that there are not loops or 2-cycles incident to $i$. Up to cyclic permutation, we can also assume that no cycle in the potential $W$ starts or ends at $i$. Construct from $Q$ the quiver $\Tilde{\mu}_i(Q)$ as follows:

\begin{enumerate}
    \item keep all the arrows not incident to $i$;
    \item replace each arrow $\alpha$ incident to $i$ with an opposite arrow $\alpha^*$.
    \item For any couple of arrows $\alpha: j\rightarrow i$ and $\beta: i\rightarrow k$, add an arrow $[\beta\alpha]: j\rightarrow k$.
\end{enumerate}

Endow $\Tilde{\mu}_i(Q)$ with the potential $\Tilde{\mu}_i(W)$ given by
\[ \Tilde{\mu}_i(W) = W_r + W_t,\]
where $W_r$ is obtained from $W$ by replacing each subpath of the form $j \xrightarrow{\alpha} i \xrightarrow{\beta} k$ appearing in a cycle of $W$ with the arrow $[\beta\alpha]$, and $W_t$ is given by
\[
W_t = \sum_{j \xrightarrow{\alpha} i \xrightarrow{\beta} k } [\alpha\beta]\alpha^*\beta^*.
\]

The \emph{mutation} of $(Q,W)$ at the vertex $i$, denoted by $\mu_i(Q,W)$, is defined as the reduced component of $(\Tilde{\mu}_i(Q),\Tilde{\mu}_i(W))$. 

For an arrow $\alpha$ of $Q$, we define the \emph{cyclic derivative with respect to} $\alpha$ as the unique 
continuous linear map 
\[ \partial_\alpha: \mathrm{HH}_0\rightarrow \widehat{kQ}\]
which sends the class of a path $p$ to the sum
\[ \sum_{p=uav} vu \]
taken over all the decompositions of $p$ as a concatenation of paths $u$, $a$ and $v$, where $u$ and $v$ are of length $\geq 0$. 
The \emph{Jacobi ideal} $(\partial_\alpha W)_{\alpha \in Q_1}$ associated to the quiver with potential $(Q,W)$ is the closure of the ideal of $\widehat{kQ}$ generated by the cyclic derivatives of the potential $W$ with respect to all the arrows of $Q$. We define the \emph{Jacobi algebra} of $(Q,W)$ as the quotient
\[ J(Q,W) = \widehat{kQ}/(\partial_\alpha W)_{\alpha \in Q_1} .\]
We say that the quiver with potential $(Q,W)$ is

\begin{enumerate}
\item \emph{Jacobi finite} if the Jacobi algebra is finite-dimensional;
\item \emph{non-degenerate} if any sequence of iterated mutations of $(Q,W)$ does not produce 2-cycles;
\item \emph{rigid} if, up to cyclic permutation, any cycle of $Q$ is contained in the Jacobi ideal. 
\end{enumerate}

By \cite[Cor.~8.2]{DWZ_quiv_pot_rep_I}, any rigid quiver with potential is non-degenerate. 

Next, following \cite{Ginzburg_CY}, we consider a differential graded algebra associated to $(Q,W)$ 
(we refer to \cite{Keller_dg_cat} for basic notions on diffential graded algebras). Let $\Tilde{Q}$ be the graded quiver with the same set of vertices as $Q$ and with arrows
\begin{itemize}
    \item the arrows of $Q$ in degree 0;
    \item an arrow $\alpha^*: j\rightarrow i$ in degree $-1$ for each arrow $\alpha: i\rightarrow j$  of $Q$;
    \item a loop $t_i$ in degree $-2$ at each vertex $i$ of $Q$.
\end{itemize}
The \emph{Ginzburg algebra} $\widehat{\Gamma}(Q,W)$ is the differential graded algebra whose underlying graded algebra is the completed path algebra (formed in the category of graded algebras) of $\Tilde{Q}$ and whose differential $d$ is the unique continuous linear endomorphism of degree 1 satisfying the Leibniz rule (i.e.
$d(pq) = (dp)q + (-1)^apdq$ for all homogeneous elements $p$ of degree $a$ and all $q$) and acting on the generators as follows:

\begin{itemize}
    \item $d\alpha=0$ for any arrow $\alpha$ of $Q$;
    \item $d\alpha^*=\partial_\alpha W$ for any arrow $\alpha$ of $Q$;
    \item $d t_i= e_i (\sum_{\alpha \in Q_1} \alpha\alpha^* - \alpha^*\alpha) e_i$ for any vertex $i$ of $Q$, where $e_i$ denotes the idempotent at the vertex $i$.
\end{itemize}

Let us abbreviate $\widehat{\Gamma}(Q,W)$ by $\Gamma$. 
We denote by $D\Gamma$ the derived category of the dg algebra $\Gamma$ (its
objects are all dg right $\Gamma$-modules) and by $\mathrm{per}(\Gamma)$ the
{\em perfect derived category}, i.e. the thick subcategory of $D\Gamma$ generated
by the free right $\Gamma$-module of rank $1$. The {\em perfectly valued}
derived category $\mathrm{pvd}(\Gamma)$ is the full subcategory
of $D\Gamma$ whose objects are the dg $\Gamma$-modules $M$
whose underlying complex of vector spaces is perfect (i.e. the homology
of $M$ is of finite total dimension). The category
$\mathrm{pvd}(\Gamma)$ is contained in $\mathrm{per}(\Gamma)$, 
cf.~\cite{Keller_Yang_derived_eq_muta_pot}.

\begin{de}[{\cite[Def.~3.5]{Amiot_clust_cat_glob_dim_2}}] The  \emph{cluster category} associated
with $(Q,W)$ is the Verdier quotient
\[
\mathcal{C}_{Q,W} = \mathrm{per}(\Gamma)/\mathrm{pvd}(\Gamma).
\] 
\end{de}

\begin{teo}[{\cite[Thm.~3.6]{Amiot_clust_cat_glob_dim_2}}]
\label{teo_prop_(Q;W)_jacobi_finite}
Suppose that $(Q,W)$ is Jacobi finite.  Then
\begin{enumerate}
    \item the cluster category $\mathcal{C}_{Q,W}$ is
    $\mathrm{Hom}$-finite, Krull--Schmidt and $2$-Calabi--Yau;
    \item the image of $\Gamma$ in $\mathcal{C}_{Q,W}$ is a cluster-tilting object whose endomorphism algebra is isomorphic to the Jacobi algebra $J(Q,W)$.
\end{enumerate}
\end{teo}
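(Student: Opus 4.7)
The plan is to follow Amiot's original argument, which proceeds in four main steps. The starting point is to verify that the Ginzburg dg algebra $\Gamma = \widehat{\Gamma}(Q,W)$ is homologically smooth and carries a bimodule $3$-Calabi--Yau structure. Using the generators of $\Gamma$ (the original arrows, their duals, and the loops $t_i$), one writes down an explicit self-dual projective bimodule resolution of $\Gamma$ of length $3$. This construction is essentially due to Keller and uses the fact that the potential $W$ encodes the Koszul-type duality between the three kinds of generators. From this resolution one obtains a quasi-isomorphism of $\Gamma$-bimodules $\mathbf{R}\mathrm{Hom}_{\Gamma^{\mathrm{e}}}(\Gamma,\Gamma^{\mathrm{e}}) \simeq \Sigma^{-3}\Gamma$, which by standard arguments yields, for $X \in \mathrm{per}(\Gamma)$ and $Y \in \mathrm{pvd}(\Gamma)$, a bifunctorial isomorphism
\[
D\mathrm{Hom}_{D\Gamma}(X,Y) \cong \mathrm{Hom}_{D\Gamma}(Y,\Sigma^{3}X).
\]

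Next, one uses the Jacobi-finite hypothesis, which says exactly that $H^{0}(\Gamma) = J(Q,W)$ is finite-dimensional. Combined with the fact that the Ginzburg algebra is concentrated in non-positive degrees, this implies $\mathrm{pvd}(\Gamma) \subset \mathrm{per}(\Gamma)$, so the quotient $\mathcal{C}_{Q,W} = \mathrm{per}(\Gamma)/\mathrm{pvd}(\Gamma)$ is well-defined. The canonical t-structure on $D\Gamma$ restricts to $\mathrm{per}(\Gamma)$ and gives a natural fundamental domain
\[
\mathcal{F} = \{\, X \in \mathcal{D}^{\leq 0}(\Gamma) \cap {}^{\perp}\!\mathcal{D}^{\leq -2}(\Gamma) \mid H^{i}(X)\ \text{finite-dimensional for } i = -1,0\,\}
\]
for the Verdier quotient; the key technical lemma is that every object of $\mathcal{C}_{Q,W}$ is isomorphic to the image of an object of $\mathcal{F}$, and that morphisms in the quotient between two such representatives can be computed in $\mathrm{per}(\Gamma)$. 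This yields Hom-finiteness and the Krull--Schmidt property; the $2$-Calabi--Yau duality is then deduced by combining the relative $3$-Calabi--Yau formula above with the fact that objects of $\mathrm{pvd}(\Gamma)$ become zero in $\mathcal{C}_{Q,W}$: one shifts the duality by one to trade a $\Sigma^{3}$ against a $\Sigma$ absorbed by the quotient.

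For part $(2)$, write $\pi : \mathrm{per}(\Gamma) \to \mathcal{C}_{Q,W}$ for the projection and $\Gamma_{\mathcal{C}} = \pi(\Gamma)$. Since $H^{p}(\Gamma) = 0$ for $p > 0$ and the free module $\Gamma$ lies in the fundamental domain $\mathcal{F}$, Hom-computations in the quotient give
\[
\mathrm{End}_{\mathcal{C}_{Q,W}}(\Gamma_{\mathcal{C}}) \cong H^{0}(\Gamma) = J(Q,W)
\]
and $\mathrm{Hom}_{\mathcal{C}_{Q,W}}(\Gamma_{\mathcal{C}},\Sigma \Gamma_{\mathcal{C}}) \cong H^{1}(\Gamma) = 0$. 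To conclude that $\Gamma_{\mathcal{C}}$ is cluster-tilting, one checks that if $X \in \mathcal{C}_{Q,W}$ satisfies $\mathrm{Ext}^{1}(\Gamma_{\mathcal{C}},X) = 0$, then $X$ is in $\mathrm{add}(\Gamma_{\mathcal{C}})$: represent $X$ by an object of $\mathcal{F}$; the vanishing of $\mathrm{Ext}^{1}$ forces $H^{-1}$ of the representative to vanish and $H^{0}$ to be projective over $J(Q,W)$, and a standard lifting argument produces the required decomposition.

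The main obstacle is the fundamental-domain lemma: showing that every object of the quotient admits a representative in $\mathcal{F}$, and that morphisms between such representatives are computed in $\mathrm{per}(\Gamma)$ without having to invert morphisms with $\mathrm{pvd}$-cone. This is the heart of Amiot's construction and requires a careful truncation argument that combines the standard t-structure, the $3$-Calabi--Yau duality, and the smallness of $J(Q,W)$. Once this lemma is in place, the rest of the theorem follows from formal manipulations of triangulated quotients.
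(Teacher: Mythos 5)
This statement is quoted verbatim from Amiot (Thm.~3.6 of \cite{Amiot_clust_cat_glob_dim_2}) and the paper gives no proof of its own, so the only benchmark is Amiot's original argument, which your outline reproduces faithfully: the bimodule $3$-Calabi--Yau/smoothness of the Ginzburg algebra, the inclusion $\mathrm{pvd}(\Gamma)\subset\mathrm{per}(\Gamma)$ under Jacobi-finiteness, the fundamental-domain lemma $\mathcal{F}=\mathcal{D}^{\leq 0}\cap{}^{\perp}\mathcal{D}^{\leq -2}\cap\mathrm{per}(\Gamma)$, the induced $2$-Calabi--Yau duality on the quotient, and the computation $\mathrm{End}_{\mathcal{C}_{Q,W}}(\pi\Gamma)\cong H^{0}(\Gamma)=J(Q,W)$ together with the cluster-tilting check. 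Your sketch is correct and takes essentially the same route as the cited source.
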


We call the image  of $\Gamma$ in $\mathcal{C}_{Q,W}$ the \emph{canonical} cluster tilting-object of $\mathcal{C}_{Q,W}$.

\begin{teo}[\cite{Keller_Yang_derived_eq_muta_pot}]
\label{teo_mutation_T_compatible_mutation_Q}
Assume that the quiver $Q$ is without loops and 2-cycles, that the potential $Q$ is rigid and that the quiver with potential $(Q,W)$ is Jacobi-finite. Let $T$ be a cluster-tilting object  of $\mathcal{C}_{Q,W}$ obtained through iterated mutations from the canonical one and let $Q(T)$ the quiver of its endomorphism algebra. Then, for any indecomposable summand $T_i$ of $T$ $(i\in Q_0)$, the quiver of the endomorphism algebra of $\mu_i(T)$ coincides with $\mu_i(Q(T))$.
\end{teo}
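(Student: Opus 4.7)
The plan is to establish the result by induction on the number of mutations separating $T$ from the canonical cluster-tilting object $\Gamma$ of $\mathcal{C}_{Q,W}$, using the derived equivalences of Ginzburg algebras constructed by Keller--Yang. The central tool is their main theorem: for any vertex $i$ of $Q$ at which $(Q,W)$ has no loops or 2-cycles, there exists an equivalence of triangulated categories $\mathrm{per}(\widehat{\Gamma}(Q,W)) \xrightarrow{\sim} \mathrm{per}(\widehat{\Gamma}(\mu_i(Q,W)))$ which restricts to an equivalence on perfectly valued subcategories, and thus descends to a triangle equivalence $F_i\colon \mathcal{C}_{Q,W} \xrightarrow{\sim} \mathcal{C}_{\mu_i(Q,W)}$ sending $\mu_i(\Gamma)$ to the canonical cluster-tilting object $\Gamma' = \widehat{\Gamma}(\mu_i(Q,W))$ of the target category.

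For the base case, I would first verify that the endoquiver of $\Gamma$ in $\mathcal{C}_{Q,W}$ coincides with $Q$ itself. Since $\mathrm{End}_{\mathcal{C}_{Q,W}}(\Gamma) \simeq J(Q,W)$ by Theorem \ref{teo_prop_(Q;W)_jacobi_finite}, and since the expansion of $W$ involves no cycles of length $\leq 2$ (by the absence of 2-cycles in $Q$ and the defining convention for $W$), all Jacobi relations live in the square of the arrow ideal. The quotient of $\widehat{kQ}_{\geq 1}$ by its square modulo these relations is therefore exactly $kQ_1$, so the Gabriel quiver of $J(Q,W)$ is $Q$. Applying the equivalence $F_i$, the endoquiver of $\mu_i(\Gamma)$ equals the endoquiver of $\Gamma'$ in $\mathcal{C}_{\mu_i(Q,W)}$. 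Since rigidity of $(Q,W)$ implies non-degeneracy by Cor.~8.2 of \cite{DWZ_quiv_pot_rep_I}, the reduced quiver with potential $\mu_i(Q,W)$ has no 2-cycles; the same analysis then identifies its endoquiver with its underlying quiver, which is $\mu_i(Q)$ by the compatibility of DWZ mutation with combinatorial quiver mutation in the 2-cycle-free case.

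For the inductive step, write $T = \mu_{i_k} \circ \cdots \circ \mu_{i_1}(\Gamma)$ and iterate the derived equivalences: composing $F_{i_k} \circ \cdots \circ F_{i_1}$ gives an equivalence $\mathcal{C}_{Q,W} \xrightarrow{\sim} \mathcal{C}_{(Q^{(k)},W^{(k)})}$, where $(Q^{(k)},W^{(k)}) = \mu_{i_k} \cdots \mu_{i_1}(Q,W)$, which sends $T$ to the canonical cluster-tilting object $\Gamma^{(k)}$ of the target category. Consequently $Q(T) = Q^{(k)}$. Applying the base case result to $(Q^{(k)},W^{(k)})$ at the vertex $i$, the endoquiver of $\mu_i(\Gamma^{(k)})$ is $\mu_i(Q^{(k)})$. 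Transporting back via the inverse equivalence, the endoquiver of $\mu_i(T)$ equals $\mu_i(Q(T))$, as desired.

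The main difficulty of this approach does not lie in the inductive bookkeeping but in the construction of $F_i$ itself and the verification that it sends the intrinsically defined mutation $\mu_i(\Gamma)$ (produced by the Iyama--Yoshino exchange triangles in $\mathcal{C}_{Q,W}$) to the canonical cluster-tilting object $\Gamma'$ of the mutated category. This is the technical heart of Keller--Yang's work: one constructs a silting mutation of the Ginzburg dg algebra via an explicit tilting bimodule, shows that it induces equivalences on the perfect and perfectly valued levels, and then identifies the image of the free module with $\widehat{\Gamma}(\mu_i(Q,W))$ up to quasi-isomorphism. One must also check, as part of the same package, that Jacobi-finiteness is preserved under mutation so that each intermediate cluster category $\mathcal{C}_{(Q^{(k)},W^{(k)})}$ remains Hom-finite and 2-Calabi--Yau; this again rests on the non-degeneracy supplied by rigidity.
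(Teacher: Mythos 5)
Your argument is correct and takes essentially the same route as the paper's (much terser) proof: both reduce the statement to the Keller--Yang derived equivalences between the cluster categories of $(Q,W)$ and its iterated DWZ mutations, with rigidity supplying non-degeneracy so that QP mutation agrees with classical quiver mutation at the level of underlying quivers and the endoquiver of each canonical cluster-tilting object is that underlying quiver. One minor inaccuracy: the preservation of Jacobi-finiteness under QP mutation is a general result of Derksen--Weyman--Zelevinsky (Cor.~6.6 of \cite{DWZ_quiv_pot_rep_I}) and does not rest on the non-degeneracy supplied by rigidity.
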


\begin{proof}
    The assumption that the quiver with potential $(Q,W)$ is Jacobi-finite allows to apply Theorem \ref{teo_prop_(Q;W)_jacobi_finite}, while the other assumptions ensures that, at the level of the underlying quivers, any sequence of mutations of the quiver with potential $(Q,W)$ corresponds to the associated sequence of classical quiver mutations. Therefore, it follows from  \cite[Thm.~3.2]{Keller_Yang_derived_eq_muta_pot} that, if $T'$ is a cluster tilting object of $\mathcal{C}_{Q,W}$ and $i_1,\dots,i_N$ is a finite sequence of vertices of $Q_0$ such that $T'=\mu_{i_N}\dots \mu_{i_1}(T)$, then the endoquiver of $T'$ is $Q(T')=\mu_{i_N}\dots \mu_{i_1}(Q)$.
\end{proof}

The second class of $2$-Calabi-Yau categories with cluster-tilting objects defined by Amiot is associated to certain algebras of global dimension at most 2.
To start with, if $A$ is a finite-dimensional $k$-algebra of finite global dimension, we denote by $\mathcal{D}^b(A)$ the bounded derived category of the category of finite-dimensional right 
$A$-modules and by $\nu_A$ its Serre functor, which is given by the total derived functor of the tensor 
product $-\otimes_A DA$. We define the {\em orbit category}
\[
\mathcal{D}^b(A)/(\nu_A\Sigma^{-2})
\]
as in \cite{Keller_orbit_cat}: It has the same objects as $\mathcal{D}^b(A)$ 
and the space of morphisms between two objects $X$ and $Y$ is defined as
\[ \bigoplus_{i\in \mathbb{Z}}\mathrm{Hom}_{\mathcal{D}^b(A)}(X,\nu_A^i\Sigma^{-2i}(Y)).\]
In general, the orbit category is not triangulated, but we can embed it fully faithfully into a ‘smallest
triangulated overcategory’, its \emph{triangulated hull} (see \cite{Keller_orbit_cat} for details). 
In particular, each short exact sequence $0\rightarrow M'\rightarrow M \rightarrow M'' \rightarrow 0$ of 
right $A$-modules gives rise to a triangle $M'\rightarrow M \rightarrow M'' \rightarrow \Sigma M$ in the triangulated hull of the orbit category.

\begin{de}[\cite{Amiot_clust_cat_glob_dim_2}] Let $A$ be a finite dimensional $k$-algebra of finite global dimension. The \emph{cluster category} $\mathcal{C}_A$ is the 
triangulated hull of the orbit category $D^b(A)/(\nu_A\Sigma^{-2})$.
\end{de}

\begin{teo}[{\cite[Thm. 4.10]{Amiot_clust_cat_glob_dim_2}}]
Let $A$ be a finite dimensional $k$-algebra of global dimension at most 2.  If the functor $\mathrm{Tor}_2^A(-,DA)$ is nilpotent, then 
\begin{enumerate}
    \item the cluster category $\mathcal{C}_A$ is $\mathrm{Hom}$-$finite$ and $2$-Calabi-Yau;
    \item the image of $A$ in $\mathcal{C}_A$ is a cluster-tilting object.
\end{enumerate}
\end{teo}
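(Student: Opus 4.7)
The plan is to realize $\mathcal{C}_A$ as the generalized cluster category of a suitably constructed dg algebra $\Gamma$, namely the derived $2$-Calabi--Yau completion of $A$ in the sense of Keller, and then to deduce the two statements from a direct analogue of Theorem~\ref{teo_prop_(Q;W)_jacobi_finite}. The bridge between the two realizations of $\mathcal{C}_A$ (as a triangulated hull of an orbit category, and as a Verdier quotient $\mathrm{per}(\Gamma)/\mathrm{pvd}(\Gamma)$) is the main technical ingredient.

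First I would construct $\Gamma$ as follows. Let $\theta = \mathbf{R}\mathrm{Hom}_{A^e}(A, A \otimes_k A)[2]$ be the inverse dualizing complex of $A$ shifted by $2$, regarded as an object in the derived category of $A$-bimodules. Since $A$ has global dimension at most $2$, one can replace $\theta$ by a quasi-isomorphic complex of projective bimodules concentrated in a bounded range, which makes the tensor dg algebra $\Gamma := T_A(\theta)$ well-defined. A formal computation (essentially the Keller--Van den Bergh calculus for CY completions) shows that $\Gamma$ carries a canonical exact bimodule $2$-Calabi--Yau structure. In particular, the zeroth cohomology $H^0(\Gamma)$ is the tensor algebra of the $A$-bimodule $\mathrm{Tor}_2^A(DA,A)$ over $A$, up to completion.

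Second, I would invoke Keller's machinery of dg orbit categories \cite{Keller_orbit_cat}: the triangulated hull $\mathcal{C}_A$ of $\mathcal{D}^b(A)/(\nu_A\Sigma^{-2})$ admits a canonical dg enhancement, and the comparison functor $-\otimes_A^{\mathbb{L}}\Gamma \colon \mathcal{D}(A)\to \mathcal{D}(\Gamma)$ sends the autoequivalence $\nu_A\Sigma^{-2}$ to the identity up to a coherent system of natural transformations encoded by $\theta$. This should induce a triangle equivalence
\[
\mathcal{C}_A \;\xrightarrow{\sim}\; \mathrm{per}(\Gamma)/\mathrm{pvd}(\Gamma),
\]
sending the image of $A_A$ to the image of the free dg module $\Gamma_\Gamma$.

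Third, I would translate the hypothesis on $\mathrm{Tor}_2^A(-,DA)$. Nilpotency of this endofunctor of $\mathrm{mod}\, A$ forces the tensor algebra $T_A(\mathrm{Tor}_2^A(DA,A))$ to be finite-dimensional; consequently $H^0(\Gamma)$ is finite-dimensional, which is the analogue, in the CY completion setting, of the Jacobi-finiteness hypothesis of Theorem~\ref{teo_prop_(Q;W)_jacobi_finite}. Applying the same argument as in that theorem (fundamental domain for the suspension, recognition of $\Gamma$ as a cluster-tilting object using the $2$-CY property and the vanishing $\mathrm{Hom}_{\mathrm{per}(\Gamma)}(\Gamma,\Sigma\Gamma)=0$) yields simultaneously the Hom-finiteness, the $2$-Calabi--Yau property, and the cluster-tilting character of the image of $\Gamma$; transporting back along the equivalence gives the statements (1) and (2) for $A$.

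The main obstacle will be the second step: producing the triangle equivalence $\mathcal{C}_A \simeq \mathrm{per}(\Gamma)/\mathrm{pvd}(\Gamma)$ requires a careful identification of the dg enhancement of the orbit category with $\Gamma$. In practice this amounts to a Morita-theoretic statement matching the Hochschild cocycle classifying $\nu_A\Sigma^{-2}$ with the bimodule $\theta[-1]$ used to build $\Gamma$. Once this equivalence is in place, (1) and (2) are formal consequences of Amiot's Ginzburg-dg-algebra argument already recalled in Theorem~\ref{teo_prop_(Q;W)_jacobi_finite}, transposed verbatim to the dg algebra $\Gamma$.
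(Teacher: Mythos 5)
The paper does not actually prove this statement: it is quoted verbatim from Amiot \cite{Amiot_clust_cat_glob_dim_2}, so your proposal can only be measured against her original argument. In structure it reproduces it exactly: one realizes $\mathcal{C}_A$ as $\mathrm{per}(\Gamma)/\mathrm{pvd}(\Gamma)$ for $\Gamma=T_A(\theta)$ with $\theta$ a cofibrant model of the shifted inverse dualizing complex, identifies this quotient with the triangulated hull of $\mathcal{D}^b(A)/(\nu_A\Sigma^{-2})$ by Keller's dg orbit category machinery \cite{Keller_orbit_cat} (later streamlined in \cite{Keller_def_CY_completions}), translates the nilpotency hypothesis into finite-dimensionality of $H^0(\Gamma)$, and then runs the same fundamental-domain argument as for the Ginzburg dg algebra in Theorem~\ref{teo_prop_(Q;W)_jacobi_finite}. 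You also correctly single out the orbit-category comparison as the technical core; that is indeed where Amiot invokes Keller's results.

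Two points as written are wrong, and one of them is load-bearing. First, $\Gamma=T_A\bigl(\mathbf{R}\mathrm{Hom}_{A^e}(A,A\otimes_k A)[2]\bigr)$ is the \emph{3-Calabi--Yau} completion $\Pi_3(A)$ and carries an exact bimodule $3$-CY structure, not a $2$-CY one; the $2$-CY property is a property of the quotient $\mathrm{per}(\Gamma)/\mathrm{pvd}(\Gamma)$, and Amiot's general criterion requires precisely: $\Gamma$ homologically smooth, concentrated in non-positive degrees, bimodule $3$-CY, and $H^0(\Gamma)$ finite-dimensional (this is also the situation of the Ginzburg dg algebra behind Theorem~\ref{teo_prop_(Q;W)_jacobi_finite}). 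Your final step, which recognizes $\Gamma$ as cluster-tilting ``using the $2$-CY property'' of $\Gamma$, does not go through as stated: if $\Gamma$ were bimodule $2$-CY the quotient would be $1$-CY, so the Calabi--Yau dimension bookkeeping must be shifted by one for the Serre duality $\mathrm{Hom}(X,Y)\cong D\,\mathrm{Hom}(Y,\Sigma^2X)$ on $\mathcal{C}_A$ to come out. Second, $H^0(\Gamma)$ is the tensor algebra $T_A\bigl(\mathrm{Ext}^2_A(DA,A)\bigr)\cong T_A\bigl(H^0(\theta)\bigr)$, not $T_A\bigl(\mathrm{Tor}_2^A(DA,A)\bigr)$; the group $\mathrm{Tor}_2^A(DA,A)$ is zero since $DA\otimes^{\mathbb{L}}_A A\simeq DA$. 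The correct statement, which is what Amiot proves, is that nilpotency of the functor $\mathrm{Tor}_2^A(-,DA)$ is equivalent to tensor-nilpotency of the bimodule $\mathrm{Ext}^2_A(DA,A)$, hence to $\dim_k H^0(\Gamma)<\infty$. With these two corrections, and granting the comparison $\mathcal{C}_A\simeq\mathrm{per}(\Pi_3(A))/\mathrm{pvd}(\Pi_3(A))$ that you defer, your sketch is a faithful reconstruction of the known proof.
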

Under the hypothesis of the theorem, we call the image of $A$ in $\mathcal{C}_A$ the \emph{canonical} cluster-tilting object of $\mathcal{C}_A$.\\

Next, we recall a link between these two classes of cluster categories. Suppose that a finite dimensional $k$-algebra $A$ 
is of global dimension at most $2$ and of the form $kQ'/I$, where $I$ is an ideal contained in the square of the ideal $S$ generated by the arrows of $Q'$. We define a quiver with potential $(Q_A,W_A)$ as follows. Let $R$ be a \emph{minimal set of relations} (\textit{cf.} \cite{Bongartz_alg_quad_forms}), \textit{i.e.} the union over the pairs of vertices $(i,j)$ of a set of representatives of a basis of 
\[e_j(I/(IS+SI))e_i.\]
Let $Q_A$ be the quiver obtained from $Q'$ by adding an arrow $\rho:j\rightarrow i$ for each minimal relation from $i$ to $j$. We endow  $Q_A$ with the potential given by
\[ W_A = \sum_{r\in R} r\rho_r.\]

\begin{teo}[{\cite[Thm. 6.12]{Keller_def_CY_completions}}]
\label{teo_link_cluster_categories}
Let $A$ be as above. 
\begin{enumerate}
    \item There is a canonical triangle equivalence from  $\mathcal{C}_A$ to the 
    cluster category $\mathcal{C}_{Q_A,W_A}$. 
    \item The canonical cluster-tilting object of $\mathcal{C}_A$ is sent by this equivalence to the 
    canonical cluster-tilting object of $\mathcal{C}_{Q_A,W_A}$.
    \end{enumerate}
\end{teo}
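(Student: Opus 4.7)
The plan is to establish the equivalence by routing it through Keller's \emph{$3$-Calabi--Yau completion} of $A$. By definition, one sets
\[
\Pi_3(A) \;=\; T_A\bigl(\Theta_A[2]\bigr),
\]
the tensor dg algebra on the $[2]$-shift of a cofibrant resolution $\Theta_A$ of the inverse dualizing complex $\mathrm{RHom}_{A^e}(A, A^e)$. A general theorem on Calabi--Yau completions identifies $\mathcal{C}_A$ with the generalized cluster category $\mathrm{per}(\Pi_3(A))/\mathrm{pvd}(\Pi_3(A))$ and sends the canonical cluster-tilting object (the image of $A$) to the image of $\Pi_3(A)$. The argument is a derived-Morita computation: the Serre twist that the orbit construction imposes on $\mathcal{D}^b(A)$ is precisely encoded by the shifted dualizing bimodule in $\Pi_3(A)$, and the triangulated hull of $\mathcal{D}^b(A)/\nu_A\Sigma^{-2}$ is thereby recognized as $\mathrm{per}(\Pi_3(A))/\mathrm{pvd}(\Pi_3(A))$.

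The main step is then to exhibit a quasi-isomorphism of complete dg algebras $\widehat{\Gamma}(Q_A, W_A) \xrightarrow{\sim} \Pi_3(A)$. For this one uses the hypothesis that $A = kQ'/I$ has global dimension at most $2$ to write down a minimal projective bimodule resolution of the form
\[
0 \to A\otimes_{kQ_0'} kR \otimes_{kQ_0'} A \to A \otimes_{kQ_0'} kQ_1' \otimes_{kQ_0'} A \to A \otimes_{kQ_0'} A \to A \to 0,
\]
where $R$ is the chosen minimal set of relations and the differentials come from the multiplication of $A$ together with the expansion of each $r \in R$ as a linear combination of paths. Applying $\mathrm{RHom}_{A^e}(-,A^e)$ and shifting by $[2]$ gives an explicit model for $\Theta_A[2]$ whose bimodule generators produce, after passing to the tensor algebra, exactly the graded quiver $\tilde{Q}_A$ underlying $\widehat{\Gamma}(Q_A, W_A)$: in degree $0$, one new arrow $\rho_r : j \to i$ for each $r \in R$ from $i$ to $j$; in degree $-1$, an arrow $\alpha^*$ opposite to each $\alpha \in Q_1'$; and in degree $-2$, a loop $t_i$ at each vertex $i$.

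One finally checks that, under this identification of generators, the differential on $\Pi_3(A)$ is given by $d(\rho_r) = 0$, $d(\alpha^*) = \partial_\alpha W_A$ and $d(t_i) = e_i\bigl(\sum_\alpha \alpha\alpha^* - \alpha^*\alpha\bigr)e_i$, with $W_A = \sum_{r \in R} r\rho_r$: the appearance of $W_A$ in $d(\alpha^*)$ is forced because, in the bimodule resolution, the second differential writes each relation $r$ as a sum of paths that gets paired with the dual generator $\rho_r$ after applying $\mathrm{RHom}_{A^e}(-,A^e)$. These are precisely the defining relations of $\widehat{\Gamma}(Q_A, W_A)$, so the assignment extends to a quasi-isomorphism of dg algebras; it induces an equivalence of perfect (and of perfectly valued) derived categories, hence the desired triangle equivalence $\mathcal{C}_A \simeq \mathcal{C}_{Q_A, W_A}$ matching canonical cluster-tilting objects. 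The hard part will be this second step: setting up the bimodule resolution and tracing how its differentials propagate to the tensor algebra is delicate, and one must additionally verify that the construction, which depends a priori on the choice of $R$, is well-defined and canonical at the triangulated level.
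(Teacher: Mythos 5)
This statement is not proved in the paper at all: it is quoted, with citation, from Keller's \emph{Deformed Calabi--Yau completions} (Thm.~6.12), so there is no internal argument to compare with. Your proposal is essentially a sketch of Keller's own proof from that reference: identify $\mathcal{C}_A$ with $\mathrm{per}(\Pi_3(A))/\mathrm{pvd}(\Pi_3(A))$ via the $3$-Calabi--Yau completion, then compare $\Pi_3(A)=T_A(\Theta_A[2])$, computed from the length-two bimodule resolution of $A$ determined by $Q_1'$ and the minimal relations $R$, with the Ginzburg dg algebra of $(Q_A,W_A)$. Two points in your sketch are glossed over and are where the actual work lies. First, the Ginzburg algebra $\widehat{\Gamma}(Q_A,W_A)$ doubles \emph{all} arrows of $Q_A$, including the relation arrows: it contains degree $-1$ generators $\rho_r^*$ with $d\rho_r^*=\partial_{\rho_r}W_A=r$, and its degree-zero part is the (completed) path algebra of $Q_A$ rather than $A$ itself; so the two dg algebras do \emph{not} have "exactly" the same generators, and the comparison is a genuine quasi-isomorphism to be constructed (the $\rho_r^*$'s are what impose the relations in homology), not a dictionary between presentations as your third paragraph suggests. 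Second, the theorem as used in the paper concerns the \emph{completed} Ginzburg algebra (needed because the application is to Jacobi-finite quivers with potential), whereas $\Pi_3(A)$ as you define it is a non-completed tensor algebra; reconciling the completed and non-completed versions is a nontrivial step in the cited source and should at least be acknowledged. With those caveats, your route is the standard one and coincides with the proof in the reference; you rightly flag independence of the choice of $R$, which is also handled there.
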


One of the advantages of working with cluster categories associated to algebras of global dimension at most 2 is that, in some cases, besides the image of $A$ itself, the images in $\mathcal{C}_A$ of other $A$-modules are cluster-tilting objects. Moreover, some exchange triangles and approximation triangles can be derived from short exact sequences of modules. Keeping in mind this point of view, we start with the following lemma.
Recall that there is an isomorphism of functors 
\[\tau_{\mathcal{D}}\Sigma \cong \nu_A,\] where  $\tau_{\mathcal{D}}$ denotes the Auslander--Reiten translation of the derived category of $A$.\\

The following Lemma sligthly extends \cite[Prop. 2.12]{Amiot_clust_cat_glob_dim_2} in the case of cluster categories associated to algebras of global dimension at most 2.

\begin{lem}
\label{lem_ext_clustcat_X_projdim1}
  Let $A$ be an algebra of global dimension at most 2 such that $\mathcal{C}_A$ is $\mathrm{Hom}$-$\mathrm{finite}$. Let $X$ and $Y$ be finite dimensional $A$-modules. Then if $\mathrm{pdim}X\leq 1$, there exists a $k$-linear isomorphism
  \[ \mathrm{Ext}_{\mathcal{C}_A}^1(X,Y) \cong  \mathrm{Ext}_A^1(X,Y)\oplus D\mathrm{Ext}_A^1(Y,X).\]
\end{lem}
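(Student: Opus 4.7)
The plan is to unfold the morphism space $\mathrm{Ext}^1_{\mathcal{C}_A}(X, Y) = \mathrm{Hom}_{\mathcal{C}_A}(X, \Sigma Y)$ via the description of $\mathcal{C}_A$ as the triangulated hull of the orbit category $\mathcal{D}^b(A)/F$, where $F = \nu_A \Sigma^{-2}$. By \cite[Prop.~2.12]{Amiot_clust_cat_glob_dim_2}, the assumed Hom-finiteness of $\mathcal{C}_A$ guarantees that this Hom space is identified with the orbit sum
\[
\mathrm{Hom}_{\mathcal{C}_A}(X, \Sigma Y) \;\cong\; \bigoplus_{i \in \mathbb{Z}} \mathrm{Hom}_{\mathcal{D}^b(A)}\bigl(X, \nu_A^i \Sigma^{1-2i} Y\bigr).
\]
It then suffices to identify the two nonvanishing summands and to show that all other summands vanish.

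The $i = 0$ summand is tautologically $\mathrm{Ext}^1_A(X, Y)$, giving the first direct summand of the lemma. For the $i = 1$ summand, I would use that $\nu_A$ is the Serre functor of $\mathcal{D}^b(A)$ (since $A$ has finite global dimension). Serre duality then yields
\[
\mathrm{Hom}_{\mathcal{D}^b(A)}(X, \nu_A \Sigma^{-1} Y) \;\cong\; D\mathrm{Hom}_{\mathcal{D}^b(A)}(\Sigma^{-1} Y, X) \;=\; D\mathrm{Ext}^1_A(Y, X),
\]
producing the second direct summand.

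For the vanishing of the remaining summands, I would rely on $t$-structural arguments in $\mathcal{D}^b(A)$. For $i \geq 2$, apply Serre duality once more to rewrite the $i$-th term as $D\mathrm{Hom}_{\mathcal{D}^b(A)}(\Sigma^{1-2i}\nu_A^{i-1}Y, X)$; since $\mathrm{gldim}\,A \leq 2$ implies that each application of $\nu_A$ extends the cohomological support downward by at most $2$, we have $\nu_A^{i-1}Y \in \mathcal{D}^{[-2(i-1),0]}$, whence $\Sigma^{1-2i}\nu_A^{i-1}Y \in \mathcal{D}^{\geq 1}$. As $X$ is a module, $X \in \mathcal{D}^{\leq 0}$, and the Hom vanishes by the standard argument $\mathrm{Hom}_{\mathcal{D}^b(A)}(\mathcal{D}^{\geq a}, \mathcal{D}^{\leq b}) = 0$ when $a > b$. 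For $i \leq -1$, the hypothesis $\mathrm{pdim}\,X \leq 1$ enters: using that $\nu_A^{-1} = R\mathrm{Hom}_A(DA, -)$ shifts cohomology upward by at most $\mathrm{pdim}_A\,DA \leq 2$, the target $\nu_A^i \Sigma^{1-2i}Y$ lands in $\mathcal{D}^{\leq -1}$, while $X \in \mathcal{D}^{\geq 0}$, and again the Hom vanishes.

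The main technical obstacle is justifying the identification of $\mathrm{Hom}_{\mathcal{C}_A}(X, \Sigma Y)$ with the orbit sum: a priori $\mathcal{C}_A$ is only the triangulated hull of the orbit category, and the hull may carry additional morphisms. Amiot's analysis resolves this under the hypotheses of gldim at most $2$, Hom-finiteness, and $\mathrm{pdim}\,X \leq 1$ (which makes $X$ perfect in a short way and thereby controls the correction terms from the hull). Once the orbit-sum formula is in place, the vanishing arguments are routine cohomological bookkeeping, and the combination of the $i = 0$ and $i = 1$ contributions yields the desired $k$-linear isomorphism.
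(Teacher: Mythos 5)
Your overall route coincides with the paper's: identify $\mathrm{Ext}^1_{\mathcal{C}_A}(X,Y)$ with the orbit sum $\bigoplus_{i\in\mathbb{Z}}\mathrm{Hom}_{\mathcal{D}^b(A)}(X,\nu_A^i\Sigma^{1-2i}Y)$, recognize the $i=0$ term as $\mathrm{Ext}^1_A(X,Y)$ and the $i=1$ term, via Serre duality, as $D\mathrm{Ext}^1_A(Y,X)$, and kill all other terms by cohomological degree arguments. The first steps are fine (for the orbit-sum identification the paper's reason is simply that the orbit category embeds fully faithfully into its triangulated hull; no Hom-finiteness or $\mathrm{pdim}$ input is needed there). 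The genuine gap is in the vanishing step: you invoke ``$\mathrm{Hom}_{\mathcal{D}^b(A)}(\mathcal{D}^{\geq a},\mathcal{D}^{\leq b})=0$ when $a>b$'', which is the wrong direction of the $t$-structure orthogonality. The correct statement is $\mathrm{Hom}(\mathcal{D}^{\leq b},\mathcal{D}^{\geq a})=0$ for $a>b$; in your configuration (source with cohomology in degrees $\geq 1$, target a module in degree $0$) the Hom space computes positive-degree extensions and is nonzero in general, e.g.\ $\mathrm{Hom}_{\mathcal{D}^b(A)}(\Sigma^{-1}S,M)\cong\mathrm{Ext}^1_A(S,M)$. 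Concretely, after your Serre-duality step for $i\geq 2$ the group $\mathrm{Hom}(\Sigma^{1-2i}\nu_A^{i-1}Y,X)$ has graded pieces of the form $\mathrm{Ext}^d_A(H^d,X)$ with $d\geq 1$, and the pieces with $d=1,2$ are not excluded by any degree reason; so neither your $i\geq 2$ nor your $i\leq -1$ argument establishes the vanishing as written.

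The repair is not cosmetic, and it is where the paper's proof differs from yours. For $i\geq 2$ the paper does not dualize: it bounds the target $\Sigma^{p+1}\tau_{\mathcal{D}}^{-p}Y$ below, placing it in $\mathcal{D}^{\geq 1}$, and uses that the source $X$ is a module, hence in $\mathcal{D}^{\leq 0}$ --- this is the correct orthogonality direction. For $i\leq -1$, the hypothesis $\mathrm{pdim}\,X\leq 1$, which you mention but never actually bring to bear, is exactly what must carry the argument: the paper places the target in $\mathcal{D}^{\leq -(p+1)}\subseteq\mathcal{D}^{\leq -2}$, and then $\mathrm{Hom}_{\mathcal{D}^b(A)}(X,-)$ vanishes on $\mathcal{D}^{\leq -2}$ because $X$ is represented by a two-term complex of projectives in degrees $-1,0$, so only $\mathrm{Ext}^{\geq 2}$-type contributions could appear and these die. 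Note also that the bound you derive for this range, target in $\mathcal{D}^{\leq -1}$, is one degree too weak even for that correct argument, since a contribution $\mathrm{Ext}^1_A(X,H^{-1})$ would survive; you need the sharper bound that the paper asserts. So while the skeleton of your proof matches the paper, the two orthogonality invocations that are supposed to do the real work would fail, and the place where $\mathrm{pdim}\,X\leq 1$ must enter is missing.
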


\begin{proof}
Since the orbit category embeds fully faithfully into its triangulated hull, we have
\[ \mathrm{Ext}_{\mathcal{C}_A}^1(X,Y) = \bigoplus_{p\in\mathbb{Z}} \mathrm{Hom}_{\mathcal{D}^b(A)}(X, \Sigma^{p+1} \tau_{\mathcal{D}}^{-p} Y).\]
For $p\leq -2$, we have 
\[\tau^{-p}(Y) \in \mathcal{D}^{\geq 0}(A)  \text{    and    }  \Sigma^{p+1}\tau_{\mathcal{D}}^{-p}(Y) \in \mathcal{D}^{\geq -(p+1)}(A) \subset \mathcal{D}^{\geq 1}(A).\]Therefore, \[\mathrm{Hom}_{\mathcal{D}^b(A)}(X, \Sigma^{p+1} \tau^{-p} Y)=0.\]
For $p=-1$, 
\[\mathrm{Hom}_{\mathcal{D}^b(A)}(X,\tau_{\mathcal{D}} Y) =\mathrm{Hom}_{\mathcal{D}^b(A)}(X,\nu_A \Sigma^{-1} Y) = D\mathrm{Hom}_{\mathcal{D}^b(A)}(\Sigma^{-1}Y,\tau_{\mathcal{D}} X) =  D\mathrm{Ext}^1_{A}(Y,X).
\]
For $p=0$,  
\[\mathrm{Hom}_{\mathcal{D}^b(A)}(X,\Sigma Y) = \mathrm{Ext}_A^1(X,Y).\]
For $p>0$, we have 
\[\tau_{\mathcal{D}}^{-p}(Y) \in \mathcal{D}^{\leq 0}(A)  \text{    and    }  \Sigma^{p+1}\tau_{\mathcal{D}}^{-p}(Y) \in \mathcal{D}^{\leq -(p+1)}(A) \subset \mathcal{D}^{\leq -2}(A).\]
Therefore, since the projective dimension of $X$ is less or equal than $1$, we have
\[ \mathrm{Hom}_{\mathcal{D}^b(A)}(X, \Sigma^{p+1} \tau_{\mathcal{D}}^{-p} Y)=0. \]

\end{proof}

Recalling the definition of a (classical)  tilting module.

\begin{de}
Let $A$ be a finite-dimensional $k$-algebra. A finite dimensional $A$-module $T$ is a \emph{tilting module} if the following conditions are satisfied:
\begin{itemize}
    \item[(T1)] the projective dimension of $T$ is at most 1;
    \item[(T2)] $\mathrm{Ext}^1_A(T,T)=0$;
    \item[(T3)] the number of isomorphism classes of the indecomposable direct summands of $T$ is equal to the number of isomorphism classes of the simple $A$-modules.
\end{itemize}
\end{de}

For any finite-dimensional $k$-algebra $A$, the free module of rank one $A$ is a tilting module, referred to as \emph{canonical}.

\begin{prop}
\label{prop_tilting_becomes_clust_tilt}
Let $A$ be an algebra of global dimension at most 2 such that $\mathcal{C}_A$ is $Hom$-$finite$. Let $T$ be a basic tilting module of $A$. Then the image of $T$ in $\mathcal{C}_A$ is a cluster tilting object.
\end{prop}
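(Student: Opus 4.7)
The plan is to verify that $T$, viewed in $\mathcal{C}_A$, is rigid and has as many non-isomorphic indecomposable direct summands as the canonical cluster-tilting object $A_A$; by a standard ``rigid plus maximal number of summands'' criterion for cluster-tilting objects in $2$-Calabi--Yau triangulated categories, this will suffice. Recall that $A_A$ is indeed cluster-tilting in $\mathcal{C}_A$, so it provides the reference object against which to measure $T$.

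For rigidity, I apply Lemma \ref{lem_ext_clustcat_X_projdim1} with $X=Y=T$. Condition (T1) gives $\mathrm{pdim}(T)\leq 1$, so the lemma yields
\[
\mathrm{Ext}^1_{\mathcal{C}_A}(T,T)\;\cong\;\mathrm{Ext}^1_A(T,T)\oplus D\mathrm{Ext}^1_A(T,T),
\]
and both summands vanish by condition (T2). Hence the image of $T$ in $\mathcal{C}_A$ is rigid.

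For the counting step, condition (T3) says that the number of isomorphism classes of indecomposable direct summands of $T$ in $\mathrm{mod}(A)$ equals the number of simple $A$-modules, which also is the number of indecomposable direct summands of the canonical cluster-tilting object $A_A\in\mathcal{C}_A$. I then need to check that two non-isomorphic indecomposable summands $T_i,T_j$ of $T$ remain non-isomorphic as objects of $\mathcal{C}_A$; this reduces to showing $\mathrm{Hom}_{\mathcal{D}^b(A)}(T_i,\nu_A^m\Sigma^{-2m}T_j)=0$ for all $m\neq 0$, which follows by the same degree-truncation argument as in the proof of Lemma \ref{lem_ext_clustcat_X_projdim1}, using $\mathrm{pdim}(T_i)\leq 1$ to handle $m>0$ and the $2$-Calabi--Yau/Serre-duality flip to handle $m<0$. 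To conclude, I invoke the standard fact (due to Dehy--Keller and exploited in \cite{Amiot_clust_cat_glob_dim_2}) that in a Hom-finite Krull--Schmidt $2$-Calabi--Yau triangulated category admitting a basic cluster-tilting object $T_0$, any basic rigid object whose number of non-isomorphic indecomposable summands equals that of $T_0$ is itself cluster-tilting.

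The main obstacle I expect is the non-isomorphism check just mentioned: one has to rule out that two distinct indecomposable $A$-module summands of $T$ become identified once one passes to the orbit category underlying $\mathcal{C}_A$. The argument is direct but requires careful bookkeeping of degrees and of the action of $\nu_A\Sigma^{-2}$ on modules of projective dimension at most one, exactly along the lines of the case-by-case analysis of the proof of Lemma \ref{lem_ext_clustcat_X_projdim1}.
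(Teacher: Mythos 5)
Your proposal is correct, and its first half is exactly the paper's: rigidity of the image of $T$ is obtained from Lemma \ref{lem_ext_clustcat_X_projdim1} applied with $X=Y=T$, using (T1) for the hypothesis $\mathrm{pdim}\,T\leq 1$ and (T2) to kill both summands. Where you diverge is in passing from rigid to cluster-tilting: the paper settles this in one line by citing \cite[Thm.~4.1]{AIR_tau_tilting} (the correspondence, in a $2$-Calabi--Yau category with a cluster-tilting object, between rigid objects and $\tau$-rigid pairs over the endomorphism algebra, under which cluster-tilting objects correspond to support $\tau$-tilting pairs), whereas you argue by hand that the image of $T$ is a basic rigid object with the maximal number of pairwise non-isomorphic indecomposable summands, hence maximal rigid, hence cluster-tilting. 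Two adjustments are needed to make your route airtight. First, the implication ``maximal rigid $\Rightarrow$ cluster-tilting'' in a $2$-Calabi--Yau category containing a cluster-tilting object is a theorem of Zhou--Zhu, not of \cite{Dehy_Keller_combinatorics_2CY} (which supplies the index-linear-independence bound on the number of summands of a rigid object); it is also exactly what the paper's appeal to \cite{AIR_tau_tilting} delivers. Second, besides checking that the $T_i$ remain pairwise non-isomorphic in $\mathcal{C}_A$, you must also check that each $T_i$ remains indecomposable, i.e.\ that $\mathrm{End}_{\mathcal{C}_A}(T_i)$ is local; the same degree estimates as in the proof of Lemma \ref{lem_ext_clustcat_X_projdim1} (using $\mathrm{pdim}\,T_i\leq 1$) show that only the degrees $0$ and $1$ of the orbit-category grading contribute, so the part of $\mathrm{End}_{\mathcal{C}_A}(T_i)$ outside degree $0$ is a square-zero ideal over the local ring $\mathrm{End}_A(T_i)$, which gives locality and also makes your non-isomorphism check immediate. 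With these points made explicit, your argument is a sound, more self-contained alternative: it avoids $\tau$-tilting theory at the cost of the counting and orbit-category bookkeeping, which, incidentally, is precisely what the paper's terse citation of \cite{AIR_tau_tilting} leaves implicit.
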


\begin{proof}
Let $\Tilde{T}$ be the image of $T$ in $\mathcal{C}_A$. By \cite[Thm.~4.1]{AIR_tau_tilting}, $\Tilde{T}$ is a cluster-tilting object if and only if it is rigid. Therefore, the results follows from Lemma \ref{lem_ext_clustcat_X_projdim1}.
\end{proof}

We conclude this section  by recalling the so called \emph{Calabi--Yau reduction}. Assume that the category $\mathcal{C}$ is 2-Calabi--Yau, $\mathrm{Hom}$-finite, Krull--Schmidt and that it contains a cluster-tilting object $T=\oplus_{j\in J} T_i$ such that the $T_i$ are its indecomposable summands. For a subset $K$ of $J$, let $\mathcal{C}_K$ be the full subcategory of $\mathcal{C}$ formed by those objects $X$ such that $\mathrm{Ext}_\mathcal{C}(X,T_i)=0$ for any $i\in K$ and let $\langle T_i \rangle_{i\in K} $ be the ideal of $\mathcal{C}_K$ generated by the identities of the objects $T_i, i \in K $. 
As shown in \cite[\S ~4]{Iyama_Yoshino_mut_tringcat_rigi_Cohen}, the additive quotient $\mathcal{C}_K/\langle T_i \rangle_{i\in K} $ carries a canonical triangulated structure, is $\mathrm{Hom}$-finite, Krull--Schmidt, 2-Calabi--Yau and contains the image of $T$ as a cluster-tilting object.

\begin{teo}[{\cite[\S ~4]{Iyama_Yoshino_mut_tringcat_rigi_Cohen}}]
\label{teo_CY_reduction}
The projection functor
\[ \mathcal{C}_K \rightarrow \mathcal{C}_K/\langle T_i\rangle_{i\in K}
\]
induces a bijection between the cluster-tilting objects of $\mathcal{C}$ having, for any $i \in K$, $T_i$ as an indecomposable summand, and the cluster-tilting objects of $\mathcal{C}_K/\langle T_i\rangle_{i\in K}$. Moreover, it preserves exchange and approximation triangles.
\end{teo}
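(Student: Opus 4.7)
The plan is to analyse the projection functor $\pi \colon \mathcal{C}_K \to \bar{\mathcal{C}} := \mathcal{C}_K/\langle T_i\rangle_{i\in K}$ and transfer the cluster-tilting property through it, using the canonical triangulated structure of $\bar{\mathcal{C}}$ that the excerpt already attributes to Iyama--Yoshino. Throughout I write $T_K = \bigoplus_{i \in K} T_i$. First I would observe that any cluster-tilting object $U$ of $\mathcal{C}$ which has every $T_i$ ($i\in K$) as a summand automatically lies in $\mathcal{C}_K$, because $\mathrm{Ext}^1_\mathcal{C}(U,T_i)=0$ by the very definition of cluster-tiltingness; so $U = T_K \oplus U'$ with $U' \in \mathcal{C}_K$. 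Thus $\pi$ is defined on the set we want to describe, and the candidate inverse consists in lifting an object $\bar V$ of $\bar{\mathcal{C}}$ to a preimage $V \in \mathcal{C}_K$ and sending it to $T_K \oplus V$.

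The key technical step is the comparison of extension groups: for any $X,Y \in \mathcal{C}_K$ I claim
\[
\mathrm{Ext}^1_{\bar{\mathcal{C}}}(\pi X, \pi Y) \cong \mathrm{Ext}^1_{\mathcal{C}}(X,Y).
\]
By construction of the suspension in $\bar{\mathcal{C}}$, the object $\pi Y\langle 1\rangle$ is obtained from a triangle $Y \to T_Y \to Y\langle 1\rangle \to \Sigma Y$ in $\mathcal{C}$, where $Y \to T_Y$ is a minimal left $\mathrm{add}(T_K)$-approximation. Applying $\mathrm{Hom}_\mathcal{C}(X,-)$ and using $\mathrm{Ext}^1_\mathcal{C}(X, T_K)=0$ (since $X \in \mathcal{C}_K$) gives a surjection $\mathrm{Hom}_\mathcal{C}(X, Y\langle 1\rangle) \twoheadrightarrow \mathrm{Ext}^1_\mathcal{C}(X,Y)$ whose kernel consists exactly of morphisms factoring through $T_Y \in \mathrm{add}(T_K)$; passing to the quotient $\bar{\mathcal{C}}$ kills precisely this kernel, yielding the isomorphism.

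With this in hand, the bijection is immediate. In the forward direction, if $U = T_K \oplus U'$ is cluster-tilting in $\mathcal{C}$ then for any $X \in \mathcal{C}_K$ the formula gives $\mathrm{Ext}^1_{\bar{\mathcal{C}}}(\pi U, \pi X)=\mathrm{Ext}^1_\mathcal{C}(U,X)$; this vanishes iff $X \in \mathrm{add}(U)$, equivalently $\pi X \in \mathrm{add}(\pi U) = \mathrm{add}(\pi U')$, proving that $\pi U$ is cluster-tilting in $\bar{\mathcal{C}}$. For the inverse direction, given $\bar V$ cluster-tilting in $\bar{\mathcal{C}}$ and a lift $V$, any $X$ with $\mathrm{Ext}^1_\mathcal{C}(T_K\oplus V, X)=0$ already satisfies $X\in \mathcal{C}_K$ (from the $T_K$ summand), and then the comparison together with the cluster-tiltingness of $\bar V$ forces $\pi X \in \mathrm{add}(\bar V)$, whence $X \in \mathrm{add}(T_K \oplus V)$. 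Bijectivity of the two constructions follows from the fact that $\pi$ is essentially surjective up to adding copies of $T_K$, which become zero in $\bar{\mathcal{C}}$.

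Finally, the preservation of exchange and approximation triangles is built into the very construction of the triangulated structure on $\bar{\mathcal{C}}$: a distinguished triangle in $\bar{\mathcal{C}}$ is, by definition, the image of a triangle in $\mathcal{C}_K$ in which the relevant connecting morphisms come from $\mathrm{add}(T_K)$-approximations. Hence an exchange triangle in $\bar{\mathcal{C}}$ relating $\pi T_j$ and $\pi T_j^\ast$ lifts tautologically to an exchange triangle in $\mathcal{C}$ of the summands $T_j$ and $T_j^\ast$ of the corresponding cluster-tilting objects, and the same holds for approximation triangles via Proposition~\ref{keller_reiten-approx}. The principal technical obstacle is the extension-comparison formula: identifying $\mathrm{Ext}^1_{\bar{\mathcal{C}}}$ with $\mathrm{Ext}^1_\mathcal{C}$ requires a careful bookkeeping of the quotient by $\langle T_i\rangle_{i\in K}$ together with the precise form of the suspension $\langle 1\rangle$, and it is this step where the $2$-Calabi--Yau hypothesis, together with Hom-finiteness (ensuring that minimal approximations exist), is genuinely used.
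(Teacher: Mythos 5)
The paper itself offers no proof of Theorem~\ref{teo_CY_reduction}: it is quoted from Iyama--Yoshino, and your proposal essentially reconstructs their argument, so the overall route is the right one. Your comparison $\mathrm{Ext}^1_{\bar{\mathcal{C}}}(\pi X,\pi Y)\cong\mathrm{Ext}^1_{\mathcal{C}}(X,Y)$ is correct, but the phrase ``passing to the quotient kills precisely this kernel'' hides the one computation that makes it work: the kernel of $\mathrm{Hom}_{\mathcal{C}}(X,Y\langle 1\rangle)\twoheadrightarrow\mathrm{Ext}^1_{\mathcal{C}}(X,Y)$ is a priori only the set of maps factoring through the specific morphism $T_Y\to Y\langle 1\rangle$, and to identify it with the full ideal $\langle T_i\rangle_{i\in K}$ you must check that any composite $X\to D\to Y\langle 1\rangle$ with $D\in\mathrm{add}(T_K)$ becomes zero after composing with $Y\langle 1\rangle\to\Sigma Y$, which holds because $\mathrm{Ext}^1_{\mathcal{C}}(D,Y)\cong D\mathrm{Ext}^1_{\mathcal{C}}(Y,D)=0$; this, rather than the existence of minimal approximations, is where the $2$-Calabi--Yau hypothesis genuinely enters. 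With that supplied, both directions of the bijection read as you wrote them (you should also record the easy rigidity check for the lift $T_K\oplus V$, since cluster-tiltingness includes $\mathrm{add}(T_K\oplus V)\subseteq\{X\mid\mathrm{Ext}^1(T_K\oplus V,X)=0\}$).

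The genuine gap is the last paragraph. Distinguished triangles of $\mathcal{C}_K/\langle T_i\rangle_{i\in K}$ are \emph{not} defined as images of arbitrary triangles of $\mathcal{C}$ with terms in $\mathcal{C}_K$; they are defined through the shift $\langle 1\rangle$ and cones built from left $\mathrm{add}(T_K)$-approximations. What you actually need is the (standard but nontrivial) descent lemma: a triangle $X\to Y\to Z\xrightarrow{h}\Sigma X$ in $\mathcal{C}$ with $X,Y,Z\in\mathcal{C}_K$ induces a distinguished triangle $X\to Y\to Z\to X\langle 1\rangle$ in the quotient, the point being that $h$ lifts through $\omega_X\colon X\langle 1\rangle\to\Sigma X$ because $\mathrm{Ext}^1_{\mathcal{C}}(Z,\mathrm{add}(T_K))=0$. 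Granting this, it is still not ``tautological'' that the image of an exchange triangle is again an exchange triangle: you must see that the descended triangle is non-split (its connecting class is nonzero under your Ext-isomorphism), that its end terms are the exchange pair produced by your bijection, and that its middle term lies in the additive hull of the remaining summands, so that uniqueness of exchange triangles applies; similarly an approximation triangle is handled by rotating, applying the descent lemma to $T''\to T'\to X\to\Sigma T''$, and rotating back. Spelling out the descent lemma and these two verifications would close the argument; as written, the final step rests on a mis-stated definition of the triangulated structure on the reduction.
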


As a particular case of Calabi--Yau reduction, suppose that $\mathcal{C}=\mathcal{C}_A$ is the cluster category of a finite dimensional $k$-algebra $A$ of global dimension at most 2 of the form $kQ'/I$ as above and let $K$ be a subset of the vertices of $Q'$. For any $i\in K$, let $P_i$ the associated indecomposable projective $A$-module. Then it follows from \cite[\S~7]{Keller_def_CY_completions} that there is a triangle equivalence 

\[
\mathcal{C}_K/\langle P_i\rangle_{i\in K} \cong \mathcal{C}_{A / \langle e_i\ |\ i \in K\rangle}.
\]

\section{Cluster categories associated to $Q(\underline{w}_0)$}
In this section we translate the problem of the determination of the exchange matrices $B(\mathfrak{C})$ of Theorem \ref{teo_main_KKOP} in a setting of additive categorification and we provide a solution in Theorem~\ref{teo_sol_KKOP}.
In the following, we keep the notations $\mathfrak{g}$, $\mathcal{Q}$, $\underline{w}_0$ of Section \ref{sec_mo_cat}. 
\\

\subsection{A Jacobi algebra associated to the initial seed}

By the definition of the quiver $Q(\underline{w}_0)$, for any pair of vertices $s,t$ such that $\imath_s\sim\imath_t$ and $s^-<t^-<s<t$, there is an arrow $s\rightarrow t$. Moreover, denoting by $\Tilde{t}$ the smallest integer greater than $s^-$ such that $\imath_{\Tilde{t}}=\imath_t$, there is also an arrow $\Tilde{t}\rightarrow s$. Therefore, for any such pair $s,t$, there is in $Q(\underline{w}_0)$ an oriented cycle

\begin{equation}
\label{cycle_potential}
\begin{tikzcd}
   & s \arrow[dr] & \\
    \Tilde{t} \arrow[ur] &\dots \arrow[l] & t \arrow[l].
\end{tikzcd} 
\end{equation}

Let $[a,b]$ be a finite interval. We endow $Q^{[a,b]}(\underline{w}_0)$ with the potential $W^{[a,b]}(\underline{w}_0)$ given by the alternated sum of all the oriented cycles of type (\ref{cycle_potential}) appearing in $Q^{[a,b]}(\underline{w}_0)$ (notice that we can equivalently describe this potential as the alternated sum of the cordless cycles of $Q^{[a,b]}(\underline{w}_0)$). When $\underline{w}_0$ is $\mathcal{Q}$-adapted, this quiver with potential was first introduced in \cite{HL_Clust_alg_approach_q_char}. Let $J^{[a,b]}(\underline{w}_0)$ be the associated Jacobi algebra.

\begin{prop} \mbox{}
\label{prop_rigid_quiver}
\begin{enumerate}
    \item The Jacobi algebra $J^{[a,b]}(\underline{w}_0)$ is finite-dimensional.
    \item The quiver with potential $(Q^{[a,b]}(\underline{w}_0), W^{[a,b]}(\underline{w}_0))$ is rigid.
\end{enumerate}
    
\end{prop}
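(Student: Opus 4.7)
My approach is to exploit the explicit form of the cyclic derivatives of $W^{[a,b]}(\underline{w}_0)$ together with the layered organization of $Q^{[a,b]}(\underline{w}_0)$. I would first classify the arrows as either \emph{horizontal} --- the arrows $s\to s^-$ internal to a single line $\imath\in I_{\check{\mathfrak{g}}}$ --- or \emph{transverse} --- those joining lines attached to adjacent vertices of the Dynkin diagram of $\check{\mathfrak{g}}$. By construction, each cycle summand of $W^{[a,b]}(\underline{w}_0)$ is a "rectangle" bounded by two transverse arrows and two runs of horizontal arrows in opposite lines, and an interior arrow belongs to exactly two adjacent rectangles, contributing with opposite signs to the alternating potential. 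Hence, for such an $\alpha$, the relation $\partial_\alpha W^{[a,b]}(\underline{w}_0)=0$ is a genuine \emph{mesh relation} equating the complementary three sides of the two rectangles meeting at $\alpha$.

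For part~(2), rigidity, my plan is to show that every oriented cycle in $Q^{[a,b]}(\underline{w}_0)$ is equivalent modulo the Jacobi ideal (up to cyclic permutation) to a single defining rectangle, and that each defining rectangle itself lies in the Jacobi ideal. The first claim I would prove by induction on a suitable "area" of a cycle: the mesh relations allow one to swap a transverse step past an adjacent horizontal step, decreasing the area until only a defining rectangle remains. For the second claim, the key observation is that rectangles near the boundary of $[a,b]$ contain arrows that lie in only \emph{one} rectangle of the potential, so the corresponding cyclic derivative directly places that rectangle in the Jacobi ideal; the mesh relations then transport this identification inductively to every rectangle of the interior.

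For part~(1), finite-dimensionality, I would use the same mesh relations to reduce every path of $J^{[a,b]}(\underline{w}_0)$ to a normal form --- say, pushing every transverse arrow as far to the left as possible along its two adjacent lines. A normal-form path is then determined by the sequence of its transverse arrows interleaved with short horizontal segments, and a direct count over the finite vertex set of $Q^{[a,b]}(\underline{w}_0)$ shows that there are finitely many such normal forms between any two vertices. Combined with the vanishing of cycles established in part~(2), this yields finite-dimensionality of $J^{[a,b]}(\underline{w}_0)$.

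The main obstacle I foresee is the sign bookkeeping: one must verify that the alternating convention in $W^{[a,b]}(\underline{w}_0)$ really produces mesh relations with nonvanishing coefficients between neighbouring rectangles, rather than accidental cancellations. A closely related difficulty is the treatment of boundary arrows near $a$ and $b$, where rectangles are truncated: the inductive arguments in both (1) and (2) must terminate correctly on these boundary configurations, which is in fact where rigidity gains its anchor in part~(2). Once these two points are handled, the rest of the proof reduces to routine combinatorial accounting on the grid-like structure of $Q^{[a,b]}(\underline{w}_0)$.
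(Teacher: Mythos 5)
Your strategy is genuinely different from the paper's: there the $\mathcal{Q}$-adapted case is delegated to Hernandez--Leclerc (after identifying $Q(\underline{w}_0)$ with their quiver $\Gamma$), and the general case is deduced by connecting $\underline{w}_0$ to an adapted word through commutation and braid moves, using Lemma \ref{lem_mut_quiv_pot_preserva} and the Derksen--Weyman--Zelevinsky mutation-invariance of rigidity and Jacobi-finiteness. A direct combinatorial proof is not out of the question, but as written your argument rests on a structural claim that is false, and this creates a genuine gap. The summands of $W^{[a,b]}(\underline{w}_0)$ are not rectangles with two horizontal runs: by (\ref{cycle_potential}) each cycle has exactly two transverse arrows and a single horizontal run on one line. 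More seriously, it is not true that an interior arrow lies in exactly two cycles of the potential: a horizontal arrow on the line of a vertex $\jmath$ lies in up to one cycle for each Dynkin neighbour of $\jmath$, so at a branching node of $\check{\mathfrak{g}}$ (types $D$, $E$, and the unfoldings used for $C_n$, $F_4$, $G_2$) the cyclic derivative $\partial_\alpha W$ has three or more terms, landing on different lines. Only the derivatives with respect to transverse arrows are two-term (one-term near the boundary), and these are indeed the ``swap a transverse step past a horizontal run'' relations you want; but your area-reduction in (2) and your normal-form rewriting in (1) are stated for unspecified arrows and tacitly assume two-term exchanges throughout, so the induction is neither well-defined nor clearly terminating once a rewrite replaces one path by a sum of paths wandering through several lines. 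The boundary anchoring of rigidity is sound, but only if you differentiate with respect to transverse arrows and verify that for the rightmost cycle between two adjacent lines the opening arrow really lies in a single cycle of the truncated potential; none of this is spelled out.

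There is a second, independent gap in the passage from (2) to (1). Rigidity only says that every cycle is, \emph{up to cyclic permutation}, in the Jacobi ideal; this does not force a cyclic subpath occurring inside a longer path to vanish, so ``finitely many normal forms plus vanishing of cycles'' does not yet give finite-dimensionality. What is needed is genuine nilpotency of long paths modulo the closed Jacobi ideal, i.e.\ a termination and spanning argument for your rewriting system in the presence of the multi-term relations above -- and that is precisely the part of the proof that is not supplied. In short: the plan is attractive and self-contained in principle (and closer in spirit to the computation Hernandez--Leclerc carry out in the adapted case), but its two central inductions are built on an incorrect description of the relations and are not carried out, whereas the paper avoids this combinatorics entirely by transporting the statement along braid moves via QP-mutation.
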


Before giving a proof, we state some preliminary results.
Recall the notation $\underline{w}_0=s_{\imath_1}\dots s_{\imath_{l_0}}$  and let $\underline{w}'_0=s_{\jmath_1}\dots s_{\jmath_{l_0}}$ be an other reduced expression of $w_0$. Following \cite[Remark.~8.5]{KKOP_mon_cat_quant_aff_II}, we say that

\begin{itemize}
    \item[(i)] $\underline{w}'_0$ is obtained from $\underline{w}_0$ by a \emph{commutation move} at an integer $1<k\leq l_0$ if 
    \[ \imath_k\not\sim \imath_{k-1},\  \imath_k=\jmath_{k-1},\ \imath_{k-1}=\jmath_{k} \text{ and }  \imath_s=\jmath_s \text{ for any } s\neq k, k-1. \]
    In this case, the quiver $Q(\underline{w}_0)$ is isomorphic to the quiver $Q(\underline{w}'_0)$ via the index change $k-1 + tl_0 \leftrightarrow k + tl_0,\ t\in\mathbb{Z}$.
    \item[(ii)] $\underline{w}'_0$ is obtained from $\underline{w}_0$ by a \emph{braid move} at an integer $1<k< l_0$ if
    \[ \imath_k\sim \imath_{k-1},\  \imath_k=\jmath_{k\pm 1},\ \imath_{k\pm 1}=\jmath_{k} \text{ and }  \imath_s=\jmath_s \text{ for any } s\neq k, k\pm 1. \]
    In this case, the quiver obtained from $Q(\underline{w}_0)$ by mutating at the vertices $k+1+tl_0,t\in\mathbb{Z}$ is isomorphic to the quiver $Q(\underline{w}'_0)$ (\cite[Lemma~8.9]{KKOP_mon_cat_quant_aff_II}).
\end{itemize}

It is a general fact from Lie theory that any two reduced expressions of $w_0$ can be obtained from each other by a composition of braid moves and commutation moves. For a finite-dimensional simple Lie algebra $\mathsf{g}$, the above terminology can be applied also to sequences of elements of $I_\mathsf{g}$ indexed by $\mathbb{Z}$ and satisfying the condition

\begin{equation}
\label{condition_cardinality_inf_seq}
|\{\imath_k\ |\ \imath_k=\imath, k\leq 0\}|=\infty = |\{\imath_k\ |\ \imath_k=\imath, k> 0\}|\ \ \imath\in I_\mathsf{g}.
\end{equation}

For such a sequence $\mathfrak{i}=(\imath_k)_{k\in \mathbb{Z}}$, we define the quiver $Q(\mathfrak{i})$ with vertex set $\mathbb{Z}$ and arrows

\begin{equation*}
\{s\rightarrow s^- | s\in\mathbb{Z}\} \cup \{s\rightarrow t | s^-<t^-<s<t,\ \imath_s\sim\imath_t \}.
\end{equation*}

 Notice that the quiver $Q(\underline{w}_0)$ is a particular case of this definition, with $\mathsf{g}=\check{\mathfrak{g}}$ and $\mathfrak{i}=\widehat{\underline{w}}_0$.
 For any possibly infinite interval $[a,b]$, we define the quiver with potential $(Q^{[a,b]}(\mathfrak{i}), W^{[a,b]}(\mathfrak{i}))$ as a generalization of  $(Q^{[a,b]}(\underline{w}_0), W^{[a,b]}(\underline{w}_0))$.

\begin{lem}
\label{lem_mut_quiv_pot_preserva}
Let $\mathsf{g}$ be a finite dimensional Lie algebra and let $\mathfrak{i}=(\imath_k)_{k\in\mathbb{Z}}$ be a sequence of elements of $I_\mathsf{g}$ satisfying $(\ref{condition_cardinality_inf_seq})$. Let $s\in\mathbb{Z}$ and assume that the sequence $\mathfrak{j}=(\jmath_k)_{k\in\mathbb{Z}}$ is obtained from $\mathfrak{i}$ by a braid move at $s$.
Then there is an isomorphism of quivers with potential 
\[\mu_{s+1}(Q(\mathfrak{i}), W(\mathfrak{i})) \cong (Q(\mathfrak{j}), W(\mathfrak{j})).\]
\end{lem}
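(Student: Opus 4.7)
The plan is a local verification: Lemma~8.9 of \cite{KKOP_mon_cat_quant_aff_II} already identifies the underlying quivers $\mu_{s+1}Q(\mathfrak{i})$ and $Q(\mathfrak{j})$, so the work is to check that the potentials match under this identification. Since the DWZ mutation at $s+1$ only affects the terms of $W(\mathfrak{i})$ that pass through $s+1$, and the braid move only modifies $\mathfrak{i}$ at the three positions $s-1, s, s+1$, both changes are confined to a finite neighbourhood of this triple.

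Setting $a = \imath_{s-1} = \imath_{s+1}$ and $b = \imath_s$, I would first record the star of $s+1$ in $Q(\mathfrak{i})$: a direct check from the definition of $Q(\mathfrak{i})$ shows that the only arrows incident to $s+1$ are
\[ s+1 \to s-1,\quad s\to s+1,\quad (s+1)^+\to s+1,\quad s+1 \to s^+, \]
where $s^+$ (resp.\ $(s+1)^+$) is the smallest index past $s$ (resp.\ $s+1$) of value $b$ (resp.\ $a$). I would then enumerate the chordless cycles of $Q(\mathfrak{i})$ running through $s+1$: by construction of the potential each appears in $W(\mathfrak{i})$ with a sign determined by its parity, and this is the finite collection actually touched by the mutation. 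Applying the DWZ recipe at $s+1$ produces four composed arrows $[uv]$, one for each path $u \to s+1 \to v$, together with the triangle term $W_t$ of Section~\ref{subsec_reminder_clust_tilt}, and the reduced component is obtained by cancelling the 2-cycles which these composed arrows form with arrows already present in $Q(\mathfrak{i})$. The final step is to match what remains, cycle by cycle and with the correct alternating signs, against the chordless cycles of $Q(\mathfrak{j})$ through $\{s-1,s,s+1\}$, i.e.\ against the local form of $W(\mathfrak{j})$.

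The main obstacle is the combinatorial bookkeeping: the chordless cycles through $s+1$ can have varying length depending on the positions of the $\mathsf{g}$-neighbours of $a$ and $b$ in $\mathfrak{i}$, and the list of 2-cycle cancellations splits into subcases according to the relative order of $s^+$ and $(s+1)^+$ and to which further neighbours of $a,b$ appear in $I_{\mathsf{g}}$. Everything, however, remains confined to a finite subquiver around the triple $\{s-1,s,s+1\}$, so the check reduces to a finite diagram chase; the built-in symmetry of the construction (the braid move exchanges the roles of $a$ and $b$, while $W(\mathfrak{i})$ and $W(\mathfrak{j})$ are both the alternating sums of chordless cycles of the respective quivers) should make the matching between the mutated potential and $W(\mathfrak{j})$ essentially forced once this finite local check has been carried out.
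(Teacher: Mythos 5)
Your proposal follows essentially the same route as the paper's proof: describe the star of $s+1$ in $Q(\mathfrak{i})$ (with the extra subcases governed by the relative positions of $(s-1)^-,s^-$ and $s^+,(s+1)^+$), perform the DWZ premutation at $s+1$ with its four composite arrows and triangle term, pass to the reduced part, and match the result, cycle by cycle with signs, against $W(\mathfrak{j})$ under the quiver identification given by the index change $s\leftrightarrow s-1$. The only point to watch when carrying out the finite check is that the reduction is a right-equivalence, not a bare deletion: killing the $2$-cycle formed by the arrow $s^+\to s$ and the composite arrow $s\to s+1\to s^+$ replaces the deleted arrow $s^+\to s$ by the composite of the two reversed arrows inside every other potential cycle through it, and it is exactly these substituted cycles that reproduce the corresponding cycles of $W(\mathfrak{j})$.
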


\begin{proof}
In this proof, when we consider a cycle of a potential, we keep track implicitly of the attached sign.\\
It follows from the definition of the quiver $Q(\mathfrak{i})$ that the full subquiver $Q_{s+1}(\mathfrak{i})$ on the vertices adjacent to $s+1$ has the form

\[
\begin{tikzpicture}
    \node (s+1) at (0,0) {$s+1$};
    \node (s+1^+) at (4,0) {$(s+1)^+$};
    \node (s-1) at (-4,0) {$s-1$};
    \node (s) at (-2,-2) {$s$};
    \node (s^+) at (4,-2) {$s^+$};

    \draw[->] (s+1^+) -- (s+1) node [pos=.5, above] {$\alpha_2$};
    \draw[->] (s+1) -- (s^+) node [pos=.5, above, sloped] {$\alpha_4$};
    \draw[->] (s^+) -- (s) node [pos=.5, above] {$\alpha_5$};
    \draw[->] (s) --  (s+1) node [pos=.5, above, sloped] {$\alpha_3$};
    \draw[->] (s+1) -- (s-1) node [pos=.5, above] {$\alpha_1$};
    \draw[->, dashed] (s^+) -- (s+1^+) node [pos=.5, left] {$\beta_2$};
    \draw[->, dashed] (s-1) -- (s) node [pos=.5, above, sloped] {$\beta_1$};
\end{tikzpicture},
\]
where the dotted arrows $\beta_1$ and $\beta_2$ are present if and only if $(s-1)^-<s^-$ and $s^+< (s+1)^+$. We assume for simplicity that neither $\beta_1$ nor $\beta_2$ are in the picture (the other cases can be treated similarly). Moreover, adding the labels of the cycles of the potential $W(\mathfrak{i})$ which involve the arrows of $Q_{s+1}(\mathfrak{i})$, we get

\[
\begin{tikzpicture}
    \node (s+1) at (0,0) {$s+1$};
    \node (s+1^+) at (4,0) {$(s+1)^+$};
    \node (s-1) at (-6,0) {$s-1$};
    \node (s) at (-4,-2) {$s$};
    \node (s^+) at (4,-2) {$s^+$};

    \node[red] (c_1) at (0,1) {$c_1$};
    \node[red] (c_2) at (-4,-1) {$c_2$};
    \node[red] (c_3) at (0,-1) {$c_3$};
    \node[red] (c_4) at (4,-1) {$c_4$};
    \node[red] (c_5) at (0,-3) {$c_5$};

    \node (ghost_c5_1) at (-1,-3) {};
    \node (ghost_c5_2) at (1,-3) {};
    \node (comma) at (4.8,-2.3) {,};

\draw[red, dashed, ->] (0.3,-1) arc (0:-290:0.3);
\draw[->, red, dashed] (ghost_c5_2) to [bend right]  (ghost_c5_1);
\draw[->, red, dashed] (1,1) to [bend left]  (-1,1);
\draw[->, red, dashed] (1,1) to [bend left]  (-1,1);
\draw[red, dashed, ->] (4,-0.7) arc (90:270:0.3);
\draw[red, dashed, ->] (-4,-1.3) arc (-90:90:0.3);

    \draw[->] (s+1^+) -- (s+1) node [pos=.5, above] {$\alpha_2$};
    \draw[->] (s+1) -- (s^+) node [pos=.5, above, sloped] {$\alpha_4$};
    \draw[->] (s^+) -- (s) node [pos=.5, above] {$\alpha_5$};
    \draw[->] (s) --  (s+1) node [pos=.5, above, sloped] {$\alpha_3$};
    \draw[->] (s+1) -- (s-1) node [pos=.5, above] {$\alpha_1$};
    
\end{tikzpicture}
\]
where 
\begin{itemize}
    \item $c_1$ is the sum of the cycles of $W(\mathfrak{i})$ containing $\alpha_1\alpha_2$ as a subpath. Its terms are indexed by the $\imath\in I_\mathsf{g}$ such that $\imath\sim \imath_{s+1}$ ;
    \item $c_2$ is the cycle of $W(\mathfrak{i})$ containing $\alpha_1\alpha_3$ as a subpath;
    \item $c_3$ is the cycle $\alpha_5\alpha_4\alpha_3$;
    \item $c_4$ is the cycle of $W(\mathfrak{i})$ containing $\alpha_4\alpha_2$ as a subpath;
    \item $c_5$ is the sum of the cycles of $W(\mathfrak{i})$ different from $c_3$ and containing $\alpha_5$ as a subpath. Its terms are indexed by the $\imath\in I_\mathsf{g}$ such that $\imath\sim \imath_{s}$ ;
\end{itemize}

As recalled in section \ref{subsec_reminder_clust_tilt}, the first step of the mutation of the quiver with potential $(Q(\mathfrak{i}), W(\mathfrak{i}))$ at $s+1$ requires to compute the quiver with potential $(\Tilde{\mu}_{s+1}Q(\mathfrak{i}), \Tilde{\mu}_{s+1} W(\mathfrak{i}))$. The quiver $\Tilde{\mu}_{s+1}Q(\mathfrak{i})$ appears as

\[
\begin{tikzpicture}
    \node (s+1) at (0,0) {$s+1$};
    \node (s+1^+) at (4,0) {$(s+1)^+$};
    \node (s-1) at (-6,0) {$s-1$};
    \node (s) at (-4,-2) {$s$};
    \node (s^+) at (4,-2) {$s^+$};

    \draw[->, blue] (s+1^+) to [bend right] node [pos=.5, above] {$[\alpha_1\alpha_2]$} (s-1)  ;
    \draw[->] (s+1) -- (s+1^+) node [pos=.5, above] {$\alpha_2^*$};
    \draw[->] (s^+) -- (s+1) node [pos=.5, above, sloped] {$\alpha_4^*$};
    \draw[->,blue] (-3.7,-1.97) -- (3.7, -1.97) node [pos=.5, above] {$[\alpha_4\alpha_3]$};
    \draw[->] (s+1) --  (s) node [pos=.5, above, sloped] {$\alpha_3^*$};
    \draw[->] (s-1) -- (s+1) node [pos=.5, above] {$\alpha_1^*$};
    \draw[->, blue] (s) -- (s-1) node [pos=.5, above, sloped] {$[\alpha_1\alpha_3]$};
    \draw[->, blue] (s+1^+) -- (s^+) node [pos=.5, right] {$[\alpha_4\alpha_2]$};
    \draw[->] (3.7,-2.09) -- (-3.7,-2.09) node [pos=.5, below] {$\alpha_5$};
    
\end{tikzpicture}.
\]
The potential $\Tilde{\mu}_{s+1} W(\mathfrak{i})$ is given by 
\[\Tilde{\mu}_{s+1} W(\mathfrak{i}) = \Tilde{W} +  W_t+\sum_{1\leq i\leq 4} \Tilde{c}_i,\]
where \[\Tilde{W} = W(\mathfrak{i}) - \sum_{1\leq i\leq 5} c_i,\]
    \[ W_t=[\alpha_1\alpha_2]\alpha_2^*\alpha_1^*+[\alpha_1\alpha_3]\alpha_3^*\alpha_1^*+[\alpha_4\alpha_3]\alpha_3^*\alpha_4^*+[\alpha_4\alpha_2]\alpha_2^*\alpha_4^*\]
and, for any $1\leq i\leq 4$, $\Tilde{c}_i$ is obtained from $c_i$ by replacing the associated subpath $\alpha_{j_1}\alpha_{j_2}$ with
$[\alpha_{j_1}\alpha_{j_2}]$. Notice that the unique 2-cycle in $\Tilde{\mu}_{s+1} W(\mathfrak{i})$ is $\Tilde{c}_3=\alpha_5[\alpha_4\alpha_3]$.
Following the proof of \cite[Lem.~4.8]{DWZ_quiv_pot_rep_I}, we obtain that there is a unitriangular automorphism $\varphi$
(see \cite[Def.~2.5]{DWZ_quiv_pot_rep_I}) of the completed path algebra of $\Tilde{\mu}_{s+1}Q(\mathfrak{i})$ sending $\Tilde{\mu}_{s+1} W(\mathfrak{i})$ to 
\[ \varphi(\Tilde{\mu}_{s+1} W(\mathfrak{i})) = \alpha_5[\alpha_4\alpha_3] + c'_5 + \Tilde{W} + (W_t-[\alpha_4\alpha_3]\alpha_3^*\alpha_4*) +\sum_{1\leq i\leq 4} \Tilde{c}_i,  \]
where $c'_5$ is the path obtained from $c_5$ by replacing $\alpha_5$ with the composition $\alpha_3^*\alpha_4^*$. 
Therefore, by \cite[Thm.~4.6]{DWZ_quiv_pot_rep_I}, after rearranging the position of its vertices, the quiver $\mu_{s+1} Q(\mathfrak{i})$ of the reduced component of $(\Tilde{\mu}_{s+1}Q(\mathfrak{i}),\Tilde{\mu}_{s+1}W(\mathfrak{i}))$, restriced to the neighbour of $s+1$, looks as 

\[
\begin{tikzpicture}
    \node (s+1) at (0,-2) {$s+1$};
    \node (s+1^+) at (4,0) {$(s+1)^+$};
    \node (s-1) at (-4,0) {$s-1$};
    \node (s) at (-6,-2) {$s$};
    \node (s^+) at (4,-2) {$s^+$};

    \draw[->, blue] (s+1^+) to node [pos=.5, above] {$[\alpha_1\alpha_2]$} (s-1)  ;
    \draw[->] (s+1) -- (s+1^+) node [pos=.5, above] {$\alpha_2^*$};
    \draw[->] (s^+) -- (s+1) node [pos=.5, above, sloped] {$\alpha_4^*$};
    \draw[->] (s+1) --  (s) node [pos=.5, above, sloped] {$\alpha_3^*$};
    \draw[->] (s-1) -- (s+1) node [pos=.5, above] {$\alpha_1^*$};
    \draw[->, blue] (s) -- (s-1) node [pos=.5, above, sloped] {$[\alpha_1\alpha_3]$};
    \draw[->, blue] (s+1^+) -- (s^+) node [pos=.5, right] {$[\alpha_4\alpha_2]$};

\end{tikzpicture},
\]
and its potential is given by 
\[ \mu_{s+1} W(\mathfrak{i}) =  c'_5 + \Tilde{W} + (W_t-[\alpha_4\alpha_3]\alpha_3^*\alpha_4^*) +\sum_{1\leq i\leq 4} \Tilde{c}_i. \]

Since the quiver $\mu_{s+1} Q(\mathfrak{i})$ does not contain 2-cycles, it coincides with the classical mutation at $s+1$ of the quiver $Q(\mathfrak{i})$. Therefore, by \cite[Lem.~2.7]{HFOO_iso_quant_groth_ring_clust_alg}, it is isomorphic to the quiver $Q(\mathfrak{\jmath})$, via the index change $s\leftrightarrow s-1$. A direct check shows that this isomorphism, up to a change of arrows to adjust the signs (see \cite[Def.~2.5]{DWZ_quiv_pot_rep_I}), respects the associated potentials (they are indeed both given by the alternated sum of the minimal cycles appearing in the quiver).

\end{proof}

\begin{proof}[Proof of Proposition \ref{prop_rigid_quiver}
] If the reduced expression $\underline{w}_0$ is $\mathcal{Q}$-adpated, then, by \cite[Prop.~7.27]{KKOP_mon_cat_quant_aff_II}, the quiver   $Q(\underline{w}_0)$ is isomorphic to the quiver denoted by $\Gamma$ in \cite{HL_Clust_alg_approach_q_char} and the result follows by applying the same arguments of \cite[Prop.~4.17]{HL_Clust_alg_approach_q_char}. \\
If $\underline{w}_0$ is not $\mathcal{Q}$-adapted, following the above discussion, we can obtain it from an $\mathcal{Q}$-adapted reduced expression via a composition of braid moves and commuation moves. Since commutaion moves do not change the potential, it suffices to show that, if $\underline{w}_0=s_{\imath_1}\dots s_{\imath_{l_0}}$ is obtained by a braid move at $1< s<l_0$ from a reduced expression $\underline{w}'_0=s_{\jmath_1}\dots s_{\jmath_{l_0}}$ satisfying the statement of the proposition for any finite interval $[a,b]$, then it holds true also for $\underline{w}_0$.
Assume that $[a,b]$ is of the form $[-l_0k,l_0k]$, $k$ being a positive integer. By Lemma \ref{lem_mut_quiv_pot_preserva}, the quiver with potential $Q(\underline{w}_0)$ is obtained from the quiver with potential $Q(\underline{w}'_0)$ by mutating at the vertices $\{s+1+zl_0 | z\in \mathbb{Z}\}$ in any order.
Since by hypothesis the quiver with potential $(Q^{[-l_0k,l_0k]}(\underline{w}'_0),W^{[-l_0k,l_0k]}(\underline{w}'_0))$ is rigid and Jacobi finite, properties which are mutation-invariant for reduced quivers with potential (\cite[Cor.~6.6, Cor.~6.11]{DWZ_quiv_pot_rep_I}), the same is true also for $(Q^{[-l_0k,l_0k]}(\underline{w}_0),W^{[-l_0k,l_0k]}(\underline{w}_0))$. 
If $[a,b]$ is a general finite interval, the result follows by taking the quotient of the path algebra associated to an interval of the form $[-l_0k,l_0k]$ containing $[a,b]$ (see \cite[prop.~8.9]{DWZ_quiv_pot_rep_I}).
\end{proof}

By Proposition \ref{prop_rigid_quiver}, the quiver with potential $(Q^{[a,b]}(\underline{w}_0),W^{[a,b]}(\underline{w}_0))$ 
satisfies the hypothesis of Theorem \ref{teo_prop_(Q;W)_jacobi_finite}.

\begin{cor}
\label{cor_properties_clust_cat_Q[a,b]}
The cluster category associated to the quiver with potential 
\[
(Q^{[a,b]}(\underline{w}_0),W^{[a,b]}(\underline{w}_0))
\]
is $\mathrm{Hom}$-finite, Krull--Schmidt, 2-Calabi--Yau and it contains the canonical cluster-tilting object, whose associated quiver is $Q^{[a,b]}(\underline{w}_0)$. Moreover,  the mutation of any cluster-tilting object obtained by a sequence of mutation from the canonical one corresponds, at the level of its endoquiver, to the classical quiver mutation.
\end{cor}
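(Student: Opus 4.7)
The plan is to deduce the corollary as a direct consequence of the two main theorems recalled in Section~\ref{subsec_reminder_clust_tilt}, using Proposition~\ref{prop_rigid_quiver} as input.

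First, I would apply Theorem~\ref{teo_prop_(Q;W)_jacobi_finite} to the quiver with potential $(Q^{[a,b]}(\underline{w}_0),W^{[a,b]}(\underline{w}_0))$. Proposition~\ref{prop_rigid_quiver}(1) ensures that this quiver with potential is Jacobi-finite, so Amiot's theorem yields that $\mathcal{C}_{Q^{[a,b]}(\underline{w}_0),W^{[a,b]}(\underline{w}_0)}$ is Hom-finite, Krull--Schmidt, $2$-Calabi--Yau, and contains the canonical cluster-tilting object, whose endomorphism algebra is isomorphic to the Jacobi algebra $J^{[a,b]}(\underline{w}_0)$.

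Next, I need to verify that the Gabriel quiver of $J^{[a,b]}(\underline{w}_0)$ is precisely $Q^{[a,b]}(\underline{w}_0)$. The key observation is that every term of $W^{[a,b]}(\underline{w}_0)$ is a cycle of length at least three (since the minimal cycles of type \eqref{cycle_potential} have length at least $3$); therefore the cyclic derivatives $\partial_\alpha W^{[a,b]}(\underline{w}_0)$ lie in the square of the arrow ideal, so none of the arrows of $Q^{[a,b]}(\underline{w}_0)$ becomes redundant in the Jacobi algebra. Moreover, directly from the definition of the arrow set of $Q(\underline{w}_0)$, one checks that $Q^{[a,b]}(\underline{w}_0)$ has neither loops nor $2$-cycles: an arrow $s\to s^-$ or $s\to t$ (with $s^-<t^-<s<t$) never admits a reverse arrow in the same list.

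Finally, for the last assertion, I would invoke Theorem~\ref{teo_mutation_T_compatible_mutation_Q}. Its hypotheses are now all met: the quiver $Q^{[a,b]}(\underline{w}_0)$ has no loops and no $2$-cycles, the potential is rigid by Proposition~\ref{prop_rigid_quiver}(2), and the quiver with potential is Jacobi-finite by Proposition~\ref{prop_rigid_quiver}(1). Hence any cluster-tilting object obtained from the canonical one by iterated mutations has endoquiver given by the corresponding iterated Fomin--Zelevinsky mutations of $Q^{[a,b]}(\underline{w}_0)$. I expect the main conceptual point---rather than a technical obstacle---to be the verification that the Gabriel quiver of the Jacobi algebra really is $Q^{[a,b]}(\underline{w}_0)$ itself, but as explained above this is immediate from the fact that $W^{[a,b]}(\underline{w}_0)$ has no terms of length $\leq 2$.
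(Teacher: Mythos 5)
Your proposal is correct and follows essentially the same route as the paper, which deduces the corollary directly from Theorem~\ref{teo_prop_(Q;W)_jacobi_finite} and Theorem~\ref{teo_mutation_T_compatible_mutation_Q} once Proposition~\ref{prop_rigid_quiver} supplies Jacobi-finiteness and rigidity. Your explicit check that the Gabriel quiver of the Jacobi algebra is $Q^{[a,b]}(\underline{w}_0)$ (since all cycles of the potential have length at least three, so the cyclic derivatives lie in the square of the arrow ideal) and that the quiver has no loops or $2$-cycles is a detail the paper leaves implicit, but it is exactly the right verification.
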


\subsection{An algebra of global dimension $\leq 2$ associated to the initial seed}

We proceed now  to define an algebra of global dimension at most 2 such that the associated cluster category is triangle equivalent to the cluster category associated to the quiver with potential $(Q^{[a,b]}(\underline{w}_0),W^{[a,b]]}(\underline{w}_0))$.

For a given arrow $\alpha: a\rightarrow b$ of $Q(\underline{w}_0)$, where $\imath_a\neq \jmath_b$, we define its \emph{orbit} as the set or arrows $\{\beta: c\rightarrow d | \imath_c=\imath_a,\ \imath_d=\imath_b\}$.\\ 
 We denote by $\mathcal{I}$ the set of pairs $(\imath,m)$ such that the interval $[a,b]$ contains an $i$-box of index $\imath$ and $i$-cardinality $m$. Notice that the set $\mathcal{I}$ gives a common labelling of the vertices of all the quivers $Q(\mathfrak{C})$: if $\mathfrak{C}$ is a chain of $i$-box of range $[a,b]$ and $(\imath,m)$ is in $\mathcal{I}$, we label by $m_\imath$ the vertex of $Q(\mathfrak{C})$ associated to the $i$-box of $\mathfrak{C}$ of index $\imath$ and $i$-cardinality $m$.\\
In particular, in our graphical representation of the quiver $Q^{[a,b]}(\underline{w}_0)$ where lines are indexed by the elements of $I_{\check{\mathfrak{g}}}$, $m_\imath$ is the $m$-th vertex from the right on the line $\imath$. \\
For each pair of lines $\imath,\jmath$ such that the associated vertices are adjacent in the Dynkin diagram of $\check{\mathfrak{g}}$, remove from the quiver $Q^{[a,b]}(\underline{w}_0)$ the orbit of the second arrow from the right connecting the line $\imath$ and the line $\jmath$.
Therefore, up to exchanging the position of the lines, the resulting quiver, denoted $Q_A$, is formed by diagrams of the following types, where the dotted arrows stand for the removed arrows and blue vertices could potentially be missing, in such a way that the resulting diagram remains connected:
\begin{equation}
\label{mesh_Q_A_tipo1}
    \begin{tikzcd}[sep=tiny]
n_j\arrow[ddrrrr] &  (n-1)_{\jmath} \arrow[l] & \dots \arrow[l] &(\Tilde{n}+1)_\jmath \arrow[l] & & & & &\Tilde{n}_\jmath\arrow[lllll]\arrow[ddr] &  \\
&&&&&&&\ &&
\\
& & & & m_i\arrow[uurrrr, dotted] & (m-1)_\imath\arrow[l] & \dots \arrow[l] & (\Tilde{m}+1)_\imath \arrow[l]
& & \Tilde{m}_\imath \arrow[ll]
\end{tikzcd}
\end{equation}

\begin{equation}
\label{mesh_Q_A_tipo2}
    \begin{tikzcd}[sep=tiny]
\textcolor{blue}{n_\jmath} &  \textcolor{blue}{(n-1)_{\jmath}} \arrow[l] & \dots \arrow[l] &\textcolor{blue}{(\Tilde{n}+1)_\jmath} \arrow[l] & & & & &\Tilde{n}_\jmath\arrow[lllll]\arrow[ddr] &  \\
&&&&&&&\ &&
\\
& & & & m_\imath\arrow[uurrrr, dotted] & (m-1)_\imath\arrow[l] & \dots \arrow[l] & (\Tilde{m}+1)_\imath \arrow[l]
& & \Tilde{m}_\imath \arrow[ll]
\end{tikzcd}
\end{equation}

\begin{equation}
\label{mesh_Q_A_tipo3}
   \begin{tikzcd}[sep=tiny]
 & & & &\Tilde{n}_\jmath\arrow[ddr] &  \\
&&&\ &&
\\
 \textcolor{blue}{m_\imath} & \textcolor{blue}{(m-1)_\imath} \arrow[l] & \dots \arrow[l] & \textcolor{blue}{(\Tilde{m}+1)_\imath} \arrow[l]
& & \Tilde{m}_\imath \arrow[ll]
\end{tikzcd}
\end{equation}

\begin{equation}
\label{mesh_Q_A_tipo4}
    \begin{tikzcd}[sep=tiny]
\textcolor{blue}{n_j}\arrow[ddrrrr] &  \textcolor{blue}{(n-1)_{\jmath}} \arrow[l] & \dots \arrow[l] & \textcolor{blue}{1_\jmath} \arrow[l] & & & &  \\
&&&&&&&\ &&
\\
& & & & \textcolor{blue}{m_\imath} & \textcolor{blue}{(m-1)_\imath }\arrow[l] & \dots \arrow[l] & 1_\imath \arrow[l]
\end{tikzcd}
\end{equation}

Let $I_A$ be the ideal of the path algebra $kQ_A$ generated by the alternating sums of the paths from the vertex in the top right corner to the vertex in the bottom left corner of any diagram of type $(\ref{mesh_Q_A_tipo1})$ or $(\ref{mesh_Q_A_tipo2})$. We denote by $A$ the quotient $kQ_A/I_A$.
For any vertex $a$, we denote by $P_{a}$ the associated indecomposable projective $A$-module. For our convenience, for any $\imath \in \check{\mathfrak{g}}$, by $P_{0_\imath}$ we mean the zero module.

\begin{rem}\label{rem_WS_path}(About zero paths in the quiver with relations $(Q_A,I_A)$) In order to understand if a path is zero in the algebra $A$, replace each of its subpaths $\Tilde{n}_\jmath \rightarrow \Tilde{m}_\imath \leadsto m_\imath$ belonging to a mesh of type (\ref{mesh_Q_A_tipo1}) by the path $\Tilde{n}_\jmath \rightarrow n_\jmath \leadsto m_\imath$. We say that the resulting path is in the WS-form. 
Therefore, the path is different from 0 if and only if, in its WS-form, it does not contain any subpath $\Tilde{n}_\jmath \rightarrow \Tilde{m}_\imath \leadsto m_\imath$ belonging to a diagram of type (\ref{mesh_Q_A_tipo2}). Notice that the $WS$-form of a path is unique.
\end{rem}

\begin{prop}
  The indecomposable projective modules of the algebra $A$ are thin.
\end{prop}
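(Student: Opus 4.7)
The plan is to establish thinness by using the $WS$-form of paths introduced in Remark~\ref{rem_WS_path}.

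Since $\dim_k (P_a)_b = \dim_k e_b A e_a$ counts the linearly independent classes of paths from $a$ to $b$ in $A$, and since the relations in $I_A$ either identify pairs of paths sharing the same $WS$-form (in the case of meshes of type (\ref{mesh_Q_A_tipo1})) or annihilate paths whose $WS$-form contains a forbidden subpath (in the case of meshes of type (\ref{mesh_Q_A_tipo2})), it is enough to exhibit at most one non-annihilated $WS$-form path from $a$ to $b$ in $Q_A$ for every ordered pair $(a,b)$.

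The key structural observation is that in $Q_A$ all diagonal arrows between any two adjacent lines are oriented in the same direction, the removed ``orbit'' consisting precisely of the arrows in the opposite direction. Hence the lines of $Q_A$ form the vertices of an oriented tree $\vec{\Delta}$, namely an orientation of the Dynkin diagram of $\check{\mathfrak{g}}$. Any path in $Q_A$ from $a$ to $b$ induces a directed walk in $\vec{\Delta}$ from $\imath_a$ to $\imath_b$. Since $\vec{\Delta}$ is a tree, this walk cannot revisit a vertex and must be the unique simple directed path from $\imath_a$ to $\imath_b$ (if such a directed path exists at all). Consequently, the sequence of lines $\imath_a = \jmath_0 \to \jmath_1 \to \dots \to \jmath_k = \imath_b$ visited by the path is determined by $(a,b)$.

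Given the fixed line trajectory, I would proceed by a right-to-left induction. Writing $c_r$ and $d_r$ for the entry and exit vertices of the path on line $\jmath_r$, one has $d_k = b$; moreover, for each $r \geq 1$, the vertex $c_r$ is necessarily the endpoint of a diagonal from $\jmath_{r-1}$. The $WS$-form forces the following determination of $c_r$ from $d_r$: if $d_r$ coincides with the bottom-left corner $m_\imath$ of a mesh of type (\ref{mesh_Q_A_tipo1}), then $c_r = d_r$, so the path enters via the ``left'' diagonal of that mesh; otherwise $d_r$ lies in the strict interior of a unique mesh, and $c_r$ must be its bottom-right corner, with the path entering via the ``right'' diagonal. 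Once $c_r$ is pinned down, $d_{r-1}$ is recovered as the unique source in line $\jmath_{r-1}$ of the diagonal landing at $c_r$. Propagating these determinations back to $r=0$ produces the unique candidate $WS$-form path; requiring $c_0 = a$ decides whether such a path actually exists, and in every case $\dim_k e_b A e_a \leq 1$, proving that $P_a$ is thin.

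The main obstacle is to verify the right-to-left step when several meshes between the same pair of lines share a diagonal: one must check that the $WS$-form rules coming from the different meshes are mutually consistent. This consistency rests on the local structure of $Q_A$ exhibited in diagrams (\ref{mesh_Q_A_tipo1})--(\ref{mesh_Q_A_tipo4}), where the meshes between two given lines are arranged in a linear order along the lines and adjacent meshes share exactly one diagonal.
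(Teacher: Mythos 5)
Your proposal is correct and takes essentially the same route as the paper's proof: both reduce the statement to uniqueness of the nonzero $WS$-form path between two fixed vertices (Remark~\ref{rem_WS_path}), use that all remaining diagonal arrows between two adjacent lines point in the same direction---so, the Dynkin diagram being a tree, the sequence of lines a path visits is forced---and then settle uniqueness by a local analysis of the meshes (\ref{mesh_Q_A_tipo1})--(\ref{mesh_Q_A_tipo4}). The only difference is presentational: the paper argues by contradiction at the first discrepancy of two $WS$-form paths, whereas you determine the entry vertices backwards from the target; your dichotomy just needs minor care at edge cases (e.g.\ a landing vertex that is only the bottom-right corner of the rightmost mesh, or the bottom-left corner of a type~(\ref{mesh_Q_A_tipo2}) diagram, which receives no diagonal and may force the candidate path to vanish), which is exactly the verification the paper's contradiction step carries out.
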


\begin{proof}
It follows from the uniqueness of the $WS$-form of a path.

    By the characterization of the indecomposable projective modules, it suffices to show that, for any pair of vertices $m_\imath,m'_{\jmath}$, the space of paths from $m_\imath$ to $m'_\imath$ in $A$ is $1$-dimensional. 
    
    By construction of the quiver $Q_A$, if there is an arrow from a vertex of the line $\imath$ to a vertex of the line $\jmath$, $\imath\neq \jmath$, then there cannot be an arrow from a vertex of the line $\imath$ to a vertex of the line $\jmath$, $\imath\neq \jmath$ (they have all been removed). 
    In particular, when a path leaves a line, it cannot return to it.
    
    Therefore, if $\imath=\jmath$, the only non-zero path between two vertices $m'_\imath, m_\imath$ of the same line is
    \[m'_\imath \leftarrow (m-1)'_\imath \leftarrow \dots \leftarrow (m+1)_\imath \leftarrow m_\imath,\]
    if $m'> m$, and the idempotent $e_{m_{\imath}}$ if $m=m'$. 
    Suppose now that $\imath\neq \jmath$.

    Let $p=\alpha_N\dots\alpha_2\alpha_1$ and $p'=\alpha'_{N'}\dots\alpha'_2\alpha'_1$ ($\alpha_i,\alpha'_j\in (Q_A)_1$) be two paths from $m_\imath$ to $m'_\jmath$. Assume that they are in the $WS$-form. Let $i,i'$ be the smallest integers such that the subpaths $\alpha_N\dots \alpha_{i+1}$ and $\alpha_{N'}\dots \alpha_{i'+1}$ are equal (if they do not exist, take $i=N$ and $i'=N'$). Up to switching the notation, we can assume that $\alpha_i'$ connects two vertices on the same line $\Tilde{\imath}$, while $\alpha_i$ has its source on a different line $\Tilde{\jmath}$. Therefore, there exists an index $j'<i'$ such that $\alpha_{j'}$ goes from the line $\Tilde{\imath}$ to the line $\Tilde{\jmath}$. In particular, since the arrow $\alpha_{j'}$ is on the right with respect to the arrow $\alpha_{i}$, it must be the left descending arrow of a diagram of type (\ref{mesh_Q_A_tipo1}). This contradicts the hypothesis that $p'$ is in the $WS$-form. 
\end{proof}

To illustrate the proof of the next theorem, we give the following examples:

\begin{ex}
Consider the quiver with relations
    \begin{equation*}
\begin{tikzcd}[sep=small]
    & \bullet \arrow[rd] &&&& \bullet  \arrow[llll]\arrow[rd] &&&& \bullet \arrow[llll]\arrow[rd] & &  \\  
    6_2 \arrow[rrrd] && \bullet \arrow[ll]\arrow[rrru, dotted] && 4_2 \arrow[rrrd] \arrow[ll]&& \bullet \arrow[ll]\arrow[rrru, dotted]&& \bullet\arrow[rrrd]  \arrow[ll]&& \bullet \arrow[ll]&\\
    & & & 3_3 \arrow[ru, dotted] &&&& 2_3 \arrow[llll]\arrow[ru, dotted] &&&& \bullet \arrow[llll] .
\end{tikzcd}
\end{equation*}

We want to compute the projective resolution of the simple module $S_{3_3}$. 

In the following quiver, we represent in red the vertices supporting the module $P_{3_3}$.

    \begin{equation*}
\begin{tikzcd}[sep=small]
    & \textcolor{red}{\bullet} \arrow[rd] &&&& \textcolor{red}{\bullet}  \arrow[llll]\arrow[rd] &&&& \textcolor{red}{\bullet} \arrow[llll]\arrow[rd] & & \\  
    \textcolor{red}{6_2} \arrow[rrrd] && \textcolor{red}{\bullet} \arrow[ll]\arrow[rrru, dotted] && \textcolor{red}{4_2} \arrow[rrrd] \arrow[ll]&& \textcolor{red}{\bullet} \arrow[ll]\arrow[rrru, dotted]&& \textcolor{red}{\bullet}\arrow[rrrd]  \arrow[ll]&& \textcolor{red}{\bullet} \arrow[ll]&\\
    & & & \textcolor{red}{3_3} \arrow[ru, dotted] &&&& \textcolor{red}{2_3} \arrow[llll]\arrow[ru, dotted] &&&& \textcolor{red}{\bullet} \arrow[llll] .
\end{tikzcd}
\end{equation*}

In the following quiver, we represent in red the vertices supporting the module $P_{2_3}$:

\begin{equation*}
\begin{tikzcd}[sep=small]
    & \bullet \arrow[rd] &&&& \textcolor{red}{\bullet}  \arrow[llll]\arrow[rd] &&&& \textcolor{red}{\bullet} \arrow[llll]\arrow[rd] & & \\  
    6_2 \arrow[rrrd] && \bullet \arrow[ll]\arrow[rrru, dotted] && \textcolor{red}{4_2} \arrow[rrrd] \arrow[ll]&& \textcolor{red}{\bullet} \arrow[ll]\arrow[rrru, dotted]&& \textcolor{red}{\bullet}\arrow[rrrd]  \arrow[ll]&& \textcolor{red}{\bullet} \arrow[ll]&\\
    & & & 3_3 \arrow[ru, dotted] &&&& \textcolor{red}{2_3} \arrow[llll]\arrow[ru, dotted] &&&& \textcolor{red}{\bullet} \arrow[llll] .
\end{tikzcd}
\end{equation*}

In the following quiver, we represent in red the vertices supporting the module $P_{6_2}$:

\begin{equation*}
\begin{tikzcd}[sep=small]
    & \textcolor{red}{\bullet} \arrow[rd] &&&& \textcolor{red}{\bullet}  \arrow[llll]\arrow[rd] &&&& \textcolor{red}{\bullet} \arrow[llll]\arrow[rd] & & \\  
    \textcolor{red}{6_2} \arrow[rrrd] && \textcolor{red}{\bullet} \arrow[ll]\arrow[rrru, dotted] && \textcolor{red}{4_2} \arrow[rrrd] \arrow[ll]&& \textcolor{red}{\bullet} \arrow[ll]\arrow[rrru, dotted]&& \textcolor{red}{\bullet}\arrow[rrrd]  \arrow[ll]&& \textcolor{red}{\bullet} \arrow[ll]&\\
    & & & 3_3 \arrow[ru, dotted] &&&& 2_3 \arrow[llll]\arrow[ru, dotted] &&&& \bullet \arrow[llll] .
\end{tikzcd}
\end{equation*}

In the following quiver, we represent in red the vertices supporting the module $P_{4_2}$:

\begin{equation*}
\begin{tikzcd}[sep=small]
    & \bullet \arrow[rd] &&&& \textcolor{red}{\bullet}  \arrow[llll]\arrow[rd] &&&& \textcolor{red}{\bullet} \arrow[llll]\arrow[rd] & &  \\  
   6_2 \arrow[rrrd] && \bullet \arrow[ll]\arrow[rrru, dotted] && \textcolor{red}{4_2} \arrow[rrrd] \arrow[ll]&& \textcolor{red}{\bullet} \arrow[ll]\arrow[rrru, dotted]&& \textcolor{red}{\bullet}\arrow[rrrd]  \arrow[ll]&& \textcolor{red}{\bullet} \arrow[ll]&\\
    & & & 3_3 \arrow[ru, dotted] &&&& 2_3 \arrow[llll]\arrow[ru, dotted] &&&& \bullet \arrow[llll].
\end{tikzcd}
\end{equation*}

Therefore, we see that the sequence
\[0\rightarrow P_{4_2} \rightarrow  P_{6_2} \oplus P_{2_3} \rightarrow P_{3_3}\rightarrow S_{3_3} \]

is a minimal projective presentation of $S_{3_3}$,

\end{ex}

\begin{ex}
    Consider the quiver with relations

\begin{equation*}
    \begin{tikzcd}[sep=small]
         & & \bullet \arrow[dr] \arrow[dll, dotted]& & \bullet \arrow[ll] \arrow[drrr] \arrow[dl, dotted]& & & & \bullet \arrow[llll] \arrow[drrr] \arrow[dl, dotted] & & & & & \\
         4_2 \arrow[dr] & & & 3_2 \arrow[lll] \arrow[drrr] \arrow[dll, dotted] & & & & \bullet \arrow[llll] \arrow[drrr] \arrow[dl, dotted]& & & & \bullet \arrow[llll] \arrow[drr]  \arrow[dl, dotted] & & \\
         & 4_3 & & & & & 3_3 \arrow[lllll] & & & & \bullet \arrow[llll] & & &\bullet \arrow[lll] \\
         & & & & & 3_4 \arrow[ur] \arrow[ullll, dotted]& & & & \bullet \arrow[llll] \arrow[ur] \arrow[ulll, dotted] & & & \bullet \arrow[lll] \arrow[ur] \arrow[ull, dotted]&.
    \end{tikzcd}
\end{equation*}

We want to compute the projective resolution of the simple module $S_{4_3}$. 
The support of $P_{4_3}$ is

\begin{equation*}
    \begin{tikzcd}[sep=small]
         & & \bullet \arrow[dr] \arrow[dll, dotted]& & \bullet \arrow[ll] \arrow[drrr] \arrow[dl, dotted]& & & & \bullet \arrow[llll] \arrow[drrr] \arrow[dl, dotted] & & & & & \\
         \textcolor{red}{4_2} \arrow[dr] & & & \textcolor{red}{3_2} \arrow[lll] \arrow[drrr] \arrow[dll, dotted] & & & & \textcolor{red}{\bullet} \arrow[llll] \arrow[drrr] \arrow[dl, dotted]& & & & \textcolor{red}{\bullet} \arrow[llll] \arrow[drr]  \arrow[dl, dotted] & & \\
         & \textcolor{red}{4_3} & & & & & \textcolor{red}{3_3} \arrow[lllll] & & & & \textcolor{red}{\bullet} \arrow[llll] & & &\textcolor{red}{\bullet} \arrow[lll] \\
         & & & & & 3_4 \arrow[ur] \arrow[ullll, dotted]& & & & \bullet \arrow[llll] \arrow[ur] \arrow[ulll, dotted] & & & \bullet \arrow[lll] \arrow[ur] \arrow[ull, dotted]&.
    \end{tikzcd}
\end{equation*}

The support of $P_{3_3}$ is 

\begin{equation*}
    \begin{tikzcd}[sep=small]
         & & \textcolor{red}{\bullet} \arrow[dr] \arrow[dll, dotted]& & \textcolor{red}{\bullet} \arrow[ll] \arrow[drrr] \arrow[dl, dotted]& & & & \textcolor{red}{\bullet} \arrow[llll] \arrow[drrr] \arrow[dl, dotted] & & & & & \\
         4_2 \arrow[dr] & & & \textcolor{red}{3_2} \arrow[lll] \arrow[drrr] \arrow[dll, dotted] & & & & \textcolor{red}{\bullet} \arrow[llll] \arrow[drrr] \arrow[dl, dotted]& & & & \textcolor{red}{\bullet} \arrow[llll] \arrow[drr]  \arrow[dl, dotted] & & \\
         & 4_3 & & & & & \textcolor{red}{3_3} \arrow[lllll] & & & & \textcolor{red}{\bullet} \arrow[llll] & & &\textcolor{red}{\bullet} \arrow[lll] \\
         & & & & & \textcolor{red}{3_4} \arrow[ur] \arrow[ullll, dotted]& & & & \textcolor{red}{\bullet} \arrow[llll] \arrow[ur] \arrow[ulll, dotted] & & & \textcolor{red}{\bullet} \arrow[lll] \arrow[ur] \arrow[ull, dotted]&.
    \end{tikzcd}
\end{equation*}

The support of $P_{4_2}$ is

\begin{equation*}
    \begin{tikzcd}[sep=small]
         & & \bullet \arrow[dr] \arrow[dll, dotted]& & \bullet \arrow[ll] \arrow[drrr] \arrow[dl, dotted]& & & & \bullet \arrow[llll] \arrow[drrr] \arrow[dl, dotted] & & & & & \\
         \textcolor{red}{4_2} \arrow[dr] & & & \textcolor{red}{3_2} \arrow[lll] \arrow[drrr] \arrow[dll, dotted] & & & & \textcolor{red}{\bullet} \arrow[llll] \arrow[drrr] \arrow[dl, dotted]& & & & \textcolor{red}{\bullet} \arrow[llll] \arrow[drr]  \arrow[dl, dotted] & & \\
         & 4_3 & & & & & 3_3 \arrow[lllll] & & & & \bullet \arrow[llll] & & &\bullet \arrow[lll] \\
         & & & & & 3_4 \arrow[ur] \arrow[ullll, dotted]& & & & \bullet \arrow[llll] \arrow[ur] \arrow[ulll, dotted] & & & \bullet \arrow[lll] \arrow[ur] \arrow[ull, dotted]&.
    \end{tikzcd}
\end{equation*}

The support of $P_{3_2}$ is 

\begin{equation*}
    \begin{tikzcd}[sep=small]
         & & \textcolor{red}{\bullet} \arrow[dr] \arrow[dll, dotted]& & \textcolor{red}{\bullet} \arrow[ll] \arrow[drrr] \arrow[dl, dotted]& & & & \textcolor{red}{\bullet} \arrow[llll] \arrow[drrr] \arrow[dl, dotted] & & & & & \\
         4_2 \arrow[dr] & & & \textcolor{red}{3_2} \arrow[lll] \arrow[drrr] \arrow[dll, dotted] & & & & \textcolor{red}{\bullet} \arrow[llll] \arrow[drrr] \arrow[dl, dotted]& & & & \textcolor{red}{\bullet} \arrow[llll] \arrow[drr]  \arrow[dl, dotted] & & \\
         & 4_3 & & & & & 3_3 \arrow[lllll] & & & & \bullet \arrow[llll] & & &\bullet \arrow[lll] \\
         & & & & & 3_4 \arrow[ur] \arrow[ullll, dotted]& & & & \bullet \arrow[llll] \arrow[ur] \arrow[ulll, dotted] & & & \bullet \arrow[lll] \arrow[ur] \arrow[ull, dotted]&.
    \end{tikzcd}
\end{equation*}

The support of $P_{3_4}$ is 

\begin{equation*}
    \begin{tikzcd}[sep=small]
         & & \bullet \arrow[dr] \arrow[dll, dotted]& & \bullet \arrow[ll] \arrow[drrr] \arrow[dl, dotted]& & & & \bullet \arrow[llll] \arrow[drrr] \arrow[dl, dotted] & & & & & \\
         4_2 \arrow[dr] & & & 3_2 \arrow[lll] \arrow[drrr] \arrow[dll, dotted] & & & & \bullet \arrow[llll] \arrow[drrr] \arrow[dl, dotted]& & & & \bullet \arrow[llll] \arrow[drr]  \arrow[dl, dotted] & & \\
         & 4_3 & & & & & 3_3 \arrow[lllll] & & & & \bullet \arrow[llll] & & &\bullet \arrow[lll] \\
         & & & & & \textcolor{red}{3_4} \arrow[ur] \arrow[ullll, dotted]& & & & \textcolor{red}{\bullet} \arrow[llll] \arrow[ur] \arrow[ulll, dotted] & & & \textcolor{red}{\bullet} \arrow[lll] \arrow[ur] \arrow[ull, dotted]&.
    \end{tikzcd}
\end{equation*}

Therefore, we see that the sequence
\[0\rightarrow P_{3_2}\oplus P_{3_4} \rightarrow  P_{4_2} \oplus P_{3_3} \rightarrow P_{4_3}\rightarrow S_{3_3} \]

is a minimal projective presentation of $S_{4_3}$,
    
\end{ex}

\begin{prop}
\label{prop_glob_dim2}
The algebra $A$ is of global dimension at most 2.
\end{prop}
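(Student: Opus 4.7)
The strategy is to show, for each simple $A$-module $S_{m_\imath}$, that the projective dimension is at most $2$ by constructing an explicit projective resolution of length at most $2$. The main combinatorial tools are the previous proposition (which asserts that every indecomposable projective $P_v$ is thin) and the description of non-zero paths in $A$ via the $WS$-form given in Remark \ref{rem_WS_path}.

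I start with the projective cover $P_{m_\imath} \twoheadrightarrow S_{m_\imath}$, whose kernel $\mathrm{rad}(P_{m_\imath})$ has semisimple top indexed by the arrows of $Q_A$ pointing into $m_\imath$. These arrows are of two kinds: the line arrow $(m-1)_\imath \to m_\imath$, present whenever $(m-1)_\imath$ is a vertex of $Q_A$; and, for each adjacent line $\jmath \sim \imath$, at most one diagonal arrow $n_\jmath \to m_\imath$, coming from the top-left vertex of a mesh of type (\ref{mesh_Q_A_tipo1}) whose bottom-left vertex is $m_\imath$. Taking one copy of $P_v$ for each such arrow $v \to m_\imath$ yields a projective cover
\[
P^{-1} \twoheadrightarrow \mathrm{rad}(P_{m_\imath}),
\]
inducing a map $P^{-1} \to P_{m_\imath}$ whose image equals $\mathrm{rad}(P_{m_\imath})$.

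Next I identify the second syzygy. Generators of the kernel of $P^{-1} \to P_{m_\imath}$ come from the mesh relations defining the ideal $I_A$. For each mesh of type (\ref{mesh_Q_A_tipo1}) with bottom-left vertex $m_\imath$ and top-right vertex $\tilde{n}_\jmath$, the mesh relation identifies in $A$ the two paths from $\tilde{n}_\jmath$ to $m_\imath$: the top-and-left route through $n_\jmath$, and the right-and-bottom route through $\tilde{m}_\imath$. This identity produces at the vertex $\tilde{n}_\jmath$ a non-zero kernel element, which generates a submodule isomorphic to $P_{\tilde{n}_\jmath}$. Summing all such contributions gives a map
\[
P^{-2} \;=\; \bigoplus_{\substack{\jmath \sim \imath \\ \text{mesh of type (\ref{mesh_Q_A_tipo1}) at } m_\imath}} P_{\tilde{n}_\jmath} \;\longrightarrow\; P^{-1},
\]
and the assertion to be verified is that $P^{-2} \to P^{-1} \to P_{m_\imath} \to S_{m_\imath} \to 0$ is exact.

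Exactness at $P_{m_\imath}$ and surjectivity onto $\mathrm{rad}(P_{m_\imath})$ are immediate. Exactness at $P^{-1}$ and injectivity of $P^{-2} \to P^{-1}$ both follow from the thin-ness of the projectives combined with Remark \ref{rem_WS_path}: vertex by vertex, one counts non-zero paths and checks that the only coincidences among paths ending at $m_\imath$ are precisely the mesh relations listed, while each component $P_{\tilde{n}_\jmath} \to P_{n_\jmath}$ of $P^{-2} \to P^{-1}$, given by composition with the top-row path of the mesh, is injective on the thin module $P_{\tilde{n}_\jmath}$. Boundary configurations in which the mesh degenerates into one of the shapes (\ref{mesh_Q_A_tipo2}), (\ref{mesh_Q_A_tipo3}) or (\ref{mesh_Q_A_tipo4}) are handled by omitting the corresponding summands of $P^{-1}$ or $P^{-2}$, and the projective dimension drops to at most $1$ or $0$ accordingly. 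The main obstacle is the systematic case analysis needed to confirm exactness at $P^{-1}$: one has to check that across every local configuration of $m_\imath$ within its neighboring meshes, no additional syzygies appear beyond those coming from the type-(\ref{mesh_Q_A_tipo1}) meshes already accounted for.
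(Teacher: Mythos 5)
Your overall strategy coincides with the paper's: bound the projective dimension of each simple $S_{m_\imath}$ by exhibiting an explicit length-two projective resolution whose middle term covers the radical via the incoming arrows and whose left term is generated by the mesh relations, with exactness checked vertex-by-vertex using thin-ness and the $WS$-form of Remark \ref{rem_WS_path}. However, there is a genuine gap in your treatment of the boundary meshes. You take $P^{-2}$ to be indexed only by the meshes of type (\ref{mesh_Q_A_tipo1}) having $m_\imath$ as bottom-left corner, and you assert that when the mesh degenerates to type (\ref{mesh_Q_A_tipo2}), (\ref{mesh_Q_A_tipo3}) or (\ref{mesh_Q_A_tipo4}) one simply omits the corresponding summands and ``the projective dimension drops to at most $1$ or $0$''. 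This is false for type (\ref{mesh_Q_A_tipo2}): there the relation does not disappear, it degenerates to a \emph{monomial} (zero) relation, namely the single path $\tilde{n}_\jmath \rightarrow \tilde{m}_\imath \leadsto m_\imath$ lies in $I_A$ (this is exactly the content of Remark \ref{rem_WS_path}). Consequently the element of $P_{(m-1)_\imath}$ given by the path $\tilde{n}_\jmath \rightarrow \tilde{m}_\imath \leadsto (m-1)_\imath$ is killed by the line arrow into $m_\imath$, so the kernel of your $P^{-1}\to P_{m_\imath}$ acquires a generator at $\tilde{n}_\jmath$ which is not hit by any type-(\ref{mesh_Q_A_tipo1}) summand of your $P^{-2}$ (in type (\ref{mesh_Q_A_tipo2}) there is no arrow from the line $\jmath$ into $m_\imath$, hence no $P_{n_\jmath}$ in $P^{-1}$ either). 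Your complex is therefore not exact at $P^{-1}$ in these cases, and the projective dimension does not drop: it remains $2$, with a second-syzygy summand $P_{\tilde{n}_\jmath}$ of a different (monomial) origin.

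This is precisely why the paper's presentation has third term $\bigoplus_{\jmath\in J_1(m_\imath)\sqcup J_2(m_\imath)} P_{\tilde{n}_\jmath}$, i.e.\ it includes the type-(\ref{mesh_Q_A_tipo2}) contributions alongside the commutativity relations of type (\ref{mesh_Q_A_tipo1}), while the middle term is $P_{(m-1)_\imath}\oplus\bigoplus_{\jmath\in J_1(m_\imath)\sqcup J_4(m_\imath)} P_{n_\jmath}$ (note also that type-(\ref{mesh_Q_A_tipo4}) meshes do contribute an incoming diagonal arrow, hence a summand of $P^{-1}$, even though they carry no relation — your description of the diagonal arrows as coming only from type-(\ref{mesh_Q_A_tipo1}) meshes is too narrow, though this is a smaller slip). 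The paper's second example makes the missing case concrete: the minimal presentation of $S_{4_3}$ is
\[
0\rightarrow P_{3_2}\oplus P_{3_4} \rightarrow  P_{4_2} \oplus P_{3_3} \rightarrow P_{4_3}\rightarrow S_{4_3}\rightarrow 0,
\]
where the summand $P_{3_4}$ arises from a degenerate (type-(\ref{mesh_Q_A_tipo2})) configuration with no companion arrow from line $4$ into $4_3$; your recipe would drop it and the resulting sequence would fail to be exact. To repair your argument you must enlarge $P^{-2}$ by one copy of $P_{\tilde{n}_\jmath}$ for every type-(\ref{mesh_Q_A_tipo2}) mesh with bottom-left corner $m_\imath$, mapping into the summand $P_{(m-1)_\imath}$ of $P^{-1}$ via the path $\tilde{n}_\jmath \rightarrow \tilde{m}_\imath \leadsto (m-1)_\imath$, and then redo the vertex-by-vertex exactness count including these components, as is done (via the explicit support decompositions) in the paper.
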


\begin{proof}
    It suffices to show that the projective dimension of each simple module is less or equal to 2. Let $S_{m_\imath}$ be the simple modules associated to the vertex $m_\imath$.\\
    We write $J_1(m_\imath),\ J_2(m_\imath),\ J_3(m_\imath)$ and $J_4(m_\imath)$ for the sets of line indices $\jmath$ such that the vertex $m_\imath$ is in the bottom-left corner of a diagram of type (\ref{mesh_Q_A_tipo1}), (\ref{mesh_Q_A_tipo2}), (\ref{mesh_Q_A_tipo3}) or (\ref{mesh_Q_A_tipo4}) respectively, where the upper vertices belong to the line indexed by $\jmath$. For any $\jmath\in J_s(m_\imath)\ (s=1,2,3,4)$, if it exists, we write  $n_\jmath$ (resp. $\Tilde{n}_\jmath$) for the vertex in the top left (resp. top right) corner of the associated diagram.\\
    We claim that a minimal projective presentation of $S_{m_\imath}$ is given by
    \begin{equation}
    \label{eq_ses_proj_res_simples}
    0\rightarrow \bigoplus_{\jmath\ \in\  J_1(m_\imath)\sqcup J_2(m_\imath)} P_{\Tilde{n}_\jmath}\xrightarrow{f} P_{(m-1)_{\imath}}\bigoplus_{\jmath\ \in\  J_1(m_\imath)\sqcup J_4(m_\imath)} P_{n_\jmath}\xrightarrow{g} P_{m_\imath}\rightarrow S_{m_\imath},
    \end{equation}
    where the components of $f$ and $g$ are maps between projective modules induced by paths.\\
    We introduce also, with the denotation $K(m_\imath)$, the set of line indices $k$ such that the vertex $m_i$ is one of the internal vertices of the bottom row of a diagram of type (\ref{mesh_Q_A_tipo1}), (\ref{mesh_Q_A_tipo2}) or (\ref{mesh_Q_A_tipo3}), where the upper vertices belong to the line indexed by $k$.
    For each $k\in K(m_i)$, we denote by $\hat{n}_k$ the vertex in the top-right corner of the corresponding diagram.
    
 Considering the paths different from 0 in the quiver with relation and using Remark \ref{rem_WS_path}, we can give the following descriptions , in terms of disjoint unions, of the supports of the modules $P_{m_i}$ and $P_{(m-1)_i}$.

\begin{align*}   
    \text{Supp}(P_{m_\imath})=&A_0\sqcup (\bigsqcup_{\jmath\in J_1(m_\imath)} A^\jmath_1) \sqcup (\bigsqcup_{\jmath\in J_3(m_\imath)} A^\jmath_3 )\sqcup (\bigsqcup_{\jmath\in J_4(m_\imath)}A^\jmath_4)\sqcup (\bigsqcup_{k\in K(m_\imath)} A^k) , \text{ where } \\
    A_0=&\{m_\imath,(m-1)_\imath,\dots, 1_\imath\},\\
    A^\jmath_1 = & \text{Supp}(P_{n_\jmath}),\\
    A^\jmath_3 = & \text{Supp}(P_{\Tilde{n}_\jmath}),\\
    A^\jmath_4 =& \text{Supp}(P_{n_\jmath}),\\
    A^k = & \text{Supp}(P_{\hat{n}_k});\\
    \\ 
    \text{Supp}(P_{m_{(\imath-1)}})=& B_0\sqcup (\bigsqcup_{\jmath\in J_1(m_\imath)} B^j_1) \sqcup (\bigsqcup_{\jmath\in J_2(m_\imath)} B^\jmath_2)\sqcup (\bigsqcup_{\jmath\in J_3(m_\imath)} A^\jmath_3 )\sqcup (\bigsqcup_{k\in K(m_\imath)} A^k)  \text{ where} \\
    B_0=&\{m_{\imath-1},\dots, 1_\imath\}\\
    B^\jmath_1 = &\text{Supp}(P_{\Tilde{n}_\jmath})\\
    B^\jmath_2 = &\text{Supp}(P_{\Tilde{n}_\jmath})\\
\end{align*}

The exactness of (\ref{eq_ses_proj_res_simples}) is now a direct verification.

\end{proof}

\begin{rem}
The choice of the arrows removed from the quiver $Q(\underline{w}_0)$  is not trivial. In particular, removing the first arrow from the left between two lines can produce an algebra of projective dimension greater than 2.

For example, consider the quiver

\begin{equation*}
    \begin{tikzcd}
        & 2 \arrow[dr] & & 4 \arrow[ll] \\
        1 \arrow[ur] & & 3 \arrow[ll] \arrow[ur, "\alpha"] & \ \ \ \ \ .
    \end{tikzcd}
\end{equation*}

Removing the orbit of the arrow $\alpha$, we obtain the quiver with relations:

\begin{equation*}
    \begin{tikzcd}
        & 2 \arrow[dr] \arrow[dl, dotted]& & 4 \arrow[ll]\arrow[dl, dotted] \\
        1  & & 3 \arrow[ll] & \ \ \ \ \ .
    \end{tikzcd}
\end{equation*}

It can be easily showed that the global dimension of the associated path algebra is equal to 3. 
In particular, a minimal projective presentation of the simple module associated to the vertex 4 is

\[
0\rightarrow P_4 \rightarrow P_3 \rightarrow P_2 \rightarrow P_1 \rightarrow S_1.
\]

\end{rem}
Proposition \ref{prop_glob_dim2} allows us to define the generalized cluster category $\mathcal{C}_A$ of the algebra $A$.\\

\begin{prop}
The generalized cluster category $\mathcal{C}_A$ is  triangle equivalent to the cluster category associated to the quiver with potential $(Q^{[a,b]}(\underline{w}_0), W^{[a,b]}(\underline{w}_0))$. In particular, it is $\mathrm{Hom}$-finite, Krull--Schmidt, $2$-Calabi--Yau, the image of $A$ in $\mathcal{C}_A$ is the canonical cluster-tilting object whose associated quiver is $Q^{[a,b]}(\underline{w}_0)$ and the mutation of any cluster-tilting object obtained by a sequence of mutations from the canonical one corresponds, at the level of its endoquiver, to the classical quiver mutation.
\end{prop}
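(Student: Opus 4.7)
The plan is to apply Keller's Theorem \ref{teo_link_cluster_categories}, which associates to any algebra $B = kQ'/I$ of global dimension $\leq 2$ with $I \subset S^2$ a canonical triangle equivalence $\mathcal{C}_B \xrightarrow{\sim} \mathcal{C}_{Q_B, W_B}$ sending the canonical cluster-tilting object to the canonical one. Combined with Corollary \ref{cor_properties_clust_cat_Q[a,b]}, this will yield every statement of the proposition as soon as we can identify the quiver with potential $(Q_A^{\mathrm{Kel}}, W_A^{\mathrm{Kel}})$ produced by Keller's recipe from $A$ with $(Q^{[a,b]}(\underline{w}_0), W^{[a,b]}(\underline{w}_0))$.

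First I note that the hypotheses of Keller's theorem are met: $A = kQ_A/I_A$ has global dimension $\leq 2$ by Proposition \ref{prop_glob_dim2}, and $I_A$ is visibly generated by alternating sums of paths of length $\geq 2$, so $I_A \subset S^2$. To apply Keller's recipe I need a minimal set of relations, and I claim that the family of alternating sums defining $I_A$, one per diagram of type (\ref{mesh_Q_A_tipo1}) or (\ref{mesh_Q_A_tipo2}), is such a minimal set. To check this, one verifies that these elements are linearly independent modulo $I_A S + S I_A$: using the uniqueness of the $WS$-form from Remark \ref{rem_WS_path}, any path from $\tilde{n}_\jmath$ to $m_\imath$ in $Q_A$ is either one of the two paths appearing in the corresponding relation, or a longer path which is already forced into $I_A S + S I_A$ by the existing relations. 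Counting dimensions of $e_{m_\imath}(I_A/(I_A S + S I_A))e_{\tilde{n}_\jmath}$ diagram by diagram then confirms minimality.

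Next, by Keller's recipe, $Q_A^{\mathrm{Kel}}$ is obtained from $Q_A$ by adjoining, for each minimal relation $r$ from $\tilde{n}_\jmath$ to $m_\imath$, an arrow $\rho_r : m_\imath \to \tilde{n}_\jmath$. But these arrows are exactly the ones that were removed from $Q^{[a,b]}(\underline{w}_0)$ in the construction of $Q_A$, one per diagram of type (\ref{mesh_Q_A_tipo1}) or (\ref{mesh_Q_A_tipo2}). Hence there is a canonical isomorphism $Q_A^{\mathrm{Kel}} \cong Q^{[a,b]}(\underline{w}_0)$. Under this identification, the Keller potential $W_A^{\mathrm{Kel}} = \sum_{r \in R} r \rho_r$ becomes the alternating sum of cycles obtained by closing up the two paths of each relation with the corresponding new arrow $\rho_r$; these are precisely the cordless oriented cycles of type (\ref{cycle_potential}) in $Q^{[a,b]}(\underline{w}_0)$, each appearing with the alternating sign dictated by the definition of $I_A$. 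Thus $W_A^{\mathrm{Kel}}$ coincides, up to the usual equivalence of potentials (right-equivalence absorbing innocuous sign changes of arrows), with $W^{[a,b]}(\underline{w}_0)$.

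Applying Theorem \ref{teo_link_cluster_categories} then produces the desired triangle equivalence $\mathcal{C}_A \xrightarrow{\sim} \mathcal{C}_{Q^{[a,b]}(\underline{w}_0), W^{[a,b]}(\underline{w}_0)}$, matching canonical cluster-tilting objects. Via this equivalence, every property asserted in the proposition -- $\mathrm{Hom}$-finiteness, Krull--Schmidt property, 2-Calabi--Yau property, the endoquiver of the canonical cluster-tilting object being $Q^{[a,b]}(\underline{w}_0)$, and the compatibility of mutation with classical quiver mutation along the mutation-orbit of the canonical cluster-tilting object -- is transported immediately from Corollary \ref{cor_properties_clust_cat_Q[a,b]}. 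The main obstacle I expect in carrying this out rigorously is the sign bookkeeping needed to identify $W_A^{\mathrm{Kel}}$ with $W^{[a,b]}(\underline{w}_0)$ on the nose (as opposed to up to right-equivalence), together with the dimension count establishing minimality of the relations; both are concrete but somewhat tedious combinatorial verifications, and neither poses any conceptual difficulty.
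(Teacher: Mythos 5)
Your proposal is correct and follows essentially the same route as the paper: both verify that the alternating sums attached to the diagrams of types (\ref{mesh_Q_A_tipo1}) and (\ref{mesh_Q_A_tipo2}) form a minimal set of relations for $A$, identify the resulting quiver with potential from Keller's construction with $(Q^{[a,b]}(\underline{w}_0), W^{[a,b]}(\underline{w}_0))$, and then invoke Theorem \ref{teo_link_cluster_categories} together with Corollary \ref{cor_properties_clust_cat_Q[a,b]}. The only difference is presentational: the paper checks minimality directly by computing $e_{m_\imath}I_Ae_{\tilde n_\jmath}$ and $e_{m_\imath}(I_AS_A+S_AI_A)e_{\tilde n_\jmath}$, while you phrase the same verification via the $WS$-form and a dimension count.
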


\begin{proof}
We want to apply Theorem \ref{teo_link_cluster_categories}. Let $S_A$ be the ideal generated by the arrows of $A$. Let $m$ and $m'$ be two vertices of $Q_A$, If $m$ and $m'$ are not, respectively,  the vertex $m_\imath$ in the bottom-left corner and the vertex $\tilde{n}_\jmath$ in the top-right corner of a diagram of type (\ref{mesh_Q_A_tipo1}) or (\ref{mesh_Q_A_tipo2}), then we have that $e_m I_A e_{m'}=e_m( I_AS_A+S_AI_A)e_{m'}$. Otherwise, let $\rho^*$ be the alternating sums of the paths from the vertex $\tilde{n}_\jmath$ to the vertex $m_\imath$ in the associated diagram. Then we have $e_{m_\imath}I_A e_{\tilde{n}_\jmath}=k\rho^*$ and $e_{m_\imath}( I_AS_A+S_AI_A)e_{\tilde{n}_\jmath}=0$. In particular, following the construction above Theorem \ref{teo_link_cluster_categories}, $\rho^*$ is a minimal relation and the quiver with potential associated to $A$ is exactly $(Q^{[a,b]}(\underline{w}_0), W^{[a,b]}(\underline{w}_0))$. Therefore, by Theorem \ref{teo_link_cluster_categories}, The generalized cluster category $\mathcal{C}_A$ is  triangle equivalent to the cluster category associated to the quiver with potential $(Q^{[a,b]}(\underline{w}_0), W^{[a,b]}(\underline{w}_0))$. The other properties in the statement of the Proposition follow from Corollary \ref{cor_properties_clust_cat_Q[a,b]}.
\end{proof}

\subsection{An additive avatar of affine determinant modules}

For $(\imath,k)\in \mathcal{I}$, let $[c,b)_\imath$ be the $i$-box of index $\imath$ and $i$-cardinality $k$ of the chain $\mathfrak{C}_-^{[a,b]}$. Recall that, if $m$ is a non-negative integer such that $(\imath,m+k)\in \mathcal{I}$ (equivalently, such that in the line $\imath$ of the quiver $Q_A$ there are at least $m+k$ vertices),
then to the $i$-box $\tau^{-m}[c,b)_\imath$ is associated the affine determinant module of Definition \ref{def_aff_det_mod}. We define its additive analogue as the $AD$-\emph{module} $AD(\imath,k,m)$ obtained as the quotient

\[ P_{m_\imath} \rightarrow P_{ (m+k)_\imath} \rightarrow AD(\imath,k,m) \rightarrow 0.\]
For any $(\imath,m)\in\mathcal{I}$, with the notation $AD(\imath,0,m)$ we mean the zero $A$-module.

When the reduced expression $\underline{w}_0$ is $\mathcal{Q}$-adapted, the module $AD(\imath,k,m)$ should be thought of as an additive analogue of a Kirillov--Reshetikhin module
over the quantum affine algebra $U_q(\mathfrak{g})$ with $k$ corresponding to the (total) degree
and $m$ to the spectral parameter.

We make a few observations about a module of the form $AD(\imath,k,m)$:
\begin{itemize}
    \item it is thin and its support $AD(\imath,k,m)$ consists of those vertices $n_\jmath$ such that in the quiver with relations $Q_A$ there is a non-zero path from $n_\jmath$ to $(m+k)_\imath$ and there are no non-zero paths from $n_\jmath$ to $m_\imath$;
    \item it is indecomposable (since $e_{(m+k)_\imath}$ generates $AD(\imath,k,m)e_{(m+k)_\imath}$ as a vector space and $AD(\imath,k,m)$ as a module);
    \item if $m=0$, then it is the projective module associated to the vertex $k_\imath$;
\end{itemize}

We say that a vertex $m_\imath$ of $Q_A$ satisfies the \emph{unbroken border} condition  if 
\begin{itemize}
   \item[(UB)]  there does not exist a vertex $m_{\imath'}'$ such that  there is a path from 
   $m_{\imath'}'$ to $m_\imath$\\ belonging to the ideal generated by the relations associated 
   to the diagrams of type (\ref{mesh_Q_A_tipo2}). 
\end{itemize}
we use the same terminology for the projective module $P_{m_{\imath}}$ and the $AD$-module $AD(\imath,k,m)$.

Thanks to Theorem \ref{teo_CY_reduction}, in our application we will often be able to assume that the vertices that we work with satisfy the (UB) condition.

\begin{prop}
\label{exchange_triangles_ses}
Let $\imath$ be in $ I_{\check{\mathfrak{g}}}$ and let $m,k$ be positive integers such that $(\imath,m+k)$ is in $\mathcal{I}$ and it satisfies the $\mathrm{(UB)}$ condition. Then, in the category of $A$-modules, the sequences
\[ 0\rightarrow AD(\imath,k,m)\xrightarrow{(\iota_1,\pi_1)} AD(\imath,k-1,m+1)\oplus AD(\imath,k+1,m)\xrightarrow {(\pi_2,\iota_2)^t} AD(\imath,k,m+1)\rightarrow 0\]
and 
\[ 0 \rightarrow P_{m_\imath}\xrightarrow{\iota_3} P_{(m+k)_\imath} \xrightarrow{\pi_3} AD(\imath,k,m) \rightarrow 0, \] 
where the maps $\iota_i$ and $\pi_i$ are natural inclusions and natural projections respectively, are exact. 
\end{prop}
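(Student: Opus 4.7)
For the second short exact sequence, the right-hand portion is exact by the defining presentation of $AD(\imath,k,m)$ as a cokernel, so only the injectivity of $\iota_3=\phi_{m,m+k}\colon P_{m_\imath}\to P_{(m+k)_\imath}$ requires verification. Since projectives are thin, this can be checked on each vertex component $e_v$: one needs the composition of the unique non-zero path $v\to m_\imath$ with the standard path $m_\imath\to(m+k)_\imath$ along line $\imath$ to survive in $A$. This composition is an explicit path in $kQ_A$ ending at $(m+k)_\imath$, and the (UB) hypothesis at $(m+k)_\imath$ says precisely that no such path lies in the ideal generated by type-(\ref{mesh_Q_A_tipo2}) relations. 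Combined with the WS-form characterisation of zero paths (Remark~\ref{rem_WS_path}), this forces every non-trivial path to $(m+k)_\imath$ in $kQ_A$ to remain non-zero in $A$, yielding the injectivity.

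For the first short exact sequence, I view the four $AD$-modules as the natural cokernels associated with the commutative square of projectives
\[
\begin{tikzcd}[sep=small]
P_{m_\imath} \arrow[r,"\phi_{m,m+k}"] \arrow[d,"\phi_{m,m+1}"'] & P_{(m+k)_\imath} \arrow[d,"\phi_{m+k,m+k+1}"] \\
P_{(m+1)_\imath} \arrow[r,"\phi_{m+1,m+k+1}"'] & P_{(m+k+1)_\imath}
\end{tikzcd}
\]
where each $\phi_{a,b}$ is induced by the path of arrows $a_\imath\to\cdots\to b_\imath$ along line $\imath$; the maps $\iota_1,\iota_2$ are the natural surjections of cokernels and $\pi_1,\pi_2$ are induced by the column inclusions. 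A direct diagram chase (with the standard sign in $(\pi_2,\iota_2)^t$) shows that the composite of the two displayed maps vanishes. For the exactness, the plan is to combine the already-proved second sequence for $AD(\imath,k,m)$ with its analogue for $AD(\imath,k+1,m)$ and obtain the commutative diagram
\[
\begin{tikzcd}[sep=small]
0 \arrow[r] & AD(\imath,1,m) \arrow[r] \arrow[d,"\mathrm{id}"] & AD(\imath,k,m) \arrow[r,"\iota_1"] \arrow[d,"\pi_1"] & AD(\imath,k-1,m+1) \arrow[r] \arrow[d,"\pi_2"] & 0 \\
0 \arrow[r] & AD(\imath,1,m) \arrow[r] & AD(\imath,k+1,m) \arrow[r,"\iota_2"] & AD(\imath,k,m+1) \arrow[r] & 0
\end{tikzcd}
\]
The snake lemma then yields $\ker\pi_1\cong\ker\pi_2$ and $\mathrm{coker}\,\pi_1\cong\mathrm{coker}\,\pi_2$; splicing these identifications with the two rows assembles into the required short exact sequence.

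The main obstacle is that the bottom row above appears to require (UB) at $(m+k+1)_\imath$, which is not among the hypotheses. I expect to address this by verifying exactness pointwise at each vertex $v$: since all modules in play are thin, each term is at most one-dimensional at $v$, and one may enumerate the cases according to which of the paths $v\to m_\imath$, $v\to(m+1)_\imath$, $v\to(m+k)_\imath$, $v\to(m+k+1)_\imath$ are non-zero in $A$. A careful analysis using WS-forms, together with the restricted form of incoming arrows to the vertices of line $\imath$ in $Q_A$ (only the line arrow and possibly a long diagonal landing at a bottom-left corner of a type-(\ref{mesh_Q_A_tipo1}) mesh), then shows that the potentially problematic configurations cannot arise under (UB) at $(m+k)_\imath$ alone, and that each vertexwise subsequence is a $3$-term complex of spaces of dimension $0$ or $1$ that is exact.
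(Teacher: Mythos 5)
Your handling of the second sequence is fine and is essentially the paper's own argument: by thinness, injectivity of $\iota_3$ reduces to checking that each composite path into $(m+k)_\imath$ survives in $A$, and the $\mathrm{(UB)}$ hypothesis combined with Remark~\ref{rem_WS_path} (pass to the WS-form, which is again a path into $(m+k)_\imath$) gives exactly that. The genuine gap is in the first sequence. Your main route stands or falls with the exactness of the bottom row $0\to AD(\imath,1,m)\to AD(\imath,k+1,m)\to AD(\imath,k,m+1)\to 0$, i.e.\ with the inclusion $\ker\bigl(P_{(m+1)_\imath}\to P_{(m+k+1)_\imath}\bigr)\subseteq\operatorname{im}\bigl(P_{m_\imath}\to P_{(m+1)_\imath}\bigr)$, and this does not follow from $\mathrm{(UB)}$ at $(m+k)_\imath$: that a path survives up to $(m+k)_\imath$ says nothing about its extension by one more arrow to $(m+k+1)_\imath$, which can die against a zero relation of type (\ref{mesh_Q_A_tipo2}) ending further along line $\imath$. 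You acknowledge this obstacle, but the proposed remedy, a vertexwise case analysis via WS-forms, is only announced ("I expect", "the plan is"), never carried out; and that analysis is precisely where the whole content of the statement lies --- it is what the paper's "direct verification" does (injectivity of $(\iota_1,\pi_1)$ plus the identification of its cokernel with $AD(\imath,k,m+1)$ through supports, with $\mathrm{(UB)}$ ensuring the relations do not intervene). The snake-lemma scaffolding itself is harmless: once both rows are exact the splice into the four-term sequence is routine, so the scaffolding carries no weight and the proof of the first sequence is, as it stands, missing.

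In addition, the structural claim underpinning your promised case analysis is inaccurate: a vertex on line $\imath$ does not only receive the line arrow and a diagonal into the bottom-left corner of a diagram of type (\ref{mesh_Q_A_tipo1}); it also receives the arrows $\Tilde{n}_\jmath\to\Tilde{m}_\imath$ landing at the bottom-right corners of diagrams of types (\ref{mesh_Q_A_tipo1}), (\ref{mesh_Q_A_tipo2}) and (\ref{mesh_Q_A_tipo3}). These are exactly the arrows through which paths from the other lines enter line $\imath$ and through which every relation (commutativity or zero) passes, so omitting them invalidates the enumeration of "problematic configurations" you intend to rule out. To finish, argue as the paper does: using thinness, check vertex by vertex that $(\iota_1,\pi_1)$ is injective and that its cokernel is supported on $\operatorname{Supp}(AD(\imath,k-1,m+1))\cup\operatorname{Supp}(AD(\imath,1,m+k))=\operatorname{Supp}(AD(\imath,k,m+1))$ with all structure maps on this support equal to the identity, the $\mathrm{(UB)}$ condition at $(m+k)_\imath$ being what guarantees that no relation interferes with these comparisons.
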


\begin{proof}
A direct verification shows that $(\iota_1,\pi_1)$ is injective and that the support of its cokernel is 
\[\text{Supp}(AD(\imath,k-1,m+1)\cup \text{Supp}(AD(\imath,1,m+k) = \text{Supp}(AD(\imath,k,m+1)),\]
and that each arrow between two vertices of the support correspond to the identity map. The assumptions ensures that the relations do not play a role.
The exactness of the second sequence is verified similarly.
\end{proof}

\begin{cor}
\label{exchange_triangles}
   Let $\imath$ be in $ I_{\check{\mathfrak{g}}}$ and let $m,k$ be positive integers such that $(\imath,m+k)\in \mathcal{I}$. In the generalized cluster category of the algebra $A$, there are distinguished triangles:
    \[ AD(\imath,k,m)\xrightarrow{} AD(\imath,k-1,m+1)\oplus AD(\imath,k+1,m)\xrightarrow{} AD(\imath,k,m+1)\xrightarrow{\Delta}\]
    and
    \[ \Sigma^{-1} AD(\imath,k,m) \rightarrow P_{m_\imath}\rightarrow P_{(m+k)_\imath} \rightarrow AD(\imath,k,m). \] 
\end{cor}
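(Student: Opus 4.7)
The plan is to obtain each triangle as the image, under the canonical triangulated functor $D^b(A) \to \mathcal{C}_A$, of a suitable short exact sequence of $A$-modules. Proposition \ref{exchange_triangles_ses} provides precisely these short exact sequences under the $\mathrm{(UB)}$ hypothesis, so the main work is to derive the statement of Corollary \ref{exchange_triangles} from Proposition \ref{exchange_triangles_ses} without assuming $\mathrm{(UB)}$.

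I would first dispose of the $\mathrm{(UB)}$ case. When $(\imath,m+k)$ satisfies $\mathrm{(UB)}$, Proposition \ref{exchange_triangles_ses} supplies the two short exact sequences of finite-dimensional $A$-modules. Since $A$ has finite global dimension by Proposition \ref{prop_glob_dim2}, each sequence lives in $D^b(A)$ and produces a distinguished triangle $X \to Y \to Z \to \Sigma X$ there. Composing with the canonical triangulated functor $D^b(A) \to \mathcal{C}_A$ — the projection to the orbit category $D^b(A)/(\nu_A\Sigma^{-2})$ followed by the inclusion into its triangulated hull — transports these triangles to $\mathcal{C}_A$. This immediately yields the first triangle as stated and, after a rotation, the second one.

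To handle the general case, I would argue by a Calabi--Yau reduction. Choose $b' \geq b$ large enough so that in the algebra $A'$ attached to the enlarged interval $[a,b']$ the vertex $(m+k)_\imath$ has its full right-hand neighborhood available in $Q_{A'}$ and therefore satisfies $\mathrm{(UB)}$. Because the $AD$-modules involved in our two sequences are thin and supported strictly within $Q_A \subset Q_{A'}$, they coincide, as $A'$-modules via the surjection $A' \twoheadrightarrow A$, with the $A$-modules of the same name. Applying the $\mathrm{(UB)}$ case inside $\mathcal{C}_{A'}$ produces the analogous triangles there, with every term lying in the Calabi--Yau reducing subcategory. The triangle equivalence between the Calabi--Yau reduction of $\mathcal{C}_{A'}$ at the projectives $P^{A'}_v$, $v \in [a,b']\setminus [a,b]$, and $\mathcal{C}_A$ recorded at the end of Section \ref{subsec_reminder_clust_tilt} (a special case of Theorem \ref{teo_CY_reduction}) then descends the triangles to $\mathcal{C}_A$.

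The most delicate step is the verification that enlarging the interval on the right truly disposes of every $\mathrm{(UB)}$ obstruction at $(m+k)_\imath$, while leaving the supports of the relevant $AD$-modules unaltered. Concretely, this amounts to checking that once $b'$ is chosen sufficiently large, no configuration of type $(\ref{mesh_Q_A_tipo2})$ in $Q_{A'}$ exhibits $(m+k)_\imath$ as the target of a ``broken-border'' relation, and simultaneously that the extra vertices in $Q_{A'}\setminus Q_A$ never enter the supports of the modules $AD(\imath,k',m')$ appearing in the two sequences. Both points reduce to a case-by-case inspection of the local mesh patterns $(\ref{mesh_Q_A_tipo1})$--$(\ref{mesh_Q_A_tipo4})$ together with the explicit combinatorial description of the supports of the thin projective and $AD$-modules.
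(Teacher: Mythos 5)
Your treatment of the (UB) case is exactly what the paper intends: under (UB) the short exact sequences of Proposition \ref{exchange_triangles_ses} yield triangles in $\mathcal{C}_A$, because every short exact sequence of $A$-modules gives a triangle in the triangulated hull of the orbit category, and the second triangle is obtained by rotation. The paper gives no further argument beyond this, together with the remark that Theorem \ref{teo_CY_reduction} allows one to assume (UB) in the applications.

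Your reduction for the general case, however, goes in the wrong direction. The (UB) condition at $(m+k)_\imath$ fails because of zero relations coming from diagrams of type (\ref{mesh_Q_A_tipo2}), and these sit at the \emph{left} end of the interval: the blue vertices that may be missing are those with the largest labels, i.e.\ the leftmost ones, so type-(\ref{mesh_Q_A_tipo2}) configurations are created by the truncation at $a$, not at $b$. Enlarging the interval on the right removes none of these relations and none of the paths from them into the fixed vertex, so the claim that a ``full right-hand neighborhood'' restores (UB) is unfounded. Moreover, since the labels $m_\imath$ count vertices from the right, a right enlargement shifts all labels, the new vertices do admit nonzero paths into the old ones (so the projectives $P_{m_\imath}$, $P_{(m+k)_\imath}$, hence the $AD$-modules, over $A'$ differ from those over $A$, contradicting your claimed identification via $A'\twoheadrightarrow A$, already for the second sequence), and the rule removing ``the orbit of the second arrow from the right'' between two lines may select the opposite orbit after the enlargement, so that $Q_{A'}$ restricted to the old vertices need not even equal $Q_A$. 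The enlargement that works --- and the one implicit in the paper, cf.\ the proof of Lemma \ref{lem_structure_Q_A_2} --- is on the \emph{left}: it preserves labels, $Q_A$ and all the right-hand data, turns the old type-(\ref{mesh_Q_A_tipo2}) diagrams into type-(\ref{mesh_Q_A_tipo1}) ones, and pushes the new zero relations so far to the left that (paths in $Q_A$ can only travel a bounded distance to the right) they cannot reach the given vertex. Finally, Theorem \ref{teo_CY_reduction} is stated to preserve only exchange and approximation triangles, so even with the correct direction you would still have to justify why your triangles descend through the reduction; the second one is an approximation triangle with respect to the canonical cluster-tilting object, but the first is not yet known to be an exchange triangle at this point of the argument.
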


Our next aim is to define, for each chain of $i$-boxes, a cluster-tilting object of the cluster category $\mathcal{C}_A$, image of a tilting module of the category of finite-dimensional $A$-modules.

\begin{lem}
\label{lem_ext_zero}
Let $R=kQ'/I$, where $Q'$ is a finite quiver and $I$ is an admissible ideal. Suppose that $N$ and $M$ are two $R$-modules such that
\begin{enumerate}
    \item M admits a minimal projective resolution
    \[ 0 \rightarrow e_bR\xrightarrow{} e_aR \xrightarrow{} M\rightarrow 0,\]
    where $a$ and $b$ are two vertices of $Q'$.
    \item The dimension of $Ne_b$ is at most $1$.
    \item There exists an element $\alpha\in R$ which generates $e_aRe_b$ as a vector space.
    \end{enumerate}
Then
\[\mathrm{Ext}^1_R(M,N) = 0 \ \text{  if and only if  } \ Ne_b=0 \ \text{  or  }\ N(\alpha) \neq 0.\]
\end{lem}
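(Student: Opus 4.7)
The plan is to apply the functor $\mathrm{Hom}_R(-,N)$ to the given minimal projective resolution and read off $\mathrm{Ext}^1_R(M,N)$ as the cokernel of an explicit linear map, which in turn will be controlled by the hypotheses $(2)$ and $(3)$.

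More precisely, applying $\mathrm{Hom}_R(-,N)$ to $0 \to e_b R \xrightarrow{\phi} e_a R \to M \to 0$ yields a four-term exact sequence
\[ 0 \to \mathrm{Hom}_R(M,N) \to \mathrm{Hom}_R(e_a R, N) \xrightarrow{\phi^*} \mathrm{Hom}_R(e_b R, N) \to \mathrm{Ext}^1_R(M,N) \to 0. \]
Under the canonical isomorphisms $\mathrm{Hom}_R(e_x R, N) \cong N e_x$ sending $\psi \mapsto \psi(e_x)$, the map $\phi^*$ becomes a $k$-linear map $f : N e_a \to N e_b$. Since $\phi$ is a homomorphism of right $R$-modules from $e_bR$ to $e_aR$, it is determined by $\phi(e_b)\in e_aRe_b$, which by hypothesis $(3)$ is a scalar multiple of $\alpha$; by minimality of the resolution this scalar is nonzero, so without loss of generality $\phi(e_b)=\alpha$. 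An immediate computation then gives $f(n) = n\,\alpha$ for all $n \in N e_a$; in other words, $f$ is exactly the map $N(\alpha)\colon Ne_a\to Ne_b$ induced by right multiplication by $\alpha$.

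From the exact sequence, $\mathrm{Ext}^1_R(M,N)$ vanishes if and only if $f=N(\alpha)$ is surjective. If $Ne_b=0$, surjectivity holds trivially. If $Ne_b\neq 0$, then by hypothesis $(2)$ the space $Ne_b$ is one-dimensional, so $N(\alpha)$ is surjective if and only if it is nonzero. Combining these two cases yields the claimed equivalence.

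The argument is essentially a direct computation; the only subtle point is the identification of $f$ as right multiplication by $\alpha$ (rather than a scalar multiple of it), which relies on the minimality of the given projective resolution together with hypothesis $(3)$ to pin down $\phi(e_b)$ up to a nonzero scalar. I do not anticipate any genuine obstacle.
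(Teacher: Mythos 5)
Your proposal is correct and follows essentially the same route as the paper: apply $\mathrm{Hom}_R(-,N)$ to the resolution, identify the induced map $Ne_a\to Ne_b$ with $N(\alpha)$ via the canonical isomorphisms $\mathrm{Hom}_R(e_xR,N)\cong Ne_x$, and conclude using the fact that $\dim Ne_b\leq 1$. The only cosmetic difference is that the paper phrases the conclusion as a dimension count, $\dim\mathrm{Ext}^1_R(M,N)=\dim Ne_b-\dim\mathrm{Im}(N(\alpha))$, while you phrase it as surjectivity of the cokernel map; these are the same argument.
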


\begin{proof}
    Considering the natural isomorphism $\mathrm{Hom}_R(e_bR,e_aR) \cong e_aRe_b$, we have that, by hypothesis \emph{(3)}, up to a constant multiple, the only non-zero map $\bar{\alpha}: e_bR\rightarrow e_aR$ is the post-composition of paths with $\alpha$.
    Appling the contravariant functor $\mathrm{Hom}_R(-,N)$ to the given minimal projective resolution of $M$, we get the exact sequence
    \[ \mathrm{Hom}_R(e_aR,N) \xrightarrow{\circ\bar{\alpha}} \mathrm{Hom}_R(e_bR,N) \rightarrow \mathrm{Ext}^1_R(M,N) \rightarrow 0,\]
    from which we deduce that
    \[ \mathrm{dim}\ \mathrm{Ext}_R^1(M,N)=\mathrm{dim}\ \mathrm{Hom}_R(e_bR,N)-\mathrm{dim}\ \mathrm{Im}(\circ\bar{\alpha}).\]
    Using the commutative square

    \[
\begin{tikzcd}
  \mathrm{Hom}_R(e_aR,N) \arrow[r, "\circ\bar{\alpha}"]\arrow[d, phantom, sloped, "\simeq"]  &   \mathrm{Hom}_R(e_bR,N) \arrow[d, phantom, sloped, "\simeq"] \\
Ne_a\arrow[r, "N(\alpha)"]   &   Ne_b,
\end{tikzcd}
\]
we obtain also the equality 
 \[ \mathrm{dim}\ \mathrm{Ext}_R^1(M,N)=\mathrm{dim}\  Ne_b-\mathrm{dim}\ \mathrm{Im}(N(\alpha)).\]
The result follows by taking into account hypothesis \emph{(2)}.

\end{proof}

\begin{prop}
\label{prop_Ext_KR_vanishing}
    Let $AD(\imath,k,m)$ and $AD(\imath,k',m')$ two $AD$-modules such that $(m+k-1)_\imath$ satisfies the $\mathrm{(UB)}$ condition. Then  
    \[\mathrm{Ext}_A^1(AD(\imath,k,m), AD(\jmath,k',m'))  = 0\]    if and only if   \[AD(\jmath,k',m')e_{m_\imath} = 0 \ \text{  or  }\ AD(\jmath,k',m')e_{(m+k)_\imath} \neq 0.\]
\end{prop}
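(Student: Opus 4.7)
The plan is to deduce the claim from Lemma \ref{lem_ext_zero} applied to $R = A$, $M = AD(\imath, k, m)$, $N = AD(\jmath, k', m')$, with $a = (m+k)_\imath$, $b = m_\imath$, and $\alpha$ the class in $A$ of the canonical path $m_\imath \to (m{+}1)_\imath \to \cdots \to (m{+}k)_\imath$ running along the $\imath$-line of $Q_A$. Hypothesis (1) of the lemma, asking for a minimal projective resolution $0 \to P_{m_\imath} \to P_{(m+k)_\imath} \to M \to 0$, is furnished by the second short exact sequence of Proposition \ref{exchange_triangles_ses}; the (UB) hypothesis on $(m+k-1)_\imath$ ensures that the differential lands in the radical of $P_{(m+k)_\imath}$, so the resolution is minimal. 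Hypothesis (2), $\dim N e_{m_\imath} \leq 1$, is immediate from the thinness of $N$ as an indecomposable quotient of the thin projective $P_{(m'+k')_\jmath}$. Hypothesis (3) holds because the indecomposable projectives of $A$ are thin, so $e_a A e_b$ is at most one-dimensional and, when nonzero, spanned by $\alpha$.

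The lemma then yields the equivalence
\[
\mathrm{Ext}^1_A(M, N) = 0 \iff N e_{m_\imath} = 0 \ \text{ or }\ N(\alpha) \neq 0,
\]
and the proposition is reduced to proving, in the case $Ne_{m_\imath} \neq 0$, that $N(\alpha) \neq 0$ if and only if $N e_{(m+k)_\imath} \neq 0$. The forward implication is a tautology, as $Ne_{(m+k)_\imath}$ is the source of the linear map $N(\alpha)$.

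For the converse, assume that both $m_\imath$ and $(m+k)_\imath$ belong to $\mathrm{Supp}(N)$. I shall use the combinatorial description
\[
\mathrm{Supp}(AD(\jmath, k', m')) = \mathrm{Supp}(P_{(m'+k')_\jmath}) \setminus \mathrm{Supp}(P_{m'_\jmath}),
\]
which follows from the thinness of the indecomposable projectives of $A$ together with the defining presentation of $N$, in order to show that every intermediate vertex $(m+j)_\imath$, for $0 \leq j \leq k$, lies in $\mathrm{Supp}(N)$. Once this is established, thinness of $N$ forces each representing map along an arrow $(m+j)_\imath \to (m+j+1)_\imath$ to be a linear isomorphism between the one-dimensional spaces $N e_{(m+j+1)_\imath}$ and $N e_{(m+j)_\imath}$; the composition $N(\alpha)$ is therefore itself an isomorphism, hence nonzero.

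The main obstacle is thus the combinatorial verification that the entire chain $m_\imath, (m+1)_\imath, \ldots, (m+k)_\imath$ is contained in $\mathrm{Supp}(N)$. The (UB) hypothesis at $(m+k-1)_\imath$ enters precisely here: it rules out type-(\ref{mesh_Q_A_tipo2}) relations that could otherwise annihilate in $A$ the composition of the canonical arrows with the path realising $(m+k)_\imath \in \mathrm{Supp}(P_{(m'+k')_\jmath})$, thereby producing the required nonzero paths from each intermediate vertex to $(m'+k')_\jmath$. Ruling out the existence of paths from intermediate vertices to $m'_\jmath$ uses the corresponding non-existence from $m_\imath$, propagated upward via the thinness of projectives and the explicit quiver-with-relations structure of $Q_A$. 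This technical check on the supports is the delicate point of the proof.
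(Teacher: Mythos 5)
Your proposal follows the same route as the paper: apply Lemma \ref{lem_ext_zero} to the defining presentation $0\to P_{m_\imath}\to P_{(m+k)_\imath}\to AD(\imath,k,m)\to 0$ with $\alpha$ the unique nonzero path along the $\imath$-line, and then identify the condition $N(\alpha)\neq 0$ (when $Ne_{m_\imath}\neq 0$) with $Ne_{(m+k)_\imath}\neq 0$. The paper disposes of this last equivalence in one line ``by the construction of $AD$-modules,'' whereas you spell out the thinness/support verification (where the (UB) hypothesis enters), so your argument is essentially the paper's proof with the implicit step made explicit.
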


\begin{proof}
Let $\alpha$ be the unique non-zero path in $Q_A$ from $e_{m_\imath}$ to $e_{(m+k)_\imath}$
By the construction of $AD$-modules, if $AD(\jmath,k',m')e_{m_\imath} \neq 0$, then $AD(\jmath,k',m')e_{(m+k)_\imath} = 0 $ if and only if $AD(\jmath,k',m')(\alpha)=0$. The result then is an application of Lemma \ref{lem_ext_zero}.
\end{proof}

\subsection{An additive avatar of monoidal seeds}

We assign to each $i$-box contained in the interval $[a,b]$ an $AD$-module as follows.
If $\mathfrak{c}\subset [a,b]$ is an $i$-box with index $\imath$ and $i$-cardinality $k$, let $\mathfrak{c}^-$ be the $i$-box of $\mathfrak{C}^{[a,b]}_-$ with the same index and $i$-cardinality. Define 
\[T({\mathfrak{c}})=AD(\imath,k,m),\]
where $m$ is the positive integer such that 
\[ \mathfrak{c}=\tau^{-m} \mathfrak{c}^- .\]
For a chain of $i$-boxes $\mathfrak{C}$ with range $[a,b]$ we define the $A$-module $T(\mathfrak{C})$ by 

\[ T(\mathfrak{C})=\bigoplus_{\mathfrak{c}\in\mathfrak{C}} T(\mathfrak{c}). \]

Notice that

\[ T(\mathfrak{C}_{-}^{[a,b]})=\bigoplus_{(k,\imath)\in\mathcal{I}} AD(\imath,k,0) =  \bigoplus_{(k,\imath)\in\mathcal{I}} P_{k_\imath}= A, \]
that is, $T(\mathfrak{C}_{-}^{[a,b]})$ is the free $A$-module of rank 1.

\begin{teo}
\label{teo_T(C)_tilting}
Let $\mathfrak{C}$ be a chain of $i$-boxes on $[a,b]$. Then the module $T(\mathfrak{C})$ is tilting.
\end{teo}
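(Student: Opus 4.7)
The plan is to verify the three defining conditions (T1), (T2), (T3) of a tilting module for $T(\mathfrak{C})$. Condition (T1), that $\mathrm{pdim}\, T(\mathfrak{C}) \leq 1$, follows directly from Proposition 6.9: each indecomposable summand $T(\mathfrak{c}) = AD(\imath, k, m)$ has the length-one projective resolution $0 \to P_{m_\imath} \to P_{(m+k)_\imath} \to AD(\imath, k, m) \to 0$. Condition (T3) follows from the bijections between the $i$-boxes of $\mathfrak{C}$, the set $\mathcal{I}$, the vertex set of $Q_A$, and the set of simple $A$-modules: the modules $T(\mathfrak{c})$ are indecomposable (being thin with one-dimensional top at $(m+k)_\imath$) and pairwise non-isomorphic across distinct $i$-boxes of $\mathfrak{C}$ (distinguished by the top vertex $(m+k)_\imath$).

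The main step is condition (T2), the self-Ext vanishing. The strategy is to first show that the image $\widetilde{T(\mathfrak{C})}$ of $T(\mathfrak{C})$ in the cluster category $\mathcal{C}_A$ is a cluster-tilting object, and then to descend to $\mathrm{mod}\, A$ by invoking Lemma 5.13. Indeed, since $\mathrm{pdim}\, T(\mathfrak{C}) \leq 1$, that lemma yields an isomorphism
\[
\mathrm{Ext}^1_{\mathcal{C}_A}(T(\mathfrak{C}), T(\mathfrak{C})) \;\cong\; \mathrm{Ext}^1_A(T(\mathfrak{C}), T(\mathfrak{C})) \oplus D\mathrm{Ext}^1_A(T(\mathfrak{C}), T(\mathfrak{C})).
\]
Since cluster-tilting objects are rigid, the left-hand side vanishes, forcing the right-hand side to vanish, which gives (T2).

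To show $\widetilde{T(\mathfrak{C})}$ is cluster-tilting, I proceed by induction on the number of box moves in the canonical chain transformation $\mathfrak{C}_-^{[a,b]} \mapsto \mathfrak{C}$ of Remark 2.14. The base case is immediate: $T(\mathfrak{C}_-^{[a,b]}) = A$, whose image in $\mathcal{C}_A$ is the canonical cluster-tilting object. For the inductive step $\mathfrak{C} \mapsto \nu_s \mathfrak{C}$, I split according to the type of box move in Prop 2.17. A type (ii) box move (different indices) does not alter $T$ as an $A$-module. For a type (i) box move (same index $\imath$), the summand $T(\mathfrak{c}_s) = AD(\imath, k, m)$ is replaced by $T(\mathfrak{c}_s') = AD(\imath, k, m \pm 1)$, and an appropriately shifted instance of Corollary 6.7 provides a distinguished triangle in $\mathcal{C}_A$ connecting these two modules, with middle term of the form $AD(\imath, k-1, \cdot) \oplus AD(\imath, k+1, \cdot)$. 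Since the $i$-boxes of fixed index $\imath$ in any chain form a nested totally ordered family, the lateral summands correspond to $i$-boxes $\mathfrak{f}_{k-1}$ and $\mathfrak{f}_{k+1} = \mathfrak{c}_{s+1}$ already present in $\mathfrak{C}$, hence are summands of $\widetilde{T(\mathfrak{C})}$. The triangle is therefore an exchange triangle for the cluster-mutation at the vertex corresponding to $\mathfrak{c}_s$, and by Iyama--Yoshino the new object $\widetilde{T(\nu_s \mathfrak{C})}$ is again cluster-tilting.

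The main obstacle will be the ``offset matching'' in the type (i) inductive step: the offsets of the lateral summands $AD(\imath, k-1, \cdot)$ and $AD(\imath, k+1, \cdot)$ in Corollary 6.7's triangle are determined by the flushness (left or right) of the nested inclusions $\mathfrak{f}_{k-1} \subset \mathfrak{c}_s \subset \mathfrak{c}_{s+1}$ within $\mathfrak{C}$. The movability hypothesis $E_{s-1} \neq E_s$ together with sub-cases (1) and (2) of the Remark after Proposition 2.17 provides exactly the combinatorial input needed to verify the matching, via a case-by-case analysis of the four possible configurations of flushnesses.
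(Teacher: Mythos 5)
Your overall architecture is genuinely different from the paper's: the paper proves the self-extension vanishing directly in $\mathrm{mod}\,A$, by induction on the box moves of the canonical chain transformation, using the explicit $\mathrm{Ext}$-criterion of Proposition \ref{prop_Ext_KR_vanishing} together with the chain combinatorics (Lemma \ref{lem_last_box_move_determine_other}, Proposition \ref{prop_condition_m_flush_right}) and the structure of $Q_A$ (Lemma \ref{lem_structure_Q_A_2}); the cluster-tilting property is then a corollary, and the identification of $T(\mathfrak{C})$ with an iterated mutation comes only afterwards (Theorem \ref{teo_form_clust_tilt}). You propose to run this in the opposite order: realize the image of $T(\mathfrak{C})$ in $\mathcal{C}_A$ as an iterated Iyama--Yoshino mutation of the canonical cluster-tilting object and descend to $\mathrm{mod}\,A$ via Lemma \ref{lem_ext_clustcat_X_projdim1}. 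The descent step is sound (it is the converse direction of the argument in Proposition \ref{prop_tilting_becomes_clust_tilt}), and conditions (T1), (T3) are handled correctly. The problem is the inductive step, where two genuine gaps remain. First, exhibiting a non-split triangle $T_i\rightarrow B\rightarrow X\rightarrow \Sigma T_i$ with $B\in\mathrm{add}(T_0)$ and $X$ indecomposable does \emph{not} identify $X$ with the Iyama--Yoshino complement $T_i^*$: for that one needs the map $T_i\rightarrow B$ to be a left $\mathrm{add}(T_0)$-approximation, equivalently one needs to know that the arrows out of the mutated vertex in the quiver of $\mathrm{End}(T(\mathfrak{C}'))$ go exactly to the two lateral summands (this is the input the paper takes from Lemma \ref{lem_exchange_triang_given_arrows}, Corollary \ref{cor_properties_clust_cat_Q[a,b]} and the local shape $(k+1)_\imath\leftarrow k_\imath\rightarrow(k-1)_\imath$ quoted from Kashiwara--Kim--Oh--Park in the proof of Theorem \ref{teo_form_clust_tilt}). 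Your proposal asserts "the triangle is therefore an exchange triangle" without this input.

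Second, the offset matching is not a local matter and your proposed fix does not reach it. The box-move mechanics do force the cardinality-$(k+1)$ box $\mathfrak{c}_{s+1}$ to share its right end with $\mathfrak{c}_s$, so that summand is indeed $AD(\imath,k+1,m)$; but the cardinality-$(k-1)$ box of index $\imath$ is in general \emph{not} $\mathfrak{c}_{s-1}$ or $\mathfrak{c}_{s+1}$, so the movability condition $E_{s-1}\neq E_s$ and the two flushness scenarios of Proposition \ref{prop_box_move} say nothing about it. In an arbitrary chain it may sit flush right in $\mathfrak{c}_s$ (offset $m$, as in $\mathfrak{C}_-^{[a,b]}$, where all same-index boxes are flush right in one another), in which case $AD(\imath,k-1,m+1)$ is not a summand of $T(\mathfrak{C}')$ and your middle term is not in $\mathrm{add}(T_0)$. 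What guarantees offset $m+1$ at the precise moments when mutation-type moves occur is the global structure of the canonical chain transformation: within a basic chain transformation the same-index boxes are shifted in increasing order of $i$-cardinality (Lemma \ref{lem_ord_mut}), and across basic transformations of the same index the shifts are monotone (Lemma \ref{lem_last_box_move_determine_other}). This is precisely the combinatorial machinery the paper's own proof invokes (via Proposition \ref{prop_condition_m_flush_right}); a case-by-case analysis of flushness configurations at positions $s-1,s,s+1$ cannot replace it. Both gaps are fixable with results already in the paper, but as written the key inductive step is not established.
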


Before proving the theorem, we state the following lemma, which gives some insight into the structure of the quiver $Q_A$.

For $\imath,\jmath \in I_{\check{\mathfrak{g}}}$ and $m_\imath \in (Q_A)_0$ a vertex on the line $\imath$, we denote by $n(\imath,\jmath,m)$ the number of vertices on the line $\jmath$ on the right of the vertex $m_\imath$ and by $\Tilde{n}(\imath,\jmath,m)$ the smallest integer such that there is a path from $m_\imath$ to $\Tilde{n}(\imath,\jmath,m)_\jmath$ (if it does not exist, we set $\Tilde{n}(\imath,\jmath,m)=0$). Notice that, for any $AD$-module $AD(\jmath,k',m')$ satisfying the (UB) condition, we have 
\[     AD(\jmath,k',m')e_{m_\imath}=0 \text{ if and only if } AD(\jmath,k',m')e_{\Tilde{n}(\imath,\jmath,m)_\jmath}=0.
\]
Keeping this notation, we have the following

 \begin{lem}
 \label{lem_structure_Q_A_2}
Assume that in $Q_A$ there is a path from the line $\imath$ to the line $\jmath$, that $n(\imath,\jmath,m)\neq 0$ and that $n(\imath,\jmath,m)_\jmath$ satisfies the (UB) condition. Then there is a path from $m_\imath$ to $n(\imath,\jmath,m)_\jmath$. In particular $n(\imath,\jmath,m) \geq \Tilde{n}(\imath,\jmath,m)$. 
 \end{lem}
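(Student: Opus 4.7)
The plan is to prove the lemma by induction on the graph-theoretic distance $d$ between $\imath$ and $\jmath$ in the Dynkin diagram of $\check{\mathfrak{g}}$. The crucial starting observation is that, between every pair of Dynkin-adjacent lines, the construction of $Q_A$ retains all arrows in a single direction (the one containing the rightmost arrow of the orbit). Since the Dynkin diagram of $\check{\mathfrak{g}}$ is a tree and the hypothesis provides a path in $Q_A$ from $\imath$ to $\jmath$, this hypothesis forces the existence of a unique coherently oriented chain of Dynkin-adjacent lines $\imath=\imath_0\sim\imath_1\sim\ldots\sim\imath_d=\jmath$ along which each arrow is oriented from $\imath_r$ to $\imath_{r+1}$.

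For the base case $d=1$, the lines $\imath$ and $\jmath$ are Dynkin-adjacent and all $Q_A$-arrows between them go from $\imath$ to $\jmath$, so the relevant mesh diagrams (\ref{mesh_Q_A_tipo1})--(\ref{mesh_Q_A_tipo4}) apply with $\imath$ as the top line and $\jmath$ as the bottom line. I would then locate the mesh containing $m_\imath$ and split into sub-cases according to whether $m_\imath$ is the top-left corner, the top-right corner, or a ``middle'' vertex of the top row: in each case, a short path consisting of a few horizontal left-steps along line $\imath$ followed by a solid diagonal of the mesh reaches $n(\imath,\jmath,m)_\jmath$, which is by definition the leftmost line-$\jmath$ vertex whose sequence-index exceeds that of $m_\imath$. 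The (UB) hypothesis at $n(\imath,\jmath,m)_\jmath$ intervenes to exclude the degenerate configurations of types (\ref{mesh_Q_A_tipo3}) and (\ref{mesh_Q_A_tipo4}) in which the target would coincide with a blue (possibly absent) vertex, and thereby ensures that the constructed path does not lie in the ideal generated by the type-(\ref{mesh_Q_A_tipo2}) relations.

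For the inductive step $d\geq 2$, I would set $k:=\imath_1$. The base case provides a path from $m_\imath$ to $n(\imath,k,m)_k$, and the inductive hypothesis applied to the pair $(k,\jmath)$ at the vertex $n(\imath,k,m)_k$ provides a further path to $n(k,\jmath,n(\imath,k,m))_\jmath$, once one verifies that the (UB) condition at $n(\imath,\jmath,m)_\jmath$ propagates to $n(k,\jmath,n(\imath,k,m))_\jmath$. Since the vertex $n(\imath,k,m)_k$ lies to the right of $m_\imath$ in the sequence $\widehat{\underline{w}}_0$, every line-$\jmath$ vertex to the right of $n(\imath,k,m)_k$ is also to the right of $m_\imath$, yielding the inequality $n(k,\jmath,n(\imath,k,m))\leq n(\imath,\jmath,m)$. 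Concatenating the two paths thus brings us to a line-$\jmath$ vertex weakly to the right of $n(\imath,\jmath,m)_\jmath$, and the left-pointing horizontal arrows along line $\jmath$ then carry us leftward to $n(\imath,\jmath,m)_\jmath$ itself. The ``in particular'' clause $n(\imath,\jmath,m)\geq \Tilde{n}(\imath,\jmath,m)$ is then immediate from the defining property of $\Tilde{n}(\imath,\jmath,m)$ as the smallest integer $k$ for which a path from $m_\imath$ to $k_\jmath$ exists.

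The hard part will be the base case, where one must examine each of the four mesh shapes and each position of $m_\imath$ inside its mesh in turn, and check meticulously that the (UB) hypothesis at $n(\imath,\jmath,m)_\jmath$ prevents the path constructed from being obstructed either by a missing (blue) target vertex in a type-(\ref{mesh_Q_A_tipo3}) or (\ref{mesh_Q_A_tipo4}) configuration, or by passing through a type-(\ref{mesh_Q_A_tipo2}) relation. Once this is secured, the inductive step is essentially formal.
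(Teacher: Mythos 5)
Your overall strategy coincides with the paper's: induction on the Dynkin distance, the observation that in $Q_A$ all arrows between two Dynkin-adjacent lines point in a single direction (so the hypothesis forces a coherently oriented Dynkin path), and a mesh-by-mesh analysis in the adjacent case, which is essentially the paper's (terse) base case. The genuine gap is in your inductive step, where you peel off the \emph{first} Dynkin edge $\imath\sim k$ rather than the last one. To invoke the base case for the pair $(\imath,k)$ you need both $n(\imath,k,m)\neq 0$ and the (UB) condition at $n(\imath,k,m)_k$, and to invoke the inductive hypothesis for $(k,\jmath)$ at that vertex you need $n(k,\jmath,n(\imath,k,m))\neq 0$; none of these is among the hypotheses of the lemma, and the only point you flag is the (UB) propagation to the landing vertex, which is indeed the easy part (a path into a vertex lying in the ideal of type-(\ref{mesh_Q_A_tipo2}) relations composes with any path out of that vertex). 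The condition at $n(\imath,k,m)_k$ is not cosmetic: it concerns an intermediate line, your propagation argument cannot supply it (it transfers (UB) from a vertex to vertices admitting a path \emph{to} it, and the existence of a path from $n(\imath,k,m)_k$ towards line $\jmath$ is exactly what remains to be proved at that stage), and if the mesh of type (\ref{mesh_Q_A_tipo1}) sitting above $n(\imath,k,m)_k$ is cut off by the left end of $[a,b]$, i.e.\ degenerates to type (\ref{mesh_Q_A_tipo2}), then there is no path in $Q_A$ from $m_\imath$ to $n(\imath,k,m)_k$ at all, so your first leg does not exist. Likewise the two non-vanishing statements require an argument relating the hypothesis that a path from line $\imath$ to line $\jmath$ exists to the right-hand ends of the intermediate lines (via the fact that the kept direction between adjacent lines is that of the rightmost arrow); you give none.

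The paper avoids all of this by peeling off the \emph{last} edge: it takes $\jmath'$ adjacent to $\jmath$ on the Dynkin path, applies the inductive hypothesis to $(\imath,\jmath')$, and performs the mesh analysis in the strip between the lines $\jmath'$ and $\jmath$, i.e.\ at the target vertex, which is precisely where the (UB) hypothesis lives; left-edge degeneracies are disposed of by enlarging the interval on the left, which changes neither $n(\imath,\jmath,m)$, nor the target, nor the arrows among the existing vertices, so no (UB) condition at intermediate vertices ever enters. Your decomposition could probably be repaired, but only by reorganizing it: treat the pair $(k,\jmath)$ first (with (UB) transferred from $n(\imath,\jmath,m)_\jmath$ to the landing vertex), establish the non-vanishing conditions from the orientation of the rightmost inter-line arrows, and then exclude the truncated configuration at $n(\imath,k,m)_k$ by showing that a failure of (UB) there would propagate along the already-constructed path and contradict (UB) at the target. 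As written, with the first leg performed before the second, the step is not justified.
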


\begin{proof}
We proceed by induction on the distance $d$ between the vertices $\imath$ and $\jmath$ of the Dynkin diagram of $I_{\check{\mathfrak{g}}}$. \\
If $d=1$, that is $\imath$ and $\jmath$ are adjacent, the result follows from the construction of the quiver $Q(\underline{w}_0)$, since $n(\imath,\jmath,m)_\jmath$ is the bottom left vertex of a mesh of type (\ref{mesh_Q_A_tipo1}) or (\ref{mesh_Q_A_tipo4}) where $m_\jmath$ is on the top line and is different from the top-right corner. Moreover, in this case we have $n(\imath,\jmath,m) = \Tilde{n}(\imath,\jmath,m)$.\\
If $d>1$, let $\jmath'$ be the index of the second last vertex reached by a path from $\imath$ to $\jmath$. Since enlarging the interval $[a,b]$ on the left does not change the positioning of the already existing vertices, we can assume that $n(\imath,\jmath,m)_\jmath$ is one of the vertices on the bottom line of a diagram of type (\ref{mesh_Q_A_tipo1}) or (\ref{mesh_Q_A_tipo4}), with $\jmath'$ being an index of the upper line. We discuss the case where $n(\imath,\jmath,m)_\jmath$ is one of the vertices on the bottom line of a diagram of type (\ref{mesh_Q_A_tipo1}), since the other case can be treated similarly. If $n(\imath,\jmath,m)_\jmath$ is the vertex in the bottom right corner and we denote by $s_{\jmath'}$ the vertex in the top-right corner, then $s\geq n(\imath,\jmath',m)$ and the results follows by the inductive hypothesis. Suppose now that $n(\imath,\jmath,m)_\jmath$ is any other vertex on the bottom line. If $s_\jmath \geq n(\imath,\jmath',m)$, the results follows again from induction. Otherwise, if $n(\imath,\jmath',m) > s_\jmath$, then $n(\imath,\jmath',m)_{\jmath'}$ is an other of the vertices of the top line and $n(\imath,\jmath,m)_\jmath$ is the vertex in the bottom left corner. Therefore, again by induction, there is a path from $m_\imath$ to $n(\imath,\jmath,m)_\jmath$.

\end{proof}

\begin{ex}
Let $Q_A$ be the quiver with relations

\begin{equation*}
    \begin{tikzcd}[sep=small]
        3_1 \arrow[dr] & & 2_1 \arrow[ll] \arrow[drrr] \arrow[dl, dotted]& & & & 1_1 \arrow[llll] \arrow[drrr] \arrow[dl, dotted] & & & & & \\
       & 3_2\arrow[drrr] & & & & 2_2 \arrow[llll] \arrow[drrr] \arrow[dl, dotted]& & & & 1_2\arrow[llll] \arrow[drr]  \arrow[dl, dotted] & & \\
        & & & & 3_3 & & & & 2_3 \arrow[llll] & & &1_3 \arrow[lll].
    \end{tikzcd}
\end{equation*}
Then we have the following:
\[
\begin{array}{ccc}
     n(1,2,2) = 2 & \text{and} & \Tilde{n}(1,2,2) = 2; \\
     n(1,3,2) = 3 & \text{and} & \Tilde{n}(1,3,2) = 2.
\end{array}
\]
\end{ex}

\begin{proof}[Proof of Theorem \ref{teo_T(C)_tilting}]
    We proceed by induction on the number $d$ of box moves  involved in the canonical chain transformation  $\mathfrak{C}_{-}^{[a,b]} \mapsto \mathfrak{C}.$
    If $d=0$, the object $T(\mathfrak{C})$ is the canonical tilting object of the module category of $A$.\\
    If $d>0$, assume that the results holds true up to the $(d-1)$st box move. Denote by $([c],(E_j)_{1\leq j\leq l-1})$ the rooted sequence of expansion operators associated to $\mathfrak{C}$ and by $\mathfrak{C'}$ the chain of $i$-boxes obtained after the first $(d-1)$ box moves. By assumption, the module $T(\mathfrak{C'})$
   associated to the chain $\mathfrak{C'}$ is tilting.
   If the $d$-th box move is not associated to a quiver mutation, then $T(\mathfrak{C'})\cong T(\mathfrak{C})$ and there is nothing to prove.
   Otherwise, the object $T(\mathfrak{C})$ is obtained by replacing a direct summand $AD(\imath,k,m)$ of $T(\mathfrak{C'})$ with $AD(\imath,k,m+1)$. Therefore, to show that $T(\mathfrak{C})$ is rigid, it suffices to prove that, for any direct summand $AD(\jmath,k',m')$ of $T(\mathfrak{C})$ different from $AD(\imath,k,m)$, we have
   \begin{align*}
       &(V1) \ \ \ \mathrm{Ext}_A^1(AD(\imath,k,m+1), AD(\jmath,k',m'))=0 \ \ \text{and} \\
   &(V2)\ \ \ \mathrm{Ext}_A^1(AD(\imath,k',m'), AD(\jmath,k,m+1))=0.
   \end{align*}
   We start with $(V1)$. By \ref{teo_CY_reduction}, we can assume that $AD(\jmath,k,m+1))$ satisfies the (UB) condition. Therfore, by Proposition \ref{prop_Ext_KR_vanishing}, we need to show that 

   \begin{equation}
   \label{eq_vanishing_Ext_KR_proof}
       AD(\jmath,k',m')e_{(m+1)_\imath} = 0 \ \text{  or  }\ AD(\jmath,k',m')e_{(m+k+1)_\imath} \neq 0.
    \end{equation}

   If $\imath=\jmath$, we can easily deduce the 2 following cases:
   \begin{enumerate}
       \item if $k'<k$, then $AD(\imath,k',m')e_{(m+1)_\imath} = 0$.
       \item Otherwise, $AD(\jmath,k',m')e_{(m+k+1)_\imath} \neq 0$.
   \end{enumerate}
   
   Suppose now that $\imath\neq \jmath$. We can assume that there is a path in $Q_A$ from the line $\imath$ to the line $\jmath$ (otherwise, we have that $AD(\jmath,k',m')e_{(m+1)_\imath} = 0$). 
   Since the canonical chain transformation  $\mathfrak{C}_{-}^{[a,b]} \mapsto \mathfrak{C}$ involves exactly $m+1$ flush right box moves of the $i$-box of index $\imath$ and $i$-cardinality $k$, we can deduce from Lemma \ref{lem_last_box_move_determine_other} that there are exactly $m+1$ $R$-operators associated to vertices of the line $\imath$. Moreover, If we write $s$ and $r$ for the integers
 \begin{align*}
 s&=\mathrm{min}\{ j\in [1,l-1]\ |\ E_j=R, i(E_j)=\imath\},\\
 r&=|\{j\in [s+1,l-1]\ |\ E_j=R\}|.
 \end{align*}
the, by Proposition \ref{prop_condition_m_flush_right}, we have that $r+s\geq i(m+k,\imath)-1$, where $\mathfrak{c}_{i(m+k,\imath)}^-$ is the $i$-box of $\mathfrak{C}_{-}^{[a,b]}$ of index $\imath$ and $i$-cardinality $m+k+1$ (in other words, in each of the basic chain transformation involving the line $\imath$, the associated $R$-operator is pushed far enough on the right to allow the flush right box move of the $i$-box of index $\imath$ and $i$-cardinality $k$).
 
Let $m'$ denote the non-negative integer
\[
m'=
\begin{cases}
   n(\imath,\jmath,m+1), & \text{if } k'\leq  n(\imath,\jmath,m+k+1)-n(\imath,\jmath,m+1),\\
   n(\imath,\jmath,m+k+1)-k',&\text{otherwise.}
\end{cases}
\]
In both cases, we have that $m'\leq n(\imath,\jmath,m+1)$ and $m'+k'\leq n(\imath,\jmath,m+k+1)$. From the first of these inequalities we deduce that the sequence $(E_j)_{1\leq j\leq l-1}$ contains at least $m'$ right extension operators of index $\jmath$, while, from the second, we deduce that there exists an integer $i(m'+k',\jmath)$ such that the $i$-box $\mathfrak{c}^-_{i(m'+k',\jmath)}$ had index $\jmath$ and $i$-cardinality $m'+k'$. Notice that, since $n(\imath,\jmath,m+k+1)\geq m'+k'$, then $i(m+k,\imath)-1\geq i(m'+k',\jmath)$. 
Write $s'$ for the largest integer such that 
\[|\{ j\in [s',l-1]\ |\ E_j=R, i(E_j)=\jmath\}|\geq m',\]
and let $r$ be the non-negative integer
\[r'=|\{j\in [s'+1,l-1]\ |\ E_j=R\}|.\]
Notice that 
\[r-r'=|\{j\in [s+1,s']\ |\ E_j=R\}|\leq s'-s,\]
from which we deduce
\[ s'+r'\geq s+r\geq i(m+k,\imath)-1\geq i(m'+k',\jmath).\]
 Therefore, by Proposition \ref{prop_condition_m_flush_right}, the canonical chain transformation $\mathfrak{C}_{-}^{[a,b]}\mapsto\mathfrak{C}$ entails at least \[
b(k')=\mathrm{min}(n(\imath,\jmath,m+1),n(\imath,\jmath,m+k+1)-k')\]
flush right shifts of the $i$-box of index $\jmath$ and $i$-cardinality $k'$.
We have the 2 following cases:
\begin{itemize}
    \item if $k'\leq n(\imath,\jmath,m+k+1)-n(\imath,\jmath,m+1)$, then $b(k')=n(i,j,m+1)$. Therefore, we have that $AD(\jmath,k',m')e_{n(i,j,m+1)_\jmath}= 0$, which implies $AD(\jmath,k',m')e_{\Tilde{n}(i,j,m+1)_\jmath}= 0$, which is equivakent to $AD(\jmath,k',m')e_{(m+1)_\imath}= 0$.
    \item Otherwise, if $k'>n(\imath,\jmath,m+k+1)-n(\imath,\jmath,m+1)$, then $m'\geq b(k')=n(\imath,\jmath,m+k+1)-k'$. Suppose that $AD(\jmath,k',m')e_{(m+1)_\imath} \neq 0$, which is equivalent to $AD(\jmath,k',m')e_{\Tilde{n}(\imath,\jmath,m+1)_\imath} \neq 0$. Therefore, since $AD(\jmath,k',m')e_{(m'+k')_\jmath} \neq 0$ and 
    \[m'+k'\geq k'+b(k')= n(\imath,\jmath,m+k+1) \geq \Tilde{n}(\imath,\jmath,m+k+1) \geq \Tilde{n}(\imath,\jmath,m+1),\] we deduce $AD(\jmath,k',m')e_{\Tilde{n}(\imath,\jmath,m+k+1)_\jmath} \neq 0$, which is equivalent to $AD(\jmath,k',m')e_{(m+k+1)_\imath}\neq 0$.\\
\end{itemize}

 Next, we consider $(V2)$. By definition of a canonical chain transformation, the chain $\mathfrak{C}'$ contains the $i$-box associated to the module $AD(\imath,k+1,m)$. Therefore, by inductive hypothesis, we have
    \[AD(\imath,k+1,m)e_{(m')_\jmath} = 0 \ \text{  or  }\ AD(\imath,k+1,m)e_{(m'+k')_\jmath} \neq 0.\]
    This directly implies that 
    \[AD(\imath,k,m+1)e_{(m')_\jmath} = 0 \ \text{  or  }\ AD(\imath,k,m+1)e_{(m'+k')_\jmath} \neq 0.\]

\end{proof}

\begin{cor}
Let $\mathfrak{C}$ be a chain of $i$-boxes on $[a,b]$. Then the object $T(\mathfrak{C})$ of $\mathcal{C}_A$ is cluster-tilting.
\end{cor}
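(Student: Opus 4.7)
The plan is to deduce this corollary directly from Theorem~\ref{teo_T(C)_tilting} together with Proposition~\ref{prop_tilting_becomes_clust_tilt}. Indeed, the algebra $A$ has global dimension at most $2$ by Proposition~\ref{prop_glob_dim2}, and the cluster category $\mathcal{C}_A$ is $\mathrm{Hom}$-finite (in fact Krull--Schmidt and $2$-Calabi--Yau) by the proposition identifying $\mathcal{C}_A$ with the cluster category of the Jacobi-finite quiver with potential $(Q^{[a,b]}(\underline{w}_0), W^{[a,b]}(\underline{w}_0))$. Hence the hypotheses of Proposition~\ref{prop_tilting_becomes_clust_tilt} are satisfied, and it suffices to verify that $T(\mathfrak{C})$ is a \emph{basic} tilting module.

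The tilting property is exactly the content of Theorem~\ref{teo_T(C)_tilting}, so the only extra point is that the indecomposable summands $T(\mathfrak{c})=AD(\imath,k,m)$, indexed by the $i$-boxes $\mathfrak{c}\in\mathfrak{C}$, are pairwise non-isomorphic. Each $AD$-module is indecomposable (as observed in the discussion preceding Proposition~\ref{exchange_triangles_ses}), and the pair $(\imath,k)$ together with $m$ can be recovered from the support and the shape of $AD(\imath,k,m)$: the index $\imath$ and $i$-cardinality $k$ are determined by the projective presentation $P_{m_\imath}\to P_{(m+k)_\imath}\to AD(\imath,k,m)\to 0$, while different $i$-boxes of the same index and $i$-cardinality in $\mathfrak{C}$ correspond to different shifts $\tau^{-m}$, hence to different values of $m$. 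Since a chain of $i$-boxes visits each triple once, the summands of $T(\mathfrak{C})$ are pairwise non-isomorphic.

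Thus $T(\mathfrak{C})$ is a basic tilting module of $A$, and Proposition~\ref{prop_tilting_becomes_clust_tilt} yields that its image in $\mathcal{C}_A$ is a cluster-tilting object. The only mildly non-trivial step is the verification that distinct $i$-boxes in $\mathfrak{C}$ give rise to non-isomorphic $AD$-modules, but this is routine in view of the explicit definition of $T(\mathfrak{c})$; no further obstacle arises.
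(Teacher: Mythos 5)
Your proposal is correct and follows exactly the paper's route: the paper also deduces the corollary as a direct particular case of Proposition~\ref{prop_tilting_becomes_clust_tilt} applied to the tilting module $T(\mathfrak{C})$ from Theorem~\ref{teo_T(C)_tilting}. Your extra check that the summands $AD(\imath,k,m)$ are pairwise non-isomorphic (so that $T(\mathfrak{C})$ is basic) is a harmless and correct addition that the paper leaves implicit.
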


\begin{proof}
    This is a particular case of Proposition \ref{prop_tilting_becomes_clust_tilt}.
\end{proof}

Recall that the set $\mathcal{I}$ of pairs $(\imath,k)$ such that the interval $[a,b]$ contains an $i$-box of index $\imath$ and $i$-cardinality $k$ forms a set of indices for the vertices of $Q_A$.

\begin{teo}
\label{teo_form_clust_tilt}
Let $\mathfrak{C}$ be a chain of $i$-boxes of range $[a,b]$.
Let $\mathcal{M}(\mathfrak{C})=\bigoplus_{(\imath,k)\in \mathcal{I}} \mathcal{M}_{k_\imath}\in \mathcal{C}_A$ be the cluster tilting object obtained from the canonical cluster tilting of $\mathcal{C}_A$ object through the mutation sequence associated to the canonical chain transformation 
$\mathfrak{C}^{[a,b]}_-\mapsto \mathfrak{C}$. \\
 For any $(\imath,k)$ in $\mathcal{I}$, let $\mathfrak{c}$ be the $i$-box of index $\imath$ and $i$-cardinality $k$ in $\mathfrak{C}$. Then we have
 \[ T(\mathfrak{c}) \cong \mathcal{M}_{m_\imath}. \]
 In particular, there is an isomorphism
\[ T(\mathfrak{C}) \cong \mathcal{M}({\mathfrak{C}}). \]
 
\end{teo}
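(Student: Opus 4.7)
The proof proceeds by induction on the number $d$ of box moves in the canonical chain transformation $\mathfrak{C}_-^{[a,b]} \mapsto \mathfrak{C}$ described in Remark \ref{rem_can_chain_trans}. In the base case $d = 0$, we have $\mathfrak{C} = \mathfrak{C}_-^{[a,b]}$ and $\mathcal{M}(\mathfrak{C}_-^{[a,b]})$ is the canonical cluster-tilting object given by the image of $A$. For each $(\imath,k) \in \mathcal{I}$, the summand labeled $k_\imath$ is $P_{k_\imath} = AD(\imath,k,0) = T(\mathfrak{c})$, where $\mathfrak{c}$ is the $(\imath,k)$-box in $\mathfrak{C}_-^{[a,b]}$, so the identification is tautological.

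For the inductive step, let $\mathfrak{C}''$ denote the chain obtained after the first $d-1$ box moves, so that the $d$-th move takes $\mathfrak{C}''$ to $\mathfrak{C}$. If this move does not entail a quiver mutation (case (ii) of Proposition \ref{prop_box_move}), then the sets of $i$-boxes in $\mathfrak{C}''$ and $\mathfrak{C}$ coincide, both $T(\mathfrak{C}) = T(\mathfrak{C}'')$ and $\mathcal{M}(\mathfrak{C}) = \mathcal{M}(\mathfrak{C}'')$ as objects of $\mathcal{C}_A$, and the inductive hypothesis carries over verbatim (the labels $m_\imath$ are attached to the same summands). Otherwise, by case (i) of Proposition \ref{prop_box_move}, the move shifts an $(\imath,k)$-box from $\tau^{-m}\mathfrak{c}^-$ to $\tau^{-(m+1)}\mathfrak{c}^-$ for some $m \geq 0$. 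By the inductive hypothesis the summand of $\mathcal{M}(\mathfrak{C}'') = T(\mathfrak{C}'')$ labeled $m_\imath$ is $AD(\imath,k,m)$, and it suffices to show that mutating at this summand replaces it with $AD(\imath,k,m+1)$.

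To this end I will exhibit the first triangle of Corollary \ref{exchange_triangles},
$$AD(\imath,k,m) \to AD(\imath,k-1,m+1) \oplus AD(\imath,k+1,m) \to AD(\imath,k,m+1) \to \Sigma AD(\imath,k,m),$$
as the exchange triangle for this mutation. By Theorem \ref{teo_mutation_T_compatible_mutation_Q}, combined with the rigidity (hence non-degeneracy) of the quiver with potential $(Q^{[a,b]}(\underline{w}_0), W^{[a,b]}(\underline{w}_0))$ established in Proposition \ref{prop_rigid_quiver}, the endoquiver of $T(\mathfrak{C}'')$ is the combinatorial quiver $Q(\mathfrak{C}'')$ obtained from $Q^{[a,b]}(\underline{w}_0)$ by the first $d-1$ quiver mutations. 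A careful tracking of box moves within the current basic chain transformation (Lemma \ref{lem_ord_mut}, together with Lemma \ref{lem_last_box_move_determine_other} and Proposition \ref{prop_condition_m_flush_right}) shows that in $\mathfrak{C}''$ the $(\imath,k-1)$-box has already been shifted to $\tau^{-(m+1)}\mathfrak{c}_{(\imath,k-1)}^-$, whereas the $(\imath,k+1)$-box still sits at $\tau^{-m}\mathfrak{c}_{(\imath,k+1)}^-$; hence both $AD(\imath,k-1,m+1)$ and $AD(\imath,k+1,m)$ are direct summands of $T(\mathfrak{C}'')$ (with the boundary conventions $AD(\imath,0,m+1) = 0$ and $AD(\imath,k+1,m) = 0$ when the corresponding box is absent). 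The local combinatorics of $Q(\mathfrak{C}'')$ at the vertex $m_\imath$ then yields arrows precisely to these two summands, so Lemma \ref{lem_exchange_triang_given_arrows} identifies $AD(\imath,k-1,m+1) \oplus AD(\imath,k+1,m)$ with the middle term of the exchange triangle; the triangle above is therefore this exchange triangle, and we conclude $\mathcal{M}_{m_\imath} = AD(\imath,k,m+1) = T(\mathfrak{c})$, closing the induction.

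The main obstacle is the combinatorial analysis establishing that the arrows out of the mutation vertex in $Q(\mathfrak{C}'')$ target exactly the $(\imath,k-1)$- and $(\imath,k+1)$-box vertices (and no others). This has to be propagated inductively along the mutation sequence associated with the canonical chain transformation, using the explicit description of $Q^{[a,b]}(\underline{w}_0)$ and the combinatorial rules of Proposition \ref{prop_box_move} and Construction \ref{con_sequence_mutations_Q(C)}; it is the price one pays for bypassing direct Ext computations with $AD$-modules via the minimal-approximation characterization of exchange triangles.
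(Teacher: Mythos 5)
Your overall strategy is the same as the paper's: induction on the number of box moves in the canonical chain transformation, with non-mutation moves handled trivially, and each mutation move identified by combining the triangle of Corollary \ref{exchange_triangles} with Lemma \ref{lem_exchange_triang_given_arrows}. Your bookkeeping of the neighbouring same-index summands (the $(\imath,k-1)$-box already at shift $m+1$, the $(\imath,k+1)$-box still at shift $m$ --- the latter forced by the flush-right configuration required for the box move) is correct and is only implicit in the paper, so that part is fine.

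The genuine gap is exactly the step you yourself flag as ``the main obstacle'': you assert, but do not prove, that in the endoquiver of $T(\mathfrak{C}'')$ the arrows with source the mutation vertex $k_\imath$ go precisely to $(k-1)_\imath$ and $(k+1)_\imath$ and to no other vertex. This is not a matter of tracking same-index boxes: along the mutation sequence, arrows between $k_\imath$ and vertices of adjacent indices $\jmath\sim\imath$ are created and destroyed, and without excluding such arrows Lemma \ref{lem_exchange_triang_given_arrows} only gives a middle term $B$ that may be strictly larger than $AD(\imath,k-1,m+1)\oplus AD(\imath,k+1,m)$, so the triangle of Corollary \ref{exchange_triangles} can no longer be recognized as the exchange triangle and the identification $\mathcal{M}_{k_\imath}\cong AD(\imath,k,m+1)$ does not follow. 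The paper does not carry out the inductive combinatorial propagation you propose; it imports this local structure of the seed quiver at a vertex about to undergo a box-move mutation from \cite[Prop.~7.15]{KKOP_mon_cat_quant_aff_II}, and transfers it to the endoquiver of $T(\mathfrak{C}'')$ via Corollary \ref{cor_properties_clust_cat_Q[a,b]} (equivalently Theorem \ref{teo_mutation_T_compatible_mutation_Q}), a transfer you do have. So the argument is repaired by a citation at that point, but as written the decisive input is missing, and proving it ``by careful tracking'' would essentially amount to reproving that result of Kashiwara--Kim--Oh--Park.
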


\begin{proof}

Similarly to the proof of Theorem \ref{teo_T(C)_tilting}, We proceed by induction on the number $d$ of box moves  involved in the canonical chain transformation  $\mathfrak{C}_{-}^{[a,b]} \mapsto \mathfrak{C}.$
    If $d=0$, the object $\mathcal{M}(\mathfrak{C})$ is by definition $T(\mathfrak{C}^{[a,b]}_-)$, the canonical cluster-tilting object of the cluster category $\mathcal{C}_A$.\\
    If $d>0$, let $\mathfrak{C'}$ the chain of $i$-boxes obtained after the first $(d-1)$ box moves. If the $d$-th box move is not associated to a quiver mutation, then $M(\mathfrak{C})\cong M(\mathfrak{C'}) \cong T(\mathfrak{C'}) \cong T(\mathfrak{C})$.
    Otherwise, denoting by $k_\imath$ the vertex where the last mutations occurs and $\mathfrak{c'}$ the associated $i$-box, the object $M(\mathfrak{C})$ is obtained by replacing the direct summand $T(\mathfrak{c'})$ of $T(\mathfrak{C'})$ with a certain indecomposable object $\mathcal{M}_{k_\imath}$ of $\mathcal{C}_A$. 
    Let $m$ be the positive integer such that $T(\mathfrak{c'})=AD(\imath,k,m)$.
    Notice that,  by \cite[Prop.~7.15]{KKOP_mon_cat_quant_aff_II}, the full subquiver of the endoquiver of $T(\mathfrak{C'})$ with set vertices $k_\imath$ and those with an arrow pointing toward it, has the form

\begin{align*}
    &(k+1)_\imath\leftarrow k_{\imath}\rightarrow  (k-1)_{\imath}, &\text{ if $k>1$; }\\
    &\ \ \ \ \ \ \ \ \ \ \ \ 2_\imath \leftarrow 1_\imath, &\text{ if $k=1$}.
\end{align*}

In both cases, we can deduce from Lemma \ref{lem_exchange_triang_given_arrows} and Corollary \ref{exchange_triangles} that $\mathcal{M}_{k_\imath}$ is isomorphic to $AD(\imath,k,m+1)=T(\tau^{-1}\mathfrak{c'})$.
\end{proof}

Let $\mathfrak{C}=(\mathfrak{c}_k)_{1\leq k\leq l}$ be a chain of $i$-boxes with range $[a,b]$.
For any $(\imath,k)\in\mathcal{I}$, if $\mathfrak{c}$ is the $i$-box of $\mathfrak{C}$ associated to the vertex $k_\imath$ (that is, the $i$-box of $\mathfrak{C}$ with index $i$ and $i$-cardinality $k$), let $m(k_\imath)$ be the positive integer such that $T(\mathfrak{c})=AD(\imath,k,m(k_\imath))$.\\
By Corollary \ref{exchange_triangles}, the index of the obejct $AD(\imath,k,m(k_\imath))$ of $\mathcal{C}_A$ with respect to the canonical cluster tilting object of $\mathfrak{C}_{-}^{[a,b]}$ is 
\[
[P_{(m(k_\imath)+k)_\imath}] - [P_{m(k_\imath)_\imath}] \in K_0(\mathrm{add}(T(\mathfrak{C}_{-}^{[a,b]})).
\]

Therefore, to apply Palu's generalized mutation rule, (Theorem \ref{teo_Palu}), we define the matrix $P(\mathfrak{C})=(t_{k'_\imath,k_\jmath})$, indexed by the vertices of $Q^{[a,b]}(\widehat{\underline{w}}_0)$, by

\begin{equation}
    t_{k'_\imath,k_\jmath} =\begin{cases}
     1 & \text{if }k'_\imath=(k+m(k_\jmath))_\jmath \\
     -1 & \text{if } k'_\imath= m(k_\jmath)_\jmath \text{ and } m(k_\jmath)\neq 0,\\
     0 & \text{otherwise}.
    \end{cases}
\end{equation}

\begin{teo}[Solution of KKOP problem]
\label{teo_sol_KKOP}
The Exchange matrix $B(\mathfrak{C})$ is given by the following formula:
\[B(\mathfrak{C})=P(\mathfrak{C})^{-1}B^{[a,b]}(\underline{w}_0) P(\mathfrak{C})^{-t}.\]
\end{teo}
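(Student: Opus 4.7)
The plan is to invoke Palu's generalized mutation rule (Theorem~\ref{teo_Palu}) in the cluster category $\mathcal{C}_A$, taking the initial cluster-tilting object to be the canonical one (the image of $A$) and the target one to be $\mathcal{M}(\mathfrak{C})$. Once we identify the relevant quantities on both sides, the stated formula drops out. Three ingredients will be needed: (a) the endoquiver of $A$ in $\mathcal{C}_A$ has exchange matrix $B^{[a,b]}(\underline{w}_0)$ (this is already known from the construction of $A$); (b) the endoquiver of $\mathcal{M}(\mathfrak{C})$ has exchange matrix $B(\mathfrak{C})$; (c) the index matrix in Palu's formula is exactly the matrix $P(\mathfrak{C})$ defined before the statement.

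For (c), Theorem~\ref{teo_form_clust_tilt} identifies $\mathcal{M}(\mathfrak{C})$ with $T(\mathfrak{C}) = \bigoplus_{(\imath,k)\in\mathcal{I}} AD(\imath,k,m(k_\imath))$. The second triangle in Corollary~\ref{exchange_triangles},
\[
\Sigma^{-1} AD(\imath,k,m(k_\imath)) \to P_{m(k_\imath)_\imath} \to P_{(m(k_\imath)+k)_\imath} \to AD(\imath,k,m(k_\imath)),
\]
is an approximation triangle of $AD(\imath,k,m(k_\imath))$ by $A$ in the sense of Proposition~\ref{keller_reiten-approx}. When $m(k_\imath)=0$, the summand coincides with $P_{k_\imath}$, and one uses the trivial approximation triangle. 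Reading off the index $[P_{(m(k_\imath)+k)_\imath}] - [P_{m(k_\imath)_\imath}]$ in $K_0(\mathrm{add}\,A)$ and collecting these columns yields precisely the matrix $P(\mathfrak{C})$ defined in the excerpt.

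For (b), I would compare two parallel mutation sequences starting from the same combinatorial data. On the additive side, Proposition~\ref{prop_rigid_quiver} guarantees that $(Q^{[a,b]}(\underline{w}_0),W^{[a,b]}(\underline{w}_0))$ is rigid and Jacobi finite, so Theorem~\ref{teo_mutation_T_compatible_mutation_Q} applies to $\mathcal{C}_A$; hence every mutation of a cluster-tilting object reachable from $A$ corresponds at the level of endoquivers to the classical quiver mutation. Since $\mathcal{M}(\mathfrak{C})$ is obtained from $A$ by the mutation sequence associated with the canonical chain transformation $\mathfrak{C}_-^{[a,b]}\mapsto \mathfrak{C}$, its endoquiver equals the iterated mutation of $Q^{[a,b]}(\underline{w}_0)$ described in Construction~\ref{con_sequence_mutations_Q(C)}. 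On the monoidal side, Lemma~\ref{lem_box_move_mutation} shows that box moves correspond to mutations of monoidal seeds, so $\mathcal{S}(\mathfrak{C})$ is obtained from $\mathcal{S}(\mathfrak{C}_-^{[a,b]})$ by exactly the same sequence of mutations, and therefore $B(\mathfrak{C})$ is obtained from $B^{[a,b]}(\underline{w}_0)$ by the same matrix mutations. Since a matrix mutation sequence is determined by the underlying indices, the two resulting matrices coincide, giving $B_{\mathcal{M}(\mathfrak{C})} = B(\mathfrak{C})$. Inserting (a), (b) and (c) into Palu's rule finishes the proof.

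The main obstacle is point (b): to invoke Palu's rule we need $B(\mathfrak{C})$ viewed as a square $K(\mathfrak{C})\times K(\mathfrak{C})$ matrix, not just its unique $K(\mathfrak{C})\times K^{\mathrm{ex}}(\mathfrak{C})$ part. This is exactly why the excerpt's definition of exchange matrix was adjusted to be square: the frozen--frozen block is then determined by the mutation sequence from the initial seed, and both sides of the formula compute it consistently. The bookkeeping of signs in the approximation triangle (which column of $P(\mathfrak{C})$ receives $+1$ versus $-1$) and the verification that Palu's conventions match those used in defining $P(\mathfrak{C})$ constitute the remaining routine checks.
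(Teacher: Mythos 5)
Your proposal is correct and follows essentially the same route as the paper: identify $\mathcal{M}(\mathfrak{C})\cong T(\mathfrak{C})$ (Theorem~\ref{teo_form_clust_tilt}), note that the endoquiver of $T(\mathfrak{C})$ and the quiver $Q(\mathfrak{C})$ arise from $Q^{[a,b]}(\underline{w}_0)$ by the same mutation sequence coming from the canonical chain transformation (via Lemma~\ref{lem_box_move_mutation}/Lemma~\ref{lem_ord_mut} on the monoidal side and Corollary~\ref{cor_properties_clust_cat_Q[a,b]}, i.e.\ Theorem~\ref{teo_mutation_T_compatible_mutation_Q}, on the additive side), and then apply Palu's rule with the index matrix $P(\mathfrak{C})$ read off from the approximation triangles of Corollary~\ref{exchange_triangles}. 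Your identification of the index data and the handling of the square-matrix convention match the paper's setup, so no further changes are needed.
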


\begin{proof}
By Lemma \ref{lem_ord_mut} and Theorem \ref{teo_form_clust_tilt}, the endoquiver $Q(T(\mathfrak{C}))$ and the quiver $Q(\mathfrak{C})$ are both obtained from the quiver $Q(\mathfrak{C}^{[a,b]}_-)$ through the mutation sequence associated to the canonical chain transformation $\mathfrak{C}^{[a,b]}_-\mapsto \mathfrak{C}$. Therefore, it follows from Corollary \ref{cor_properties_clust_cat_Q[a,b]} that these two quivers coincide. Then a direct application of Palu's generalized mutation rule (\ref{teo_Palu}) provides a solution to the KKOP problem.
\end{proof}

\begin{rem}
\label{rem_g_vectors}
Following \cite{Dehy_Keller_combinatorics_2CY}, the indices of the  $AD$-modules correspond to the $g$-vectors of the associated cluster variables. When the reduced expression $\underline{w}_0$ is $\mathcal{Q}$-adapted, the $g$-vectors of the Kirillov--Reshetikhin modules have already been computed via other methods in \cite[Prop.~4.16]{HL_Clust_alg_approach_q_char}. See also \cite[\S 2]{HFOO_iso_quant_groth_ring_clust_alg} for other computations of $g$-vectors.
\end{rem}

The following Remark is a reformulation of \cite[Rem.~2.1]{Nakanishi_Zelev_tropical}.

\begin{rem}
Let $n$ be a positive integer. Let $B$ be a skew-symmetrizable $n\times n$-matrix and let $D$ be a \emph{skew-symmetrizer} of $B$, that is, a diagonal $n\times n$-matrix with strictly positive integer diagonal entries such that $DB=-B^TD$ (in the setting of this article $B$ is skew-symmetric and we can take $D$ as the identity matrix). Mutation equivalent matrices share the same skew-symmetrizers, cf. \cite[Prop.~4.5]{Fomin_Zelevinsky_clustalg_I}. For any $\theta=\pm 1$ and $1\leq k\leq n$, if we define the matrices $E_{B,k,\theta}=(e^k_{ij})_{1\leq i,j\leq n}$ and $F_{B,k,\theta}=(f^k_{ij})_{1\leq i,j\leq n}$ by

\[
e_{ij}^k=\begin{cases}
    \delta_{ij} & \text{if } j\neq k, \\
    -1 & \text{if } i=j=k,\\
    \mathrm{max}(0,-\theta b_{ik}) & \text{if } i\neq j=k,
\end{cases}\ \text{ and }\ f_{ij}^k=\begin{cases}
    \delta_{ij} & \text{if } j\neq k, \\
    -1 & \text{if } i=j=k,\\
    \mathrm{max}(0,\theta b_{ki}) & \text{if } i=k\neq j,
\end{cases}\
\]
we can express the mutation of the matrix $B$ at $k$ as 

\[\mu_k(B)=E_{B,k,\theta}^{-1}BF_{B,k,\theta}.\]
As pointed out in \cite[Lem.~5.5]{Keller_clusT_alg_der_cat}, 
\[F_{B,k,\theta}=D^{-1}(E_{B,k,\theta}^t)^{-1}D.\]
For a positive integer $e$ and  a sequence of indices $1\leq k_1,\dots,k_e\leq n$, let $B_s$ be the skew-symmetrizable matrix obtain from $B_0=B$ by mutating, in order, at the first $s$ indices, $1\leq s\leq e$. Then, for any $e$-tuple $(\theta_s)_{1\leq s\leq e}$ with $\theta_s=\pm 1$, we have

\[B_e = (E_{B_0,k_1,\theta_1}\dots E_{B_{e-1},k_e,\theta_e})^{-1} B D^{-1} (E_{B_0,k_1,\theta_1}\dots E_{B_{e-1},k_e,\theta_e})^{-t} D.\]
By \cite[Thm.~5.6]{Keller_clusT_alg_der_cat} (a reformulation of \cite[Prop.~1.3]{Nakanishi_Zelev_tropical}), if, for any $1\leq s\leq e$, we choose as $\theta_s$ the sign of the $c$-vector of index $k_s$ associated to $B_{s-1}$ (see \cite[Cor.~5.5]{Gross_Hacking_Keel_Kont_canonical_bases} for the sign-coherence of $c$-vectors), we have
\[ G_e= E_{B_0,k_1,\theta_1}\dots E_{B_{e-1},k_e,\theta_e},\]
where $G_e$ is the matrix of $g$-vectors associated to $B_s$ with respect to the initial matrix $B$. Therefore, in terms of the combinatorics of cluster algebras, Palu's generalized mutation rule becomes
\begin{equation}
\label{eq_ACFP}
    B_e = (G_e)^{-1}D^{-1}BG_e^{-t}D.
\end{equation} 
Relying on Remark \ref{rem_g_vectors}, equation (\ref{eq_ACFP}) provides an alternative way to adress Kashiwara--Kim--Oh--Park's problem.
\end{rem}

\begin{ex}
 Consider the pair ($\Delta=A_3,\text{Id}$), and define on it the height function $\varepsilon$ by $\varepsilon_1=0, \varepsilon_2=1, \varepsilon_3 =0$, obtaining a Q-datum for a simple Lie algebra of type $\text{A}_3$. It is associated to to the oriented quiver  $Q: \ \  \begin{tikzcd}[ampersand replacement=\&]
1 \&  2\arrow[l]\arrow[r]  \& 3 
\end{tikzcd}. $

We have that $\underline{w}_0=s_1s_3s_2s_1s_3s_2$ is an $\varepsilon$-adapted reduced expression, and the infinite sequence associated to it is
 \[ 
 \widehat{\underline{w_0}}=\dots\ 2,\ 1,\ 2,\ 3,\ \underbrace{\ 1,\ 3,\ 2,\ 3,\ 1,\ 2,\ 3,\ 1,\ 2\,}_{[-8,0]}, 1,\ 3\ \  
 \dots .\]

The chain $\mathfrak{C}^{[-8,0]}_{-}$ is formed by the $i$-boxes

 \begin{align*}
\mathfrak{c}_1&=[0]_2, &  \mathfrak{c}_2&=[-1]_1, &\mathfrak{c}_3&=[-2]_3, \\
\mathfrak{c}_4&=[-3,0]_2, &  \mathfrak{c}_5&=[-4,-1]_1, &\mathfrak{c}_6&=[-5,-2]_3,\\
\mathfrak{c}_7&=[-6,0]_2, &  \mathfrak{c}_8&=[-7,-2]_3, &\mathfrak{c}_9&=[-8,-1]_1.
\end{align*}  

The quiver associated to the monoidal seed $\mathcal{S}(\mathfrak{C}^{[-8,0]}_{-})$ is $Q^{[-8,0]}(\underline{w}_0)$:

\[\begin{tikzcd}[ampersand replacement=\&]
\mathfrak{c}_9 \arrow[dr] \& \& \mathfrak{c}_5 \arrow[ll] \arrow[dr]  \& \& \mathfrak{c}_2\arrow[dr] \arrow[ll] \& \\
\& \mathfrak{c}_7 \arrow[dr] \arrow[ur] \& \& \mathfrak{c}_4\arrow[dr] \arrow[ur] \arrow[ll] \& \&\mathfrak{c}_1 \arrow[ll]\\
\mathfrak{c}_8 \arrow[ur] \& \& \mathfrak{c}_6 \arrow[ur] \arrow[ll] \& \& \mathfrak{c}_3\arrow[ur] \arrow[ll] \& \\
\end{tikzcd}
\].

An alternative chain of $i$-boxes $\mathfrak{C}'=(\mathfrak{c}_k)_{1\leq k\leq 9}$ for the interval $[-8,0]$ is given by
\begin{align*}
\mathfrak{c'}_1&=[-4]_1, & \mathfrak{c'}_2&=[-3]_2, & \mathfrak{c'}_3&=[-5]_3, \\
\mathfrak{c'}_4&=[-5,-2]_3, &\mathfrak{c'}_5&=[-6,-3]_2,
 &\mathfrak{c'}_6&=[-4,-1]_1,\\
\mathfrak{c'}_7&=[-7,-2]_3 &\mathfrak{c'}_8&=[-6,0]_2, &\mathfrak{c'}_9&=[-9,-1]_1.
\end{align*}

The algebra $k\vec{A_3}\otimes k\vec{\Delta}$ is given by the quiver with relations

\[\begin{tikzcd}[ampersand replacement=\&]
\bullet \arrow[dr] \& \& \bullet\arrow[dl,dashed] \arrow[dr]\arrow[dr] \arrow[ll] \& \& \bullet\arrow[dl,dashed]\arrow[dr]\arrow[ll]\arrow[dr] \& \\
\& \bullet \& \& \bullet\arrow[ll] \& \&\bullet\arrow[ll]\\
\bullet \arrow[ur] \& \& \bullet\arrow[ul,dashed] \arrow[ur] \arrow[ll] \& \& \bullet\arrow[ul,dashed]\arrow[ur]\arrow[ll] \& \\
\end{tikzcd}.
\]
The $AD$-modules associated to intervals the first chain of $i$-boxes are 

 \begin{align*}
AD(2,1,0), & &AD(1,1,0), & &AD(3,1,0), \\
AD(2,2,0), & & AD(1,2,0), & &AD(3,2,0),\\
AD(2,3,0), & &AD(3,3,0), & & AD(1,3,0).
\end{align*}

The $AD$-modules associated to the intervals of the second chain of $i$-boxes are

\begin{align*}
AD(1,2,1),& & AD(2,2,1), && AD(3,2,1), \\
AD(3,2,1),& & AD(2,3,1), && AD(1,2,0),\\
AD(3,3,0)& & AD(2,3,0), && AD(1,3,0).
\end{align*}

By Palu's formula, we obtain  the quiver for the second seed, 

\[\begin{tikzcd}[ampersand replacement=\&]
c'_9\arrow[d] \& c'_5 \arrow[r] \arrow[l] \& c'_2\arrow[d]  \\
 c'_7 \arrow[r] \& c'_4\arrow[d]\arrow[u] \& c'_1\arrow[l]\\
c'_8 \arrow[u] \& c'_6 \arrow[r] \arrow[l] \& c'_3\arrow[u]  \\
\end{tikzcd}
\].

\end{ex}

\begin{ex}
 Consider the pair ($\Delta=A_3,\vee$), and define on it the height function $\varepsilon$ by $\varepsilon_1=1, \varepsilon_2=0, \varepsilon_3 =0$, obtaining a Q-datum for a simple Lie algebra of type $\text{B}_2$. We have that $\underline{w}_0=s_2s_1s_2s_3s_2s_1$ is an $\varepsilon$-adapted reduced expression, and the infinite sequence associated to it is
 \[ 
 \widehat{\underline{w_0}}=\dots\ 2,\ 1,\ 2,\ 3,\ \underbrace{2,\ 1,\ 2,\ 3,\ 2,\ 1,\ }_{[1,6]} 2,\ 3\ \  
 \dots .\]
\end{ex}

The chain $\mathfrak{C}^{[-3,8]}_{-}$ is formed by the $i$-boxes

 \begin{align*}
\mathfrak{c}_1&=[8]_3, &  \mathfrak{c}_2&=[7]_2, &\mathfrak{c}_3&=[6]_1, &\mathfrak{c}_4&=[3,7]_2,\\
\mathfrak{c}_5&=[4,8]_3, &  \mathfrak{c}_6&=[3,7]_2, &\mathfrak{c}_7&=[2,6]_1, &\mathfrak{c}_8&=[1,7]_2,\\
\mathfrak{c}_9&=[0,8]_3, &  \mathfrak{c}_{10}&=[-1,7]_2, &\mathfrak{c}_{11}&=[-2,6]_1, &\mathfrak{c}_{12}&=[-3,7]_2.
\end{align*} 

The quiver associated to the monoidal seed $\mathcal{S}(\mathfrak{C}^{[-3,8]}_{-})$ is $Q^{[-3,8]}(\underline{w}_0)$:

\[
\begin{tikzcd}[sep=small]
    & \mathfrak{c}_{11} \arrow[rd] &&&& \mathfrak{c}_7  \arrow[llll]\arrow[rd] &&&& \mathfrak{c}_3 \arrow[llll]\arrow[rd] & &  \\  
    \mathfrak{c}_{12} \arrow[rrrd] && \mathfrak{c} _{10}\arrow[ll]\arrow[rrru] && \mathfrak{c}_8 \arrow[rrrd] \arrow[ll]&& \mathfrak{c}_6\arrow[ll]\arrow[rrru]&& \mathfrak{c}_4\arrow[rrrd]  \arrow[ll]&& \mathfrak{c}_2 \arrow[ll]&\\
    & & & \mathfrak{c}_9 \arrow[ru] &&&& \mathfrak{c}_5 \arrow[llll]\arrow[ru] &&&& \mathfrak{c}_1 \arrow[llll] .
\end{tikzcd}
\]

An alternative chain of $i$-boxes $\mathfrak{C}'=(\mathfrak{c}_k)_{1\leq k\leq 12}$ for the interval $[-3,8]$ is given by

\begin{align*}
\mathfrak{c}'_1&=[2]_1=\tau^{-1}\mathfrak{c}_3, &  \mathfrak{c}'_2&=[3]_2=\tau^{-2}\mathfrak{c}_2, &\mathfrak{c}'_3&=[1,3]_2=\tau^{-2}\mathfrak{c}_4, &\mathfrak{c}'_4&=[4]_3=\tau^{-1}\mathfrak{c}_1,\\
\mathfrak{c}'_5&=[0,4]_3=\tau^{-1}\mathfrak{c}_5, &  \mathfrak{c}'_6&=[1,5]_2=\tau^{-1}\mathfrak{c}_6, &\mathfrak{c}'_7&=[-1,5]_2=\tau^{-1}\mathfrak{c}_8, &\mathfrak{c}'_8&=[2,6]_1=\tau^{0}\mathfrak{c}_7,\\
\mathfrak{c}'_9&=[-2,6]_1=\tau^{0}\mathfrak{c}_{11}, &  \mathfrak{c}'_{10}&=[-1,7]_2=\tau^{0}\mathfrak{c}_{10}, &\mathfrak{c}'_{11}&=[-3,7]_2=\tau^{0}\mathfrak{c}_{12}, &\mathfrak{c}'_{12}&=[0,8]_3=\tau^{0}\mathfrak{c}_9.
\end{align*} 

It corresponds to the pair $(2,(T_k)_{1\leq k\leq 11})$, where $T_k=R$ if and only if $R$ is odd.
Therefore, ordering columns and rows in such a way that $m_i<m'_j$ if $i<j$ or if $m<m'$ and $i=j$, the matrix $T$ is 

\[ T=
\setcounter{MaxMatrixCols}{20}
\begin{bmatrix}
    -1 & 0 & 0 &  0 &  0 &  0 &  0 & 0 & 0 &  0 & 0 & 0\\
    1 & 1 & 0 & 0 & 0 & 0 & 0 & 0 & 0 &  0 & 0 & 0\\
     0 &0 &1  &0  &0  &0 & 0 &0 &0  &0 &0 &0\\
    0 &0 &0  &0  &0 &-1 &-1 &0 &0  &0 &0 &0\\
    0 &0 &0 &-1 &-1  &0  &0 &0 &0  &0 &0 &0\\
    0 &0 &0  &1  &0  &0  &0 &0 &0  &0 &0 &0\\
    0 &0 &0  &0  &1  &1  &0 &0 &0  &0 &0 &0\\
    0 &0 &0  &0  &0  &0  &1 &1 &0  &0 &0 &0\\
    0 &0 &0  &0  &0  &0  &0 &0 &1  &0 &0 &0\\
    0 &0 &0  &0  &0  &0  &0 &0 &0 &-1 &-1 &0\\
    0 &0 &0  &0  &0  &0  &0 &0 &0  &1 &0 &0\\
    0 &0 &0  &0  &0  &0  &0 &0 &0  &0  &1 &1
\end{bmatrix}.\]

Therefore, by application of Theorem \ref{teo_main_KKOP}, the quiver of the monoidal seed $\mathcal{S}(\mathfrak{C}')$ is 

\[\begin{tikzcd}
    \mathfrak{c}'_9 \arrow[rd] & & \mathfrak{c}'_8 \arrow[ll] \arrow[rr]  & & \mathfrak{c}'_1\arrow[ld]\arrow[lr] & \\
    \mathfrak{c}'_{11}\arrow[d] & \mathfrak{c}'_{10} \arrow[l]\arrow[r]& \mathfrak{c}'_7 \arrow[d]\arrow[u]& \mathfrak{c}'_6\arrow[l]\arrow[r] &\mathfrak{c}'_3\arrow[u]\arrow[d]& \mathfrak{c}'_2 \arrow[l]\\
     \mathfrak{c}'_{12} \arrow[rr]& & \mathfrak{c}'_5\arrow[ul]\arrow[ur] & & \mathfrak{c}'_3\arrow[ll] &
\end{tikzcd}\]

\section*{Acknowledgement}
This article is part of the author's PhD thesis. The author is grateful to his supervisor Bernhard Keller for his guidance. Moreover, he is indebted to Ryo Fujita, David Hernandez, Geoffrey Janssens and Se-Jin Oh for discussions and useful comments. The author thanks the ANR CHARMS (ANR-19-CE40-0017) for financial support.


\providecommand{\bysame}{\leavevmode\hbox to3em{\hrulefill}\thinspace}
\providecommand{\MR}{\relax\ifhmode\unskip\space\fi MR }
\providecommand{\MRhref}[2]{%
  \href{http://www.ams.org/mathscinet-getitem?mr=#1}{#2}
}
\providecommand{\href}[2]{#2}

\end{document}